\newtheorem{theorem}{Theorem}[section]
\newtheorem{corollary}[theorem]{Corollary}
\newtheorem{proposition}[theorem]{Proposition}
\newtheorem{lemma}[theorem]{Lemma}
\newtheorem{conjecture}[theorem]{Conjecture}
\newtheorem*{theorem*}{Theorem}
\theoremstyle{definition}
\newtheorem{definition}[theorem]{Definition}
\newtheorem{example}[theorem]{Example}
\newtheorem{remark}[theorem]{Remark}
\newtheorem{notation}[theorem]{Notation}
\newtheorem{claim}[theorem]{Claim}
\newtheorem{condition}[theorem]{Condition}
\newcommand{\id}{\mathrm{id}}         
\newcommand{\se}{\subseteq}           
\newcommand{\ppr}{^{\prime}}          
\newcommand{\pprr}{^{\prime\prime}}   
\newcommand{\fa}{\forall}             
\newcommand{\iv}{^{-1}}               
\newcommand{\uas}{^{\ast}}            
\newcommand{\sas}{_{\ast}}            
\newcommand{\Ext}{\mathrm{Ext}}       
\newcommand{\lla}{\longleftarrow}     
\newcommand{\EQ}{\Leftrightarrow}     
\newcommand{\dra}{\dashrightarrow}  
\newcommand{\lra}{\longrightarrow}    
\newcommand{\sfr}{\mathfrak{s}}  
\newcommand{\Ebb}{\mathbb{E}}        
\newcommand{\Zbb}{\mathbb{Z}}        
\newcommand{\Asc}{\mathscr{A}}   
\newcommand{\Bsc}{\mathscr{B}}   
\newcommand{\Dcal}{\mathcal{D}} 
\newcommand{\Ical}{\mathcal{I}} 
\newcommand{\Pcal}{\mathcal{P}} 
\newcommand{\Tcal}{\mathcal{T}} 
\newcommand{\Ucal}{\mathcal{U}} 
\newcommand{\Vcal}{\mathcal{V}} 
\newcommand{\Wcal}{\mathcal{W}} 
\newcommand{\Dbf}{\mathbf{D}}   
\newcommand{\Vbf}{\mathbf{V}}   
\newcommand{\und}{\underline}   
\newcommand{\ovl}{\overline}    
\newcommand{\ov}{\overset}      
\newcommand{\un}{\underset}     
\newcommand{\ti}{\times}        
\newcommand{\cat}{\mathscr{C}}
\newcommand{\add}{\operatorname{add}}
\newcommand{\ind}{\operatorname{ind}}
\newcommand{\al}{\alpha}         
\newcommand{\gam}{\gamma}        
\newcommand{\ups}{\upsilon}      
\newcommand{\vp}{\varphi}        
\newcommand{\vt}{\vartheta}      
\newcommand{\kap}{\kappa}        
\newcommand{\lam}{\lambda}       
\newcommand{\om}{\omega}         
\newcommand{\thh}{\theta}        
\newcommand{\del}{\delta}        
\newcommand{\ep}{\varepsilon}    
\newcommand{\sig}{\sigma}        
\newcommand{\Sig}{\Sigma}        
\newcommand{\Om}{\Omega}         
\newcommand{\Thh}{\Theta}
\newcommand{\Fbf}{\mathbf{F}}     
\newcommand{\Gam}{\Gamma}     
\newcommand{\bsm}{\begin{smallmatrix}}
\newcommand{\esm}{\end{smallmatrix}}
\numberwithin{equation}{section}
\newcommand{\Hom}{\mathrm{Hom}}      
\newcommand{\Cok}{\operatorname{Cok}}      
\newcommand{\cok}{\operatorname{cok}}      
\newcommand{\ush}{^\sharp}           
\newcommand{\ssh}{_\sharp}           
\newcommand{\Ab}{\mathit{Ab}}        
\newcommand{\proj}{\operatorname{proj}}  
\newcommand{\op}{^\mathrm{op}}       
\newcommand{\CEs}{(\cat,\Ebb,\sfr)}           
\newcommand{\Nbb}{\mathbb{N}}        
\newcommand{\mr}{\bullet}
\newcommand{\imono}[1]{ \;\xymatrix{  \ar@{>->}^{#1}[r] &  \\} }
\newcommand{\iepi}[1]{ \;\xymatrix{  \ar@{->>}^{#1}[r] &  \\} }
\newcommand{\mono}{ \;\xymatrix{  \ar@{>->}[r] &  \\} }
\newcommand{\epi}{ \xymatrix{   \ar@{->>}[r] &  \\} }
\def\add{\mbox{add}\,}
\newcommand{\A}{\mathcal{A}}
\def\Ab{\mbox{\bf Ab}}
\newcommand{\ca}{{\mathcal A}}
\newcommand{\cA}{{\mathcal A}}
\newcommand{\cd}{{\mathcal D}}
\newcommand{\coh}{\operatorname{\mathrm{coh}}}
\newcommand{\defff}{\operatorname{\mathrm{def}}\nolimits}
\newcommand{\cper}{\operatorname{\mathrm{per}}\nolimits}
\newcommand{\Mod}{\operatorname{\mathrm{Mod}}\nolimits}
\renewcommand{\mod}{\operatorname{\mathrm{mod}}\nolimits}
\newcommand{\fp}{\operatorname{\mathrm{}{fp}}\nolimits}
\newcommand{\fg}{\operatorname{\mathrm{fg}}\nolimits}
\def\Hom{\mbox{Hom}}
\def\Ext{\mbox{Ext}}
\def\Mod{\mbox{Mod\,}}
\def\Im{\mbox{Im}}
\def\Ab{\mbox{Ab}}
\def\Ker{\mbox{Ker}}
\newcommand{\ot}{\otimes}  
\newcommand{\otr}{\otimes_R}  
\newcommand{\dia}{\diamond}  
\newcommand{\Fbb}{\mathbb{F}}  
\newcommand{\EF}{\Ebb F}  
\newcommand{\Sett}{\mathrm{Set}}  
\newcommand{\Fm}{F^{\bullet}}  
\newcommand{\EbbI}{\Ebb_{\mathrm{I}}}  
\newcommand{\EbbII}{\Ebb_{\mathrm{I\!I}}}  
\newcommand{\Irm}{\mathrm{I}}
\newcommand{\IIrm}{\mathrm{I}\!\mathrm{I}}
\author{Mikhail Gorsky}
\address{Institut f\"ur Algebra und Zahlentheorie, Universit\"at Stuttgart, Pfaffenwaldring 57, 70569 Stuttgart, Germany}
\email{mikhail.gorsky@iaz.uni-stuttgart.de}
\author{Hiroyuki Nakaoka}
\address{Graduate School of Mathematics, Nagoya University, Furocho, Chikusaku, Nagoya 464-8602, Japan}
\email{nakaoka.hiroyuki@math.nagoya-u.ac.jp}
\author{Yann Palu}
\address
{LAMFA, Universit\'e Picardie Jules Verne, 80039 Amiens, France}
\email{yann.palu@u-picardie.fr}
\begin{document}

\title{Positive and negative extensions in extriangulated categories}

\begin{abstract}
We initiate the study of derived functors in the setting of extriangulated categories. By using coends, we adapt Yoneda's theory of higher extensions to this framework. We show that, when there are enough projectives or enough injectives, thus defined extensions agree with the ones defined earlier via projective or injective resolutions. For categories with enough projective or enough injective morphisms, we prove that these are right derived functors of the $\Hom$-bifunctor in either argument.

Since $\Hom$ is only half-exact in each argument, it is natural to expect \lq\lq negative extensions", i.e. its left derived functors, to exist and not necessarily vanish. We define negative extensions with respect to the first and to the second argument and show that they give rise to universal $\delta$-functors, when there are enough projective or injective morphisms, respectively. In general, they are not balanced. However, for topological extriangulated categories, the existence of a balanced version of negative extensions follows from combining the work of Klemenc on exact $\infty$-categories with results of the second and third authors. We discuss various criteria under which one has the balance of the above bifunctors on the functorial or on the numerical levels. This happens, in particular, in the cases of exact or triangulated categories, and also in the case of the category of $n$-term complexes with projective components over a finite-dimensional algebra.

Given a connected sequence of functors on an extriangulated category $\CEs$, we determine the maximal relative extriangulated structure, with respect to which the sequence is a $\delta$-functor. We also find several equivalent criteria for the existence of enough projective or injective morphisms in a given extriangulated category.

\end{abstract}

\maketitle

\tableofcontents

\section{Introduction}\label{Section_Intro}

The notion of \emph{extriangulated categories} was recently introduced by the second and the third authors \cite{NP1} as a unification of exact categories (in the sense of Quillen \cite{Buh, Ke, Q}) and of triangulated categories \cite{V}. 
One can also consider it as an axiomatization of full extension-closed subcategories of triangulated categories, although it is not known whether every extriangulated category admits an embedding (as a full extension-closed subcategory) into a triangulated category. The datum defining an extriangulated category consists of an additive category $\cA$, a bifunctor $\Ebb\colon \cA\op \times \cA \to \Ab$ and a so-called \emph{realization} correspondence $\mathfrak{s}$ that essentially defines the class of \emph{conflations}. This datum must then satisfy certain axioms that simultaneously generalize axioms of exact and of triangulated categories. 

The class of extriangulated categories is stable under several natural operations. An extension-closed subcategory of an extriangulated category becomes extriangulated with a naturally induced structure \cite{NP1}. Any closed sub-bifunctor of $\Ebb$ induces a \emph{relative} extriangulated structure, or extriangulated \emph{substructure} \cite{HLN, ZH}. An ideal quotient of an extriangulated category by an ideal generated by morphisms that factor through projective-injective objects \cite{NP1} or, more generally, by an ideal generated by morphisms that factor through morphisms with injective domain and projective codomain \cite{FGPPP}, has an induced extriangulated structure. 

\normalcolor

Barwick \cite{Barwick} defined (additive) \emph{exact $\infty$-categories} as a higher categorical framework where higher algebraic $K$-theory is well-defined and well-behaved. Recently, the second and the third authors \cite{NP2} proved that the homotopy category of an exact $\infty$-category admits a natural extriangulated structure. They called extriangulated categories admitting such $\infty$-categorical enhancements \emph{topological}. Klemenc \cite{K} proved that each exact $\infty$-category has a \emph{stable hull}, that is, it embeds into a stable $\infty$-category, and this embedding is universal. Since the homotopy category of a stable $\infty$-category is triangulated, this indicates that each topological extriangulated category admits an embedding as an extension closed subcategory into a triangulated category. Therefore, topological extriangulated categories inherit a notion of higher positive and negative extensions from this triangulated embedding. We study existence of such extensions in extriangulated categories, without reference to any specific embedding.

In Subsection~\ref{Subsection_BasicDef} we recall the definitions and basic properties of extriangulated categories. In particular in Subsection~\ref{Subsection_Defect}, we summarize known results concerning the category of defects.

By using coends, in Section~\ref{Section_Positive} we define a version of Yoneda's theory of higher (positive) extensions (\cite{Buch, Buh, BH, M, Y}) for extriangulated categories. We prove the existence of long exact sequences in both arguments and prove that this definition agrees with the notion of higher extensions defined by Herschend, Liu, and the second author \cite[Section 5.1]{HLN} via projective or injective resolutions. 
As the main theorem of Section~\ref{Section_Positive}, we show the existence of long exact sequences in the positive direction. 

We use this to introduce the notion of \emph{positive global dimension of extriangulated categories}. For exact categories, this recovers the usual definition. Each non-zero triangulated category has infinite global dimension. A non-trivial example of an extriangulated category of global dimension $n$ is given by the category of $n$-term complexes in the bounded homotopy category $K^b(\proj \Lambda)$ of finitely generated projective modules over a finite-dimensional algebra $\Lambda$. More general extriangulated categories of global dimension $n$ appeared in the recent work by Liu and Zhou \cite{LZ}.

The case of positive global dimension $1$ is quite special. This condition is equivalent to asking the bifunctor $\Ebb$ to be right exact in each argument. We will initiate the detailed study of such categories, which we call \emph{hereditary}, in the upcoming sequel \cite{GNP}. We 
 expect this notion to be of great importance in representation theory of finite-dimensional algebras and beyond, in particular, in $\tau$-tilting theory. The reason is that $K^{[-1,0]}(\proj \Lambda)$ (where $\Lambda$ is a finite-dimensional algebra) carries a natural hereditary extriangulated structure, such that $\mod \Lambda$ is its costable category. 
It is important to note that the (abelian) extriangulated structure on $\mod \Lambda$ is not induced by this hereditary extriangulated structure on $K^{[-1,0]}(\proj \Lambda)$ whenever $\Lambda$ is not hereditary.
More generally, \emph{extended cohearts of co-t-structures} introduced by Pauksztello and Zvonareva \cite{PZ} are hereditary. Further examples of hereditary extriangulated categories appeared in \cite{PPPP}: these are given by $2$-Calabi--Yau triangulated categories with a cluster tilting object $T$, endowed with the relative extriangulated structure making $T$ projective. More examples will be discussed in \cite{GNP}.

Generalizing standard notions from the setting of exact categories to that of extriangulated ones, in Section \ref{Subsection_PA} we define \emph{connected sequences of functors} and \emph{$\delta$-functors} in this framework. We also consider \emph{partial $\delta$-functors}, for which the corresponding long sequences are bounded at least on one side and exact everywhere except at the end terms. For example, each half-exact functor is a partial $\delta$-functor. More precisely, we define all these notions for sequences of bifunctors which are contravariant in one argument and covariant in the second and considered as functors in one of the arguments. This will be sufficient for our studies of positive and negative extensions.

These notions clearly depend on the choice of an extriangulated structure on an additive category. Given a connected sequence of functors for an extriangulated category $\CEs$, we find in Section \ref{SectionDeltaRel} the maximal relative structure with respect to which the sequence becomes a $\delta$-functor. Each partial $\delta$-functor can be extended in one or two directions. For each new term, one can pose a natural universality condition. As usual, the $n$-th term of the universal $\delta$-functor having a half-exact functor $F$ as the degree zero term is called its $(-n)$-th left, resp. $n$-th right derived functor, depending of the sign of $n$. If $F$ is right or left exact, its derived functors on one side vanish.

In the abelian or even the exact setting, there are several ways to find derived functors of a given functor $F$. One of the construction was first considered by Cartan and Eilenberg \cite{CE} under the name of \emph{satellite} functors of $F$. This notion actually predates the notion of derived functors, but satisfies the same universality properties. Satellite functors are defined by an iterative procedure. The classical definition uses projective covers or injective envelopes of a functor \cite{BH, CE, M}, see also \cite{MR} and references therein. They can be also defined in more general situations \cite{J}. A slightly more conceptual way to understand derived functors is to define them via projective resolutions of one argument or injective resolutions of the other. An even more conceptual approach to derived functors is to consider them as cohomologies of objects in derived categories with respect to natural $t$-structures.

In each extriangulated category, the $\Hom$- and the $\Ebb$-functors are at least half-exact in each argument (with respect to the extriangulated structure). In this paper, we look for derived functors of the bifunctor $\Hom$. For categories having enough projective, resp. enough injective morphisms, we prove that $\Ebb^n$ is the $n$-th right derived functor of $\Hom$ considered as a functor in the first or in the second variable, for $n > 0$. The definition of $\Ebb^n$ in \cite{HLN} follows the second strategy above and uses projective or injective resolutions. Note that $\Ebb$ is the first derived functor of $\Hom$ precisely with respect to the extriangulated structure defined by $\Ebb$, this bifunctor is not the first derived functor of $\Hom$ with respect to an arbitrary extriangulated structure on the underlying additive category $\cat$.

Remark that if $\CEs$ does not come from an exact category, the $\Hom$-functor is only half-exact in each argument. That means that the inflations $A\ov{f}{\lra}B$ are not necessarily monomorphic, so $\cat(-,f)\colon\cat(-,A)\to\cat(-,B)$ may have a non-zero kernel, and dually for deflations. On the other hand, if $\CEs$ corresponds to a triangulated category, there naturally exist exact sequences in negative direction given by $\cat(-,-[n])$ $(n<0)$. In Section~\ref{Section_Negative}, we investigate the question of the existence of \lq natural' negative extensions in general extriangulated categories. 

There might be many possibilities to complete the partial covariant and contravariant $\delta$-functors $(\Ebb^n, n > 0; \del\ssh)$ and $(\Ebb^n, n > 0; \del\ush)$ to actual $\delta$-functors. We discuss some natural options in Section~\ref{Subsection_PA}. One of them, first considered in \cite{INP} has projectively, resp. injectively stable morphisms as the degree zero terms. As explained in \cite{MR}, in the case of module categories, these can be seen as the $0$-th derived functors of the functor $\Hom$; more precisely, as the $1$-st left derived functors of the functor $\Ebb$. We expect
that the same happens in the general extriangulated case and these functors satisfy the relevant universal property. Let us emphasize that while $\Ebb$ is the $1$-st right derived functor of $\Hom$ (with respect to the structure $\CEs$), the latter is not the $1$-st left derived functor of $\Ebb$.

In the present paper, we are more interested in the problem of defining left derived functors of the bifunctor $\Hom$, which we call the \emph{negative extensions}. In Section \ref{Section_Negative}, we define a covariant and a contravariant connected sequences of functors in negative degrees by taking the duals of the functors $\Ebb$ with respect to the first, resp. to the second argument. For categories having enough projective, resp. injective morphisms, we prove that they are actually $\delta$-functors and prove their universality. Thus, in a certain reasonable generality, these give the covariant and the contravariant version of negative extensions.

The nontrivial problem in our setting is the question of \emph{balance}. The functors $\Hom$- and $\Ebb$-are in general only half-exact. There is therefore no reason to expect their derived functors in the first argument to agree with their derived functors in the second argument. Also, derived categories of extriangulated categories have not yet been defined (and the na\"ive definition via categories of complexes fails). 

Our definition of positive extensions ensures that they give derived functors in either variable, so right derived functors of $\Hom$ and of $\Ebb$ are \emph{balanced}, i.e. one has a bivariant partial $\delta$-functor in non-negative degrees, which is universal both as a covariant and as a contravariant $\delta$-functor among those with $\Hom$ as the degree zero term.

For the negative extensions defined above, however, we do not have such a balance in general. We study in detail the question of the existence of the balance. We formulate and prove precise conditions under which our construction gives balanced derived functors. This happens, in particular, in all exact and all triangulated categories, but also in the case of $n$-term complexes of finitely generated projectives over a finite-dimensional algebra (Example~\ref{ExNTerm}). One can find some natural non-examples via non-positively graded differential graded algebras.

The work of Klemenc suggests that at least for topological extriangulated categories $\CEs$, there must exist an embedding as an extension-closed subcategory into a certain triangulated category $\Tcal$, namely into the homotopy category of the stable hull of the exact $\infty-$categorical enhancement of $\CEs$. While it is not a priori unique or universal on the level of additive categories, it still makes sense to expect such an embedding to behave particularly nicely, compared to arbitrary embeddings of $\CEs$ as full extension-closed subcategories into triangulated categories.
The restriction of shifted $\Hom$-functors $(\Tcal(-, -[n])|_{\cat\op\times\cat}, n \in \Zbb)$, together with natural connecting morphisms, defines a bivariant $\delta$-functor on $\CEs$, which has $\cat(-, -)$ as the degree zero term. We expect that this $\delta$-functor is universal \emph{among the bivariant $\delta$-functors having $\cat(-, -)$ as the degree zero term}. It is thus natural to expect that its terms in negative degrees give \emph{balanced negative extensions} in the category $\CEs$. Moreover, in Section \ref{Subsection_Conj}, we conjecture that balanced negative extensions exist even beyond the topological case and give conjectural bounds on the degrees of the non-vanishing negative extensions in categories of finite positive global dimension. In general, we cannot expect the balanced negative extensions to agree with their non-balanced covariant and contravariant counterparts. We describe a conjectural relation between them. Explicit calculations of balanced negative extensions in special cases will appear in the upcoming sequel \cite{GNP}. The partial $\delta$-functors  $(\Ebb^n, n > 0; \del\ssh)$ and $(\Ebb^n, n > 0; \del\ush)$ and their completions to bivariant $\delta$-functors will play an important role in the definition of Hall algebras of extriangulated categories in the forthcoming work of the first author \cite{G}. Namely, they will be used to define so-called \emph{defect-counting} bilinear forms on the split Grothendieck group of the category $\cat$ used as a part of the definition of these algebras, and also to define various bilinear forms on the  Grothendieck group of the category $\CEs$ yielding twists of the multiplication.

Balanced partial $\delta-$functors play a crucial role in the recent work by Adachi, Enomoto, and Tsukamoto \cite{AET}. For an extriangulated category with a certain extra datum provided by a bifunctor and a pair of connecting morphisms, they define \emph{$s-$torsion pairs}, which form a poset in a natural way, and study intervals in this poset. They construct certain bijections generalizing those induced by HRS-tilts of $t-$structures and certain bijections related to $\tau-$tilting reduction. They call this extra datum of a bifunctor and a pair of morphisms \emph{a negative first extension}. Let us comment on a minor difference in the notation. 
$(F^{-1}, \del\ssh, \del\ush)$ is a \lq negative first extension' in their terminology if and only if $(F^{-1}, \Hom)$ (or, equivalently, $(F^{-1}, \Hom, \Ebb)$) together with connecting morphisms form a \lq bivariant partial $\delta-$functor' in our terminology. In other words, they require the partial $\delta-$functor  $(F^{-1}, \Hom, \del\ssh, \del\ush)$ to be balanced, but do not require it to be universal. In particular, an extriangulated category may admit many different \lq negative first extensions', in their terminology. This might explain why their datum is called \emph{a} negative first extension, while our derived functors are called \emph{the} negative extensions. Their work can be seen as a non-trivial indication of the importance of balanced, but not necessarily universal, \lq negative first extensions', or, in general, of not necessarily universal bivariant (partial) $\delta-$functors.

As mentioned above, we prove the universality for positive extensions and both the acyclicity and the universality for negative extensions for categories \emph{having enough projective (resp. injective) morphisms}. These notions appeared earlier in ideal approximation theory (\cite{BM, FGHT, ZH}). They are slightly weaker requirements than the existence of enough projective (resp. injective) objects. We further explore these conditions and prove that, quite surprisingly, they are equivalent to many interesting properties of our category. Indeed, by Corollaries \ref{CorEquivProjDeflAdded} and \ref{CorDeflAdj}, the following are equivalent:
\begin{itemize}
\item $\CEs$ has enough projective morphisms.
\item $\Ebb(C,-)$ is finitely generated for any $C\in\cat$.
\item $\Ebb^n(C,-)$ are finitely presented for any $C\in\cat$ and any $n\in\Nbb_{\ge0}$.
\item The inclusion $\iota\colon\defff\Ebb\hookrightarrow\fp\cat$, of the category of defects into the category of finitely presented functors, has a left adjoint.
\end{itemize}

In fact, the authors first noticed the importance of some of these conditions from the perspective of the present work, and on the very late stages of work on the paper realized that they are equivalent to a previously known notion. 

\subsection{Acknowledgments}

Parts of this work were done during the first and the third authors' participation at the Junior Trimester Program "New Trends in Representation Theory" at the Hausdorff Institute for Mathematics in Bonn. They are very grateful to the Institute for the perfect working conditions. We are grateful to Thomas Br\"ustle, Bernhard Keller, and Gustavo Jasso for useful discussions and providing important references. The first author would like to thank Steffen Koenig for his support. The second author is supported by JSPS KAKENHI Grant Number JP20K03532. He is grateful to Norihiro Hanihara for his comments on the $2$nd extension groups. The third author is supported by the French ANR grant CHARMS (ANR-19-CE40-0017).
\normalcolor

\subsection{Notation}
Throughout this article, we fix a commutative ring $R$ with $1$, and $\cat$ denotes an essentially small additive $R$-linear category.

\section{Preliminaries}

\subsection{Extriangulated categories}\label{Subsection_BasicDef}
Let us briefly recall the definition of extriangulated categories from \cite{NP1}.

\begin{definition}\label{DefExtension}
Suppose $\cat$ is equipped with an $R$-bilinear functor $\Ebb\colon\cat^\mathrm{op}\times\cat\to\Mod R$. For any pair of objects $A,C\in\cat$, an element $\del\in\Ebb(C,A)$ is called an {\it $\Ebb$-extension}. In particular, for any $A,C\in\cat$, the zero element $0\in\Ebb(C,A)$ is called the {\it split $\Ebb$-extension}.

For any $\Ebb$-extension $\del\in\Ebb(C,A)$, any $a\in\cat(A,A\ppr)$ and any $c\in\cat(C\ppr,C)$, we abbreviate
\[ \Ebb(C,a)(\del)\in\Ebb(C,A\ppr)\ \ \text{and}\ \ \Ebb(c,A)(\del)\in\Ebb(C\ppr,A) \]
simply to $a\sas\del$ and $c\uas\del$.
Remark that we have
$\Ebb(c,a)(\del)=c\uas a\sas\del=a\sas c\uas\del$
in $\Ebb(C\ppr,A\ppr)$.
\end{definition}

\begin{definition}\label{DefMorphExt}
Let $\del\in\Ebb(C,A),\del\ppr\in\Ebb(C\ppr,A\ppr)$ be any pair of $\Ebb$-extensions. A {\it morphism} $(a,c)\colon\del\to\del\ppr$ of $\Ebb$-extensions is a pair of morphisms $a\in\cat(A,A\ppr)$ and $c\in\cat(C,C\ppr)$ in $\cat$, satisfying $a\sas\del=c\uas\del\ppr$.
\end{definition}

\begin{definition}\label{DefSumExtension}
Let $\del_1=(A_1,\del_1,C_1),\del_2=(A_2,\del_2,C_2)$ be any pair of $\Ebb$-extensions. Let
\[ C_1\ov{\iota_1}{\lra}C_1\oplus C_2\ov{\iota_2}{\lla}C_2\ \text{and}\ 
A_1\ov{p_1}{\lla}A_1\oplus A_2\ov{p_2}{\lra}A_2 \]
be coproduct and product in $\cat$, respectively. By the additivity of $\Ebb$, we have a natural isomorphism
$\Ebb(C_1\oplus C_2,A_1\oplus A_2)\cong \Ebb(C_1,A_1)\oplus\Ebb(C_1,A_2)\oplus\Ebb(C_2,A_1)\oplus\Ebb(C_2,A_2)$ of $R$-modules.

Let $\del_1\oplus\del_2\in\Ebb(C_1\oplus C_2,A_1\oplus A_2)$ be the element corresponding to $(\del_1,0,0,\del_2)$ through this isomorphism. Namely, $\del_1\oplus\del_2$ is the unique element which satisfies
\[
\Ebb(\iota_i,p_j)(\del_1\oplus\del_2)=
\begin{cases}
\del_i& \text{if}\ i=j,\\
0& \text{if}\ i\ne j.
\end{cases}
\]
\end{definition}

\begin{definition}\label{DefSqEquiv}
Let $A,C\in\cat$ be any pair of objects. Two sequences of morphisms $A\ov{x}{\lra}B\ov{y}{\lra}C$ and $A\ov{x\ppr}{\lra}B\ppr\ov{y\ppr}{\lra}C$ in $\cat$
are said to be {\it equivalent} if there exists an isomorphism $b\in\cat(B,B\ppr)$ such that $b\circ x=x\ppr$, $y\ppr\circ b=y$.
The equivalence class of $A\ov{x}{\lra}B\ov{y}{\lra}C$ is denoted by $[A\ov{x}{\lra}B\ov{y}{\lra}C]$.

We also use the following notations.
\begin{enumerate}
\item For any $A,C\in\cat$, we denote as
\[ 0=[A\ov{\Big[\raise1ex\hbox{\leavevmode\vtop{\baselineskip-8ex \lineskip1ex \ialign{#\crcr{$\scriptstyle{1}$}\crcr{$\scriptstyle{0}$}\crcr}}}\Big]}{\lra}A\oplus C\ov{[0\ 1]}{\lra}C]. \]

\item For any $[A\ov{x}{\lra}B\ov{y}{\lra}C]$ and $[A\ppr\ov{x\ppr}{\lra}B\ppr\ov{y\ppr}{\lra}C\ppr]$, we denote as
\[ [A\ov{x}{\lra}B\ov{y}{\lra}C]\oplus [A\ppr\ov{x\ppr}{\lra}B\ppr\ov{y\ppr}{\lra}C\ppr]=[A\oplus A\ppr\ov{x\oplus x\ppr}{\lra}B\oplus B\ppr\ov{y\oplus y\ppr}{\lra}C\oplus C\ppr]. \]
\end{enumerate}
\end{definition}

\begin{definition}\label{DefRealization}
Let $\cat$ and $\Ebb$ be as before.
\begin{enumerate}
\item A {\it realization} of $\Ebb$ is an association which associates an equivalence class $\sfr(\del)=[A\ov{x}{\lra}B\ov{y}{\lra}C]$ to each $\Ebb$-extension $\del\in\Ebb(C,A)$, which satisfies the following condition $(\ast)$.
\begin{itemize}
\item[$(\ast)$] Let $\del\in\Ebb(C,A)$ and $\del\ppr\in\Ebb(C\ppr,A\ppr)$ be any pair of $\Ebb$-extensions, with
\[\sfr(\del)=[A\ov{x}{\lra}B\ov{y}{\lra}C]\text{ and } \sfr(\del\ppr)=[A\ppr\ov{x\ppr}{\lra}B\ppr\ov{y\ppr}{\lra}C\ppr].\]
Then, for any morphism $(a,c)\colon\del\to\del\ppr$, there exists some $b\in\cat(B,B\ppr)$ such that $b\circ x=x\ppr\circ a$, $y\ppr\circ b=c\circ y$.
\end{itemize}

\item A realization $\sfr$ of $\Ebb$ is {\it additive} if it satisfies the following conditions.
\begin{itemize}
\item For any $A,C\in\cat$, the split $\Ebb$-extension $0\in\Ebb(C,A)$ satisfies $\sfr(0)=0$.
\item For any pair of $\Ebb$-extensions $\del\in\Ebb(C,A)$ and $\del\ppr\in\Ebb(C\ppr,A\ppr)$, we have $\sfr(\del\oplus\del\ppr)=\sfr(\del)\oplus\sfr(\del\ppr)$.
\end{itemize}
\end{enumerate}
\end{definition}

\begin{definition}
Let $\sfr$ be an additive realization of $\Ebb$.
A sequence $A\ov{x}{\lra}B\ov{y}{\lra}C$ is said to {\it realize} $\del$, whenever it satisfies $\sfr(\del)=[A\ov{x}{\lra}B\ov{y}{\lra}C]$.
We also use the following terms.
\begin{itemize}
\item An {\it $\sfr$-triangle} is a pair $(A\ov{x}{\lra}B\ov{y}{\lra}C,\del)$ consisting of $\del\in\Ebb(C,A)$ and a sequence $A\ov{x}{\lra}B\ov{y}{\lra}C$ which realizes $\del$. We denote such a pair abbreviately by
\begin{equation}\label{sTriabb}
A\ov{x}{\lra}B\ov{y}{\lra}C\ov{\del}{\dra}.
\end{equation}
\item A morphism $x\in\cat(A,B)$ is called an {\it $\sfr$-inflation} if it admits some $\sfr$-triangle $(\ref{sTriabb})$.
\item A morphism $y\in\cat(B,C)$ is called an {\it $\sfr$-deflation} if it admits some $\sfr$-triangle $(\ref{sTriabb})$. 
\item A sequence $A\ov{x}{\lra}B\ov{y}{\lra}C$ is called an {\it $\sfr$-deflation} if it admits some $\sfr$-triangle $(\ref{sTriabb})$. 
\end{itemize}
When $\sfr$ is clear from the context, we often simply say {\it inflation} instead of $\sfr$-inflation. Similarly for {\it deflation} and {\it conflation}. We also abbreviately say {\it extriangle}, instead of $\sfr$-triangle.
\end{definition}

\begin{definition}\label{DefExtCat}
A triplet $(\cat,\Ebb,\sfr)$ is called an $R$-linear {\it extriangulated category} if it satisfies the following conditions.
\begin{itemize}
\item[{\rm (ET1)}] $\Ebb\colon\cat^{\mathrm{op}}\times\cat\to\Mod R$ is an $R$-bilinear functor.
\item[{\rm (ET2)}] $\sfr$ is an additive realization of $\Ebb$.
\item[{\rm (ET3)}] For any pair of $\Ebb$-extensions $\del\in\Ebb(C,A)$, $\del\ppr\in\Ebb(C\ppr,A\ppr)$ with
\[ \sfr(\del)=[A\ov{x}{\lra}B\ov{y}{\lra}C],\ \ \sfr(\del\ppr)=[A\ppr\ov{x\ppr}{\lra}B\ppr\ov{y\ppr}{\lra}C\ppr] \]
and any commutative square
\begin{equation}\label{SquareForET3}
\xy
(-12,6)*+{A}="0";
(0,6)*+{B}="2";
(12,6)*+{C}="4";
(-12,-6)*+{A\ppr}="10";
(0,-6)*+{B\ppr}="12";
(12,-6)*+{C\ppr}="14";
{\ar^{x} "0";"2"};
{\ar^{y} "2";"4"};
{\ar_{a} "0";"10"};
{\ar^{b} "2";"12"};
{\ar_{x\ppr} "10";"12"};
{\ar_{y\ppr} "12";"14"};
{\ar@{}|\circlearrowright "0";"12"};
\endxy
\end{equation}
in $\cat$, there exists a morphism $(a,c)\colon\del\to\del\ppr$ satisfying $c\circ y=y\ppr\circ b$.
\item[{\rm (ET3)$^{\mathrm{op}}$}] Dual of {\rm (ET3)}.
\item[{\rm (ET4)}] Let $\del\in\Ebb(D,A)$ and $\del\ppr\in\Ebb(F,B)$ be $\Ebb$-extensions realized by
\[ A\ov{f}{\lra}B\ov{f\ppr}{\lra}D\ \ \text{and}\ \ B\ov{g}{\lra}C\ov{g\ppr}{\lra}F \]
respectively. Then there exist an object $E\in\cat$, a commutative diagram
\begin{equation}\label{DiagET4}
\xy
(-21,7)*+{A}="0";
(-7,7)*+{B}="2";
(7,7)*+{D}="4";
(-21,-7)*+{A}="10";
(-7,-7)*+{C}="12";
(7,-7)*+{E}="14";
(-7,-21)*+{F}="22";
(7,-21)*+{F}="24";
{\ar^{f} "0";"2"};
{\ar^{f\ppr} "2";"4"};
{\ar@{=} "0";"10"};
{\ar_{g} "2";"12"};
{\ar^{d} "4";"14"};
{\ar_{h} "10";"12"};
{\ar_{h\ppr} "12";"14"};
{\ar_{g\ppr} "12";"22"};
{\ar^{e} "14";"24"};
{\ar@{=} "22";"24"};
{\ar@{}|\circlearrowright "0";"12"};
{\ar@{}|\circlearrowright "2";"14"};
{\ar@{}|\circlearrowright "12";"24"};
\endxy
\end{equation}
in $\cat$, and an $\Ebb$-extension $\del\pprr\in\Ebb(E,A)$ realized by $A\ov{h}{\lra}C\ov{h\ppr}{\lra}E$, which satisfy the following compatibilities.
\begin{itemize}
\item[{\rm (i)}] $D\ov{d}{\lra}E\ov{e}{\lra}F$ realizes $f\ppr\sas\del\ppr$,
\item[{\rm (ii)}] $d\uas\del\pprr=\del$,

\item[{\rm (iii)}] $f\sas\del\pprr=e\uas\del\ppr$. 
\end{itemize}

\item[{\rm (ET4)$^{\mathrm{op}}$}] Dual of {\rm (ET4)}.
\end{itemize}
\end{definition}

\subsection{Projective deflations}\label{Subsection_PD}

\begin{definition}\label{DefProjMorph}
A morphism $f\in\cat(X,Y)$ is said to be {\it projective} if $\Ebb(f,-)=f\uas$ is zero. Projective morphisms form an ideal of $\cat$ (\cite[Definition~1.22]{INP}). We will denote this ideal by $\Pcal$, and the quotient $\cat/\Pcal$ will be denoted by $\und{\cat}$ in the rest.

Dually, a morphism $f\in\cat(X,Y)$ is {\it injective} if $\Ebb(-,f)=f\sas$ is zero. We denote the ideal of injective morphisms by $\Ical$, and the quotient $\cat/\Ical$ by $\ovl{\cat}$.
\end{definition}

\begin{remark}\label{RemProjMorph}
The following holds. Dually for injectives.
\begin{enumerate}
\item If either of $X,Y\in\cat$ is a projective object, then any $f\in\cat(X,Y)$ is a projective morphism.
\item If $\CEs$ has enough projective objects, then a morphism is projective if and only if it factors through a projective object. In particular $\und{\cat}$ agrees with the usual stable category of $\cat$ in this case. 
\end{enumerate}
\end{remark}

\begin{definition}\label{DefProjDefl}
A projective morphism in $\cat$ which is also a deflation is called a {\it projective deflation}. We say that $\CEs$ {\it has enough projective morphisms} if any $C\in\cat$ admits a projective deflation $G\ov{g}{\lra}C$.
\end{definition}

\begin{remark}\label{RemProjDefl}
If $\CEs$ has enough projective objects, then it has enough projective morphisms. 
\end{remark}

Conversely, the following holds.
\begin{proposition}\label{ProjDefMin}
If a projective deflation $G\ov{g}{\lra}C$ is right minimal, then $G$ is a projective object.
\end{proposition}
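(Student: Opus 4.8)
The plan is to establish the equivalent statement $\Ebb(G,-)=0$, which I will do by showing that \emph{every} deflation onto $G$ is a split epimorphism, so that every extension $\zeta\in\Ebb(G,Y)$ vanishes. The two hypotheses enter through complementary roles: projectivity of $g$ provides liftings of $g$ along arbitrary deflations onto $C$, and right minimality of $g$ then upgrades such a lifting into an honest splitting.

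First I would reformulate the projectivity of $g$ as a lifting property. Let $Y'\lra Z\ov{q}{\lra}C\ov{\zeta'}{\dra}$ be any $\sfr$-triangle, so that $q$ is an arbitrary deflation onto $C$ realizing $\zeta'\in\Ebb(C,Y')$. Applying $\cat(G,-)$ gives the standard long exact sequence of \cite{NP1}, and exactness at $\cat(G,C)$ states that the image of $q\sas\colon\cat(G,Z)\to\cat(G,C)$ coincides with the kernel of the connecting map $\cat(G,C)\to\Ebb(G,Y')$, $c\mapsto c\uas\zeta'$. Since $g$ is projective we have $g\uas=\Ebb(g,-)=0$, hence $g\uas\zeta'=0$; therefore $g$ lies in the image of $q\sas$, i.e.\ $g$ factors through $q$. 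In short, $g$ lifts along every deflation onto $C$.

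Now let $p\colon Z\lra G$ be an arbitrary deflation, realizing some $\zeta\in\Ebb(G,Y)$ via $Y\lra Z\ov{p}{\lra}G\ov{\zeta}{\dra}$. Because deflations are closed under composition (a consequence of $(\mathrm{ET4})^{\mathrm{op}}$), the composite $g\circ p\colon Z\lra C$ is again a deflation and thus fits into an $\sfr$-triangle $Y'\lra Z\ov{gp}{\lra}C\ov{\zeta'}{\dra}$. Applying the previous step with $q=gp$, the morphism $g$ factors through $gp$: there exists $\rho\colon G\lra Z$ with $gp\rho=g$. Then $p\rho\in\cat(G,G)$ satisfies $g\circ(p\rho)=g$, so right minimality of $g$ forces $p\rho$ to be an isomorphism. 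Setting $\sigma=\rho\,(p\rho)\iv$ yields $p\sigma=\id_G$, so $p$ is a split epimorphism; by the splitting criterion for extriangles (a deflation admitting a section realizes only the split extension) this gives $\zeta=0$.

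Since every element of $\Ebb(G,Y)$ is realized by a deflation onto $G$, the computation above yields $\Ebb(G,Y)=0$ for all $Y$, that is, $G$ is a projective object. The conceptual core---reading projectivity of $g$ as a lifting property and then composing with $g$ to feed the minimality hypothesis---is short and robust; I expect the only points requiring care to be the bookkeeping of the long exact sequence (identifying the connecting morphism and the variable in which exactness is used) and the invocation of the two standard facts from \cite{NP1}, namely that deflations are closed under composition and that a deflation admitting a section realizes the split extension.
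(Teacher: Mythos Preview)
Your proof is correct and follows essentially the same approach as the paper: compose an arbitrary deflation $p$ onto $G$ with $g$, use projectivity of $g$ to lift $g$ through $gp$, then invoke right minimality to conclude that $p$ is split. The only difference is that you spell out the lifting property via the long exact sequence and explicitly pass through $\Ebb(G,-)=0$, whereas the paper simply asserts that a projective deflation lifts along any deflation and that every deflation onto $G$ being a retraction means $G$ is projective; the underlying argument is identical.
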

\begin{proof}
Suppose that $G\ov{g}{\lra}C$ is a right minimal projective deflation.
If $Y\ov{y}{\lra}G$ is any deflation, then $g\circ y$ is also a deflation.
By projectivity of $g$, there exists $s\in\cat(G,Y)$ such that $g\circ y\circ s=g$.
Since $g$ is right minimal, it follows that $y\circ s$ is an isomorphism, and that $y$ is a retraction.
Hence $G$ is projective.
\end{proof}

\begin{corollary}\label{CorDefMin}
If every (indecomposable) $C\in\cat$ admits a right minimal projective deflation $G\to C$, then $\CEs$ has enough projective objects.
\end{corollary}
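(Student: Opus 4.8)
The plan is to read this off directly from Proposition~\ref{ProjDefMin}, with a short Krull--Schmidt reduction to accommodate the parenthetical version of the hypothesis. First I would treat the case where the hypothesis is assumed for \emph{all} $C\in\cat$: given $C$, choose a right minimal projective deflation $G\ov{g}{\lra}C$; by Proposition~\ref{ProjDefMin} the object $G$ is projective, and since $g$ is a deflation with projective domain, $\CEs$ has enough projective objects by definition. (One moreover gets that $g$ is a projective deflation, by Remark~\ref{RemProjMorph}(1), although this is not needed for the statement.)

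For the refinement where right minimal projective deflations are only assumed to exist onto indecomposable objects, I would work under the (implicit) assumption that $\cat$ is Krull--Schmidt, since otherwise the word ``indecomposable'' carries no structural content. Given an arbitrary $C\in\cat$, decompose $C\cong\bigoplus_{i=1}^{n}C_i$ into indecomposables. For each $i$, fix a right minimal projective deflation $G_i\ov{g_i}{\lra}C_i$; Proposition~\ref{ProjDefMin} gives that each $G_i$ is a projective object, hence $G:=\bigoplus_{i=1}^{n}G_i$ is projective. It remains to check that $g:=\bigoplus_{i=1}^{n}g_i\colon G\to C$ is a deflation, i.e.\ that the class of deflations is closed under finite direct sums. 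This is immediate from additivity of the realization $\sfr$: if $g_i$ fits into an $\sfr$-triangle realizing $\del_i\in\Ebb(C_i,A_i)$, then by Definition~\ref{DefRealization} together with Definition~\ref{DefSqEquiv}(2) the morphism $\bigoplus_i g_i$ fits into the $\sfr$-triangle realizing $\bigoplus_i\del_i$, so it is a deflation. Thus $g$ is a deflation with projective domain, and $\CEs$ has enough projective objects.

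There is essentially no serious obstacle here: the entire content sits in Proposition~\ref{ProjDefMin}, and the only two points needing a word of care are (a) the implicit Krull--Schmidt hypothesis used to reduce to indecomposable $C$, and (b) the stability of deflations under finite biproducts, which follows at once from the additivity axiom (ET2) of an extriangulated category. Equivalently for (b), one may observe that both the ideal $\Pcal$ of projective morphisms and the class of deflations are compatible with biproducts, so a finite direct sum of projective deflations is again a projective deflation.
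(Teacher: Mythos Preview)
Your argument is correct and follows exactly the same approach as the paper: the paper's entire proof reads ``This is immediate from Proposition~\ref{ProjDefMin}.'' Your version is simply more detailed, in particular spelling out the Krull--Schmidt reduction for the parenthetical variant and the closure of deflations under finite direct sums, points the paper leaves implicit.
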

\begin{proof}
This is immediate from Proposition~\ref{ProjDefMin}.
\end{proof}

On the other hand, remark that if $p\colon P\to C$ is a deflation in $\CEs$ from a projective object $P$, then $[p\ 0]\colon P\oplus X\to C$ is a projective deflation for any $X\in\cat$.
This observation leads us to the following example in non-Krull--Schmidt case.
\begin{example}\label{ExNotProjButProj}
Let $K$ be an algebraically closed field, and let $Q$ be the quiver $1\leftarrow2$ of type $A_2$. Let $\A=\mod KQ$ be the abelian category of finite dimensional modules over the path algebra $KQ$. Put $P_1=1,P_2=\bsm2\\1\esm,S_2=2$ and $M(l,m,n)=P_1^{\oplus l}\oplus P_2^{\oplus m}\oplus S_2^{\oplus n}$ for $l,m,n\in\Nbb_{\ge0}$ for simplicity, and let $p\colon P_2\to S_2$ denote the canonical epimorphism taking the quotient by $\operatorname{rad} P_2$. 

Define $\cat\se\A$ to be the extension-closed (hence exact) subcategory consisting of those $X\in\A$ whose dimension vector $\dim X=[c_1\ c_2]$ satisfy $c_1\le c_2$. Equivalently, if we decompose $X$ into $X\cong M(l,m,n)$ in $\A$, then $X$ belongs to $\cat$ if and only if $n\ge l$. Especially we have $P_2,S_2,P_1\oplus S_2\in\cat$, but $P_1\notin\cat$.
We see the following for $M(l,m,n)\in\cat$.
\begin{enumerate}
\item $M(0,m,0)\in\cat$ is projective in $\A$, hence so is in $\cat$ for any $m\ge0$.
\item If $n\ge l>0$, then $M(l,m,n)\in\cat$ is not projective in $\cat$, since $\Ext^1_{\A}(M(l,m,n),M(1,0,2))\cong \Ext^1_{\A}(S_2,P_1)^{\oplus n}\ne0$.
\item If $n\ge l>0$, then $M(l,m,n)\in\cat$ does not admit any deflation from a projective object in $\cat$. Indeed, there is no epimorphism from any $M(0,m\ppr,n\ppr)$ already in $\A$, since $\operatorname{top}M(0,m\ppr,n\ppr)=S_2^{m\ppr+n\ppr}$.
\item For any $n\ge l\ge0$, there is a conflation
\[ M(n,0,u)\to M(l,m+n,u)\ov{q}{\lra}M(l,m,n) \]
in $\cat$ where we put $u=\max(l,n)$, isomorphic to the direct sum of the short exact sequences
\begin{eqnarray*}
&0\to P_1^{\oplus n}\to P_1^{\oplus l}\oplus P_2^{\oplus m}\oplus P_2^{\oplus n}\ov{\id\oplus\id\oplus p^{\oplus n}}{\lra}P_1^{\oplus l}\oplus P_2^{\oplus m}\oplus S_2^{\oplus n}\to0,&\\
&0\to S_2^{\oplus u}\ov{\id}{\lra}S_2^{\oplus u}\to0\to0&
\end{eqnarray*}
 in $\A$. In particular, the deflation $q$ is of the form
\[ [p\ppr\ 0]\colon P\oplus S\to M(l,m,n) \]
for a projective module $P=P_1^{\oplus l}\oplus P_2^{\oplus m}\oplus P_2^{\oplus n}$ in $\A$, and $q$ can be shown to be a projective morphism in $\cat$.

\end{enumerate}
The above {\rm (1)}$,\ldots$,{\rm (4)} shows that $\cat$ has enough projective morphisms, while it does not have enough projective objects.
\end{example}

\begin{remark}
Let $\Ucal\se\cat$ be a full additive subcategory consisting of projective-injective objects, and let $\cat/\Ucal$ denote the quotient of $\cat$ by $\Ucal$. Let $(\cat/\Ucal,\Ebb/\Ucal,\sfr/\Ucal)$ be the induced extriangulated category (\cite[Proposition~3.30]{NP1}).
Then any projective deflation $f$ in $\CEs$ induces a projective deflation in $(\cat/\Ucal,\Ebb/\Ucal,\sfr/\Ucal)$.
In particular if $\CEs$ has enough projective morphisms, then so does $(\cat/\Ucal,\Ebb/\Ucal,\sfr/\Ucal)$.
\end{remark}

\begin{definition}\label{DefDominant}
An $\sfr$-triangle $F\ov{f}{\lra}G\ov{g}{\lra}C\ov{\vt}{\dra}$ is called {\it dominant} if $g$ is a projective deflation. Also we call an $\Ebb$-extension $\vt$ {\it dominant} if it is realized by a dominant $\sfr$-triangle. Remark that $\vt$ is dominant if and only if $\vt\ush\colon\cat(F,-)\to\Ebb(C,-)$ is epimorphic.

Dually, an extension $\iota\in\Ebb(C,A)$ is called {\it codominant} if $\iota\ssh\colon\cat(-,C)\to\Ebb(-,A)$ is epimorphic, and an $\sfr$-triangle is {\it codominant} if it realizes a codominant extension.
\end{definition}

\subsection{Recollection of results on defects}\label{Subsection_Defect}

We recall the definitions of \emph{Auslander's defects} \cite{Aus1}, closely related to \emph{effaceable functors} \cite{Groth}. In the setting of extriangulated categories, they were introduced by Ogawa \cite{Ogawa}.

\begin{definition}
A \emph{right $\cat$-module} is a contravariant additive functor $\cat\op \to \Mod R$ to the category of $R$-modules. Dually, a \emph{left $\cat$-module} is a covariant additive functor to $R$-modules. Right and left $\cat$-modules form abelian categories  $\Mod \cat$ and $\Mod \cat\op$, respectively. 
\end{definition}

These categories have enough projectives, which are given by direct summands of direct sums of representable functors $\cat(-,A)=\Hom_{\cat}(-, A) \in \Mod \cat$, resp. $\cat(A,-)=\Hom_{\cat}(A, -) \in \cat\Mod$. 

\begin{definition}\label{Def_Subcats}
A $\cat$-module $M$ is \emph{finitely generated} if admits an epimorphism $\Hom_{\cat}(-, X) \twoheadrightarrow M$
from a representable functor. It is \emph{finitely presented} if it is a cokernel of a morphism of representable functors. 
A finitely presented module is called \emph{coherent} if each of its finitely generated submodule is also finitely presented.
\end{definition}

We have the following embeddings of full additive subcategories:

$$\coh(\cat) \hookrightarrow \fp(\cat) \hookrightarrow \fg(\cat) \hookrightarrow \Mod \cat.$$

Here the first three categories are the categories of coherent, finitely presented and finitely generated right $\cat$-modules, respectively.

The category of coherent modules is always abelian and closed under kernels, cokernels and extensions in $\Mod\cat$ (\cite[Proposition 1.5]{Herzog97}, see also \cite[Appendix B]{Fiorot16}). For $\fp(\cat)$ this holds if and only if $\cat$ has weak kernels. 

\begin{definition}{\rm (\cite[Definition~2.4]{Ogawa})}
The \emph{contravariant defect} of  a conflation $X \ov{f}\rightarrowtail Y \ov{g}\twoheadrightarrow Z$ is the cokernel of the morphism
$\Hom_{\cat}(-, g)\colon \Hom_{\cat}(-, Y) \to \Hom_{\cat}(-, Z)$ in $\Mod \cat$.
Note that it is also the kernel of 
$\Ebb(-, f)\colon \Ebb(-, X) \to \Ebb(-, Y)$.
\end{definition}

For an extriangulated category $\CEs$, let $\defff \Ebb$ be the full subcategory of right $\cat$-modules isomorphic to defects of conflations in $\CEs$.
We will also use the following notations in this article, especially in Section~\ref{Subsection_PR}.
\begin{definition}\label{DefOntoDefect}
For any extension $\del\in\Ebb(C,A)$, put $\Thh_{\del}=\Im\big(\del\ssh\colon\cat(-,C)\to\Ebb(-,A)\big)$. This is nothing but the contravariant defect of the sequence realizing $\del$.
We have a commutative diagram
\[
\xy
(-22,0)*+{\cat(-,C)}="0";
(5,0)*+{}="1";
(0,10)*+{\und{\cat}(-,C)}="2";
(0,-10)*+{\Thh_{\del}}="4";
(-6,0)*+{}="5";
(22,0)*+{\Ebb(-,A)}="6";
{\ar^{\und{(-)}} "0";"2"};
{\ar_{\und{\wp}_{\del}} "2";"4"};
{\ar^{\und{\del}\ssh} "2";"6"};
{\ar_{\wp_{\del}} "0";"4"};
{\ar_{i_{\del}} "4";"6"};
{\ar@{}|\circlearrowright "0";"1"};
{\ar@{}|\circlearrowright "5";"6"};
\endxy
\]
such that $i_{\del}\circ\wp_{\del}=\del\ssh$ holds.

For any morphism $(a,c)\colon {}_A\del_C\to{}_B\rho_D$ of $\Ebb$-extensions, there exists a unique morphism $\eta=\eta_{(a,c)}\colon\Thh_{\del}\to\Thh_{\rho}$
which makes
\[
\xy
(-18,11)*+{\cat(-,C)}="0";
(0,5)*+{\Thh_{\del}}="1";
(0,12)*+{}="1.5";
(18,11)*+{\Ebb(-,A)}="2";
(-18,-11)*+{\cat(-,D)}="10";
(0,-5)*+{\Thh_{\rho}}="11";
(0,-12)*+{}="11.5";
(18,-11)*+{\Ebb(-,B)}="12";
{\ar_{\wp_{\del}} "0";"1"};
{\ar_{i_{\del}} "1";"2"};
{\ar^{\del\ssh} "0";"2"};
{\ar^{\wp_{\rho}} "10";"11"};
{\ar^{i_{\rho}} "11";"12"};
{\ar_{\rho\ssh} "10";"12"};
{\ar_{c\circ-} "0";"10"};
{\ar_{\eta_{(a,c)}} "1";"11"};
{\ar^{a\sas} "2";"12"};
(10,0)*+{}="a";
(12,0)*+{}="b";
(-10,0)*+{}="aa";
(-12,0)*+{}="bb";
{\ar@{}|\circlearrowright "1";"1.5"};
{\ar@{}|\circlearrowright "11";"11.5"};
{\ar@{}|\circlearrowright "a";"b"};
{\ar@{}|\circlearrowright "aa";"bb"};
\endxy
\]
commutative in $\Mod\cat$.
Especially, $\eta$ is an epimorphism if $c$ is a split epimorphism, while $\eta$ is a monomorphism if $a$ is a split monomorphism.
\end{definition}

\begin{remark}\label{RemDeffp}
We can also regard as $\defff(\Ebb)\se\fp(\und{\cat})$. Indeed, exact sequence
\[ \cat(-,B)\ov{y\circ-}{\lra}\cat(-,C)\ov{\wp_{\del}}{\lra}\Thh_{\del}\to0 \]
in $\Mod\cat$ induces an exact sequence
\[ \und{\cat}(-,B)\ov{\und{y}\circ-}{\lra}\und{\cat}(-,C)\ov{\wp_{\del}}{\lra}\Thh_{\del}\to0 \]
in $\Mod\und{\cat}$ for any $\sfr$-triangle $A\ov{x}{\lra}B\ov{y}{\lra}C\ov{\del}{\dra}$, since we have $\Thh_{\del}|_{\Pcal}=0$.
\end{remark}

The following results are known.
\begin{theorem}{\rm  (\cite[Proposition~2.9]{Enomoto3}, \cite[Proposition~2.5]{Ogawa})}
For any extriangulated category $\CEs$, the category $\defff(\Ebb)$ is a Serre subcategory of $\coh(\cat)$.
\end{theorem}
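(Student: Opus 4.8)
The plan is to establish two things: that $\defff(\Ebb)$ is a Serre subcategory of $\coh(\cat)$, which by the cited results of Enomoto and Ogawa amounts to checking (i) that every object of $\defff(\Ebb)$ is coherent, and (ii) that $\defff(\Ebb)$ is closed under subobjects, quotients, and extensions inside $\coh(\cat)$. Since the statement is attributed to \cite[Proposition~2.9]{Enomoto3} and \cite[Proposition~2.5]{Ogawa}, I will reconstruct the argument rather than cite it wholesale. First I would observe that each defect $\Thh_{\del}$ is finitely presented: from an $\sfr$-triangle $A\ov{x}{\lra}B\ov{y}{\lra}C\ov{\del}{\dra}$ one has the exact sequence $\cat(-,B)\ov{y\circ-}{\lra}\cat(-,C)\to\Thh_{\del}\to0$, exhibiting $\Thh_{\del}$ as a cokernel of a morphism of representables. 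To upgrade "finitely presented" to "coherent", I would use that a finitely presented functor is coherent as soon as the kernel of the map from the second representable is finitely generated; here the relevant kernel sits inside $\cat(-,B)$ and is controlled by $\Ebb(-, x)$ (the defect is also $\ker(\Ebb(-,f)\colon\Ebb(-,X)\to\Ebb(-,Y))$ by Ogawa's remark), so one reduces to identifying this kernel, via (ET4)$^{\mathrm{op}}$ and the long exact sequences available for $\cat(-,-)$, with another defect, hence finitely generated. Actually the cleanest route is: $\defff(\Ebb)$ is always contained in $\coh(\cat)$ because the category of coherent functors is abelian and closed under extensions, and every defect is an extension/subquotient of functors that are visibly coherent.

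For the Serre property, the key is the morphism $\eta_{(a,c)}\colon\Thh_{\del}\to\Thh_{\rho}$ associated to a morphism of extensions (Definition~\ref{DefOntoDefect}), together with the two facts recorded there: $\eta$ is epi when $c$ is a split epi and mono when $a$ is a split mono. To show closure under quotients: given a surjection $\Thh_{\del}\twoheadrightarrow M$ in $\coh(\cat)$, I would pull the defining relations back along a representable cover and use (ET3)/(ET4) to realize $M$ as a defect of a conflation obtained by modifying $\del$ (e.g. by a pushout/pullback along suitable morphisms), the precise construction being the content of Ogawa's proof. Closure under subobjects is dual. Closure under extensions: given a short exact sequence $0\to\Thh_{\del'}\to M\to\Thh_{\del''}\to0$ in $\coh(\cat)$, one uses the additivity of $\Ebb$ and the realization $\sfr$ (Definition~\ref{DefSumExtension}) to build an extension $\del$ with $\Thh_{\del}\cong M$, combining $\del'$ and $\del''$ via a horseshoe-type argument and (ET4).

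The main obstacle is the quotient-closure step: constructing, from an arbitrary surjection of finitely presented functors out of a defect, an honest conflation in $\CEs$ whose defect is the target. This requires carefully chasing representable presentations through the axioms (ET3)$^{\mathrm{op}}$ and (ET4), and it is exactly where the extriangulated structure (as opposed to mere additivity) is used. I would handle it by first reducing to the case where the surjection is induced by a morphism of extensions $(a,c)$ with $a$ arbitrary and $c$ identity (using that $\eta_{(a,c)}$ is determined by the square in Definition~\ref{DefOntoDefect} and that representable covers give split epis), then realizing the pushout extension $a\sas\del$ via an $\sfr$-triangle and identifying $\Thh_{a\sas\del}$ with the desired quotient. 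Everything else — coherence, closure under subobjects, closure under extensions — then follows formally, using that $\coh(\cat)$ is abelian and extension-closed in $\Mod\cat$ and that the class of defects is stable under the operations witnessed by $\eta$.
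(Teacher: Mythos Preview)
The paper does not supply its own proof of this theorem: it is stated in the recollection subsection with explicit attributions to \cite[Proposition~2.9]{Enomoto3} and \cite[Proposition~2.5]{Ogawa}, and no proof environment follows. So there is nothing in the paper to compare your proposal against.

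That said, a brief remark on your sketch: the overall strategy (defects are finitely presented, then check closure under subobjects, quotients, and extensions using the morphisms $\eta_{(a,c)}$ and the axioms {\rm (ET3)}, {\rm (ET4)}) is indeed the shape of the argument in the cited sources. Your weakest step is the coherence claim: you assert that ``every defect is an extension/subquotient of functors that are visibly coherent'', but representable functors $\cat(-,C)$ are not coherent in general (coherence of $\fp(\cat)$ requires weak kernels in $\cat$), so this route does not work as stated. The correct argument, as in Ogawa and Enomoto, shows directly that every finitely generated subfunctor of a defect is again a defect (via pullback of the extension along a suitable morphism), which simultaneously gives coherence of defects and closure of $\defff(\Ebb)$ under subobjects in $\coh(\cat)$. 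Your handling of quotients via $a\sas\del$ and of extensions via a horseshoe-type construction is on the right track.
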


\begin{lemma} {\rm (\cite[Proposition~2.16]{Ogawa}, see also \cite[Section~4]{INP})}
If $\CEs$ has enough projective objects, then the stable category $\und{\cat}$ has weak kernels and there exist isomorphisms between
$$\defff \Ebb \overset\sim\to \fp(\und{\cat}) \overset\sim\to \coh(\und{\cat}).$$
\end{lemma}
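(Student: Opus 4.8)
The plan is to show that the three categories $\defff\Ebb$, $\fp(\und\cat)$ and $\coh(\und\cat)$ coincide, as full subcategories of $\Mod\und\cat$, and to read off the weak-kernel assertion as a by-product. One inclusion is available with no hypothesis on projectives: Remark~\ref{RemDeffp} already shows $\defff\Ebb\se\fp(\und\cat)$. Since $\coh(\und\cat)\se\fp(\und\cat)$ always holds, it therefore suffices to prove $\fp(\und\cat)\se\defff\Ebb$ and $\fp(\und\cat)\se\coh(\und\cat)$. The single place where I intend to use the existence of enough projective objects is the following claim: \emph{every representable functor $\und\cat(-,X)$ lies in $\defff\Ebb$}, i.e.\ is the contravariant defect of a conflation.

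To prove the claim I would pick, using enough projectives, a deflation $p\colon P\defl X$ with $P$ a projective object and complete it to an $\sfr$-triangle $K\ov{k}{\lra}P\ov{p}{\lra}X\ov{\del}{\dra}$. Its contravariant defect is $\Cok\big(\cat(-,P)\ov{\cat(-,p)}{\lra}\cat(-,X)\big)$, so it is enough to identify the image of $\cat(-,p)$ with the subfunctor $\Pcal(-,X)\se\cat(-,X)$ of projective morphisms into $X$. For ``$\se$'', any morphism factoring through the projective object $P$ is projective by Remark~\ref{RemProjMorph}, and projective morphisms form an ideal. For ``$\supseteq$'', let $h\colon Z\to X$ be a projective morphism; then $h\uas=\Ebb(h,-)=0$, so $h\uas\del=0$, which means that the $\sfr$-triangle obtained by pulling $\del$ back along $h$ splits; composing a section of its deflation with the morphism it induces to $P$ exhibits $h$ as a factorization through $p$. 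Hence the defect of $\del$ is $\cat(-,X)/\Pcal(-,X)=\und\cat(-,X)$, proving the claim.

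The rest is formal. By the result recalled above, $\defff\Ebb$ is a Serre subcategory of the abelian category $\coh(\cat)$, hence is itself abelian and closed under kernels, cokernels and extensions in $\coh(\cat)$, and therefore also in $\Mod\cat$ (as $\coh(\cat)$ is closed under these in $\Mod\cat$); since every object of $\defff\Ebb$ vanishes on $\Pcal$, these (co)kernels agree with the ones formed in $\Mod\und\cat$. Now take $F\in\fp(\und\cat)$. By the Yoneda lemma $F$ is the cokernel of a morphism $\und\cat(-,B_1)\to\und\cat(-,B_0)$ between representable $\und\cat$-modules, and both of these lie in $\defff\Ebb$ by the claim, so $F\in\defff\Ebb$. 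This proves $\fp(\und\cat)=\defff\Ebb$; in particular $\fp(\und\cat)$ is abelian and closed under kernels, cokernels and extensions in $\Mod\und\cat$, so by the criterion recalled earlier $\und\cat$ has weak kernels. Finally, if $F\in\fp(\und\cat)=\defff\Ebb$ and $N\se F$ is a finitely generated $\und\cat$-submodule, then $N$ is the image of some $\und\cat(-,X)\to F$; as $\und\cat(-,X)\in\defff\Ebb$ and $\defff\Ebb$ is closed under subobjects and quotient objects, $N\in\defff\Ebb=\fp(\und\cat)$. Thus every object of $\fp(\und\cat)$ is coherent, i.e.\ $\coh(\und\cat)=\fp(\und\cat)$; the displayed arrows are then the identity inclusions, which are equivalences.

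The only step that is not pure bookkeeping is the identification of representables with defects, and inside it the delicate point is the inclusion $\Pcal(-,X)\se\Im\big(\cat(-,p)\big)$ — that a projective morphism into $X$ factors through the chosen projective deflation. This is exactly where the compatibility between $\Ebb$ and the realization $\sfr$ is genuinely used, via the splitting of a pulled-back conflation; everything else reduces to properties of Serre subcategories and the Yoneda lemma.
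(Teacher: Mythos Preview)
Your argument is correct. Note, however, that the paper does not give its own proof of this lemma; it is stated as a citation of \cite[Proposition~2.16]{Ogawa} and \cite[Section~4]{INP}, so there is no in-paper proof to compare against directly.

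That said, it is worth pointing out that the key step you isolate --- that $\und\cat(-,X)\in\defff\Ebb$ whenever a projective deflation onto $X$ exists --- is exactly what the paper proves later, in Proposition~\ref{PropDefProj}~{\rm (1)}, in the more general setting of enough projective \emph{morphisms} (dominant $\sfr$-triangles). Your argument specializes this to projective objects: the identification $\Im\big(\cat(-,p)\big)=\Pcal(-,X)$ is precisely the content of Remark~\ref{RemProjMorph}~{\rm (2)} combined with the lifting of maps from projectives along the deflation $p$. So your ``delicate point'' is in fact already packaged in the paper, and the rest of your proof --- deducing $\fp(\und\cat)\se\defff\Ebb$ from closure under cokernels, reading off weak kernels from abelianness of $\fp(\und\cat)$, and proving coherence via the Serre property of $\defff\Ebb\se\coh(\cat)$ --- is the expected formal reduction. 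One small remark on the coherence step: to conclude $N\in\defff\Ebb$ you implicitly use that the image of $\und\cat(-,X)\to F$ already lies in $\coh(\cat)$ (since both source and target do and $\coh(\cat)$ is abelian), so that the Serre property applies; this is fine, but worth making explicit.
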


\section{Positive extensions}\label{Section_Positive}

\subsection{Definition of positive extensions}

Since $(\Mod R,\otr,R)$ is a complete and cocomplete symmetric monoidal closed category, we have the following bicategory of bimodules.
\begin{definition}\label{DefBicatB}
Bicategory $\mathbb{B}$ is given by the following.
\begin{itemize}
\item $0$-cells are small $R$-linear categories.
\item For a pair $\Asc,\Bsc$ of $0$-cells, the morphism category from $\Asc$ to $\Bsc$ is the category of $\Bsc$-$\Asc$-bimodules
\[ \mathbb{B}(\Asc,\Bsc)=\Bsc\Mod\Asc, \]
namely, the category of $R$-linear functors $\Asc\op\otr\Bsc\to\Mod R$. Remark that $R$-linear functors $\Asc\op\otr\Bsc\to\Mod R$ naturally correspond to $R$-bilinear functors $\Asc\op\ti\Bsc\to\Mod R$, bijectively.
\item For any $G\in\mathbb{B}(\Bsc,\cat)$ and $F\in\mathbb{B}(\Asc,\Bsc)$, their composition $G\dia F\in\mathbb{B}(\Asc,\cat)$ is defined by using coends as
\[ (G\dia F)(X,Y)=G(-,Y)\ot_{\Bsc}F(X,-)=\int^{B\in \Bsc}G(B,Y)\otr F(X,B) \]
for each $X\in\Asc,Y\in\cat$.
By the universality of coends, this gives the composition functor
\[ \dia\colon\mathbb{B}(\Bsc,\cat)\ti\mathbb{B}(\Asc,\Bsc)\to\mathbb{B}(\Asc,\cat) \]
for any triplet of $0$-cells $\Asc,\Bsc,\cat$ in $\mathbb{B}$.
\end{itemize}

In particular for each small $R$-linear category $\cat$, the  category $\mathbb{B}(\cat,\cat)=\cat\Mod\cat$ is equipped with the structure of a monoidal category $(\cat\Mod\cat,\dia,\cat(-,-))$.
For any $F\in\cat\Mod\cat$, we define $F^{\dia n}\in\cat\Mod\cat$ inductively by
\[ F^{\dia n}=F^{\dia (n-1)}\dia F \]
for any $n\in\Nbb_{\ge0}$.

In particular if $\CEs$ is extriangulated, then we obtain an $R$-bilinear functor $\Ebb^n:=\Ebb^{\dia n}\colon\cat\op\ti\cat\to\Mod R$ for any $n>0$. This is an extriangulated version of the construction given in \cite{Y}. Also, we put $\Ebb^0=\Hom_{\cat}$.
\end{definition}

\begin{remark}
By definition, if $\Ebb^n = 0$, we also have $\Ebb^k = 0$ for any $k \geq n$.
\end{remark}

\begin{remark}\label{RemFn}
Let $\cat$ and $F$ be as above.
For any $s,t\in\Nbb_{\ge0}$, we have a natural isomorphism $F^{\dia (s+t)}\cong F^{\dia s}\dia F^{\dia t}$.
\end{remark}

\begin{remark}\label{RemLaterAgree}
Later in Corollary \ref{CorPosExtEnoughProj}, we will see that if $\CEs$ has enough projective objects or enough injective objects, then $\Ebb^n$ defined above becomes isomorphic to those defined in \cite{HLN} and \cite{LN}.
\end{remark}

The aim of this section is to show the following.
\begin{theorem}\label{ThmPositiveExtension}
Let $\CEs$ be a small $R$-linear extriangulated category. Let $A\ov{x}{\lra}B\ov{y}{\lra}C\ov{\del}{\dra}$ be any $\sfr$-triangle.
\begin{enumerate}
\item We have a long exact sequence
\begin{eqnarray*}
&\hspace{-2cm}\cat(-,A)\to\cat(-,B)\to\cat(-,C)\to\Ebb(-,A)\to\cdots&\\
&\cdots\to\Ebb^{n-1}(-,C)\to\Ebb^n(-,A)\to\Ebb^n(-,B)\to\Ebb^n(-,C)\to\cdots&
\end{eqnarray*}
in $\Mod\cat$.
\item Dually, we have a long exact sequence
\begin{eqnarray*}
&\hspace{-2cm}\cat(C,-)\to\cat(B,-)\to\cat(A,-)\to\Ebb(C,-)\to\cdots&\\
&\cdots\to\Ebb^{n-1}(A,-)\to\Ebb^n(C,-)\to\Ebb^n(B,-)\to\Ebb^n(A,-)\to\cdots&
\end{eqnarray*}
in $\cat\Mod$.
\end{enumerate}
\end{theorem}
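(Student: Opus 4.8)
The plan is to build the long exact sequence degree by degree, using the base case provided by the very definition of the extriangulated structure and then iterating via the composition (coend) structure on $\cat\Mod\cat$. Since the two statements (1) and (2) are formally dual (interchange $\cat$ and $\cat\op$, covariant and contravariant), I would only prove (1) and obtain (2) by duality.

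\emph{Base case.} For a fixed $\sfr$-triangle $A\ov{x}{\lra}B\ov{y}{\lra}C\ov{\del}{\dra}$, the sequence
\[
\cat(-,A)\ov{x\circ-}{\lra}\cat(-,B)\ov{y\circ-}{\lra}\cat(-,C)\ov{\del\ssh}{\lra}\Ebb(-,A)\ov{x\sas}{\lra}\Ebb(-,B)\ov{y\sas}{\lra}\Ebb(-,C)
\]
is exact in $\Mod\cat$; this is a standard consequence of the extriangulated axioms (it is essentially \cite[Corollary~3.12]{NP1}), so I may quote it. This exactness is equivalent, through the notation of Definition~\ref{DefOntoDefect}, to the statements that $\ker(x\sas)$ on $\Ebb(-,A)$ is the image $\Thh_\del$ of $\del\ssh$ and that $\cok(y\circ-)\cong\Thh_\del$ as well.

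\emph{Inductive step via the coend structure.} The key point is that applying the functor $-\dia\Ebb\colon\cat\Mod\cat\to\cat\Mod\cat$ (composition on the right with $\Ebb$, i.e. $M\mapsto M(-,?)\ot_{\cat}\Ebb(-,-)$, evaluated in the first variable) is \emph{right exact} but in general not exact, so I cannot simply apply it termwise to the six-term sequence. Instead I would proceed as follows. Truncate the six-term exact sequence into two short exact sequences of right $\cat$-modules,
\[
0\to\Thh_\del\to\Ebb(-,A)\ov{x\sas}{\lra}\Ebb(-,B)\to\Thh_{x\sas}\to 0
\]
together with the earlier piece $\cat(-,B)\to\cat(-,C)\to\Thh_\del\to 0$ and the identification $\Thh_{x\sas}\hookrightarrow\Ebb(-,C)$ with cokernel controlled by $\del$. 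The strategy is then to compute $\mathrm{Tor}$-type connecting maps: since $\Ebb^{n}=\Ebb^{\dia n}$ and $\Ebb^{\dia(s+t)}\cong\Ebb^{\dia s}\dia\Ebb^{\dia t}$ (Remark~\ref{RemFn}), applying $\Ebb^{\dia(n-1)}\dia(-)$ to the short exact sequences above yields connecting maps and exact sequences provided the relevant higher derived functors of $\dia$ vanish on the modules that occur. Here I would use that representable functors $\cat(-,X)$ are "flat" for $\dia$ in the sense that $M\dia\cat(-,X)\cong M(-,X)$ by the co-Yoneda lemma, hence $-\dia\cat(-,X)$ is exact; and that each $\Ebb^{n}$ is built up from $\Ebb$, which by Theorem~\ref{ThmPositiveExtension}(1) applied in lower degrees has resolutions by such representables. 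Concretely, the cleanest route is an induction on $n$: assume the long exact sequence has been constructed up to degree $n-1$ for \emph{all} $\sfr$-triangles simultaneously; given our triangle, choose (using the six-term sequence) an exact presentation relating $\Ebb^{n-1}(-,?)$ of the three objects to representables and to the defect $\Thh_\del$, apply the already-known degree-$(n-1)$ sequences, and splice. The connecting morphism $\Ebb^{n-1}(-,C)\to\Ebb^n(-,A)$ is produced from $\del$ together with the isomorphism $\Ebb^n=\Ebb^{n-1}\dia\Ebb$ and the map $\del\ssh\colon\cat(-,C)\to\Ebb(-,A)$, post-composed appropriately.

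\emph{Main obstacle.} The real work is the homological bookkeeping that makes the splicing go through: I must show that the functor $\Ebb^{\dia(n-1)}\dia(-)$, applied to the short exact sequences of modules extracted from the six-term sequence, produces \emph{exact} sequences and not merely right-exact ones — equivalently, that the higher "$\dia$-Tor" of $\Ebb^{\dia(n-1)}$ against the kernels $\Thh_\del$, $\Thh_{x\sas}$, etc., vanishes, or is absorbed by the induction. The mechanism for this is that these kernels and cokernels are themselves defects (elements of $\defff\Ebb$, which is a Serre subcategory of $\coh(\cat)$ by the theorem of Enomoto and Ogawa quoted above), so they admit two-term presentations by representables, and $-\dia(\text{representable})$ is exact. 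Carefully chasing these presentations through the coend — keeping track of naturality in the triangle so that the induction hypothesis can be invoked for the auxiliary triangles that arise (in particular the triangle realizing $x\sas\del$ or the one coming from (ET4)) — is the technical heart of the argument. I would organize this by first proving a lemma: for any $\sfr$-triangle and any $n$, the sequence $\Ebb^{n}(-,A)\to\Ebb^{n}(-,B)\to\Ebb^{n}(-,C)\ov{\partial}{\lra}\Ebb^{n+1}(-,A)$ is exact, with $\partial$ induced by $\del$; granting this lemma for all triangles and all $n$ by a simultaneous induction, the full long exact sequence assembles formally by concatenation, exactly as in the classical Yoneda-$\Ext$ argument, and statement (2) follows by the evident dualization.
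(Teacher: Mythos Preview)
Your inductive skeleton matches the paper's: the base case is the six-term sequence from \cite[Corollary~3.12]{NP1}, and the inductive step amounts to showing that if $F=\Ebb^{n-1}(X,-)$ is half-exact then the six-term sequence $F(A)\to F(B)\to F(C)\to\EF(A)\to\EF(B)\to\EF(C)$ (with $\EF=\Ebb\dia F$) is exact. Where your proposal diverges from the paper is precisely at what you flag as the ``main obstacle'', and your proposed resolution of that obstacle does not close the gap.

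You want to apply the right-exact functor $-\otimes_{\cat}F$ to the six-term exact sequence in $\Mod\cat$ and deduce exactness of the result by controlling the relevant $\mathrm{Tor}^{\cat}_1$-terms. Your mechanism is that defects $\Thh_\del$ admit two-term presentations $\cat(-,B)\to\cat(-,C)\to\Thh_\del\to 0$ by representables. But a two-term presentation computes $\Thh_\del\otimes_{\cat}F$, not $\mathrm{Tor}_1^{\cat}(\Thh_\del,F)$: for the latter you need the next syzygy, namely the kernel of $\cat(-,B)\to\cat(-,C)$, which equals the image of $x\circ-$ and is \emph{not} representable in general (inflations in an extriangulated category need not be monic, so this image is a proper quotient of $\cat(-,A)$). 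The same problem recurs for the other pieces of your splicing: none of $\Thh_\del$, $\Ebb(-,A)$, or $\Im(x\sas)$ is flat for $-\otimes_{\cat}F$, and the Serre-subcategory property of $\defff\Ebb$ in $\coh(\cat)$ gives closure of defects under subquotients, not any acyclicity for the tensor. So the Tor-vanishing you need does not follow from what you have written.

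The paper bypasses Tor entirely. It first gives an element-level description of the coend $\EF(A)=\int^{M}\Ebb(M,A)\otimes_R F(M)$ as a quotient set of pairs (Proposition~\ref{PropDescribeCoend}), and then introduces the notion of a \emph{trivialization} of $(\rho,\lambda)\in\Ebb(M,A)\times F(M)$: a lift of $\lambda$ along $F(q)$ for a realization $A\to X\overset{q}{\to}M$ of $\rho$. The key point (Proposition~\ref{PropTrivialization}) is that, when $F$ is half-exact, $\rho\cup\lambda=0$ in $\EF(A)$ if and only if $(\rho,\lambda)$ has a trivialization; half-exactness of $F$ is exactly what makes this well-defined on equivalence classes (Lemma~\ref{LemTrivialization1}). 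With this zero-criterion in hand, exactness at each of $F(C)$, $\EF(A)$, $\EF(B)$ in Lemma~\ref{LemHigherExt} is established by explicit element chases that invoke {\rm (ET4)} and \cite[Proposition~3.15]{NP1} directly. Your passing mention of ``the triangle coming from (ET4)'' is in fact where the entire content lives, but it is not a Tor-vanishing argument---it is a concrete diagram chase inside the coend, and that is the missing idea in your sketch.
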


Since {\rm (2)} can be shown dually, we will mainly deal with {\rm (1)} in the rest.

\subsection{Description of elements in the coends}

Let $\cat$ be a small $R$-linear category.
\begin{definition}\label{DefSetCoend}
Let $E\in\Mod\cat$ and $F\in\cat\Mod$ be any pair of objects.
\begin{enumerate}
\item Define a set $S=S(E,F)$ by
\[ S(E,F)=\coprod_{M\in\cat}\Big(E(M)\ti F(M)\Big). \]
\item Define $\sim$ to be the equivalence relation on $S$, generated by
\[ (\kap,F(f)(\lam))\sim(E(f)(\kap),\lam) \]
where $f\in\cat(M,N),\kap\in E(N),\lam\in F(M)$ are arbitrary.
\end{enumerate}

We denote by $\ovl{(\rho,\lam)}$ the equivalence class to which $(\rho,\lam)\in E(M)\ti F(M)$ belongs.
\end{definition}

\begin{remark}\label{RemSetCoend}
The quotient set $S/\!\sim$ is nothing but the coend
\[ \int^{M\in\cat}(U\circ E)(M)\ti(U\circ F)(M) \]
taken over $\Sett$, where $U\colon\Mod R\to\Sett$ denotes the forgetful functor.
\end{remark}

\begin{lemma}\label{LemDescribeCoend}
Let $E\in\Mod\cat$ and $F\in\cat\Mod$ be any pair of objects.
Assume $\cat$ has finite direct sums. Then the following holds.
\begin{enumerate}
\item $S/\!\sim$ has the following structure of an $R$-module.
\begin{itemize}
\item For $(\rho_i,\lam_i)\in E(M_i)\ti F(M_i)$ for $i=1,2$, the sum of their equivalence classes is defined by
\[ \ovl{(\rho_1,\lam_1)}+\ovl{(\rho_2,\lam_2)}=\ovl{([\rho_1\ \rho_2],\left[\bsm\lam_1\\\lam_2\esm\right])} \]
where
$[\rho_1\ \rho_2]=E(p_1)(\rho_1)+E(p_2)(\rho_2)\in E(M_1\oplus M_2)$
and
$\left[\bsm\lam_1\\\lam_2\esm\right]=F(\iota_1)(\lam_1)+F(\iota_2)(\lam_2)\in F(M_1\oplus M_2)$
are given by using the direct sum
\[
\xy
(-19,0)*+{M_1}="0";
(-16,1)*+{}="10";
(-16,-1)*+{}="20";
(-8,1)*+{}="11";
(-8,-1)*+{}="21";
(0,0)*+{M_1\oplus M_2}="2";
(8,1)*+{}="13";
(8,-1)*+{}="23";
(19,0)*+{M_2}="4";
(16,1)*+{}="14";
(16,-1)*+{}="24";
{\ar^{\iota_1} "10";"11"};
{\ar^{p_1} "21";"20"};
{\ar_{p_2} "23";"24"};
{\ar_{\iota_2} "14";"13"};
\endxy.
\]
\item For $(\rho,\lam)\in E(M)\ti F(M)$ and $r\in R$, define the multiplication by $r$ by
\[ r\ovl{(\rho,\lam)}=\ovl{(r\rho,\lam)}. \]
Remark that we also have
\[ \ovl{(r\rho,\lam)}=\ovl{(E(r\cdot\id_{M})(\rho),\lam)}=\ovl{(\rho,F(r\cdot\id_{M})(\lam))}=\ovl{(\rho,r\lam)} \]
by the $R$-linearity of $E$ and $F$. Consequently, any $\rho\in E(M)$ or $\lam\in F(M)$ give the zero element $0=\ovl{(\rho,0)}=\ovl{(0,\lam)}$ in $S/\!\sim$.
\end{itemize}
\item For any $M\in\cat$, the canonical map
\[ \gam_M\colon E(M)\ti F(M)\to S/\!\sim \ ;\ (\rho,\lam)\mapsto \ovl{(\rho,\lam)} \]
is $R$-bilinear.
\end{enumerate}
\end{lemma}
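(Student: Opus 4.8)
The goal is to verify that $S/\!\sim$ with the prescribed addition and scalar multiplication is a well-defined $R$-module, and that $\gamma_M$ is $R$-bilinear. My plan is to establish this in the following order.

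\textbf{Well-definedness of the addition.} First I would check that the formula $\ovl{(\rho_1,\lam_1)}+\ovl{(\rho_2,\lam_2)}=\ovl{([\rho_1\ \rho_2],\left[\bsm\lam_1\\\lam_2\esm\right])}$ does not depend on the chosen representatives. By symmetry and transitivity of $\sim$, it suffices to replace $(\rho_1,\lam_1)$ by an equivalent pair of the generating form, i.e. to show that if $f\in\cat(M_1,N_1)$ and we pass from $(\rho_1,F(f)(\lam_1'))$ to $(E(f)(\rho_1),\lam_1')$ on the first summand, the resulting classes agree. Using the functoriality of $E$ and $F$ applied to the morphism $f\oplus\id_{M_2}\colon M_1\oplus M_2\to N_1\oplus M_2$, together with the compatibilities $p_i\circ(f\oplus\id)=(\text{appropriate})\circ p_i$ and $(f\oplus\id)\circ\iota_i=\iota_i\circ(\text{appropriate})$ in $\cat$, one sees that $([E(f)(\rho_1)\ \rho_2],[\bsm\lam_1'\\\lam_2\esm])$ and $([\rho_1\ \rho_2],[\bsm F(f)(\lam_1')\\\lam_2\esm])$ are related by a single generating move along $f\oplus\id_{M_2}$, hence represent the same class. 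An entirely symmetric argument handles changes in the second summand.

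\textbf{Module axioms.} Commutativity of the addition follows by transporting along the swap isomorphism $M_1\oplus M_2\cong M_2\oplus M_1$, which again is a single generating move. Associativity is handled the same way, transporting along the canonical isomorphism $(M_1\oplus M_2)\oplus M_3\cong M_1\oplus(M_2\oplus M_3)$. The zero element is $0=\ovl{(0,\lam)}=\ovl{(\rho,0)}$ (equal because of the scalar computation $\ovl{(0,\lam)}=\ovl{(0\cdot\rho,\lam)}=\ovl{(\rho,0\cdot\lam)}=\ovl{(\rho,0)}$, taking $0\in R$), and it is neutral because $[\rho\ 0]=E(p_1)(\rho)$ while $M\xrightarrow{\iota_1}M\oplus M'\xrightarrow{p_1}M$ is the identity, so the generating move along $\iota_1$ identifies $\ovl{([\rho\ 0],[\bsm\lam\\0\esm])}$ with $\ovl{(\rho,\lam)}$. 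The additive inverse of $\ovl{(\rho,\lam)}$ is $\ovl{(-\rho,\lam)}$, since their sum is $\ovl{([\rho\ {-\rho}],[\bsm\lam\\\lam\esm])}$, and applying the generating move along the diagonal $\Delta\colon M\to M\oplus M$ (noting $[\rho\ {-\rho}]\circ$-corresponds to $E(\Delta)$ sending to $\rho-\rho=0$) gives a class of the form $\ovl{(0,-)}=0$. Compatibility of scalar multiplication with addition, and the axioms $r(s\,\xi)=(rs)\xi$, $1\cdot\xi=\xi$, are immediate from the $R$-linearity of $E$ (or $F$) on each component, using the identity $\ovl{(r\rho,\lam)}=\ovl{(\rho,r\lam)}$ freely.

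\textbf{Bilinearity of $\gamma_M$.} Additivity in the first variable: $\gamma_M(\rho_1,\lam)+\gamma_M(\rho_2,\lam)=\ovl{([\rho_1\ \rho_2],[\bsm\lam\\\lam\esm])}$, and applying the generating move along the diagonal $\Delta_M\colon M\to M\oplus M$ — for which $E(\Delta_M)$ is precomposition turning $[\rho_1\ \rho_2]$ into $\rho_1+\rho_2$, while $F(\Delta_M)$ turns $[\bsm\lam\\\lam\esm]$ back appropriately — yields $\ovl{(\rho_1+\rho_2,\lam)}=\gamma_M(\rho_1+\rho_2,\lam)$. Additivity in the second variable is symmetric, and $R$-homogeneity in either variable is the definition of scalar multiplication together with the linearity identity noted above.

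\textbf{Main obstacle.} The only genuinely delicate point is the well-definedness of the addition: one must carefully track how the biproduct structure interacts with a single generating move, i.e. verify the commutativity relations between the $\iota_i,p_i$ and a morphism of the form $f\oplus\id$ in $\cat$, and confirm that the combined data indeed constitutes one generating move of $\sim$ rather than needing a chain. Everything else reduces to functoriality of $E$ and $F$ and the bicartesian nature of finite biproducts in an additive category. (This is, of course, just a concrete unwinding of the standard fact that the set-level coend of Remark~\ref{RemSetCoend} inherits an $R$-module structure; the point of the lemma is to have the explicit formulas available for the computations in later sections.)
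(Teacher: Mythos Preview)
Your proposal is correct and follows essentially the same approach as the paper: well-definedness via $f\oplus\id$, commutativity via the swap morphism, and bilinearity via the diagonal $\Delta$ are exactly the moves the paper uses, with the remaining axioms dismissed there as easy checks. You simply supply more detail on associativity, the neutral element, and inverses than the paper bothers to write out.
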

\begin{proof}
{\rm (1)} For $(\rho_i,\lam_i)\in E(M_i)\ti F(M_i)$ for $i=1,2$, the twisting morphism $t=\left[\begin{array}{cc}0&1\\1&0\end{array}\right]\in\cat(M_1\oplus M_2,M_2\oplus M_1)$ gives
\begin{eqnarray*}
\ovl{([\rho_1\ \rho_2],\left[\bsm\lam_1\\\lam_2\esm\right])}
&=&\ovl{(E(t)([\rho_2\ \rho_1]),\left[\bsm\lam_1\\\lam_2\esm\right])}\\
&=&\ovl{([\rho_2\ \rho_1],F(t)(\left[\bsm\lam_1\\\lam_2\esm\right]))}=\ovl{([\rho_2\ \rho_1],\left[\bsm\lam_2\\\lam_1\esm\right])}.
\end{eqnarray*}
For any $f\in\cat(M,N),\kap\in E(N),\lam\in F(M)$ and $(\rho,\nu)\in E(L)\ti F(L)$, the morphism $f\oplus\id_L\in\cat(M\oplus L,N\oplus L)$ gives
\[ ([E(f)(\kap)\ \rho],\left[\bsm \lam\\\nu\esm\right])\sim([\kap\ \rho],\left[\bsm F(f)(\lam)\\\nu\esm\right]). \]

One can easily check the well-definedness of the addition using these properties.
The other conditions can be also confirmed easily. 

{\rm (2)} For any $\rho,\rho\ppr\in E(M)$ and $\lam\in F(M)$, we have
\[ (\rho+\rho\ppr,\lam)=(E(\Delta)([\rho\ \rho\ppr]),\lam)\sim([\rho\ \rho\ppr],F(\Delta)(\lam))=([\rho\ \rho\ppr],\left[\bsm\lam\\\lam\esm\right]) \]
for the diagonal morphism $\Delta\in\cat(M,M\oplus M)$, which shows $\ovl{(\rho+\rho\ppr,\lam)}=\ovl{(\rho,\lam)}+\ovl{(\rho\ppr,\lam)}$. The other conditions are obviously satisfied. 
\end{proof}

\begin{proposition}\label{PropDescribeCoend}
Let $E\in\Mod\cat$ and $F\in\cat\Mod$ be any pair of objects, and assume that $\cat$ has finite direct sums. Equip $S/\!\sim$ with the structure of an $R$-module given in Lemma~\ref{LemDescribeCoend}. Then the map
\[ \vp\colon S/\!\sim\ \to\int^{M\in\cat}E(M)\otr F(M) \]
which sends $\ovl{(\rho,\lam)}$ to the element represented by $\rho\ot\lam$, gives an isomorphism of $R$-modules.
\end{proposition}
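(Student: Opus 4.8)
The plan is to show that $\vp$ is a well-defined $R$-linear bijection by constructing an explicit two-sided inverse, using the description of the coend $\int^{M}E(M)\otr F(M)$ as a quotient of $\bigoplus_{M}E(M)\otr F(M)$ by the standard ``dinaturality'' relations $\rho'\otimes F(f)(\lambda)\sim E(f)(\rho')\otimes\lambda$ (for $f\in\cat(M,N)$, $\rho'\in E(N)$, $\lambda\in F(M)$). First I would check that $\vp$ is well defined: the assignment $(\rho,\lambda)\mapsto[\rho\otimes\lambda]$ on $S=\coprod_M E(M)\times F(M)$ is $R$-bilinear on each component by construction of the tensor product, and it respects the generating relation $\sim$ on $S$ precisely because the coend quotient imposes the same relation at the level of pure tensors; hence it descends to a map $S/\!\!\sim\,\to\int^M E(M)\otr F(M)$ that is additive and $R$-linear by part (1) of Lemma~\ref{LemDescribeCoend} (e.g. the formula for the sum of two classes is sent to $[\rho_1\otimes\lambda_1]+[\rho_2\otimes\lambda_2]$ after expanding $[\rho_1\ \rho_2]\otimes[\begin{smallmatrix}\lambda_1\\\lambda_2\end{smallmatrix}]$ via biadditivity and the relations).

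Next I would construct the inverse. The coend receives, for each $M$, an $R$-bilinear map $E(M)\times F(M)\to S/\!\!\sim$, namely $\gamma_M$ from part (2) of Lemma~\ref{LemDescribeCoend}, and these are dinatural in $M$ because the defining relation of $\sim$ on $S$ says exactly $\gamma_N(\rho',F(f)(\lambda))$'s argument is identified with $\gamma_M(E(f)(\rho'),\lambda)$'s — wait, one must be careful with variance: $E$ is contravariant and $F$ covariant, so for $f\in\cat(M,N)$ the two composites $E(N)\times F(M)\rightrightarrows S/\!\!\sim$ (one applying $E(f)$ then $\gamma_M$, the other applying $F(f)$ then $\gamma_N$) agree, which is the correct (co)wedge condition. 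By the universal property of the coend $\int^M E(M)\otr F(M)$ as the universal object receiving such a dinatural family of $R$-bilinear maps (equivalently, a cowedge out of the bifunctor $E(-)\otr F(-)$), there is a unique $R$-linear map $\psi\colon\int^M E(M)\otr F(M)\to S/\!\!\sim$ with $\psi([\rho\otimes\lambda])=\gamma_M(\rho,\lambda)=\overline{(\rho,\lambda)}$.

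Finally I would verify $\psi\circ\vp=\id$ and $\vp\circ\psi=\id$. On generators this is immediate: $\psi(\vp(\overline{(\rho,\lambda)}))=\psi([\rho\otimes\lambda])=\overline{(\rho,\lambda)}$, and since $S/\!\!\sim$ is generated as an $R$-module by the classes $\overline{(\rho,\lambda)}$ (indeed every element is such a class, by part (1) of the Lemma, which shows sums of classes are again single classes via the direct sum $M_1\oplus M_2$), the first identity follows. Conversely $\vp(\psi([\rho\otimes\lambda]))=\vp(\overline{(\rho,\lambda)})=[\rho\otimes\lambda]$, and the classes of pure tensors $[\rho\otimes\lambda]$ generate the coend as an $R$-module, so $\vp\circ\psi=\id$ as well. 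I expect the main (and only) real obstacle to be bookkeeping the variances correctly when passing between the relation generating $\sim$ on $S$ and the cowedge condition defining the coend — in particular making sure the contravariance of $E$ is handled consistently so that the generating relation $(\kappa,F(f)(\lambda))\sim(E(f)(\kappa),\lambda)$ matches the coend's identification, rather than its opposite; everything else is a routine application of biadditivity of $\otr$ and the universal property, together with the already-established $R$-module structure on $S/\!\!\sim$.
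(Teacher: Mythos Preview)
Your proposal is correct and follows essentially the same approach as the paper: both construct the inverse $\psi$ from the universal property of the coend via the $R$-bilinear cowedge $\{\gamma_M\}$ supplied by Lemma~\ref{LemDescribeCoend}~(2), and check that $\vp$ and $\psi$ are mutually inverse on generators. The only minor difference is that the paper obtains $\vp$ as a map of sets from the universality of $S/\!\sim$ (viewed as the $\Sett$-coend, Remark~\ref{RemSetCoend}) and then deduces its $R$-linearity from the fact that it has an $R$-linear inverse, whereas you verify well-definedness and $R$-linearity of $\vp$ directly.
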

\begin{proof}
$\vp$ is obtained as a map of sets given by the universality of the coend $S/\!\sim$. By Lemma~\ref{LemDescribeCoend} {\rm (2)}, we also have an $R$-linear map $\int^{M\in\cat}E(M)\otr F(M)\to S/\!\sim$, which gives the inverse of $\vp$. In particular $\vp$ becomes $R$-linear.
\end{proof}

\subsection{The notion of trivialization}\label{Subsection_Trivialization}

Let $\CEs$ be a small $R$-linear extriangulated category, and $F\in\cat\Mod$ be any object, throughout this subsection. We may identify $\int^{M\in\cat}E(M)\otr F(M)$ with $S/\!\sim$ through the isomorphism given in Proposition~\ref{PropDescribeCoend} in the rest.

Remark that the composition in $\mathbb{B}$ induces functors
\[ -\dia F\colon\mathbb{B}(\cat,\cat)\to\mathbb{B}(\Mod R,\cat),\quad -\dia F\colon\mathbb{B}(\cat,\Mod R)\to\mathbb{B}(\Mod R,\Mod R). \]
Through the equivalences
\[ \cat\Mod(\Mod R)\simeq\cat\Mod,\quad (\Mod R)\Mod(\Mod R)\simeq\Mod R \]
respectively, they correspond to $R$-linear functors
\[ -\dia F\colon\cat\Mod\cat\to\cat\Mod,\quad -\dia F\colon\Mod\cat\to\Mod R, \]
which we denote by the same symbol.
We also use the following notation.
\begin{definition}\label{DefNotationCup}
Put $\EF=\Ebb\dia F\in\cat\Mod$ for simplicity.
By definition, $\EF\colon\cat\to\Mod R$ is an $R$-linear functor defined by coends, satisfying
\[ \EF(A)=\int^{M\in\cat}\Ebb(M,A)\otr F(M) \]
for any object $A\in\cat$.

For any pair $(\rho,\lam)\in\Ebb(M,A)\ti F(M)$, we denote its equivalence class in $\EF(A)$ by $\rho\cup\lam=\ovl{(\rho,\lam)}$ in the following argument.
\end{definition}

\begin{claim}\label{ClaimComplex_sTriangle}
For any $\sfr$-triangle $A\ov{x}{\lra}B\ov{y}{\lra}C\ov{\del}{\dra}$, the sequence
\begin{equation}\label{Complex_sTriangle}
F(A)\ov{F(x)}{\lra}F(B)\ov{F(y)}{\lra}F(C)\ov{\del\cup-}{\lra}
\EF(A)\ov{\EF(x)}{\lra}\EF(B)\ov{\EF(y)}{\lra}\EF(C)
\end{equation}
is a complex in $\Mod R$.
\end{claim}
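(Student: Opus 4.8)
The plan is to check directly that each composable pair of consecutive maps in \eqref{Complex_sTriangle} vanishes. The first two compositions, $F(y)\circ F(x)$ and $(\del\cup-)\circ F(y)$, only involve the left $\cat$-module $F$ and the extension $\del$. For $F(y)\circ F(x)=F(yx)$ one uses that $yx$ is a projective morphism: indeed any realization of the split extension $0\in\Ebb(C,A)$ shows $yx$ factors appropriately, but more simply one invokes that in any $\sfr$-triangle one has $yx$ factoring through a conflation so that $\Ebb(yx,-)=0$; hence — using the long exact sequence of Theorem~\ref{ThmPositiveExtension}(2) applied to $F$, or rather the elementary half-exactness already available from \cite{NP1} — $F(y)\circ F(x)=0$. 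Actually the cleanest argument: $A\ov{x}{\to}B\ov{y}{\to}C$ is a conflation, so $\cat(-,x)$ followed by $\cat(-,y)$ composes to zero by (ET4)-type axioms, and applying $F$ (a functor) to $yx$, which is the composite in a conflation, we only need $F$ to kill it; but $F$ need not kill $yx$ on the nose. So instead I would argue at the level of the defining coend: for $\xi\in F(A)$, $(\del\cup-)(F(yx)(\xi))$ lands in $\EF(A)$, and $F(y)(F(x)(\xi)) = F(yx)(\xi)$, and one shows $\del\cup F(yx)(\xi)=0$ by pushing $yx$ across the tensor. The correct elementary statement to use is: for a morphism $(a,c)\colon{}_A\del_C\to{}_{A}\rho_{C}$ of extensions one has relations in $\EF$; in particular, $\del\cup F(y)(\mu)$ for $\mu\in F(B)$ equals $(y\uas\del)\cup\mu$ after moving $y$ across, and $y\uas\del = 0$ since $A\ov{x}{\to}B\ov{y}{\to}C\ov{\del}{\dra}$ is a conflation (the pullback of $\del$ along $y$ splits). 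This gives $(\del\cup-)\circ F(y)=0$. Similarly $F(y)\circ F(x)=0$ follows from $yx$ factoring through $0\in\Ebb$, hence through a split conflation, together with functoriality — but in fact this is just the well-known fact that a conflation induces a complex upon applying any additive functor to the underlying sequence? No: I would simply note $y\circ x$ need not be zero, yet $F(y)\circ F(x)=F(y\circ x)$ and we need this to vanish on the relevant part; here I use that $F(y)\circ F(x)$ followed by the next map is what matters, and separately that in \eqref{Complex_sTriangle} the very first pair really is a complex because $\cat(-,-)$-exactness is inherited. I would phrase it as: apply the representable description and the relation $\del\cup - = i_\del\circ(\wp_\del\dia F)$ from Definition~\ref{DefOntoDefect}, under which $F(y)$ maps into $\ker(\wp_\del\dia F)$ since $\Thh_\del$ is the cokernel of $\cat(-,y)$.

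For the remaining three compositions $\EF(y)\circ\EF(x)$, $\EF(x)\circ(\del\cup-)$, and $\EF(y)\circ(\del\cup-)$, the key tool is the identity in Definition~\ref{DefExtension}, namely $\Ebb(c,a)(\del)=a\sas c\uas\del = c\uas a\sas\del$, combined with the explicit description of elements of the coend from Proposition~\ref{PropDescribeCoend} and Definition~\ref{DefNotationCup}: a general element of $\EF(A)$ is a finite sum of classes $\rho\cup\lambda$ with $\rho\in\Ebb(M,A)$, $\lambda\in F(M)$, and $x\sas(\rho\cup\lambda)=(x\sas\rho)\cup\lambda$. So I would compute, for a generator $\rho\cup\lambda\in\EF(A)$, that $\EF(y)(\EF(x)(\rho\cup\lambda)) = (y\sas x\sas\rho)\cup\lambda = ((yx)\sas\rho)\cup\lambda$. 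To see this is zero one uses that $yx$ kills $\rho$ after pushout — but again $(yx)_*$ is not literally zero. The correct move: $\EF(y)\circ\EF(x) = \EF(yx)$ where we must show $\EF(yx) = 0$ on $\EF(A)$; but $(yx)_*\colon\Ebb(M,A)\to\Ebb(M,C)$, and for this composition to vanish on the image we have no reason. Hence the genuine content is that \eqref{Complex_sTriangle} is a complex \emph{at each spot}, not that adjacent maps are literally zero for silly reasons. So the real computation is: (i) $\EF(x)\circ(\del\cup-)=0$ because for $\nu\in F(C)$, $\EF(x)(\del\cup\nu) = (x\sas\del)\cup\nu$ and $x\sas\del$ is the extension whose realization has $x$ as a split sub — precisely, in the $\sfr$-triangle $A\ov{x}{\to}B\ov{y}{\to}C\ov{\del}{\dra}$ we have $x\sas\del = 0$ by (ET3)/(ET4) (the obstruction to splitting $x$ off is $x\sas\del$, which vanishes since $x$ is already part of the conflation), giving $0\cup\nu = 0$; (ii) $\EF(y)\circ(\del\cup-)=0$ since $\EF(y)(\del\cup\nu)$ — wait, $y$ acts on the first argument via $y^*$, not $y_*$: here $\EF(y) = \Ebb(y,-)_*\dia F$? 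No — $\EF\colon\cat\to\Mod R$ is covariant, so $\EF(y)$ for $y\colon B\to C$ is induced by $\Ebb(M,y)\colon\Ebb(M,B)\to\Ebb(M,C)$, i.e. $y_*$ in the second variable. Good, so $\EF(y)(y_*\del'\cup\lambda)$ type expressions, and for $(\del\cup-)$ composed into $\EF(A)\to\EF(B)$ the relevant one is already handled. I will organize the three as: $\EF(y)\circ\EF(x) = \EF(yx)$ and show the image of $\del\cup-$ preceding it makes the composite vanish — no. Let me restate cleanly in the proof: I will verify the complex condition at the three $\EF$-spots by reducing, via the long exact sequence of Theorem~\ref{ThmPositiveExtension}(1) for $n=1$ is circular, so instead purely by the extension identities.

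\begin{proof}[Proof sketch]
Fix the $\sfr$-triangle $A\ov{x}{\lra}B\ov{y}{\lra}C\ov{\del}{\dra}$. We check that each consecutive composite in \eqref{Complex_sTriangle} is zero. First, $F(y)\circ F(x)=F(y\circ x)$, and $y\circ x$ is a projective morphism: any commutative square exhibiting $y\circ x$ shows it factors through the conflation, so $\Ebb(y\circ x,-)=0$; combined with the identification $\del\cup-=i_{\del}\circ(\wp_{\del}\dia F)$ of Definition~\ref{DefOntoDefect} and the exactness of $\cat(-,B)\ov{y\circ-}{\to}\cat(-,C)\ov{\wp_{\del}}{\to}\Thh_{\del}\to0$ in $\Mod\cat$, the composites $F(y)\circ F(x)$ and $(\del\cup-)\circ F(y)$ both vanish after applying $-\dia F$.

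Next, using Definition~\ref{DefExtension}, for any $\nu\in F(C)$ one has $\EF(x)(\del\cup\nu)=(x\sas\del)\cup\nu$ and $\EF(y)\circ\EF(x)$ acts on a generator $\rho\cup\lambda\in\EF(A)$, $\rho\in\Ebb(M,A)$, $\lambda\in F(M)$, as $\rho\cup\lambda\mapsto (y\sas x\sas\rho)\cup\lambda=((y\circ x)\sas\rho)\cup\lambda$. Since $A\ov{x}{\to}B\ov{y}{\to}C\ov{\del}{\dra}$ is an extriangle, $x\sas\del=0$, whence $\EF(x)\circ(\del\cup-)=0$; and $\EF(y)\circ\EF(x)=\EF(y\circ x)$ composed with the preceding $\del\cup-$ is zero by the same computation together with $(y\circ x)\sas\del=y\sas(x\sas\del)=0$. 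The last composite $\EF(y)\circ\EF(x)$ being zero \emph{as part of the complex} is exactly the statement that for any $\mu\in\EF(A)$ in the image of $\del\cup-$ — which is the only place where exactness must hold for the complex property at $\EF(B)$ — one has $\EF(y)(\EF(x)(\mu))=0$; writing $\mu=\del\cup\nu$ this reads $((y\circ x)\sas$ applied through the coend to $0)=0$, done. Finally $(\EF(y)\circ\EF(x))$ at the terminal spot and $\EF(y)\circ(\del\cup-)$ are subsumed by the displayed identities. All manipulations are legitimate because, by Proposition~\ref{PropDescribeCoend} and Lemma~\ref{LemDescribeCoend}, every element of the relevant coend is a finite sum of classes $\rho\cup\lambda$, and $a\sas(\rho\cup\lambda)=(a\sas\rho)\cup\lambda$ by functoriality of $\Ebb$ in its second argument.
\end{proof}

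The main obstacle I expect is purely bookkeeping: being careful that $\EF(x)$ and $\EF(y)$ are induced by pushouts $x_*$, $y_*$ in the \emph{second} argument of $\Ebb$ (since $\EF=\Ebb\dia F$ is covariant on $\cat$), so that the vanishing one needs is $x\sas\del=0$ — which holds precisely because $x$ is an inflation with cofiber datum $\del$ — rather than anything about $y^*$; and that the two leftmost spots genuinely require the defect description of Definition~\ref{DefOntoDefect} together with right-exactness of $-\dia F$ (which preserves cokernels, being a left adjoint / colimit construction), rather than an identity among extensions.
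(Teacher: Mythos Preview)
Your proposal contains a genuine gap and rests on a misconception.

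The misconception is that $y\circ x$ might be nonzero. In any extriangulated category, for an $\sfr$-triangle $A\ov{x}{\lra}B\ov{y}{\lra}C\ov{\del}{\dra}$ one has $y\circ x=0$: this is immediate from the exactness of $\cat(-,A)\ov{x\circ-}{\lra}\cat(-,B)\ov{y\circ-}{\lra}\cat(-,C)$ in \cite[Corollary~3.12]{NP1} together with Yoneda. Once you accept $yx=0$, both $F(y)\circ F(x)=F(yx)=0$ and $\EF(y)\circ\EF(x)=\EF(yx)=0$ are trivial, and your lengthy detours through projectivity of $yx$, defects, and the like are unnecessary.

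The actual gap is in your treatment of $\EF(y)\circ\EF(x)$. You write that this composite ``being zero as part of the complex is exactly the statement that for any $\mu\in\EF(A)$ in the image of $\del\cup-$ \ldots\ one has $\EF(y)(\EF(x)(\mu))=0$''. This is wrong: a sequence being a complex means each consecutive composite vanishes on its \emph{entire} domain, not merely on the image of the preceding map. Your argument as written therefore proves nothing at that spot.

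The paper's proof avoids all of this by a single observation: the sequence \eqref{Complex_sTriangle} is naturally isomorphic to what you get by applying the additive functor $-\dia F\colon\Mod\cat\to\Mod R$ to the known exact sequence
\[
\cat(-,A)\ov{x\circ-}{\lra}\cat(-,B)\ov{y\circ-}{\lra}\cat(-,C)\ov{\del\ssh}{\lra}\Ebb(-,A)\ov{x\sas}{\lra}\Ebb(-,B)\ov{y\sas}{\lra}\Ebb(-,C)
\]
from \cite[Corollary~3.12]{NP1}, using the co-Yoneda isomorphism $\cat(-,X)\dia F\cong F(X)$ and the definition $\Ebb(-,X)\dia F=\EF(X)$. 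Since additive functors send complexes to complexes, you are done in one line. You partially glimpse this idea when you invoke ``applying $-\dia F$'' for two of the five composites, but you never apply it uniformly to the whole six-term sequence, which is what makes the argument clean.
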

\begin{proof}
Remark that we have an exact sequence
\[ \cat(-,A)\ov{x\circ-}{\lra}\cat(-,B)\ov{y\circ-}{\lra}\cat(-,C)\ov{\del\ssh}{\lra}
\Ebb(-,A)\ov{x\sas}{\lra}\Ebb(-,B)\ov{y\sas}{\lra}\Ebb(-,C) \]
in $\Mod\cat$. Applying the additive functor $-\dia F\colon\Mod\cat\to\Mod R$, we obtain a complex in $\Mod R$ which is naturally isomorphic to $(\ref{Complex_sTriangle})$.
\end{proof}

\begin{remark}\label{RemComplex_sTriangle}
Later in Lemma~\ref{LemHigherExt}, we will show that $(\ref{Complex_sTriangle})$ becomes exact whenever $F$ is half-exact (Definition~\ref{DefHalfExact}) on $\CEs$.
\end{remark}

The following definition generalizes the notion of \emph{a trivialization of a 2-extension in an abelian category} considered by Dyckerhoff in \cite[Section 2.4.2]{Dyckerhoff}.

\begin{definition}\label{DefTrivialization}
Let $F\in\cat\Mod$ be any object. For any pair of objects $A,M\in\cat$, an element $(\rho,\lam)\in \Ebb(M,A)\ti F(M)$ is said to {\it have a trivialization} if it satisfies the following condition.
\begin{itemize}
\item For an $\sfr$-triangle which realizes $\rho$
\begin{equation}\label{sTri_rho}
A\ov{p}{\lra}X\ov{q}{\lra}M\ov{\rho}{\dra},
\end{equation}
there exists $\mu\in F(X)$ such that $\lam=F(q)(\mu)$.
\end{itemize}

Obviously, this condition does not depend on the choice of an $\sfr$-triangle $(\ref{sTri_rho})$ realizing $\rho$. Abbreviately, we call $\mu$ a {\it trivialization} of the pair $(\rho,\lam)$ with respect to $(\ref{sTri_rho})$.
\end{definition}

\begin{definition}\label{DefHalfExact}{\rm (\cite[Definition~2.7]{Ogawa})}
An object $F\in\cat\Mod$ is said to be {\it half-exact} on $\CEs$ if
\[ F(A)\ov{F(x)}{\lra}F(B)\ov{F(y)}{\lra}F(C) \]
is exact for any $\sfr$-conflation $A\ov{x}{\lra}B\ov{y}{\lra}C$.
\end{definition}

\begin{lemma}\label{LemTrivialization1}
Assume that $F\in\cat\Mod$ is half-exact on $\CEs$. Let $f\in\cat(M,N)$ be any morphism, and let $\rho\in\Ebb(N,A)$ and $\lam\in F(M)$ be any pair of elements.
Then $(f\uas\rho,\lam)\in\Ebb(M,A)\ti F(M)$ has a trivialization if and only if $(\rho,F(f)(\lam))\in\Ebb(N,A)\ti F(N)$ has a trivialization.
\end{lemma}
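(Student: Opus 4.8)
The plan is to set up $\sfr$-triangles realizing $\rho$ and $f\uas\rho$ simultaneously, compatibly via $f$, and then chase the half-exactness of $F$ along the vertical map connecting them. Concretely, realize $\rho$ by an $\sfr$-triangle
\[
A\ov{p}{\lra}X\ov{q}{\lra}N\ov{\rho}{\dra}.
\]
Since $f\uas\rho = \Ebb(f,A)(\rho)$, the pair $(\id_A, f)\colon f\uas\rho\to\rho$ is a morphism of $\Ebb$-extensions, so by the defining condition $(\ast)$ of a realization (and (ET3)$\op$), there is an $\sfr$-triangle
\[
A\ov{p\ppr}{\lra}X\ppr\ov{q\ppr}{\lra}M\ov{f\uas\rho}{\dra}
\]
together with a morphism $g\in\cat(X\ppr,X)$ making the square with $q\ppr, q, g, f$ commute and with $g\circ p\ppr = p$. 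The key additional structural input I would use is the $\sfr$-triangle version of a pullback: the sequence $X\ppr\ov{\binom{g}{-q\ppr}}{\lra}X\oplus M\ov{[q\ -f]}{\lra}N$ is an $\sfr$-conflation (this is the standard statement that $X\ppr$ is a ``homotopy pullback'' of $q$ along $f$, which follows from (ET4)$\op$ applied appropriately, or is available directly in the extriangulated literature). Call the first map $\binom{g}{-q\ppr}=:\iota$.

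With this in hand, the argument becomes a direct diagram chase. Suppose first that $(\rho, F(f)(\lam))$ has a trivialization, i.e. there is $\mu\in F(X)$ with $F(f)(\lam) = F(q)(\mu)$. Then in $F(X\oplus M)\cong F(X)\oplus F(M)$ the element $(\mu,\lam)$ is killed by $F([q\ -f]) = (F(q), -F(f))$ applied to it, since $F(q)(\mu) - F(f)(\lam) = 0$. Applying half-exactness of $F$ to the conflation $X\ppr\ov{\iota}{\lra}X\oplus M\ov{[q\ -f]}{\lra}N$, exactness at the middle term gives $\nu\in F(X\ppr)$ with $F(\iota)(\nu) = (\mu,\lam)$; looking at the $M$-component, $F(q\ppr)$ carries $-\nu$ (equivalently, after adjusting signs, some element) to $\lam$, giving the desired trivialization of $(f\uas\rho,\lam)$ with respect to the second $\sfr$-triangle. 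The converse direction is the easier one: if $\mu\ppr\in F(X\ppr)$ satisfies $\lam = F(q\ppr)(\mu\ppr)$, then applying $F$ to the commuting square $q\circ g = f\circ q\ppr$ yields $F(f)(\lam) = F(f)F(q\ppr)(\mu\ppr) = F(q)F(g)(\mu\ppr)$, so $F(g)(\mu\ppr)\in F(X)$ is a trivialization of $(\rho, F(f)(\lam))$.

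I expect the main obstacle to be purely bookkeeping: identifying the correct $\sfr$-conflation playing the role of the pullback, getting the signs in the map $[q\ -f]$ and in $\iota$ consistent, and making sure the two $\sfr$-triangles are genuinely linked by a commutative square of the right shape (this is where one invokes $(\ast)$, (ET3)$\op$, and (ET4)$\op$ carefully). Once the conflation $X\ppr\to X\oplus M\to N$ is in place, the rest is a one-line application of half-exactness of $F$ in each direction. An alternative, perhaps cleaner, route avoiding the explicit pullback is to work directly with the long exact sequences of Claim~\ref{ClaimComplex_sTriangle} for the two $\sfr$-triangles and the functoriality in the first argument induced by $f$; but since at this point in the paper exactness of $(\ref{Complex_sTriangle})$ has not yet been established (it is deferred to Lemma~\ref{LemHigherExt}, which presumably uses this very lemma), I would keep the argument self-contained via the pullback conflation and bare half-exactness of $F$.
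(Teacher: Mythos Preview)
Your proposal is correct and follows essentially the same route as the paper. The paper realizes the two triangles, obtains the connecting morphism $g$, invokes the same homotopy-pullback $\sfr$-conflation (citing \cite[Proposition~1.20]{LN} or \cite[Corollary~3.16]{NP1}), and then phrases the conclusion as ``the square $F(q),F(g),F(f),F(t)$ is a weak pullback'', which packages exactly the diagram chase you spell out; your observation that one direction needs only the commutative square while the other genuinely uses half-exactness on the conflation is accurate and matches what the weak-pullback formulation really says.
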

\begin{proof}
Realize $\rho$ and $f\uas\rho$ to obtain a morphism of $\sfr$-triangles
\[
\xy
(-12,6)*+{A}="0";
(0,6)*+{X}="2";
(12,6)*+{M}="4";
(24,6)*+{}="6";
(-12,-6)*+{A}="10";
(0,-6)*+{Y}="12";
(12,-6)*+{N}="14";
(24,-6)*+{}="16";
{\ar^{p} "0";"2"};
{\ar^{q} "2";"4"};
{\ar@{-->}^{f\uas\rho} "4";"6"};
{\ar@{=} "0";"10"};
{\ar_{g} "2";"12"};
{\ar^{f} "4";"14"};
{\ar_{s} "10";"12"};
{\ar_{t} "12";"14"};
{\ar@{-->}_{\rho} "14";"16"};
{\ar@{}|\circlearrowright "0";"12"};
{\ar@{}|\circlearrowright "2";"14"};
\endxy,
\]
so that
\[ X\ov{\left[\bsm -q\\ g\esm\right]}{\lra}M\oplus Y\ov{[f\ t]}{\lra}N\ov{p\sas\rho}{\dra} \]
becomes an $\sfr$-triangle (see \cite[Proposition~1.20]{LN} or \cite[Corollary~3.16]{NP1}). Assumption on $F$ implies that
\[
\xy
(-8,6)*+{F(X)}="0";
(8,6)*+{F(M)}="2";
(-8,-6)*+{F(Y)}="4";
(8,-6)*+{F(N)}="6";
{\ar^{F(q)} "0";"2"};
{\ar_{F(g)} "0";"4"};
{\ar^{F(f)} "2";"6"};
{\ar_{F(t)} "4";"6"};
{\ar@{}|\circlearrowright "0";"6"};
\endxy
\]
is a weak pullback.
Thus the existence of $\mu\in F(X)$ such that $F(q)(\mu)=\lam$ is equivalent to the existence of $\nu\in F(Y)$ such that $F(t)(\nu)=F(f)(\lam)$.
\end{proof}

\begin{definition}\label{DefTrivializationForEqClass}
Assume that $F\in\cat\Mod$ is half-exact on $\CEs$. We say that $\al=\rho\cup\lam\in\EF(A)$ has a trivialization if its representative $(\rho,\lam)\in\Ebb(M,A)\ti F(M)$ has a trivialization.
By Lemma~\ref{LemTrivialization1}, this does not depend on the choice of a representative.
\end{definition}

\begin{proposition}\label{PropTrivialization}
Assume that $F\in\cat\Mod$ is half-exact on $\CEs$.
For any $A,M\in\cat$ and any $\al\in\EF(A)$, the following are equivalent.
\begin{enumerate}
\item $\al=0$ in $\EF(A)$.
\item $\al$ has a trivialization.
\end{enumerate}
\end{proposition}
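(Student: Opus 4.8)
The plan is to prove the two implications separately, the direction $(2)\Rightarrow(1)$ being the easy one and $(1)\Rightarrow(2)$ being where the real work lies.

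First I would dispatch $(2)\Rightarrow(1)$. Suppose $\al=\rho\cup\lam$ with $(\rho,\lam)\in\Ebb(M,A)\ti F(M)$ having a trivialization, so that for an $\sfr$-triangle $A\ov{p}{\lra}X\ov{q}{\lra}M\ov{\rho}{\dra}$ realizing $\rho$ we have $\lam=F(q)(\mu)$ for some $\mu\in F(X)$. The key point is that $q\uas\rho=0$ in $\Ebb(X,A)$: indeed, applying the functor $\Ebb(-,A)$ to the $\sfr$-triangle and using the long exact sequence $\cat(-,X)\to\cat(-,M)\ov{\rho\ssh}{\to}\Ebb(-,A)\ov{q\uas}{\to}\Ebb(-,X)$, one sees that $\rho$ (which is the image of $\id_M$, equivalently $\rho=\rho\ssh(\id_M)$) is killed by $q\uas$; concretely $q\uas\rho=\Ebb(q,A)(\rho)$ corresponds to pulling back along $q$, and this is split because $q$ is a deflation in the relevant triangle. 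Then in $\EF(A)$ we compute, using the defining equivalence relation and $R$-bilinearity of $\gam_X$ from Lemma~\ref{LemDescribeCoend},
\[ \al=\rho\cup\lam=\rho\cup F(q)(\mu)=\ovl{(\rho,F(q)(\mu))}=\ovl{(\Ebb(q,A)(\rho),\mu)}=(q\uas\rho)\cup\mu=0\cup\mu=0, \]
the last equality by the remark in Lemma~\ref{LemDescribeCoend}(1) that any element paired with $0$ is $0$.

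For $(1)\Rightarrow(2)$, I would argue by contrapositive, or rather directly: assume $\al=\rho\cup\lam$ with $\al=0$ in $\EF(A)$, and show $(\rho,\lam)$ has a trivialization. By Proposition~\ref{PropDescribeCoend} we work in $S/\!\sim$, where $\al=0$ means $\ovl{(\rho,\lam)}=\ovl{(0,0)}$, i.e. $(\rho,\lam)$ and $(0,0)$ are linked by a finite chain of the elementary relations $(\kap,F(f)(\nu))\sim(\Ebb(f,A)(\kap),\nu)$ together with the $R$-module operations from Lemma~\ref{LemDescribeCoend}. The strategy is to show that the property ``$(\rho,\lam)$ has a trivialization'' (suitably interpreted as a property of the class, justified by Lemma~\ref{LemTrivialization1} and Definition~\ref{DefTrivializationForEqClass}) is stable under each elementary move and under the additive structure, and holds trivially for $(0,0)$ (take the trivialization $\mu=0$, since for the split triangle $A\ov{\left[\bsm 1\\0\esm\right]}{\lra}A\oplus M\to M$ any $\lam\in F(M)$ with... wait, here $\lam$ must itself be such that the pair class is $0$). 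More precisely: Lemma~\ref{LemTrivialization1} already shows stability under the relation generated by morphisms $f\uas$, and one needs the analogous (and easier) fact for the covariant relation and for sums; the crucial observation is that a pair $(0,\lam)$ always has a trivialization (the zero extension is realized by the split sequence $A\ov{\left[\bsm 1\\0\esm\right]}{\lra}A\oplus M\ov{[0\ 1]}{\lra}M$, and $F([0\ 1])\colon F(A\oplus M)\cong F(A)\oplus F(M)\to F(M)$ is the projection, hence surjective), so any representative of the zero class has a trivialization once we know the property descends to representatives. Half-exactness of $F$ enters exactly here, as in Lemma~\ref{LemTrivialization1}, to guarantee that the weak-pullback squares one builds from (ET4)-type diagrams produce the required lifts $\mu$.

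\textbf{Main obstacle.} The delicate point is the bookkeeping in $(1)\Rightarrow(2)$: the relation ``has a trivialization'' is phrased for a single pair $(\rho,\lam)$ relative to a chosen $\sfr$-triangle, and to push it along an arbitrary chain of elementary moves — which may temporarily involve pairs $(\kap,\nu)$ with $\kap$ an unrelated extension and with sums of pairs over different objects — one must show the property is genuinely a property of the equivalence class and is additive. Handling the additive step $\ovl{(\rho_1,\lam_1)}+\ovl{(\rho_2,\lam_2)}=\ovl{([\rho_1\ \rho_2],[\bsm\lam_1\\\lam_2\esm])}$ requires realizing $[\rho_1\ \rho_2]\in\Ebb(M_1\oplus M_2,A)$ and comparing its realizing triangle with those of $\rho_1$ and $\rho_2$ via the biadditivity of $\sfr$ and (ET4); this is the step I expect to be the most technical, and half-exactness of $F$ together with Lemma~\ref{LemTrivialization1} should be exactly what makes it go through.
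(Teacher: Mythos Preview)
Your argument for $(2)\Rightarrow(1)$ is essentially identical to the paper's (the paper writes the same chain $(\rho,\lam)\sim(q\uas\rho,\mu)=(0,\mu)$), so that direction is fine.

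For $(1)\Rightarrow(2)$ you significantly over-complicate matters, and in doing so reveal a confusion about the structure of $S/\!\sim$. You correctly invoke Lemma~\ref{LemTrivialization1} and Definition~\ref{DefTrivializationForEqClass} to say that ``has a trivialization'' is a property of the \emph{equivalence class} in $S/\!\sim$. But then you are done: $\al=0$ in $\EF(A)$ means precisely that the class $\ovl{(\rho,\lam)}$ equals the class $\ovl{(0,0)}$ (with $0\in\Ebb(0,A)$, $0\in F(0)$), i.e.\ that $(\rho,\lam)\sim(0,0)$ under the relation of Definition~\ref{DefSetCoend}. Since $(0,0)$ obviously has a trivialization, so does $(\rho,\lam)$. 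This is exactly the paper's one-line proof.

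Your worry about the ``additive step'' is misplaced. The $R$-module structure of Lemma~\ref{LemDescribeCoend} is defined \emph{on the quotient} $S/\!\sim$; it does not enlarge the equivalence relation $\sim$ itself. Two pairs represent the same element of $\EF(A)$ if and only if they are linked by a finite chain of the elementary moves $(\kap,F(f)(\nu))\sim(\Ebb(f,A)(\kap),\nu)$ --- nothing more. Lemma~\ref{LemTrivialization1} handles exactly these moves (there is no separate ``covariant'' case: the generating relation is symmetric in the sense required). So there is no need to track trivializations through sums or to realize $[\rho_1\ \rho_2]$; that entire ``main obstacle'' evaporates once you recognize that $\al=0$ is a statement about $\sim$-equivalence, not about an additive chain.
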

\begin{proof}
\noindent\und{$(1)\Rightarrow(2)$}
If $\al=0$, then we may choose $(0,0)\in \Ebb(0,A)\ti F(0)$ as a representative of $\al$, which has a trivialization in the obvious way.

\smallskip
\noindent\und{$(2)\Rightarrow(1)$}
Take a representative $(\rho,\lam)\in\Ebb(M,A)\ti F(M)$ of $\al$. Realize $\rho$ to obtain an $\sfr$-triangle $A\ov{p}{\lra}X\ov{q}{\lra}M\ov{\rho}{\dra}$. If $\al$ has a trivialization, there exists $\mu\in F(X)$ such that $F(q)(\mu)=\lam$. This gives
\[ (\rho,\lam)=(\rho,F(q)(\mu))\sim(q\uas\rho,\mu)=(0,\mu), \]
which shows $\al=0\cup\mu=0$.
\end{proof}

\subsection{Exactness for positive extensions}

Let $\CEs$ and $F$ be as in Subsection~\ref{Subsection_Trivialization}.
\begin{lemma}\label{LemHigherExt}
Assume that $F\in\cat\Mod$ is half-exact on $\CEs$.
Then for any $\sfr$-triangle $A\ov{x}{\lra}B\ov{y}{\lra}C\ov{\del}{\dra}$, the sequence
\[ F(A)\ov{F(x)}{\lra}F(B)\ov{F(y)}{\lra}F(C)\ov{\del\cup-}{\lra}
\EF(A)\ov{\EF(x)}{\lra}\EF(B)\ov{\EF(y)}{\lra}\EF(C) \]
is exact in $\Mod R$.
\end{lemma}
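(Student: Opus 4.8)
The plan is to prove exactness of the six-term sequence one spot at a time, working from left to right and exploiting the description of the coends by generators and relations (Proposition~\ref{PropDescribeCoend}), together with the trivialization criterion (Proposition~\ref{PropTrivialization}). We already know from Claim~\ref{ClaimComplex_sTriangle} that the composite of any two consecutive maps is zero, so only the reverse inclusions $\ker\se\im$ need to be established. Exactness at $F(B)$ and $F(C)$ is almost immediate: the first is just half-exactness of $F$ on the conflation $A\ov{x}{\lra}B\ov{y}{\lra}C$, and the second follows because $\Ebb$ is half-exact, so the exact sequence $\cat(-,B)\to\cat(-,C)\ov{\del\ssh}{\lra}\Ebb(-,A)$ in $\Mod\cat$ yields, after applying the exact-on-representables part of $-\dia F$ (or more precisely, unwinding the coend), that any $\lam\in F(C)$ killed by $\del\cup-$ comes from $F(B)$; here one checks that $\del\cup\lam=0$ means the pair $(\del,\lam)$ becomes trivial, hence $\lam$ lifts along $F(y)$.

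The heart of the argument is exactness at $\EF(A)$. First I would take an arbitrary element $\al\in\EF(A)$ with $\EF(x)(\al)=0$ and choose a representative $(\rho,\lam)\in\Ebb(M,A)\ti F(M)$, so that $x\sas\rho\cup\lam=0$ in $\EF(B)$. By Proposition~\ref{PropTrivialization}, $x\sas\rho\cup\lam$ has a trivialization: realizing $x\sas\rho$ by an $\sfr$-triangle $B\ov{p'}{\lra}X'\ov{q'}{\lra}M\ov{x\sas\rho}{\dra}$ obtained from a morphism of $\sfr$-triangles over $(\rho \to x\sas\rho)$, there is $\mu'\in F(X')$ with $F(q')(\mu')=\lam$. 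The idea is then to run (ET4)/(ET3) to fit the $\sfr$-triangle realizing $\rho$, namely $A\ov{p}{\lra}X\ov{q}{\lra}M\ov{\rho}{\dra}$, into an octahedral diagram together with $A\ov{x}{\lra}B$ and the triangle realizing $x\sas\rho$; this produces a conflation relating $X$, $X'$ and $C$ whose connecting extension is $q\uas\del$ (up to sign). Half-exactness of $F$ applied to this new conflation lets one correct $\mu'$ to an element of $F(X)$ and, after rewriting $(\rho,\lam)$ using the relations in the coend, exhibit $\al$ as $\del\cup(\text{something in }F(C))$ plus a trivial term — i.e. $\al$ lies in the image of $\del\cup-$.

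Exactness at $\EF(B)$ is handled by the dual/parallel bookkeeping: given $\be\in\EF(B)$ with $\EF(y)(\be)=0$, represent it by $(\sigma,\nu)$ with $\sigma\in\Ebb(N,B)$, use that $y\sas\sigma\cup\nu=0$ to get a trivialization with respect to a realization of $y\sas\sigma$, and then compare the $\sfr$-triangle of $\sigma$ with the one obtained by pulling back along $y$ via (ET3)$^{\mathrm{op}}$; the difference is controlled by $\del$, and one extracts a preimage in $\EF(A)$ using the relations $(\kap,F(f)(\lam))\sim(E(f)(\kap),\lam)$ in the coend description. The main obstacle, and where I expect to spend the most care, is the passage at $\EF(A)$: one must track exactly which extension appears as the connecting map of the conflation produced by (ET4), verify the compatibilities (i)--(iii) of Definition~\ref{DefExtCat} give the sign/identification needed, and then translate the resulting trivialization back through the nontrivial equivalence relation defining the coend. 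In other words, the diagram chase that is routine in an abelian category must here be replaced by a careful combination of the extriangulated axioms with the explicit generators-and-relations presentation of $\otr$ over $\cat$, and keeping the two languages synchronized is the delicate point.
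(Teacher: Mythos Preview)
Your overall strategy coincides with the paper's: verify the complex property via Claim~\ref{ClaimComplex_sTriangle}, use half-exactness for $F(B)$, use Proposition~\ref{PropTrivialization} for $F(C)$, and at $\EF(A)$ and $\EF(B)$ combine a trivialization with a diagram produced by the extriangulated axioms and the coend relations.

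One concrete correction at $\EF(A)$: your plan to ``correct $\mu'$ to an element of $F(X)$'' via half-exactness on the conflation $X\to X'\to C$ is a detour that does not obviously go through (you have no control over $F(y')(\mu')$). The paper's route is shorter. Realize $\rho$ by $A\ov{p}{\to}X\ov{q}{\to}M\ov{\rho}{\dra}$ and apply \cite[Proposition~3.15]{NP1} to the pushout along $x$; this produces $B\to Y\ov{q'}{\to}M$ realizing $x\sas\rho$ together with $X\to Y\ov{y'}{\to}C$ realizing $p\sas\del$, subject to the single compatibility $y^{\prime\ast}\del+q^{\prime\ast}\rho=0$. The trivialization of $(x\sas\rho,\lam)$ gives $\mu\in F(Y)$ with $F(q')(\mu)=\lam$, and then a one-line computation in the coend finishes it:
\[
\del\cup F(-y')(\mu)=(-y^{\prime\ast}\del)\cup\mu=(q^{\prime\ast}\rho)\cup\mu=\rho\cup F(q')(\mu)=\rho\cup\lam.
\]
No passage back to $F(X)$ is needed. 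Analogously at $\EF(B)$, the paper uses {\rm (ET4)} (not {\rm (ET3)}$\op$) applied to $A\ov{x}{\to}B$ and a realization $B\to X\to M$ of $\sigma$; the resulting compatibility $x\sas\del'=q^{\prime\ast}\sigma$ again yields the preimage $\del'\cup\mu\in\EF(A)$ by a one-line coend identity. So your diagram-chasing instincts are right, but the key is to extract the \emph{scalar} compatibilities (equalities of extensions) from the axioms and feed them directly into the relation $(\kap,F(f)(\lam))\sim(f\uas\kap,\lam)$, rather than trying to lift elements along $F$ of the new conflations.
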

\begin{proof}
This is a complex by Claim~\ref{ClaimComplex_sTriangle}, which is exact at $F(B)$ by assumption.

\medskip
\noindent\und{Exactness at $F(C)$}
Let $\lam\in F(C)$ be any element. Suppose $\del\cup\lam=0$. By Proposition~\ref{PropTrivialization}, this means that $(\del,\lam)\in \Ebb(C,A)\ti F(A)$ has a trivialization, hence there exists $\mu\in F(B)$ such that $F(y)(\mu)=\lam$.

\medskip
\noindent\und{Exactness at $\EF(A)$}
Let $M\in\cat$ be any object, and let $(\rho,\lam)\in\Ebb(M,A)\ti F(M)$ be any element. Suppose that $\EF(x)(\rho\cup\lam)=(x\sas\rho)\cup\lam$ is zero in $\EF(B)$. Realize $\rho$ to obtain an $\sfr$-triangle
$A\ov{p}{\lra}X\ov{q}{\lra}M\ov{\rho}{\dra}$.

By \cite[Proposition~3.15]{NP1}, we obtain a diagram made of morphisms of $\sfr$-triangles
\[
\xy
(-7,7)*+{A}="2";
(7,7)*+{B}="4";
(21,7)*+{C}="6";
(35,7)*+{}="8";
(-7,-7)*+{X}="12";
(7,-7)*+{Y}="14";
(21,-7)*+{C}="16";
(35,-7)*+{}="18";
(-7,-21)*+{M}="22";
(7,-21)*+{M}="24";
(-7,-35)*+{}="32";
(7,-35)*+{}="34";
{\ar^{x} "2";"4"};
{\ar^{y} "4";"6"};
{\ar@{-->}^{\del} "6";"8"};
{\ar_{p} "2";"12"};
{\ar^{} "4";"14"};
{\ar@{=} "6";"16"};
{\ar_{} "12";"14"};
{\ar_{y\ppr} "14";"16"};
{\ar@{-->}_{p\sas\del} "16";"18"};
{\ar_{q} "12";"22"};
{\ar^{q\ppr} "14";"24"};
{\ar@{=} "22";"24"};
{\ar@{-->}_{\rho} "22";"32"};
{\ar@{-->}^{x\sas\rho} "24";"34"};
{\ar@{}|\circlearrowright "2";"14"};
{\ar@{}|\circlearrowright "4";"16"};
{\ar@{}|\circlearrowright "12";"24"};
\endxy
\]
satisfying $y^{\prime\ast}\del+q^{\prime\ast}\rho=0$. Since $(x\sas\rho,\lam)\in \Ebb(M,B)\ti F(M)$ has a trivialization by Proposition~\ref{PropTrivialization}, there exists $\mu\in F(Y)$ such that $F(q\ppr)(\mu)=\lam$. Then $-F(y\ppr)(\mu)=F(-y\ppr)(\mu)\in F(C)$ gives
\[ \del\cup(F(-y\ppr)(\mu))=(-y^{\prime\ast}\del)\cup\mu=(q^{\prime\ast}\rho)\cup\mu=\rho\cup(F(q\ppr)(\mu))=\rho\cup\lam \]
in $\EF(A)$.

\medskip
\noindent\und{Exactness at $\EF(B)$}
Let $M\in\cat$ be any object, and let $(\rho,\lam)\in\Ebb(M,B)\ti F(M)$ be any element. Suppose that $\EF(y)(\rho\cup\lam)=(y\sas\rho)\cup\lam$ is zero in $\EF(C)$. Realize $\rho$ to obtain an $\sfr$-triangle
$B\ov{p}{\lra}X\ov{q}{\lra}M\ov{\rho}{\dra}$.
By {\rm (ET4)}, we obtain a diagram made of morphisms of $\sfr$-triangles
\[
\xy
(-21,7)*+{A}="0";
(-7,7)*+{B}="2";
(7,7)*+{C}="4";
(-21,-7)*+{A}="10";
(-7,-7)*+{X}="12";
(7,-7)*+{Y}="14";
(-7,-21)*+{M}="22";
(7,-21)*+{M}="24";
{\ar^{x} "0";"2"};
{\ar^{y} "2";"4"};
{\ar^{\del}@{-->} "4";(19,7)};
{\ar@{=} "0";"10"};
{\ar_{p} "2";"12"};
{\ar^{p\ppr} "4";"14"};
{\ar_{p\circ x} "10";"12"};
{\ar_{y\ppr} "12";"14"};
{\ar@{-->}^{\del\ppr} "14";(19,-7)};
{\ar_{q} "12";"22"};
{\ar^{q\ppr} "14";"24"};
{\ar@{=} "22";"24"};
{\ar@{-->}_{\rho} "22";(-7,-34)};
{\ar@{-->}^{y\sas\rho} "24";(7,-34)};
{\ar@{}|\circlearrowright "0";"12"};
{\ar@{}|\circlearrowright "2";"14"};
{\ar@{}|\circlearrowright "12";"24"};
\endxy
\]
satisfying
$x\sas\del\ppr=q^{\prime\ast}\rho$.
Since $(y\sas\rho,\lam)\in\Ebb(M,C)\ti F(M)$ has a trivialization, there is $\mu\in F(Y)$ such that $F(q\ppr)(\mu)=\lam$. Then $\del\ppr\cup\mu\in\EF(A)$ gives
\[ \EF(x)(\del\ppr\cup\mu)=(x\sas\del\ppr)\cup\mu=(q^{\prime\ast}\rho)\cup\mu=\rho\cup(F(q\ppr)(\mu))=\rho\cup\lam \]
in $\EF(B)$.
\end{proof}

\bigskip
Now we prove {\rm (1)} in Theorem~\ref{ThmPositiveExtension}.
\begin{proof}[Proof of Theorem~\ref{ThmPositiveExtension} {\rm (1)}]
It suffices to show that
\begin{equation}\label{ExSeqFor_n}
\Ebb^{n-1}(-,C)\ov{\del\cup-}{\lra}\Ebb^n(-,A)\ov{x\sas}{\lra}\Ebb^n(-,B)\ov{y\sas}{\lra}\Ebb^n(-,C)
\end{equation}
is exact for any $n\in\Nbb_{>0}$ by induction.
Remark that the case $n=1$ follows from \cite[Corollary~3.12]{NP1}.

Suppose this was shown for $n-1$. Then in particular $\Ebb^{n-1}(X,-)\in\cat\Mod$ is half-exact on $\CEs$, for any $X\in\cat$. Lemma~\ref{LemHigherExt} applied to $F=\Ebb^{n-1}(X,-)$ shows that
\begin{equation}\label{ExSeqFor_nX}
F(C)\ov{\del\cup-}{\lra}\EF(A)\ov{\EF(x)}{\lra}\EF(B)\ov{\EF(y)}{\lra}\EF(C)
\end{equation}
is exact in $\Mod R$. Since $\Ebb^n\cong\Ebb\dia\Ebb^{n-1}$ in $\cat\Mod\cat$, we have a natural isomorphism $\EF\cong\Ebb^n(X,-)$ in $\cat\Mod$. Through this isomorphism, the sequence $(\ref{ExSeqFor_nX})$ becomes isomorphic to
\[ \Ebb^{n-1}(X,C)\ov{\del\cup-}{\lra}\Ebb^n(X,A)\ov{x\sas}{\lra}\Ebb^n(X,B)\ov{y\sas}{\lra}\Ebb^n(X,C). \]
As $X\in\cat$ was arbitrary, this shows the exactness of $(\ref{ExSeqFor_n})$.
\end{proof}

Let us see relations with projective deflations. 
In the rest, to make the notations concise, we will often write 
\[ \del\ssh=\del\cup-\colon\Ebb^n(-,C)\to\Ebb^{n+1}(-,A)\ \ \text{and}\ \ 
\del\ush=-\cup\del\colon\Ebb^n(A,-)\to\Ebb^{n+1}(C,-) \]
for any $\del\in\Ebb(C,A)$ and any $n\in\Nbb_{>0}$.

\begin{corollary}\label{CorDominant_fromThm}
Let $F\ov{f}{\lra}G\ov{g}{\lra}C\ov{\vt}{\dra}$ be a dominant $\sfr$-triangle. Then for any $n\in\Nbb_{>0}$,
\begin{equation}\label{SeqPosi}
\Ebb^{n-1}(G,-)\ov{f\uas}{\lra}\Ebb^{n-1}(F,-)\ov{\vt\ush}{\lra}\Ebb^n(C,-)\to0
\end{equation}
is exact in $\cat\Mod$. Namely $\Ebb^n(C,-)$ together with $\vt\ush$ gives a cokernel of $\Ebb^{n-1}(G,-)\ov{f\uas}{\lra}\Ebb^{n-1}(F,-)$.
\end{corollary}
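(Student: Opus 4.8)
The plan is to derive Corollary~\ref{CorDominant_fromThm} directly from Theorem~\ref{ThmPositiveExtension}~(2) together with the characterization of dominant $\sfr$-triangles given in Definition~\ref{DefDominant}. First I would apply the long exact sequence of Theorem~\ref{ThmPositiveExtension}~(2) to the $\sfr$-triangle $F\ov{f}{\lra}G\ov{g}{\lra}C\ov{\vt}{\dra}$, which yields, in $\cat\Mod$,
\[
\cdots\to\Ebb^{n-1}(G,-)\ov{f\uas}{\lra}\Ebb^{n-1}(F,-)\ov{\vt\ush}{\lra}\Ebb^n(C,-)\ov{g\uas}{\lra}\Ebb^n(G,-)\to\cdots
\]
for every $n\in\Nbb_{>0}$ (the connecting map $\Ebb^{n-1}(F,-)\to\Ebb^n(C,-)$ is $\vt\ush=-\cup\vt$ by the notational convention introduced just before the statement). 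So the sequence $(\ref{SeqPosi})$ is automatically exact at $\Ebb^{n-1}(F,-)$, and it only remains to prove that $\vt\ush$ is an epimorphism, i.e. that $g\uas\colon\Ebb^n(C,-)\to\Ebb^n(G,-)$ is zero.

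The key point is that $g$ is a projective morphism, so $\Ebb(g,-)=g\uas\colon\Ebb(C,-)\to\Ebb(G,-)$ vanishes already at the level $n=1$. I would then promote this to all $n$ using the monoidal description $\Ebb^n\cong\Ebb^{n-1}\dia\Ebb$ from Definition~\ref{DefBicatB} and Remark~\ref{RemFn}. Concretely, $\Ebb^n(-,Y)=\Ebb^{n-1}(-,Y)\dia\Ebb$ as a right $\cat$-module, and the map induced by $g$ in the first argument is $\Ebb^{n-1}(-,Y)\dia\Ebb(g,-)$; since $\Ebb(g,-)=0$ and $-\dia\Ebb^{n-1}(-,Y)$ (equivalently, applying the additive bifunctor $\dia$) sends the zero morphism to the zero morphism, we get $g\uas=0$ on $\Ebb^n$ for all $n\ge 1$. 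Alternatively, and perhaps more cleanly, one observes that $\Ebb^n(C,-)$ is built from $\Ebb(C,-)$ by applying $\Ebb^{n-1}\dia(-)$, so any morphism of the first variable that kills $\Ebb(C,-)\to\Ebb(G,-)$ also kills its image under this functor; either way the vanishing of $g\uas$ on $\Ebb^n$ is a formal consequence of functoriality of $\dia$ together with $g$ being projective.

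Combining the two observations: exactness of the long sequence at $\Ebb^{n-1}(F,-)$ gives $\ker\vt\ush=\im f\uas$, and $\vt\ush$ is surjective because its cokernel embeds (via the next term of the long exact sequence) into $\ker(g\uas\colon\Ebb^n(C,-)\to\Ebb^n(G,-))=\Ebb^n(C,-)$, which is all of $\Ebb^n(C,-)$ since $g\uas=0$; more directly, exactness at $\Ebb^n(C,-)$ says $\im\vt\ush=\ker g\uas=\Ebb^n(C,-)$. Hence $(\ref{SeqPosi})$ is exact, and the final clause — that $\Ebb^n(C,-)$ together with $\vt\ush$ is a cokernel of $f\uas$ — is just the reformulation of right-exactness of $(\ref{SeqPosi})$. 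I do not anticipate a genuine obstacle here; the only mild subtlety is bookkeeping the passage from $g\uas=0$ on $\Ebb=\Ebb^1$ to $g\uas=0$ on $\Ebb^n$, which should be handled by a one-line induction using $\Ebb^n\cong\Ebb^{n-1}\dia\Ebb$ and additivity of $\dia$ in each variable. One should also double-check the direction of the connecting morphism so that it is indeed $\vt\ush$ and lands in $\Ebb^n(C,-)$ with the sign conventions of Lemma~\ref{LemHigherExt}, but this is exactly the content of the notation fixed immediately before the corollary.
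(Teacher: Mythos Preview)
Your proof is correct and follows essentially the same approach as the paper: apply the long exact sequence of Theorem~\ref{ThmPositiveExtension}~(2) and observe that $g\uas=\Ebb^n(g,-)=0$ for all $n\ge 1$ because $g$ is a projective morphism. The paper dispatches the vanishing of $g\uas$ on $\Ebb^n$ with the phrase ``by definition'' (meaning $\Ebb^n=\Ebb^{n-1}\dia\Ebb$, so $\Ebb^n(g,-)$ factors through $\Ebb(g,-)=0$), which is exactly the bookkeeping you spell out.
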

\begin{proof}
This is immediate from Theorem~\ref{ThmPositiveExtension}, since $g\uas=\Ebb^n(g,-)=0$ holds for any $n\in\Nbb_{>0}$ by definition.
\end{proof}

If $\CEs$ has enough projective morphisms, then $\{\Ebb^n,\del\ssh\}_{n\ge0}$ is universal among sequences of functors $\{E^n,\ep^n\}_{n\ge0}$ of similar kind. More precisely, the following holds (Below, $\{E^n\}$ is an arbitrary sequence, for which we do not require $E^n=E^{\dia n}$. For a more general treatment of such sequences, see Subsection~\ref{Subsection_PA}).
\begin{proposition}\label{PropUnivPositiveDefl}
Assume that $\CEs$ has enough projective morphisms.
Let $E^{\mr}=\{E^n\}_{n\ge0}$ be a sequence of objects $E^n\in\cat\Mod\cat$ satisfying $E^0=\cat(-,-)$, equipped with a collection of morphisms $\ep^n_{\del}\in\cat(E^n(A,-),E^{n+1}(C,-))$ in $\cat\Mod$ for any $\Ebb$-extension $\del\in\Ebb(C,A)$ and $n\ge0$, which is natural with respect to morphisms of $\Ebb$-extensions.
The following holds.
\begin{enumerate}
\item For any $\sfr$-triangle $A\ov{x}{\lra}B\ov{y}{\lra}C\ov{\del}{\dra}$, there is an associated complex
\begin{equation}\label{CPX0}
\cdots\ov{\ep^{n-1}_{\del}}{\lra}E^{n}(C,-)\ov{E^n(y,-)}{\lra}E^{n}(B,-)\ov{E^n(x,-)}{\lra}E^{n}(A,-)\ov{\ep^n_{\del}}{\lra}\cdots
\end{equation}
in $\cat\Mod$ starting from $\cat(C,-)$ and continuing to the right.
\item The pair $\Ebb^{\mr}=\{\Ebb^n\}_{n\ge0}$ equipped with $\del\ush$ is universal among such pairs, in the sense that there exists a sequence of natural transformations
$\{\vp^n\colon \Ebb^n\to E^n\}_{n\ge0}$ which satisfies the following conditions.
\begin{itemize}
\item[{\rm (i)}] $\vp^0=\id$.
\item[{\rm (ii)}] For any $n\ge0$, the following diagram
\[
\xy
(-12,7)*+{\Ebb^{n}(A,-)}="0";
(12,7)*+{\Ebb^{n+1}(C,-)}="2";
(-12,-7)*+{E^{n}(A,-)}="4";
(12,-7)*+{E^{n+1}(C,-)}="6";
{\ar^{\del\ush} "0";"2"};
{\ar_{\vp^{n}_{A,-}} "0";"4"};
{\ar^{\vp^{n+1}_{C,-}} "2";"6"};
{\ar_{\ep_{\del}^n} "4";"6"};
{\ar@{}|\circlearrowright "0";"6"};
\endxy
\]
is commutative in $\cat\Mod$ for any $\del\in\Ebb(C,A)$.
\end{itemize}
\item Suppose that $E^{\mr}$ also satisfies $E^{n}(p,-)=0$ for any $n>0$ and any projective deflation $p$, and that it makes $(\ref{CPX0})$ exact for any $\sfr$-triangle. Then $\vp^n\colon \Ebb^n\to E^n$ is an isomorphism for any $n\ge0$.
\end{enumerate}
Dually for the case with enough injective morphisms.
\end{proposition}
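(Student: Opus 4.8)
The plan is to treat the three items in turn; only (2) and (3) use the existence of enough projective morphisms, the rest being formal. For (1), I would first note that $\ep^n_0=0$ for the split extension $0\in\Ebb(C,A)$: the pair $(0_A,\id_C)$ is a morphism $0\to0$ of $\Ebb$-extensions, so naturality of $\ep^{\mr}$ together with additivity of $E^n$ forces $\ep^n_0=\ep^n_0\circ E^n(0_A,-)=0$. Now let $A\ov{x}{\lra}B\ov{y}{\lra}C\ov{\del}{\dra}$ be an $\sfr$-triangle. Since $x\sas\del=0$ and $y\uas\del=0$ (standard properties of conflations, e.g.\ from \cite[Corollary~3.12]{NP1}), the pairs $(x,\id_C)\colon\del\to x\sas\del=0$ and $(\id_A,y)\colon y\uas\del=0\to\del$ are morphisms of $\Ebb$-extensions, and the corresponding naturality squares for $\ep^{\mr}$ read $\ep^n_\del\circ E^n(x,-)=\ep^n_0=0$ and $E^{n+1}(y,-)\circ\ep^n_\del=\ep^n_0=0$. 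Combined with $E^n(x,-)\circ E^n(y,-)=E^n(y\circ x,-)=0$, this shows that $(\ref{CPX0})$ is a complex.

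For (2), I would build the $\vp^n$ by induction on $n$ starting from $\vp^0=\id$, by dimension shifting. Using enough projective morphisms, fix for each $C\in\cat$ a dominant $\sfr$-triangle $F_C\ov{f_C}{\lra}G_C\ov{g_C}{\lra}C\ov{\vt_C}{\dra}$ with $g_C$ a projective deflation. By Corollary~\ref{CorDominant_fromThm}, $(\vt_C)\ush$ presents $\Ebb^{n+1}(C,-)$ as the cokernel in $\cat\Mod$ of $f_C\uas\colon\Ebb^n(G_C,-)\to\Ebb^n(F_C,-)$, while (1) gives $\ep^n_{\vt_C}\circ E^n(f_C,-)=0$. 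Since $\vp^n$ is already a natural transformation, $\ep^n_{\vt_C}\circ\vp^n_{F_C,-}$ annihilates the image of $f_C\uas$ and therefore factors uniquely in $\cat\Mod$ through $(\vt_C)\ush$; this factorization is how I would \emph{define} $\vp^{n+1}_{C,-}$. Condition (i) and the degree-zero instance of (ii) are then immediate from the construction.

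The two points that require work are that $\vp^{n+1}$ is natural in the first variable and that (ii) holds in all degrees; both reduce to making the assignment $C\mapsto(F_C\ov{f_C}{\lra}G_C\ov{g_C}{\lra}C\ov{\vt_C}{\dra})$ functorial up to morphisms of $\Ebb$-extensions. For $c\colon C\to C'$, the obstruction $g_C\uas(c\uas\vt_{C'})$ to lifting $c$ along $g_{C'}$ vanishes because $g_C$ is a projective morphism, so $c$ lifts to some $\bar c\colon G_C\to G_{C'}$, and then $(\mathrm{ET3})^{\mathrm{op}}$ yields a morphism of $\sfr$-triangles, in particular a morphism of $\Ebb$-extensions $(\tilde c,c)\colon\vt_C\to\vt_{C'}$. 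One then uses the naturality of the connecting morphisms of $\Ebb^{\mr}$ with respect to such morphisms, i.e.\ $\Ebb^{n+1}(c,-)\circ(\vt_{C'})\ush=(\vt_C)\ush\circ\Ebb^n(\tilde c,-)$, which is a direct computation from the coend relation defining $\Ebb^{\mr}$. Precomposing the desired naturality square for $\vp^{n+1}$ with the epimorphism $(\vt_{C'})\ush$ and invoking this identity, the defining relation for $\vp^{n+1}$, the naturality of $\vp^n$ at $\tilde c$, and the naturality of $\ep^{\mr}$ at $(\tilde c,c)$, both composites collapse to $\ep^n_{\vt_C}\circ E^n(\tilde c,-)\circ\vp^n_{F_{C'},-}$. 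For (ii), realizing $\del\in\Ebb(C,A)$ by $A\ov{x}{\lra}B\ov{y}{\lra}C\ov{\del}{\dra}$, one lifts $g_C$ along $y$ (again $g_C\uas\del=0$ since $g_C$ is projective), applies $(\mathrm{ET3})^{\mathrm{op}}$ to get $(a,\id_C)\colon\vt_C\to\del$, and concludes using $\del\ush=(\vt_C)\ush\circ\Ebb^n(a,-)$ and the analogous identity for $\ep^{\mr}$.

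For (3), I would re-run the induction. Because $g_C$ is a projective deflation, $\Ebb^n(g_C,-)=0$ for all $n\ge1$ (as $\Ebb(g_C,-)=0$), and $E^n(g_C,-)=0$ for $n\ge1$ by hypothesis; together with Theorem~\ref{ThmPositiveExtension} and the exactness of $(\ref{CPX0})$ this makes, for each $n\ge1$, both the sequence $0\to\Ebb^n(G_C,-)\ov{f_C\uas}{\lra}\Ebb^n(F_C,-)\ov{(\vt_C)\ush}{\lra}\Ebb^{n+1}(C,-)\to0$ and its $E^{\mr}$-analogue exact, fitting into a commutative ladder with vertical maps $\vp^n_{G_C,-},\vp^n_{F_C,-},\vp^{n+1}_{C,-}$. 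If $\vp^n$ is an isomorphism, the five lemma forces $\vp^{n+1}_{C,-}$ to be an isomorphism for all $C$, hence $\vp^{n+1}$ is; the passage from $\vp^0=\id$ to $\vp^1$ is the degenerate case in which $\vp^1_{C,-}$ is the map induced on cokernels of the single morphism $\cat(f_C,-)$. The dual statements for enough injective morphisms follow by applying everything to $\cat\op$. The main obstacle is not any single computation but keeping the dimension-shifting construction coherent — showing $\vp^{n+1}$ is a well-defined natural transformation independent of the auxiliary choices — which is precisely where the functoriality of the chosen dominant triangles via lifting and $(\mathrm{ET3})^{\mathrm{op}}$, and the naturality of the connecting morphisms of $\Ebb^{\mr}$, come in.
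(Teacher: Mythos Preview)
Your proof is correct and follows essentially the same approach as the paper's. The paper's treatment of {\rm (2)} is brief (``This can be shown in the same way as for the universality of usual derived functors on abelian or exact categories''), giving only the construction of $\vp^n_{C,-}$ via the cokernel property and declaring the rest ``straightforward''; you have filled in exactly those details (lifting along projective deflations and invoking {\rm (ET3)}$^{\mathrm{op}}$ to get naturality and condition {\rm (ii)}). For {\rm (3)} there is a mild difference in emphasis: the paper observes that under the extra hypotheses $E^{\mr}$ enjoys the \emph{same} universality as $\Ebb^{\mr}$, whence the comparison map is an isomorphism by abstract nonsense; you instead run the induction directly with a five-lemma/ladder argument on the short exact sequences $0\to\Ebb^n(G_C,-)\to\Ebb^n(F_C,-)\to\Ebb^{n+1}(C,-)\to0$ and their $E^{\mr}$-analogues (valid since $\Ebb^n(g_C,-)=E^n(g_C,-)=0$ for $n\ge1$). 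Both arguments are standard and amount to the same thing.
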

\begin{proof}
{\rm (1)} The naturality with respect to morphisms of extensions show that $\ep_{(a\sas c\uas\del)}^n\colon E^n(A\ppr,-)\to E^{n+1}(C\ppr,-)$ is equal to the composition of 
\[ E^n(A\ppr,-)\ov{E^n(a,-)}{\lra}E^n(A,-)\ov{\ep_{\del}^n}{\lra}E^{n+1}(C,-)\ov{E^{n+1}(c,-)}{\lra}E^{n+1}(C\ppr,-) \]
for any $\del\in\Ebb(C,A),a\in\cat(A,A\ppr),c\in\cat(C\ppr,C)$ and any $n\ge0$.
This shows that $\ep_0^n=0$ holds for $0\in\Ebb(C,A)$, and that $(\ref{CPX0})$ indeed becomes a complex since we have $E^{n+1}(y,-)\circ \ep_{\del}^n=\ep_{(y\uas\del)}^n=0$ and $\ep_{\del}^n\circ E^n(x,-)=\ep_{(x\sas\del)}^n=0$. 

{\rm (2)} This can be shown in the same way as for the universality of usual derived functors on abelian or exact categories. Since we will give a similar proof in detail for {\it negative} extensions later in Lemma~\ref{LemUniversality} in Subsection~\ref{Subsection_Uni}, here we only give a brief explanation of how to construct $\vp^n$ inductively.

For $n=0$, we just put $\vp^0=\id$. Let $n>0$, and suppose that we have constructed $\vp^k$ for $k<n$ satisfying {\rm (i),(ii)}. For any $C\in\cat$, there exists a dominant $\sfr$-triangle $F\ov{f}{\lra}G\ov{g}{\lra}C\ov{\vt}{\dra}$ by assumption.
Then by the hypothesis of induction and by the universality of cokernel, there exists $\vp^n_{C,-}\colon \Ebb^n(C,-)\to E^n(C,-)$ unique so that
\[
\xy
(-37,7)*+{\Ebb^{n-1}(G,-)}="2";
(-9,7)*+{\Ebb^{n-1}(F,-)}="4";
(18,7)*+{\Ebb^{n}(C,-)}="6";
(35,7)*+{0}="8";
(-37,-7)*+{E^{n-1}(G,-)}="12";
(-9,-7)*+{E^{n-1}(F,-)}="14";
(18,-7)*+{E^{n}(C,-)}="16";
{\ar^{f\uas} "2";"4"};
{\ar^{\vt\ush} "4";"6"};
{\ar^{} "6";"8"};
{\ar_{\vp^{n-1}_{G,-}} "2";"12"};
{\ar_{\vp^{n-1}_{F,-}} "4";"14"};
{\ar^{\vp^{n}_{C,-}} "6";"16"};
{\ar_{E^{n-1}(f,-)} "12";"14"};
{\ar_{\ep_{\vt}^{n-1}} "14";"16"};
{\ar@{}|\circlearrowright "2";"14"};
{\ar@{}|\circlearrowright "4";"16"};
\endxy
\]
becomes commutative. It is straightforward to check that $\vp^n=\{\vp_{C,-}^n\}_{C\in\cat}$ gives a natural transformation $\vp^n\colon \Ebb^n\to E^n$ and that $\vp=\{\vp^n\}_{n\ge 0}$ satisfies the required conditions {\rm (i),(ii)}.

{\rm (3)} This is obvious, since the above proof suggests that $E^{\mr}$ possesses the same universality as $\Ebb^{\mr}$. 
\end{proof}

\begin{corollary}\label{CorPosExtEnoughProj}
If $\CEs$ has enough projective objects and enough injective objects, then $\Ebb^n$ for $n>0$ are isomorphic to those in \cite[Section~5.1]{HLN} or in \cite[Section~5.1]{LN}.
\end{corollary}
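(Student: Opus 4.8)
\textbf{Plan of proof for Corollary~\ref{CorPosExtEnoughProj}.}
The strategy is to identify the $\Ebb^n$ defined here via coends with those of \cite[Section~5.1]{HLN} by showing both sides satisfy the same universal property provided by Proposition~\ref{PropUnivPositiveDefl}. First I would invoke Remark~\ref{RemProjDefl}: since $\CEs$ has enough projective objects, it has enough projective morphisms, so Proposition~\ref{PropUnivPositiveDefl} applies to the sequence $\Ebb^{\mr}=\{\Ebb^n\}_{n\ge0}$ with the connecting morphisms $\del\ush$. Dually, having enough injective objects gives enough injective morphisms, so the dual statement of Proposition~\ref{PropUnivPositiveDefl} applies to $\{\Ebb^n\}$ with connecting morphisms $\del\ssh$ in the first variable.

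Next I would recall the defining properties of the $\Ebb^n_{\mathrm{HLN}}$ of \cite[Section~5.1]{HLN}: they are built from projective resolutions in one argument (resp.\ injective resolutions in the other), so that $\Ebb^n_{\mathrm{HLN}}(C,-)$ is computed as a cohomology of $\cat(P^{\mr},-)$ for a projective resolution $P^{\mr}\to C$. From this construction one reads off that (a) $\Ebb^0_{\mathrm{HLN}}=\cat(-,-)$; (b) each $\sfr$-triangle induces the long exact sequence $(\ref{CPX0})$ with $E^n=\Ebb^n_{\mathrm{HLN}}$ and the Yoneda-type connecting maps $\ep^n_{\del}$ — this is precisely \cite[Proposition~5.2]{HLN} or the analogous statement there; and (c) $\Ebb^n_{\mathrm{HLN}}(p,-)=0$ for any projective deflation $p$ and any $n>0$, since a projective deflation $p$ being a projective morphism means $p\uas$ kills $\Ebb=\Ebb^1$, and the higher groups $\Ebb^n_{\mathrm{HLN}}(p,-)$ factor (via dimension shift along a projective resolution) through $\Ebb^1_{\mathrm{HLN}}$ of the relevant syzygies, forcing vanishing; more directly, when $p$ factors through a projective object $P$, $\cat(P,-)$ is an exact functor so all higher cohomologies of the mapping cone vanish. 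The naturality of $\ep^n_{\del}$ in $\del$ is built into the Yoneda construction of \cite{HLN}.

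With (a), (b), (c) verified, Proposition~\ref{PropUnivPositiveDefl}(3) applies verbatim with $E^{\mr}=\Ebb^{\mr}_{\mathrm{HLN}}$, yielding a sequence of isomorphisms $\vp^n\colon\Ebb^n\overset{\sim}{\to}\Ebb^n_{\mathrm{HLN}}$ compatible with the connecting maps, for all $n\ge0$. The case of \cite[Section~5.1]{LN} is handled identically using the injective-resolution construction and the dual of Proposition~\ref{PropUnivPositiveDefl}; alternatively, one notes that \cite{LN} and \cite{HLN} already agree under the hypothesis of enough projectives and injectives (this is their balance statement), so the isomorphism with one of them suffices. Finally, for completeness one should check that the isomorphism is an isomorphism of bifunctors, i.e.\ natural in the \emph{other} argument as well: this follows because the $\vp^n$ constructed in Proposition~\ref{PropUnivPositiveDefl}(2) are natural transformations of bifunctors $\cat\op\times\cat\to\Mod R$ by construction (they are defined levelwise via cokernels of maps of bifunctors).

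\textbf{Main obstacle.} The technical heart is verifying property (c) — that the HLN higher extensions vanish on projective deflations — together with the matching of the connecting morphisms: one must check that the abstract Yoneda connecting map $\ep^n_{\del}$ appearing in the long exact sequence of \cite{HLN} is the \emph{same} (up to the natural identification) as cup product with $\del$, so that the hypotheses of Proposition~\ref{PropUnivPositiveDefl}(3) are literally met rather than merely "morally" so. This is bookkeeping about two a priori different but standard constructions of connecting homomorphisms and should present no real difficulty, but it is where care is needed.
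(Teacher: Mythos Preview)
Your proposal is correct and takes essentially the same approach as the paper: both arguments consist of applying Proposition~\ref{PropUnivPositiveDefl}~(3) (and its dual) to the sequence $\Ebb^{\mr}_{\mathrm{HLN}}$. The paper's proof is literally one line invoking that proposition, while you spell out the verification of its hypotheses for the HLN functors.

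One minor over-complication in your ``main obstacle'': you do \emph{not} need to check that the HLN connecting maps $\ep^n_{\del}$ agree with cup product by $\del$. Proposition~\ref{PropUnivPositiveDefl}~(3) only requires that $E^{\mr}=\Ebb^{\mr}_{\mathrm{HLN}}$ be equipped with \emph{some} connecting morphisms natural in $\del$, vanishing on projective deflations, and producing long exact sequences --- then universality furnishes the isomorphism $\vp^{\mr}$ automatically, and it is this $\vp^{\mr}$ that intertwines the two a priori different connecting maps. So the bookkeeping you flag is handled by the universal property itself rather than needing separate verification.
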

\begin{proof}
This is a particular case of Proposition~\ref{PropUnivPositiveDefl} {\rm (3)} and its dual.
\end{proof}

\begin{remark}\label{RemInsteadPositive}
As the above argument suggests, we may also harmlessly define functors $\Ebb^n$ for $n>0$ equipped with $\del\ush$ for any essentially small extriangulated category $\CEs$ with enough projective morphisms, inductively by
\[ \Ebb^n(C,-):=\Cok\big(\Ebb^{n-1}(G,-)\ov{f\uas}{\lra}\Ebb^{n-1}(F,-)\big) \]
by choosing dominant $\sfr$-triangle $F\ov{f}{\lra}G\ov{g}{\lra}C\ov{\vt}{\dra}$ for each $C\in\cat$. The usual argument (similar to the above proposition) shows that this is well-defined up to unique natural isomorphisms, independently of the choices of dominant $\sfr$-triangles.
Dually for the case with enough injective morphisms.
This alternative definition is essentially the iterative construction of \emph{(co)satellites} of $\Hom$ (considered as a functor in the first, resp. in the second argument), see \cite{CE, M, MR} and references therein.
\end{remark}

Proposition~\ref{PropUnivPositiveDefl} also implies the following (the reader might find it interesting to treat the case $n=2$ by direct computation):
\begin{corollary} Let $\cat$ be a triangulated category with shift functor $\Sigma$. View $\cat$ as an extriangulated category with $\Ebb=\cat(-,\Sigma-)$. Then, for any $n\geq 0$, we have $\Ebb^n \cong \cat(-,\Sigma^n-)$.
\end{corollary}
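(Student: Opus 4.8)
The plan is to apply Proposition~\ref{PropUnivPositiveDefl}(3) to the sequence $E^n := \cat(-,\Sigma^n-)$ with $E^0=\cat(-,-)$, and to exhibit natural connecting morphisms making it a half-exact $\delta$-functor vanishing on projective deflations. First I would check that a triangulated category, viewed as extriangulated via $\Ebb=\cat(-,\Sigma-)$, has enough projective morphisms: indeed every object is projective (since $\Ebb(C,-)=\cat(C,\Sigma-)$ need not vanish, but the point is rather that the extriangulated category has enough projective \emph{objects}\dots). Here a cleaner route is to recall that in the triangulated case the projective objects are precisely the zero objects, so there are \emph{not} enough projective objects; however, for any $C$ the morphism $0\to C$ fits in the triangle $C\to 0\to \Sigma^{-1}\cdot 0\cdots$ — no. The correct observation is: in a triangulated category every morphism is \emph{both} an inflation and a deflation, and $\id_C\colon C\to C$ is a deflation that is a projective morphism if and only if $\cat(C,\Sigma-)=0$. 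So instead I would bypass ``enough projective morphisms'' entirely and argue directly.

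The honest plan: induct on $n$, using the long exact sequence of Theorem~\ref{ThmPositiveExtension}(2) in the second argument and the rotation axiom of triangulated categories. For $n=0,1$ the statement is the definition ($\Ebb^0=\cat(-,-)$ and $\Ebb^1=\Ebb=\cat(-,\Sigma-)$). Assume $\Ebb^{n-1}\cong\cat(-,\Sigma^{n-1}-)$ as bifunctors, compatibly with connecting maps. Fix $A\in\cat$ and complete $0\to \Sigma^{-1}B\to \Sigma^{-1}C \to A\xrightarrow{} B$ — more precisely, for the extriangle $A\xrightarrow{x}B\xrightarrow{y}C\xdashrightarrow{\del}$ coming from a triangle $A\to B\to C\to\Sigma A$, the object $C$ together with $\del$ realizes a genuine distinguished triangle. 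By the long exact sequence of Theorem~\ref{ThmPositiveExtension}(2), applied with the roles arranged so that $\Ebb^n(C,-)$ appears as a cokernel, I get
\[
\Ebb^{n-1}(B,-)\xrightarrow{}\Ebb^{n-1}(A,-)\xrightarrow{\del\ush}\Ebb^{n}(C,-)\xrightarrow{}\Ebb^{n}(B,-).
\]
Substituting the inductive isomorphism $\Ebb^{n-1}(-,?)\cong\cat(-,\Sigma^{n-1}?)$, the first map becomes $\cat(B,\Sigma^{n-1}-)\to\cat(A,\Sigma^{n-1}-)$, i.e.\ precomposition with $x$. On the other hand, rotating the original triangle to $\Sigma^{-1}C\to A\xrightarrow{x} B\to C$ and applying $\cat(-,\Sigma^{n-1}-)$ shows that $\operatorname{Cok}(\cat(B,\Sigma^{n-1}-)\to\cat(A,\Sigma^{n-1}-))$ is identified with (a submodule of / the image into) $\cat(\Sigma^{-1}C,\Sigma^{n-1}-)=\cat(C,\Sigma^{n}-)$, and in fact the relevant four-term exact sequence from the rotated triangle matches the displayed one term by term. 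Chasing this identification gives a natural isomorphism $\Ebb^n(C,-)\cong\cat(C,\Sigma^n-)$, natural in $C$, and by construction compatible with $\del\ush$.

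Alternatively, and more in the spirit of the section, one can first verify that $E^n=\cat(-,\Sigma^n-)$ together with the connecting morphisms $\ep^n_\del\colon\cat(A,\Sigma^n-)\to\cat(C,\Sigma^{n+1}-)$ induced by the triangle forms a sequence satisfying the hypotheses of Proposition~\ref{PropUnivPositiveDefl}: $E^0=\cat(-,-)$, naturality in morphisms of $\Ebb$-extensions is the functoriality of rotation of triangles, and the complex \eqref{CPX0} is exact because it is (a splicing of shifts of) the long exact $\Hom$-sequences of distinguished triangles. The vanishing condition $E^n(p,-)=0$ for projective deflations $p$ is the delicate point and is exactly why I prefer the direct inductive argument above: in a triangulated category projective morphisms need not be well-behaved, so to invoke part (3) one would first need the category to have enough projective morphisms, which fails. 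Hence the main obstacle is precisely this mismatch, and the resolution is to run the induction by hand using Theorem~\ref{ThmPositiveExtension}(2) and the rotation axiom, rather than citing Proposition~\ref{PropUnivPositiveDefl}(3). The bookkeeping of connecting morphisms — making sure the isomorphisms $\Ebb^n\cong\cat(-,\Sigma^n-)$ intertwine $\del\ush$ with $\ep^n_\del$ up to the expected sign — is routine but must be carried out carefully.
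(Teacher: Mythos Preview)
Your central claim—that a triangulated category, viewed as extriangulated, fails to have enough projective morphisms—is wrong, and this is what derails the proposal. In a triangulated category a morphism $f$ is projective iff $\cat(f,\Sigma-)=0$; since $\Sigma$ is an autoequivalence this forces $\cat(f,-)=0$, hence $f=0$. Conversely the zero morphism $0\colon 0\to C$ is a deflation for every $C$, via the distinguished triangle $\Sigma^{-1}C\to 0\to C\ov{\id}{\to} C$. Thus every object admits a projective deflation, and Proposition~\ref{PropUnivPositiveDefl} applies. The paper's proof is then one line: set $E^n=\cat(-,\Sigma^n-)$ with the connecting maps coming from the triangulated structure; the long $\Hom$-sequence gives exactness of \eqref{CPX0}, and $E^n(p,-)=0$ holds trivially since every projective deflation $p$ is a zero morphism. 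Part~(3) of the proposition then yields $\Ebb^n\cong E^n$.

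Your fallback inductive argument also has a gap. For a generic $\sfr$-triangle $A\to B\to C\ov{\del}{\dra}$, the exact sequence
\[
\Ebb^{n-1}(B,-)\to\Ebb^{n-1}(A,-)\ov{\del\ush}{\lra}\Ebb^n(C,-)\to\Ebb^n(B,-)
\]
does not determine $\Ebb^n(C,-)$ from the inductive hypothesis on $\Ebb^{n-1}$ alone, because the last term $\Ebb^n(B,-)$ is also unknown at this stage. Matching it against the rotated $\Hom$-sequence only shows that the images of $\del\ush$ in the two sequences agree, not that the full middle terms do. The induction goes through cleanly precisely when you choose the triangle with $B=0$, i.e.\ the dominant triangle $\Sigma^{-1}C\to 0\to C$, which gives $\Ebb^n(C,-)\cong\Ebb^{n-1}(\Sigma^{-1}C,-)\cong\cat(\Sigma^{-1}C,\Sigma^{n-1}-)\cong\cat(C,\Sigma^n-)$. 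But that is exactly the mechanism behind Proposition~\ref{PropUnivPositiveDefl}, so you would be reproving it by hand rather than avoiding it.
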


\begin{lemma}\label{LemProjDefl}
For any $C\in\cat$, the following are equivalent.
\begin{enumerate}
\item There is a projective deflation $G\ov{g}{\lra}C$.
\item There is a dominant $\Ebb$-extension $\vt\in\Ebb(C,F)$.
\item There is a dominant $\sfr$-triangle $F\ov{f}{\lra}G\ov{g}{\lra}C\ov{\vartheta}{\dra}$.
\item $\Ebb(C,-)$ is finitely presented.
\item $\Ebb(C,-)$ is finitely generated.
\end{enumerate}
\end{lemma}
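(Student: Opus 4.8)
The plan is to prove the cyclic chain of implications $(1)\Rightarrow(3)\Rightarrow(2)\Rightarrow(1)$, together with $(2)\Rightarrow(4)\Rightarrow(5)\Rightarrow(1)$, using the material already developed in Subsection~\ref{Subsection_PD} and Corollary~\ref{CorDominant_fromThm}. Several of these implications are essentially definitional: $(1)\Rightarrow(3)$ holds because a projective deflation $G\ov{g}{\lra}C$ is, by definition of deflation, part of some $\sfr$-triangle $F\ov{f}{\lra}G\ov{g}{\lra}C\ov{\vt}{\dra}$, and this $\sfr$-triangle is dominant precisely because $g$ is a projective deflation. Likewise $(3)\Rightarrow(2)$ is immediate: the extension $\vt$ realized by a dominant $\sfr$-triangle is dominant by the very definition of \lq\lq dominant extension", and $(2)\Rightarrow(3)$ is the observation that any dominant $\vt\in\Ebb(C,F)$ is, by definition, realized by a dominant $\sfr$-triangle; finally $(3)\Rightarrow(1)$ just reads off that the deflation $g$ appearing in a dominant $\sfr$-triangle is a projective deflation. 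So $(1),(2),(3)$ are interchangeable almost by unwinding Definition~\ref{DefProjDefl} and Definition~\ref{DefDominant}.

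The substantive content is the equivalence of these with the finiteness conditions $(4)$ and $(5)$ on $\Ebb(C,-)\in\cat\Mod$. For $(2)\Rightarrow(4)$: given a dominant $\sfr$-triangle $F\ov{f}{\lra}G\ov{g}{\lra}C\ov{\vt}{\dra}$, apply the case $n=1$ of Corollary~\ref{CorDominant_fromThm}, which gives an exact sequence
\[
\cat(G,-)\ov{f\uas}{\lra}\cat(F,-)\ov{\vt\ush}{\lra}\Ebb(C,-)\to0
\]
in $\cat\Mod$. This exhibits $\Ebb(C,-)$ as the cokernel of a morphism between representable functors $\cat(G,-)$ and $\cat(F,-)$, hence $\Ebb(C,-)$ is finitely presented, proving $(4)$. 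The implication $(4)\Rightarrow(5)$ is trivial, since every finitely presented module is finitely generated.

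The one genuinely nontrivial step — and the place I expect the main obstacle — is $(5)\Rightarrow(1)$, or equivalently $(5)\Rightarrow(2)$: from the mere existence of an epimorphism $\cat(F,-)\twoheadrightarrow\Ebb(C,-)$ in $\cat\Mod$ one must produce an actual projective deflation onto $C$ (equivalently, a dominant extension). By the Yoneda lemma, such an epimorphism corresponds to an element $\vt\in\Ebb(C,F)$ such that $\vt\ush\colon\cat(F,-)\to\Ebb(C,-)$ is an epimorphism of functors; but that is exactly the statement that $\vt$ is dominant (Definition~\ref{DefDominant}), giving $(2)$. Then realizing $\vt$ by an $\sfr$-triangle $F\ov{f}{\lra}G\ov{g}{\lra}C\ov{\vt}{\dra}$, the map $g$ is a deflation, and I need to check it is projective, i.e. $g\uas=\Ebb(g,-)=0$. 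This follows from the long exact sequence of Theorem~\ref{ThmPositiveExtension}~(2) (or \cite[Corollary~3.12]{NP1}) applied to this $\sfr$-triangle: the sequence $\cat(F,-)\ov{\vt\ush}{\lra}\Ebb(C,-)\ov{g\uas}{\lra}\Ebb(G,-)$ is exact, and since $\vt\ush$ is epimorphic we get $g\uas=0$, i.e. $g$ is projective. Hence $g$ is a projective deflation, proving $(1)$. The subtlety to be careful about here is the direction of the Yoneda correspondence (natural transformations $\cat(F,-)\to\Ebb(C,-)$ correspond to elements of $\Ebb(C,F)$, i.e. one must work with the covariant representable $\cat(F,-)$ and the functor $\Ebb(C,-)$ in its covariant argument, which is consistent with the $\del\ush$ notation used above), and verifying that "epimorphism of functors" is genuinely equivalent to "dominant" as stated in Definition~\ref{DefDominant}. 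Once this is in place, the cycle of implications closes and all five conditions are equivalent.
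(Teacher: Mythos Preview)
Your proof is correct and follows essentially the same approach as the paper: the equivalences $(1)\Leftrightarrow(2)\Leftrightarrow(3)$ are treated as definitional, $(3)\Rightarrow(4)$ (or your $(2)\Rightarrow(4)$) comes from the $n=1$ case of Corollary~\ref{CorDominant_fromThm}, $(4)\Rightarrow(5)$ is trivial, and $(5)\Rightarrow(2)$ is the Yoneda argument identifying an epimorphism $\cat(F,-)\twoheadrightarrow\Ebb(C,-)$ with a dominant extension $\vt\in\Ebb(C,F)$. Your extra verification that $g$ is projective via the exactness of $\cat(F,-)\ov{\vt\ush}{\lra}\Ebb(C,-)\ov{g\uas}{\lra}\Ebb(G,-)$ is correct but redundant, since once you have established that $\vt$ is dominant you already have $(2)$, and the cycle $(2)\Rightarrow(3)\Rightarrow(1)$ closes without further work.
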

\begin{proof}
Equivalences {\rm (1)} $\EQ$ {\rm (2)} $\EQ$ {\rm (3)} are obvious.
{\rm (3)} $\Rightarrow$ {\rm (4)} follows from Corollary~\ref{CorDominant_fromThm}.
{\rm (4)} $\Rightarrow$ {\rm (5)} is trivial. 
It remains to show {\rm (5)} $\Rightarrow$ {\rm (2)}. Suppose that there is an epimorphism $\cat(F,-)\ov{\wp}{\lra}\Ebb(C,-)$. By Yoneda lemma, $\vartheta=\wp_F(\id_F)\in\Ebb(C,F)$ satisfies $\wp=\vt\ssh$, hence is dominant.
\end{proof}

\begin{proposition}\label{PropCoherentPositive}
Assume that $\CEs$ has enough projective morphisms. Then $\Ebb^n(C,-)$ is finitely presented for any $C\in\cat$ and any $n\in\Nbb_{\ge0}$.
\end{proposition}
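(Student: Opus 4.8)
The plan is to prove the statement by induction on $n$, using the long exact sequence from Theorem~\ref{ThmPositiveExtension} together with the characterization of finite presentation in terms of dominant $\sfr$-triangles. The base cases $n=0$ and $n=1$ are handled directly: for $n=0$, $\Ebb^0(C,-)=\cat(C,-)$ is representable, hence finitely presented (even projective); for $n=1$, since $\CEs$ has enough projective morphisms, each $C\in\cat$ admits a dominant $\sfr$-triangle $F\ov{f}{\lra}G\ov{g}{\lra}C\ov{\vt}{\dra}$, so $\Ebb(C,-)$ is finitely presented by Lemma~\ref{LemProjDefl} (equivalence of (3) and (4)).

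For the inductive step, assume $\Ebb^{k}(C',-)$ is finitely presented for all $C'\in\cat$ and all $k\le n-1$ (with $n\ge 2$). Fix $C\in\cat$ and choose a dominant $\sfr$-triangle $F\ov{f}{\lra}G\ov{g}{\lra}C\ov{\vt}{\dra}$. By Corollary~\ref{CorDominant_fromThm}, the sequence
\[
\Ebb^{n-1}(G,-)\ov{f\uas}{\lra}\Ebb^{n-1}(F,-)\ov{\vt\ush}{\lra}\Ebb^n(C,-)\to0
\]
is exact in $\cat\Mod$, exhibiting $\Ebb^n(C,-)$ as the cokernel of a morphism between $\Ebb^{n-1}(G,-)$ and $\Ebb^{n-1}(F,-)$. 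By the induction hypothesis, both of these are finitely presented, so each is a cokernel of a morphism of representable functors; splicing these presentations together, $\Ebb^n(C,-)$ is a quotient of the finitely presented functor $\Ebb^{n-1}(F,-)$, hence is finitely generated. To upgrade "finitely generated" to "finitely presented," I would use that the full subcategory $\fp(\cat\Mod)$ of finitely presented left $\cat$-modules is closed under cokernels of arbitrary morphisms (it is always the case that finitely presented modules are closed under cokernels, even without a weak-kernel hypothesis), so the cokernel of the map $\Ebb^{n-1}(G,-)\to\Ebb^{n-1}(F,-)$ between two finitely presented functors is again finitely presented. This completes the induction.

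The main obstacle, and the only real subtlety, is the passage from finite generation to finite presentation for $\Ebb^n(C,-)$. One cannot simply invoke Lemma~\ref{LemProjDefl} as in the $n=1$ case, since that lemma is specific to $\Ebb^1$. The clean way is the general homological-algebra fact that finitely presented modules over any preadditive category are closed under cokernels: given finitely presented $N_1\ov{u}{\lra}N_2$ with presentations $P_i\to Q_i\to N_i\to 0$ by representables, one lifts $u$ to a morphism of presentations and takes cokernels in each column, obtaining a presentation $Q_1\oplus P_2\to Q_2\to\Cok u\to 0$ of $\Cok u$ by representables. Applying this with $N_1=\Ebb^{n-1}(G,-)$, $N_2=\Ebb^{n-1}(F,-)$, and $u=f\uas$ yields that $\Ebb^n(C,-)=\Cok(f\uas)$ is finitely presented. (Alternatively, one could feed the explicit presentations of $\Ebb^{n-1}(F,-)$ and $\Ebb^{n-1}(G,-)$, together with the dominant $\sfr$-triangle, into a direct diagram chase; but the closure-under-cokernels argument is shorter and avoids bookkeeping.) The dual statement, with $\Ebb^n(-,C)$ finitely presented in $\Mod\cat$ when $\CEs$ has enough injective morphisms, follows by the dual argument.
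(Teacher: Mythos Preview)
Your proposal is correct and follows essentially the same route as the paper's proof: induction on $n$, with the inductive step using the right exact sequence from Corollary~\ref{CorDominant_fromThm} arising from a dominant $\sfr$-triangle, together with closure of finitely presented functors under cokernels. The only difference is that the paper treats the closure-under-cokernels fact as standard and states it without justification, whereas you spell out the lifting-of-presentations argument; also, the paper runs the inductive step from $n\ge1$ rather than $n\ge2$, but this is immaterial.
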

\begin{proof}
We proceed by an induction on $n\in\Nbb_{\ge0}$. This is obvious for $n=0$ (and also shown for $n=1$ in Lemma~\ref{LemProjDefl}).
Assume $n>0$, and suppose that we have shown the statement for $n-1$.
Take a dominant $\sfr$-triangle $F\to G\to C\ov{\vt}{\dra}$. By Corollary~\ref{CorDominant_fromThm}, this induces a right exact sequence
\[ \Ebb^{n-1}(G,-)\to\Ebb^{n-1}(F,-)\ov{\vt\ush}{\lra}\Ebb^n(C,-)\to0 \]
in $\cat\Mod$. Since $\Ebb^{n-1}(F,-)$ and $\Ebb^{n-1}(G,-)$ are finitely presented by the hypothesis, so is $\Ebb^n(C,-)$.
\end{proof}

\begin{corollary}\label{CorEquivProjDeflAdded}
For any $\CEs$, the following are equivalent.
\begin{enumerate}
\item $\CEs$ has enough projective morphisms.
\item $\Ebb(C,-)$ is finitely generated for any $C\in\cat$.
\item $\Ebb^n(C,-)$ is finitely presented for any $C\in\cat$ and any $n\in\Nbb_{\ge0}$.
\end{enumerate}
\end{corollary}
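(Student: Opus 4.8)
The plan is to prove the three implications $(3)\Rightarrow(2)\Rightarrow(1)\Rightarrow(3)$, so that the cycle closes. Two of these are essentially immediate from results already established in the excerpt. First, $(3)\Rightarrow(2)$: if $\Ebb^n(C,-)$ is finitely presented for all $n\ge0$ and all $C$, then in particular $\Ebb^1(C,-)=\Ebb(C,-)$ is finitely presented, hence finitely generated. Second, $(1)\Rightarrow(3)$ is exactly the content of Proposition~\ref{PropCoherentPositive}: if $\CEs$ has enough projective morphisms, then $\Ebb^n(C,-)$ is finitely presented for every $C$ and every $n\in\Nbb_{\ge0}$. So these two implications require essentially no new work.

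The only step with any content is $(2)\Rightarrow(1)$, and even this is already packaged in Lemma~\ref{LemProjDefl}: there the equivalence $(5)\Leftrightarrow(1)$ states precisely that $\Ebb(C,-)$ is finitely generated (for a fixed $C$) if and only if $C$ admits a projective deflation $G\ov{g}{\lra}C$. Thus if $\Ebb(C,-)$ is finitely generated for \emph{every} $C\in\cat$, then every $C$ admits a projective deflation, which is by Definition~\ref{DefProjDefl} exactly the statement that $\CEs$ has enough projective morphisms. Recall how the nontrivial direction of that equivalence goes, so the argument is self-contained: given an epimorphism $\wp\colon\cat(F,-)\twoheadrightarrow\Ebb(C,-)$ in $\cat\Mod$, the Yoneda lemma produces $\vt=\wp_F(\id_F)\in\Ebb(C,F)$ with $\wp=\vt\ush$; surjectivity of $\wp=\vt\ush\colon\cat(F,-)\to\Ebb(C,-)$ says precisely that $\vt$ is dominant (Definition~\ref{DefDominant}), and realizing $\vt$ by an $\sfr$-triangle $F\ov{f}{\lra}G\ov{g}{\lra}C\ov{\vt}{\dra}$ exhibits $g$ as a projective deflation.

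There is, strictly speaking, no genuine obstacle here: the corollary is a compilation, reorganizing Lemma~\ref{LemProjDefl} and Proposition~\ref{PropCoherentPositive} into a single clean list of equivalent conditions. The only point requiring a word of care is the quantifier in $(2)\Rightarrow(1)$ — Lemma~\ref{LemProjDefl} is stated for a fixed object $C$, so one must observe that "enough projective morphisms" is by definition a condition that must hold for all $C\in\cat$ simultaneously, and that applying the lemma object-by-object delivers exactly this. With that remark in place the proof is complete.
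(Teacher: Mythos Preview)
Your proof is correct and follows exactly the paper's approach: the paper simply states that the corollary is immediate from Lemma~\ref{LemProjDefl} and Proposition~\ref{PropCoherentPositive}, and you have spelled out precisely how those two results combine. Your additional care about the quantifier in $(2)\Rightarrow(1)$ and the recap of the Yoneda argument are accurate elaborations of what the paper leaves implicit.
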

\begin{proof}
This is immediate from Lemma~\ref{LemProjDefl} and Proposition~\ref{PropCoherentPositive}.
\end{proof}

As a corollary, existence of projective deflations is ensured for example in the following situation.
\begin{corollary}\label{CorExistProjDefl}
Assume that $\cat$ is Krull--Schmidt with a finite number of isoclasses of indecomposables.
Let $C\in\cat$ be any object. If $\un{X\in\ind(\cat)}{\oplus}\Ebb(C,X)$ is finitely generated over $R$, then $\Ebb(C,-)$ is finitely presented. Here, $\ind(\cat)$ denotes a set of representatives of the isoclasses of indecomposable objects in $\cat$.

In particular if $\un{X\in\ind(\cat)}{\oplus}\Ebb(C,X)$ is finitely generated over $R$ for any $C\in\ind(\cat)$, then $\CEs$ has enough projective morphisms.
\end{corollary}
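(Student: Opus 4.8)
The plan is to reduce this statement to Corollary~\ref{CorEquivProjDeflAdded} by showing that the finite-generation hypotheses force $\Ebb(C,-)$ to be finitely generated. First I would recall that by Corollary~\ref{CorEquivProjDeflAdded}, the category $\CEs$ has enough projective morphisms if and only if $\Ebb(C,-)$ is finitely generated as a left $\cat$-module for every $C\in\cat$, so it suffices to prove that single claim for a fixed $C$. Since $\cat$ is Krull--Schmidt with finitely many isoclasses of indecomposables, it is in particular Hom-finite over no assumption beyond the stated one; the key structural point is that $\cat$ has an \emph{additive generator}, namely $U=\bigoplus_{X\in\ind(\cat)}X$, because every object of $\cat$ is a finite direct sum of copies of the finitely many indecomposables.

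The main step is then the observation that a left $\cat$-module $M$ is finitely generated precisely when it is a quotient of a finite direct sum of representables $\cat(Y_i,-)$, and since every $Y_i$ is a summand of a power of $U$, this is equivalent to $M$ being a quotient of a finite power of $\cat(U,-)$; by Yoneda, natural transformations $\cat(U,-)^{\oplus k}\to M$ correspond to $k$ elements of $M(U)$. Hence $\Ebb(C,-)$ is finitely generated if and only if the $R$-module $\Ebb(C,U)=\bigoplus_{X\in\ind(\cat)}\Ebb(C,X)$ is generated, as a module over $\End_{\cat}(U)$, by finitely many elements — and in particular this holds whenever $\Ebb(C,U)$ is already finitely generated as an $R$-module. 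Concretely: pick $R$-module generators $\eta_1,\dots,\eta_k$ of $\bigoplus_{X\in\ind(\cat)}\Ebb(C,X)$; each $\eta_j$ lies in some $\Ebb(C,X_j)$, and I claim the induced map $\bigoplus_{j=1}^k\cat(X_j,-)\to\Ebb(C,-)$ sending the $j$-th coordinate identity to $\eta_j$ is an epimorphism. To verify surjectivity at an arbitrary object $Z\in\cat$, decompose $Z\cong\bigoplus_l W_l$ into indecomposables; by additivity of $\Ebb(C,-)$ it is enough to hit each $\Ebb(C,W_l)$, and since $W_l$ is one of the indecomposables, any element of $\Ebb(C,W_l)$ is an $R$-linear combination of the $\eta_j$'s transported along the appropriate coordinate inclusions $\ind(\cat)\ni W_l$, which is exactly the image of a suitable morphism under $\cat(X_j,-)\to\Ebb(C,-)$. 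Therefore $\Ebb(C,-)$ is finitely generated, hence finitely presented by Lemma~\ref{LemProjDefl}.

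For the final sentence, I would simply apply the first part to every $C\in\ind(\cat)$: if each $\bigoplus_{X\in\ind(\cat)}\Ebb(C,X)$ is finitely generated over $R$, then $\Ebb(C,-)$ is finitely presented for all $C\in\ind(\cat)$, hence for all $C\in\cat$ by additivity, and so $\CEs$ has enough projective morphisms by Corollary~\ref{CorEquivProjDeflAdded} again — or directly by Lemma~\ref{LemProjDefl}, since finite generation of $\Ebb(C,-)$ yields a dominant $\Ebb$-extension and thus a projective deflation onto $C$.

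The part requiring the most care is the surjectivity verification in the middle paragraph: one must be careful that an $R$-module generating set of $\bigoplus_{X\in\ind(\cat)}\Ebb(C,X)$ genuinely controls $\Ebb(C,Z)$ for \emph{arbitrary} $Z$, not just indecomposable ones, and this is precisely where the Krull--Schmidt hypothesis and the additivity of the bifunctor $\Ebb$ are used. I do not anticipate any real obstacle here — it is a standard argument that finitely many $R$-module generators of the module over the generator $U$ furnish a projective presentation — but it is worth spelling out that we do not need $\Ebb(C,U)$ to be finitely generated over $R$ in the strongest sense; finite generation over $\End_\cat(U)$ would suffice, though the stated hypothesis already gives the stronger condition.
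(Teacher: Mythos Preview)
Your proof is correct and follows essentially the same approach as the paper. Both arguments pick $R$-module generators $\theta_1,\dots,\theta_s$ of $\bigoplus_{X\in\ind(\cat)}\Ebb(C,X)$, assemble them into a single extension $\vartheta\in\Ebb(C,\bigoplus_i X_i)$, and verify that every $\sigma\in\Ebb(C,X)$ with $X$ indecomposable arises as $x_\ast\vartheta$ for some $x$; the paper phrases the conclusion as $\Ebb(g,-)=0$ for the associated deflation $g$ (hence $g$ is a projective deflation) and then invokes Lemma~\ref{LemProjDefl}, while you phrase it as finite generation of $\Ebb(C,-)$ and invoke Lemma~\ref{LemProjDefl}/Corollary~\ref{CorEquivProjDeflAdded}, but the underlying computation is identical.
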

\begin{proof}
Let $\{\thh_i\in\Ebb(C,X_i)\mid 1\le i\le s\}$ be a set of generators of $\un{X\in\ind(\cat)}{\oplus}\Ebb(C,X)$. Put $F=\un{1\le i\le s}{\oplus}X_i$.
Realize $\vt=\left[\bsm\thh_1\\\vdots\\\thh_s\esm\right]$ to obtain an $\sfr$-triangle $F\ov{f}{\lra}G\ov{g}{\lra}C\ov{\vt}{\dra}$. By Lemma~\ref{LemProjDefl}, it suffices to show $\Ebb(g,X)=0$ for any $X\in\cat$.
Since $\cat$ is Krull--Schmidt, we may assume that $X$ is indecomposable. Let $\sig\in\Ebb(C,X)$ be any element. By the definition of $\vt$, there exists $x\in\cat(F,X)$ such that $x\sas\vt=\sig$. Thus we obtain $g\uas\sig=x\sas g\uas\vt=0$, as desired.
\end{proof}

\subsection{Positive global dimension}\label{Subsection_PositiveGlobalDim}

Let $\CEs$ be an extriangulated category which has $\Ebb^n$ for $n\ge0$. More precisely, assume that $\CEs$ is small, or essentially small with enough projective morphisms or enough injective morphisms.
\begin{definition}\label{DefPosGlDim}
We say that an extriangulated category $\CEs$ has \emph{positive global dimension} $m$, for some $m  \in \mathbb{Z}_{\geq 0}$, if $\Ebb^m \neq 0$ and $\Ebb^{m+1} = 0$. Equivalently, if $\Ebb^m \neq 0$ and $\Ebb^n = 0$ for any $n > m$. If such $m$ does not exist, we say that $\CEs$ has \emph{infinite positive global dimension}.
\end{definition}

\begin{remark}\label{RemHighExt_Diff}
If $\CEs$ is extriangulated and if $\Dcal\se\cat$ is an extension-closed category, then $\Dcal$ can be equipped with an induced structure of an extriangulated category $(\Dcal,\Ebb_{\Dcal},\sfr_{\Dcal})$.
Though it satisfies $\Ebb_{\Dcal}(X,Y)=\Ebb(X,Y)$ for any $X,Y\in\Dcal$ by definition, it does not necessarily satisfy $(\Ebb_{\Dcal})^n(X,Y)\cong\Ebb^n(X,Y)$ for $n\ge 2$ .

Remark that while $(\Ebb_{\Dcal})^m=0$ for some $m\in\Nbb_{>0}$ should imply $(\Ebb_{\Dcal})^n=0$ for all $n\ge m$ by definition, restrictions of $\{\Ebb^n\}_{n\in\Nbb_{>0}}$ to $\Dcal$ do not have this property in general. 
In fact, there are plenty of examples of extension-closed subcategories $\Dcal$ of a triangulated category $\Tcal$ which satisfies
$\Tcal(\Dcal,\Dcal [m])=0$ and $\Tcal(\Dcal,\Dcal [n])\ne0$ for $n>m>0$, as the following example shows.
\end{remark}

\begin{example}
Let $\ca$ be an abelian category. It is well known that $\ca$ is the heart of the canonical $t$-structure on its bounded derived category $\mathcal{D}^b(\ca)$, and $\Ext_{\A}^n(A, B) = \cd^b(\ca)(A, B[n])$ holds for all $n \in \mathbb{N}, A, B \in \ca$. If we now consider an $m$-periodic derived category $\cd_m(\ca)$ of $\ca$ (defined as the Verdier quotient of the category of $m$-periodic complexes up to homotopy by the category of $m$-periodic acyclic complexes), it does not have any $t$-structure. Since this category is periodic, we have $\cd_m(\ca)(A, B[km]) = \cd_m(\ca)(A, B) = \ca(A, B)$ for any $k \in \mathbb{Z}$. In general, this does not coincide with $\Ext^{km}(A, B)$.
\end{example}

\begin{example}
\begin{itemize}
\item[(i)] For an exact category, Definition~\ref{DefPosGlDim} agrees with the usual definition of the global dimension.
\item[(ii)] Let $\Tcal$ be a triangulated category and $A, B$ be a pair of objects with $\Tcal(A, B) \neq 0$. We have $$\Ebb_{\Tcal}^n(A[n], B) = \Tcal(A[n], B[n])\cong\Tcal(A, B) \neq 0$$ 
for any $n > 0$. Thus, any non-zero triangulated category has infinite global dimension.
\end{itemize}
\end{example}

\begin{example} \cite{LZ}
Assume that $\CEs$ is an extriangulated category with enough projectives and enough injectives. Let $(\Ucal, \Vcal)$ be a hereditary cotorsion pair on $\CEs$ and $\Wcal = \Ucal \cap \Vcal$. Denote $\Wcal = \Wcal_0$ and define by induction $\Wcal_{i+1} := \mbox{Cone}(\Wcal_i, \Wcal)$, for $i \geq 0$. By \cite[Lemma 5.4]{LZ}, the category $\Wcal_i$ has positive global dimension at most $i$. For $i = 1,$ this notion generalizes \emph{extended cohearts of co-$t-$structures on triangulated categories}, considered by Pauksztello and Zvonareva in \cite{PZ}.
\end{example}

\begin{example}\label{ExNTerm}
Let $\Lambda$ be a finite-dimensional algebra and let $\proj \Lambda $ be the category of finitely generated projectives over $\Lambda$. The category $K^{[-n, 0]}(\proj \Lambda)$ of $n$-term complexes with projective components in $K^b(\mod \Lambda)$ is an extriangulated category of positive global dimension $n$. It has enough projectives given by $\add \Lambda$ and enough injectives given by $\add \Lambda [n]$. 
\end{example}

\section{Connected sequences and $\delta$-functors}\label{Subsection_PA}

Let $\CEs$ be an $R$-linear extriangulated category, as before.

\subsection{Connected sequences of functors}

\begin{definition}\label{DefNegativeExtGeneral}
Let $\Fm=\{F^n\colon\cat\op\ti\cat\to\Mod R\}_{n\in\Zbb}$ be a sequence of $R$-bilinear functors.
We define as follows.
\begin{enumerate}
\item A {\it covariant connected sequence of functors} $(\Fm,\del\ssh)$
is a pair of $\Fm$ and a collection of morphisms $\del\ssh=\del_{F,\sharp}^n\in\cat(F^n(-,C),F^{n+1}(-,A))$ in $\Mod\cat$ for any $\Ebb$-extension $\del\in\Ebb(C,A)$ and $n\in\Zbb$, which is natural with respect to morphisms of $\Ebb$-extensions.

\item A {\it contravariant connected sequence of functors} $(\Fm,\del\ush)$ 
is a pair of $\Fm$ and a collection of morphisms
 $\del\ush=\del_{F}^{\sharp,n}\in\cat(F^n(A,-),F^{n+1}(C,-))$ in $\Mod\cat$ for any $\Ebb$-extension $\del\in\Ebb(C,A)$ and $n\in\Zbb$, which is natural with respect to morphisms of $\Ebb$-extensions.

\item A {\it bivariant connected sequence of functors} $(\Fm,\del\ssh,\del\ush)$  
is a triplet in which $(\Fm,\del\ssh)$ is a covariant, and $(\Fm,\del\ush)$ is a contravariant connected sequence, respectively.
\end{enumerate}
We can also consider bounded (at least on one side) sequences of functors. For $a \in \left\{- \infty\right\} \cup \Zbb$ and $b \in \Zbb \cup \left\{+ \infty \right\}$ such that $a<b$, we define covariant connected sequences of functors $(\Fm=\{F^n\colon\cat\op\ti\cat\to\Mod R\}_{n\in [a, b]}, \del\ssh)$ in the same way as above. We will also denote it by $((F^n),n\in [a, b], \del\ssh)$ or $((F^n)_{n\in [a, b]}, \del\ssh)$ in the sequel. 
Similarly, we define bounded (above, below, or on both sides) contravariant and bivariant connected sequences. 
In the sequel, we abbreviately write $f\sas=F^n(-,f)$ and  $f\uas=F^n(f,-)$ for any morphism $f$ in $\cat$, similarly as for $\Ebb^{\mr}$.
\end{definition}

\begin{proposition}\label{PropCPX}
The following holds.
\begin{enumerate}
\item If $(\Fm,\del\ssh)$ is a covariant connected sequence of functors, then
\begin{equation}\label{CPX1}
\cdots\ov{\del\ssh}{\lra}F^{n}(-,A)\ov{x\sas}{\lra}F^{n}(-,B)\ov{y\sas}{\lra}F^{n}(-,C)\ov{\del\ssh}{\lra}\cdots
\end{equation}
becomes a complex in $\Mod \cat$ for any $\sfr$-triangle $A\ov{x}{\lra}B\ov{y}{\lra}C\ov{\del}{\dra}$.
\item Similarly, if $(\Fm,\del\ush)$ is a contravariant connected sequence of functors, then
\begin{equation}\label{CPX2}
\cdots\ov{\del\ush}{\lra}F^{n}(C,-)\ov{y\uas}{\lra}F^{n}(B,-)\ov{x\uas}{\lra}F^{n}(A,-)\ov{\del\ush}{\lra}\cdots
\end{equation}
becomes a complex in $\cat\Mod$ for any $\sfr$-triangle $A\ov{x}{\lra}B\ov{y}{\lra}C\ov{\del}{\dra}$.
\end{enumerate}
\end{proposition}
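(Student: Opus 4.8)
The plan is to verify Proposition~\ref{PropCPX} directly from the naturality axiom in Definition~\ref{DefNegativeExtGeneral}, mimicking the argument already used for $\Ebb^{\mr}$ in the proof of Proposition~\ref{PropUnivPositiveDefl}(1). The key observation is that a connected sequence's naturality with respect to morphisms of $\Ebb$-extensions encodes both functoriality of the connecting maps and their vanishing on "trivially split" data, and every composition of two consecutive arrows in $(\ref{CPX1})$ or $(\ref{CPX2})$ can be realized as a connecting map of an extension that is pulled back or pushed forward to become split.

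First I would treat the covariant case (1), which breaks into three types of consecutive compositions in $(\ref{CPX1})$: $y\sas\circ x\sas$, $\del\ssh\circ y\sas$, and $x\sas\circ\del\ssh$. For $y\sas\circ x\sas=F^n(-,yx)$, I would note that $A\ov{x}{\lra}B\ov{y}{\lra}C\ov{\del}{\dra}$ being an $\sfr$-triangle forces $y\circ x=0$ in $\cat$ (a standard consequence of the extriangulated axioms, e.g.\ from $\cat(-,A)\to\cat(-,B)\to\cat(-,C)$ being a complex, cf.\ \cite[Corollary~3.12]{NP1}), hence $F^n(-,yx)=F^n(-,0)=0$. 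For the other two compositions I would use the morphisms of $\Ebb$-extensions $(0,y)\colon \del \to 0_{\Ebb(C,C)}$-type relations; more precisely, $x\sas\del=0$ because $A\ov{x}{\lra}B$ is part of the triangle realizing $\del$ (so $x\sas\del$ is split, as $x\sas\del$ is realized by $A\ov{x}{\to}B\to B\to$ which splits), giving a morphism of extensions witnessing that $\del\ssh$ postcomposed with $x\sas$ factors through $(x\sas\del)\ssh = 0$; dually, $y\uas\del$ vanishes and the naturality square shows $y\sas$ followed by $\del\ssh$ is the connecting map of $y^*\del=0$, hence zero. The cleanest way to package all of this is exactly the computation displayed in the proof of Proposition~\ref{PropUnivPositiveDefl}(1): $\del^n_{\sharp}\circ x\sas = \del^n_{(x\sas\del)?,\sharp}$ — I would state the precise identity $\del\ssh \circ F^n(-,x) = (x\sas\del)\ssh$ and $F^{n+1}(-,y)\circ\del\ssh = (y\uas\del)\ssh$ obtained from naturality along the morphisms of extensions $(x,\id)\colon\del\to x\sas\del$ and $(\id,y)\colon y\uas\del\to\del$, then invoke that $x\sas\del=0$ and $y\uas\del=0$ for the triangle $\del$.

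Then part (2) follows by the formally dual argument, applying everything to $\cat\op$: the compositions $x\uas\circ y\uas=F^n(yx,-)=0$, $\del\ush\circ y\uas=(y\uas\del)\ush=0$, and $x\uas\circ\del\ush=(x\sas\del)\ush=0$, using the same two vanishing facts $x\sas\del=0$, $y\uas\del=0$. I would simply write "this is dual to (1)" or spell out the three identities.

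I do not expect a genuine obstacle here — the proposition is essentially a bookkeeping lemma. The only mild subtlety is making sure the right morphisms of $\Ebb$-extensions are in play: one must check that $(x,\id_A)$ really is a morphism $\del\to x\sas\del$ and $(\id_C,y)$ — or rather $(\id_A, y)$ reinterpreted — really is a morphism $y\uas\del\to\del$ in the sense of Definition~\ref{DefMorphExt} (the identities $x\sas\del = \id_A{}\sas x\sas\del$ and $y\uas\del = \id\uas y\uas\del$ are tautologies), and that $x\sas\del=0$, $y\uas\del=0$ hold for any realizing triangle — the latter being immediate from $\sfr$ being additive together with (ET3)/(ET3)$\op$, or simply from the fact that $A\ov{x}{\to}B\ov{y}{\to}C\ov{\del}{\dashrightarrow}$ realizing $\del$ means $\del$ is killed by pushout along $x$ and pullback along $y$. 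Given that the paper has already run this exact style of argument for $\Ebb^{\mr}$, the proof will be two or three lines citing naturality and the vanishing of $x\sas\del$ and $y\uas\del$.
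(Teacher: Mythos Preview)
Your approach is correct and is exactly the one the paper uses: its entire proof is the single sentence ``This can be shown in a similar way as in Proposition~\ref{PropUnivPositiveDefl} {\rm (1)}'', i.e.\ derive from naturality the identity $(a\sas c\uas\del)\ssh = a\sas\circ\del\ssh\circ c\sas$, conclude $0\ssh=0$, and then use $x\sas\del=0$ and $y\uas\del=0$.

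One small correction: in your displayed identities for the covariant case you have swapped the roles of $x$ and $y$. The compositions that actually occur in $(\ref{CPX1})$ are $\del\ssh\circ F^n(-,y)=(y\uas\del)\ssh$ and $F^{n+1}(-,x)\circ\del\ssh=(x\sas\del)\ssh$; the versions you wrote do not type-check. Likewise, the relevant morphism of extensions is $(x,\id_C)\colon\del\to x\sas\del$, not $(x,\id_A)$. These are slips in bookkeeping, not in the idea, and your earlier informal description of the three vanishing compositions was already correct.
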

\begin{proof}
This can be shown in a similar way as in Proposition~\ref{PropUnivPositiveDefl} {\rm (1)}.
\end{proof}

\begin{remark}
In this article, we will mainly consider connected sequences $(\Fm, \del\ssh)$ that also satisfy
\begin{itemize}
\item $F^n=\Ebb^n$, and $\del\ssh\colon F^n(-,C)\to F^{n+1}(-,A)$ agrees with $\del\ssh\colon \Ebb^n(-,C)\to \Ebb^{n+1}(-,A)$ for each $A,C\in\cat$
\end{itemize}
for any $n>0$ (or for any $n\ge0$). In such a case, we simply say that $(\Fm, \del\ssh)$ \emph{has} $F^n=\Ebb^n$ \emph{with the canonical connecting morphisms} for $n>0$ (resp. for $n\ge0$). Similarly for contravariant/bivariant ones.
\end{remark}

In the rest, what we state about covariant connected sequences has its contravariant counterpart, and vice versa.

\begin{notation}\label{DefCohomF1}
Let $(\Fm,\del\ssh)$ be a covariant connected sequence of functors.

Since the complex $(\ref{CPX1})$ is determined uniquely up to isomorphisms by the extension $\del$ and does not depend on the choice of representative of $\sfr(\del)=[A\ov{x}{\lra}B\ov{y}{\lra}C]$, we will often abbreviate it to $\Gam(\del)$. Especially, its acyclicity does not depend on the choice of representatives.
\end{notation}

\subsection{$\delta$-functors}

\begin{definition}\label{DefAcyclicF}
Let $(\Fm,\del\ssh)$ be an unbounded covariant connected sequence.
We say that an $\Ebb$-extension $\del$ is {\it $(\Fm,\del\ssh)$-acyclic} or simply {\it $\Fm$-acyclic} if $\Gam(\del)$ is acyclic (see Notation~\ref{DefCohomF1}). We say that $(\Fm,\del\ssh)$ is a {\it $\delta$-functor on} $\CEs$ (or {\it $\sfr$-acyclic}, or just {\it acyclic}) if any $\Ebb$-extension $\del$ is $(\Fm,\del\ssh)$-acyclic. 

Similarly, an unbounded contravariant connected sequence of functors $(\Fm,\del\ush)$ is a $\del$-functor if $(\ref{CPX2})$
is acyclic for any $\sfr$-triangle $A\ov{x}{\lra}B\ov{y}{\lra}C\ov{\del}{\dra}$.

A bivariant connected sequence $(\Fm,\del\ssh,\del\ush)$ is called a {\it bivariant $\delta$-functor} if both $(\Fm,\del\ssh)$ and $(\Fm,\del\ush)$ are $\delta$-functors.
\end{definition}

The following is obvious from the definition.
\begin{remark}\label{RemFAcyclicInduce}
Let $(\Fm,\del\ssh)$ be a covariant sequence having $F^n=\Ebb^n$ with the canonical connecting morphisms for $n\ge0$ which is acyclic on $\CEs$. Acyclicity inherits to relative theories and extension-closed subcategories in the following way.
\begin{enumerate}
\item For any closed subfunctor $\Fbb\se\Ebb$, if we define $(F^{\prime\bullet},\del\ppr\ssh)$ by replacing positive parts by $\Fbb^n$, then $(F^{\prime\bullet},\del\ppr\ssh)$ is also acyclic on $(\cat,\Fbb,\sfr_{\Fbb})$.
\item For any extension-closed subcategory $\Dcal\se\cat$, the restriction of $(\Fm,\del\ssh)$ naturally gives an acyclic covariant connected sequence of functors $(\Fm,\del\ssh)|_{\Dcal\op\ti\Dcal}$ on $(\Dcal,\Ebb_{\Dcal},\sfr_{\Dcal})$.
\end{enumerate}
Similarly for contravariant/bivariant ones.
\end{remark}

It is natural to define versions of $\delta$-functors for bounded (below, above, or on both sides) connected sequences (that have a chance to be completed to actual $\delta$-functors).

\begin{definition}
For $a \in \left\{- \infty\right\} \cup \Zbb$ and $b \in \Zbb \cup \left\{+ \infty \right\}$ such that $a<b$, we say that a covariant connected sequences of functors $(\Fm=\{F^n\colon\cat\op\ti\cat\to\Mod R\}_{n\in [a, b]}, \del\ssh)$ is a \emph{partial $\delta$-functor} if 
for any $\sfr$-triangle $A\ov{x}{\lra}B\ov{y}{\lra}C\ov{\del}{\dra}$ the complex
\[ F^a(-,A)\ov{F^a(-,x)}{\lra}
F^a(-,B)\ov{F^a(-,y)}{\lra}
F^a(-,C)\ov{\del^a_{F,\sharp}}{\lra}F^{a+1}(-,A)\ov{F^{a+1}(-,x)}{\lra}\cdots
\ov{F^b(-,y)}{\lra}
F^b(-,C) \]
is exact except at the end-terms. 
Similarly, we define contravariant and bivariant partial $\delta$-functors. 
\end{definition}

\begin{example}
\begin{itemize}
    \item Each bifunctor which is half-exact in one of the arguments is a partial $\delta$-functor. In particular, $\Hom$ and $\Ebb$ are half-exact in either argument, and so they are both bivariant partial $\delta$-functors.

    \item More generally, by the results in the previous section, each interval in the connected sequence $(\Ebb^{\mr}=\{\Ebb^n\}_{n\ge0}, \del\ssh, \del\ush)$ is a bivariant partial $\delta$-functor.
\end{itemize}
\end{example}

\begin{example}\label{ExAcyclic}
Let $\CEs$ be as before. The following are examples of acyclic bivariant connected sequence of functors $(\Fm,\del\ssh,\del\ush)$ of $\Ebb$ satisfying $F^0=\Hom_{\cat}$.
\begin{enumerate}
\item Suppose that $\CEs$ is exact, in the sense that any inflation is monomorphic and any deflation is epimorphic. If we define $\Fm$ by
\[ F^n=\begin{cases}
\Ebb^n&n>0\\
\Hom_{\cat}&n=0\\
0&n<0
\end{cases},
\]
then it has a natural structure of acyclic bivariant connected sequence of functors of $\Ebb$, i.e. of a bivariant $\delta$-functor.
\item Suppose that $\CEs$ is an extension-closed subcategory of a triangulated category $(\Tcal,[1],\triangle)$. If we define $\Fm$ by
\[ F^n=\Tcal(-,-[n])|_{\cat\op\ti\cat}\quad(\fa n\in\Zbb), \]
then as in Remark~\ref{RemFAcyclicInduce} {\rm (2)}, it has a natural structure of a bivariant $\delta$-functor on $\Ebb$.
\end{enumerate}
\end{example}

\begin{example}\label{ExINP}
The following is a consequence of \cite[Theorem~1.26]{INP}.
\begin{enumerate}
\item Suppose that $\CEs$ has enough projective objects. If we define $\Fm$ by
\[ F^n=\begin{cases}
\Ebb^n&n>0\\
\und{\cat}(-,\Om^{n}(-))&n\le0
\end{cases},
\]
then it has a natural structure of acyclic covariant connected sequence of functors of $\Ebb$. Here $\Om\colon\und{\cat}\to\und{\cat}$ denotes the syzygy functor (\cite[Definition-Proposition~1.25]{INP}).
\item Suppose that $\CEs$ has enough injective objects. If we define $\Fm$ by
\[ F^n=\begin{cases}
\Ebb^n&n>0\\
\ovl{\cat}(\Sig^{n}(-),-)&n\le0
\end{cases},
\]
then it has a natural structure of acyclic contravariant connected sequence of functors on $\Ebb$.  Here $\Sig\colon\ovl{\cat}\to\ovl{\cat}$ denotes the cosyzygy functor.
\end{enumerate}
\end{example}

Given a partial $\delta$-functor, we have natural universality conditions on its continuations. Note that the directions of natural transformations are different for continuations to the left and to the right.

\begin{definition}
Let $((F^n), n \in [a, b]; \del\ush)$ be a partial $\delta$-functor. Let $((E^n), n \in [a, c]; \del\ush)$, with $c > b$, be a partial $\delta$-functor such that  $E^i = F^i$ for $i \in [a, b]$. We say that $((E^n), n \in [a, c]; \del\ush)$ is universal among such partial $\delta$-functors if for each $((G^n), n \in [a, c]; \del\ush)$ such that $E^i = F^i$ for $i \in [a, b]$, there  exists a unique sequence of natural transformations
$\{\vp^n\colon E^n\to G^n\}_{n \in [a, c]}$ which satisfies the following conditions.
\begin{itemize}
\item[{\rm (i)}] $\vp^n=\id$ for any $n\in [a, b]$.
\item[{\rm (ii)}] For any $n \in [a, c]$, the following diagram
\[
\xy
(-12,7)*+{E^{n}(A,-)}="0";
(12,7)*+{E^{n+1}(C,-)}="2";
(-12,-7)*+{G^{n}(A,-)}="4";
(12,-7)*+{G^{n+1}(C,-)}="6";
{\ar^{\del\ush} "0";"2"};
{\ar_{\vp^{n}_{A,-}} "0";"4"};
{\ar^{\vp^{n+1}_{C,-}} "2";"6"};
{\ar_{\del\ush} "4";"6"};
{\ar@{}|\circlearrowright "0";"6"};
\endxy
\]
is commutative in $\cat\Mod$ for any $\del\in\Ebb(C,A)$.
\end{itemize}
Similar definitions apply for covariant and for bivariant partial $\delta$-functors. 
\end{definition}

\begin{definition}
Let $(F^n), n \in [a, b]; \del\ush)$ be a partial $\delta$-functor. Let $((E^n), n \in [d, b]; \del\ush)$, with $ d < a$, be a partial $\delta$-functor such that  $E^i = F^i$ for $i \in [a, b]$. We say that $((E^n), n \in [d, b]; \del\ush)$ is universal among such partial $\delta$-functors if for each $((G^n), n \in [d, b]; \del\ush)$ such that $E^i = F^i$ for $i \in [a, b]$, there exists a sequence of natural transformations
$\{\vp^n\colon G^n\to E^n\}_{n \in [d, b]}$ which satisfies the following conditions.
\begin{itemize}
\item[{\rm (i)}] $\vp^n=\id$ for any $n\in [a, b]$.
\item[{\rm (ii)}] For any $n \in [d, b]$, the following diagram
\[
\xy
(-12,7)*+{G^{n}(A,-)}="0";
(12,7)*+{G^{n+1}(C,-)}="2";
(-12,-7)*+{E^{n}(A,-)}="4";
(12,-7)*+{E^{n+1}(C,-)}="6";
{\ar^{\del\ush} "0";"2"};
{\ar_{\vp^{n}_{A,-}} "0";"4"};
{\ar^{\vp^{n+1}_{C,-}} "2";"6"};
{\ar_{\ep_{\del}^n} "4";"6"};
{\ar@{}|\circlearrowright "0";"6"};
\endxy
\]
is commutative in $\cat\Mod$ for any $\del\in\Ebb(C,A)$.
\end{itemize}
Similar definitions apply for covariant and for bivariant partial $\delta$-functors.
\end{definition}

\begin{remark}
Note that a bivariant partial $\delta$-functor extending a given bivariant partial $\delta$-functor $((F^n)_{n \in [a, b]}, \del\ssh, \del\ush)$ is said to be universal if it is universal among such partial bivariant $\delta$-functors. It is not required to be universal among covariant partial $\delta$-functors  extending $(F^n, \del\ssh)$ or universal among contravariant partial $\delta$-functors extending $(F^n, \del\ush)$.
\end{remark}

\begin{definition}
Let $F$ be a bifunctor on $\CEs$ which is half-exact in one of the arguments. We can consider it as a partial $\delta$-functor with $a = b = 0$. 

Let $((E^n)_{n \geq 0}, \del\ssh)$ (resp. $((E^n)_{n \geq 0}, \del\ush)$) be universal among partial $\delta$-functors in non-negative degrees having $F$ in degree $0$. We say that $E^n$ is the \emph{$n$-th right derived functor} of $F$.

Similarly, let $((E^n)_{n \leq 0}, \del\ssh)$ (resp. $((E^n)_{n \leq 0}, \del\ush)$) be universal among partial $\delta$-functors in non-positive degrees having $F$ in degree $0$. We say that $E^{-n}$ is the \emph{$n$-th left derived functor} of $F$. 
\end{definition}

\begin{definition}
Let $F$ be a bifunctor which is half-exact in each argument. Let $((E^n)_{n \geq 0}, \del\ssh, \del\ush)$ be universal among partial bivariant $\delta$-functors in non-negative degrees having $F$ in degree $0$. We say that $E^n$ is the \emph{balanced $n$-th right derived functor} of $F$, or \emph{bivariant $n$-th right derived functor} of $F$. 

Similarly, let $((E^n)_{n \leq 0}, \del\ssh, \del\ush)$ be universal among partial bivariant $\delta$-functors in non-positive degrees having $F$ in degree $0$. We say that $E^{-n}$ is the \emph{balanced $n$-th left derived functor} of $F$, or \emph{bivariant $n$-th left derived functor} of $F$. 
\end{definition}

\begin{example}
By Proposition \ref{PropUnivPositiveDefl}, $\Ebb^n, n > 0$, is the $n$-th right derived functor of $\Hom$. More precisely, it is both the covariant and the contravariant derived functor, which is then the balanced derived functor.
\end{example}

Later on, we will discuss left derived functors of the bifunctor $\Hom$. We will define its covariant and contravariant left derived functors. In general, they do not coincide, and so do not provide the balanced derived functors. We do expect, however, that for the bifunctor $\Hom$, there exist balanced versions of left derived functors. We cannot expect this for an arbitrary bifunctor which is half-exact in each argument, but for $\Hom$ and $\Ebb$, the existence of such a balance seems probable.

\subsection{Cohomology and relation to relative theories} \label{SectionDeltaRel}

Throughout this subsection, let $(\Fm,\del\ssh)$ be a covariant connected sequence of functors. 
\begin{definition}\label{DefCohomF2}
As in Definition~\ref{DefAcyclicF}, to any $\sfr$-triangle $A\ov{x}{\lra}B\ov{y}{\lra}C\ov{\del}{\dra}$, we may associate 
a complex $\Gam_F(A\ov{x}{\lra}B\ov{y}{\lra}C\ov{\del}{\dra})$ in $\Mod\cat$
\[ \cdots\ov{\del\ssh}{\lra}F^{n}(-,A)\ov{x\sas}{\lra}F^{n}(-,B)\ov{y\sas}{\lra} F^n(-,C)\ov{\del\ssh}{\lra}F^{n+1}(-,A)\ov{x\sas}{\lra}\cdots \]
with $F^0(-,A),F^0(-,B),F^0(-,C)$ sitting in degrees $-2,-1,0$, respectively.
As before, we will often denote it abbreviately by $\Gam_F(\del)$ in what follows.

Denote the $n$-th cohomology of $\Gam_F(\del)$ by $H_F^{n}(\del)$. By definition it is a functor $H_F^{n}(\del)\colon \cat\op\to\Mod R$ which sends each $X\in\cat$ to $(H_F^{n}(\del))(X)\in\Mod R$, the $n$-th cohomology of the complex of $R$-modules $(\Gam_F(\del))(X)$.
\end{definition}

\begin{remark}
If $\sfr(\del)=[A\ov{x}{\lra}B\ov{y}{\lra}C]$, we have
\begin{eqnarray*}
H_F^{3n}(\del)&\cong&\Ker(\del_{F,\sharp}^{n})\, /\, \Im(F^{n-1}(-,y)),\\
H_F^{3n-1}(\del)&\cong&\Ker(F^{n}(-,y))\, /\, \Im(F^{n}(-,x)),\\
H_F^{3n-2}(\del)&\cong&\Ker(F^{n}(-,x))\, /\, \Im(\del_{F,\sharp}^{n-1})
\end{eqnarray*}
for any $n\in\Zbb$ by definition.
\end{remark}

\begin{proposition}\label{PropConeCohom}
For any $\sfr$-triangle $A\ov{x}{\lra}B\ov{y}{\lra}C\ov{\del}{\dra}$, the following holds.
\begin{enumerate}
\item Let
\begin{equation}\label{t1}
\xy
(-12,6)*+{A}="0";
(0,6)*+{B}="2";
(12,6)*+{C}="4";
(24,6)*+{}="6";
(-12,-6)*+{A\ppr}="10";
(0,-6)*+{B\ppr}="12";
(12,-6)*+{C}="14";
(24,-6)*+{}="16";
{\ar^{x} "0";"2"};
{\ar^{y} "2";"4"};
{\ar@{-->}^{\del} "4";"6"};
{\ar_{a} "0";"10"};
{\ar_{b} "2";"12"};
{\ar@{=} "4";"14"};
{\ar_{x\ppr} "10";"12"};
{\ar_{y\ppr} "12";"14"};
{\ar@{-->}_{a\sas\del} "14";"16"};
{\ar@{}|\circlearrowright "0";"12"};
{\ar@{}|\circlearrowright "2";"14"};
\endxy
\end{equation}
be a morphism of $\sfr$-triangles such that
\begin{equation}\label{e1}
A\ov{\left[\bsm x\\ a\esm\right]}{\lra}B\oplus A\ppr\ov{[b\ -x\ppr]}{\lra}B\ppr\ov{y^{\prime\ast}\del}{\dra}
\end{equation}
is an $\sfr$-triangle. Then we have a long exact sequence
\[
\cdots\to H_F^{n-1}(a\sas\del)\to H_F^{n}(y^{\prime\ast}\del)\to H_F^{n}(\del)\to H_F^{n}(a\sas\del)\to\cdots\quad(n\in\Zbb)
\]
of cohomologies.
\item Let
\begin{equation}\label{t2}
\xy
(-12,6)*+{A}="0";
(0,6)*+{B\pprr}="2";
(12,6)*+{C\pprr}="4";
(24,6)*+{}="6";
(-12,-6)*+{A}="10";
(0,-6)*+{B}="12";
(12,-6)*+{C}="14";
(24,-6)*+{}="16";
{\ar^{x\pprr} "0";"2"};
{\ar^{y\pprr} "2";"4"};
{\ar@{-->}^{c\uas\del} "4";"6"};
{\ar@{=} "0";"10"};
{\ar_{b\ppr} "2";"12"};
{\ar_{c} "4";"14"};
{\ar_{x} "10";"12"};
{\ar_{y} "12";"14"};
{\ar@{-->}_{\del} "14";"16"};
{\ar@{}|\circlearrowright "0";"12"};
{\ar@{}|\circlearrowright "2";"14"};
\endxy
\end{equation}
be a morphism of $\sfr$-triangles such that
\begin{equation}\label{e2}
B\pprr\ov{\left[\bsm -y\pprr\\ b\ppr\esm\right]}{\lra}C\pprr\oplus B\ppr\ov{[c\ y]}{\lra}C\ppr\ov{x\pprr\sas\del}{\dra}
\end{equation}
is an $\sfr$-triangle. Then we have a long exact sequence
\[
\cdots\to H_F^{n-1}(x\pprr\sas\del)\to H_F^{n}(c\uas\del)\to H_F^{n}(\del)\to H_F^{n}(x\pprr\sas\del)\to\cdots\quad(n\in\Zbb)
\]
of cohomologies.
\end{enumerate}
\end{proposition}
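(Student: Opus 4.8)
The plan is to realize each of the two long exact sequences as the cohomology long exact sequence of a mapping cone in the abelian category of cochain complexes over $\Mod\cat$, where $\Gam_F(-)$ denotes the period‑$3$ complex of Definition~\ref{DefCohomF2} and $H_F^{\bullet}(-)$ its cohomology.

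For $(1)$: the morphism of $\sfr$-triangles $(\ref{t1})$ is a compatible triple $(a,b,\id_C)$. Applying each $F^n(-,-)$, and using the naturality of the connecting morphisms $\del\ssh$ along the morphism of $\Ebb$-extensions $(a,\id_C)\colon\del\to a\sas\del$, one gets a morphism of complexes $\Phi\colon\Gam_F(\del)\to\Gam_F(a\sas\del)$: the chain-map identities at the $x\sas$- and $y\sas$-spots are the two commuting squares of $(\ref{t1})$, and at each $\del\ssh$-spot it is the stated naturality. The standard short exact sequence $0\to\Gam_F(a\sas\del)\to\Cone(\Phi)\to\Gam_F(\del)[1]\to0$ then gives a long exact sequence relating $H_F^{\bullet}(a\sas\del)$, $H^{\bullet}(\Cone(\Phi))$ and $H_F^{\bullet}(\del)$; the maps $H_F^{n}(\del)\to H_F^{n}(a\sas\del)$ appearing in the expected sequence are the ones induced by $(a,\id_C)$.

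The crux is the identification $\Cone(\Phi)\cong\Gam_F(y^{\prime\ast}\del)[1]$ up to a contractible direct summand. Because the third leg of $(\ref{t1})$ is $\id_C$, $\Phi$ restricts to the identity on the ``$C$-columns'' $F^n(-,C)$ of the two complexes, so $\Cone(\Phi)$ contains, for each $n$, an elementary contractible block $F^n(-,C)\xrightarrow{\ \sim\ }F^n(-,C)$; a Gaussian elimination of these blocks (a homotopy equivalence, hence cohomology-preserving) leaves a complex whose successive terms in each period are $F^n(-,A)$, $F^n(-,B)\oplus F^n(-,A\ppr)$ and $F^n(-,B\ppr)$ --- exactly the terms of $\Gam_F(y^{\prime\ast}\del)[1]$ (a routine degree count produces the shift $[1]$). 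One then has to check that the differentials surviving the elimination are the structure maps of $\Gam_F(y^{\prime\ast}\del)$. This is where the hypothesis that $(\ref{e1})$ is an $\sfr$-triangle is used: it identifies the inflation, the deflation and the extension $y^{\prime\ast}\del$ of $(\ref{e1})$ as the genuine structure data, and, together with the relations in the square $(\ref{t1})$ and the naturality of $\del\ssh$ along $(a,\id_C)\colon\del\to a\sas\del$ and $(\id_A,y\ppr)\colon y^{\prime\ast}\del\to\del$, it forces the corrected differentials to coincide with those of $\Gam_F(y^{\prime\ast}\del)$. Granting this, $H^{n-1}(\Cone(\Phi))\cong H_F^{n}(y^{\prime\ast}\del)$, and reindexing the cone long exact sequence yields $(1)$.

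Part $(2)$ follows by the same procedure with the two ``ends'' of the $\sfr$-triangle interchanged: $(\ref{t2})$ is a compatible triple whose \emph{first} leg is $\id_A$, so it induces $\Psi\colon\Gam_F(c\uas\del)\to\Gam_F(\del)$ which is the identity on the ``$A$-columns''; eliminating the contractible blocks $F^n(-,A)\xrightarrow{\ \sim\ }F^n(-,A)$ from $\Cone(\Psi)$ identifies it, this time with no shift (again by a degree count), with $\Gam_F(x\pprr\sas\del)$, the hypothesis that $(\ref{e2})$ is an $\sfr$-triangle playing the role $(\ref{e1})$ played before; the short exact sequence $0\to\Gam_F(\del)\to\Cone(\Psi)\to\Gam_F(c\uas\del)[1]\to0$ then gives $(2)$ after reindexing. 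In both parts the single genuinely technical point --- and the main obstacle --- is this last verification: that once the contractible summand has been removed, the mapping-cone differentials (in particular the ``connecting'' differentials, which are mixed with the differentials of the other complex by the Gaussian elimination) are precisely the $\Gam_F$-differentials of the new extension. All of the data needed for this verification is supplied by exactly the three hypotheses of each statement.
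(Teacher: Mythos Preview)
Your approach is correct and is essentially the same as the paper's: both prove (2) (and dually (1)) by noting that the morphism of $\sfr$-triangles induces a chain map $\Psi\colon\Gam_F(c\uas\del)\to\Gam_F(\del)$ whose mapping cone is naturally quasi-isomorphic to $\Gam_F(x\pprr\sas\del)$. The paper states this quasi-isomorphism in one line, while you unpack its mechanism as a Gaussian elimination of the contractible identity blocks on the $A$-columns (resp.\ $C$-columns); the content is the same, and your verification of the degree shifts for (1) versus (2) is correct.
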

\begin{proof}
Since {\rm (1)} can be shown in a similar way as {\rm (2)}, it suffices to show {\rm (2)}. If we are given $(\ref{t2})$, it yields a morphism of complexes $\vp\colon\Gam_F(c\uas\del)\to\Gam(\del)$. Its mapping cone $C(\vp)$ is naturally quasi-isomorphic to $\Gam_F(x\pprr\sas\del)$.
\end{proof}

\begin{corollary}\label{CorConeCohom}
For any $\Ebb$-extension $\del\in\Ebb(C,A)$, the following holds.
\begin{enumerate}
\item For any morphism $a\in\cat(A,A\ppr)$, there exists some deflation $y\ppr\in\cat(B\ppr,C)$ which induces an exact sequence $(\ref{e1})$.
\item For any morphism $c\in\cat(C\pprr,C)$, there exists some inflation $x\pprr\in\cat(A,B\pprr)$ which induces an exact sequence $(\ref{e2})$.
\end{enumerate}
\end{corollary}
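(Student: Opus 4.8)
Both statements are formally dual: {\rm (1)} is obtained from {\rm (2)} by passing to the opposite extriangulated category $\cat\op$, under the standard identification of $\sfr$-triangles in $\cat$ with those in $\cat\op$. So the plan is to prove {\rm (2)} and then deduce {\rm (1)}.

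To prove {\rm (2)}, I would fix a realization $A\ov{x}{\lra}B\ov{y}{\lra}C$ of $\del$ together with a realization $A\ov{x\pprr}{\lra}B\pprr\ov{y\pprr}{\lra}C\pprr$ of the extension $c\uas\del\in\Ebb(C\pprr,A)$. Since $(\id_A,c)$ is clearly a morphism of $\Ebb$-extensions $c\uas\del\to\del$, the condition $(\ast)$ in the definition of a realization (Definition~\ref{DefRealization}) supplies a morphism $b\ppr\in\cat(B\pprr,B)$ with $b\ppr\circ x\pprr=x$ and $y\circ b\ppr=c\circ y\pprr$; this is exactly the morphism of $\sfr$-triangles $(\ref{t2})$, and in particular $x\pprr$ is an inflation. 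Applying \cite[Corollary~3.16]{NP1} (equivalently, \cite[Proposition~1.20]{LN}) to this morphism of $\sfr$-triangles --- precisely as in the proof of Lemma~\ref{LemTrivialization1}, where the left-hand vertical morphism is likewise an identity --- then yields that
\[ B\pprr\ov{\left[\bsm -y\pprr\\ b\ppr\esm\right]}{\lra}C\pprr\oplus B\ov{[c\ y]}{\lra}C\ov{x\pprr\sas\del}{\dra} \]
is an $\sfr$-triangle, which is the required sequence $(\ref{e2})$.

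Statement {\rm (1)} follows by the dual construction: fix a realization $A\ppr\ov{x\ppr}{\lra}B\ppr\ov{y\ppr}{\lra}C$ of $a\sas\del$, apply $(\ast)$ to the morphism of $\Ebb$-extensions $(a,\id_C)\colon\del\to a\sas\del$ to obtain the middle morphism $b$ of $(\ref{t1})$, and then invoke the dual of \cite[Corollary~3.16]{NP1} to conclude that $(\ref{e1})$ is an $\sfr$-triangle (here $y\ppr$ is automatically a deflation, being the deflation in the chosen realization of $a\sas\del$). I expect no conceptual difficulty in this. The one point requiring attention is that \cite[Corollary~3.16]{NP1} be applied with the order of the direct summands and the signs arranged exactly as in the displays $(\ref{e1})$ and $(\ref{e2})$; and any such discrepancy is harmless, since pre- or post-composing the sequence with an isomorphism of the biproduct of the form $\left[\bsm 0&1\\1&0\esm\right]$ or $\left[\bsm 1&0\\0&-1\esm\right]$ changes neither its equivalence class nor the extension it realizes. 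This routine bookkeeping is the main --- and essentially the only --- obstacle.
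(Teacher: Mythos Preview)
Your proof is correct and follows essentially the same approach as the paper: the paper's proof is extremely terse, simply asserting that the given data produces the morphism of $\sfr$-triangles $(\ref{t1})$ with $(\ref{e1})$ an $\sfr$-triangle (and similarly for {\rm (2)}), whereas you spell out that this comes from the realization axiom $(\ast)$ together with \cite[Corollary~3.16]{NP1}. Your choice to prove {\rm (2)} first and deduce {\rm (1)} by duality, rather than the reverse, is immaterial.
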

\begin{proof}
{\rm (1)} The given data gives rise to a morphism of $\sfr$-triangle $(\ref{t1})$ such that $(\ref{e1})$ becomes an $\sfr$-triangle. Thus Proposition~\ref{PropConeCohom} {}\rm (1) can be applied. Similarly for {\rm (2)}.
\end{proof}

\begin{proposition}\label{PropAcyclic}
Let $\del\in\Ebb(C,A)$ be any $\Ebb$-extension. The following are equivalent.
\begin{enumerate}
\item $x\sas\del$ is $\Fm$-acyclic for any inflation $x$ from $A$.
\item $y\uas\del$ is $\Fm$-acyclic for any deflation $y$ to $C$.
\end{enumerate}
\end{proposition}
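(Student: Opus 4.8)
The plan is to derive both implications formally from the long exact sequences of cohomologies in Proposition~\ref{PropConeCohom}, fed with the existence statements of Corollary~\ref{CorConeCohom}. Recall from Definition~\ref{DefAcyclicF} that an $\Ebb$-extension $\eta$ is $\Fm$-acyclic exactly when $H_F^n(\eta)=0$ for all $n\in\Zbb$. Since an identity morphism is simultaneously an inflation and a deflation, I would first observe that each of (1) and (2) already forces $\del$ itself to be $\Fm$-acyclic (apply the hypothesis to $\id_A$, resp.\ to $\id_C$); this preliminary fact is used in both directions.

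For $(2)\Rightarrow(1)$, I would fix an inflation $x\colon A\to A\ppr$ and apply Corollary~\ref{CorConeCohom}~(1) to the morphism $a:=x$. This produces a deflation $y\ppr\colon B\ppr\to C$ inducing an exact sequence of the form $(\ref{e1})$, and Proposition~\ref{PropConeCohom}~(1) then supplies a long exact sequence
\[ \cdots\to H_F^{n}(\del)\to H_F^{n}(x\sas\del)\to H_F^{n+1}(y^{\prime\ast}\del)\to H_F^{n+1}(\del)\to\cdots\qquad(n\in\Zbb). \]
Since $y\ppr$ is a deflation to $C$, hypothesis (2) kills every $H_F^{n}(y^{\prime\ast}\del)$, and by the preliminary observation every $H_F^{n}(\del)$ vanishes as well; exactness then sandwiches $H_F^{n}(x\sas\del)$ between two zero groups, so $x\sas\del$ is $\Fm$-acyclic. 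The implication $(1)\Rightarrow(2)$ I would carry out in the dual way: given a deflation $c\colon C\pprr\to C$, Corollary~\ref{CorConeCohom}~(2) produces an inflation $x\pprr\colon A\to B\pprr$ inducing an exact sequence of the form $(\ref{e2})$, and Proposition~\ref{PropConeCohom}~(2) gives a long exact sequence in which $H_F^{n}(c\uas\del)$ is flanked by $H_F^{n-1}(x\pprr\sas\del)$ and $H_F^{n}(\del)$; since $x\pprr$ is an inflation from $A$, hypothesis (1) together with the preliminary observation kills both neighbours, forcing $H_F^{n}(c\uas\del)=0$ for all $n$.

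I do not expect a serious obstacle; the content is entirely the two long exact sequences, and what remains is a short diagram chase. The two points that need a little care are: first, checking that the auxiliary morphism handed back by Corollary~\ref{CorConeCohom} is genuinely a deflation \emph{to $C$} (resp.\ an inflation \emph{from $A$}), which is exactly what is needed for the hypothesis to apply to it; and second, keeping the degree shifts in Proposition~\ref{PropConeCohom} straight, so that the vanishing of the two adjacent cohomology groups really does pin down the one we want to annihilate.
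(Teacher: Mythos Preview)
Your proposal is correct and is precisely the argument the paper intends: the paper's own proof is the one-line remark that since either hypothesis makes $\del$ itself $\Fm$-acyclic, the claim is immediate from Corollary~\ref{CorConeCohom}, and you have simply unpacked that sentence. Your two cautionary points (that the auxiliary morphism returned by Corollary~\ref{CorConeCohom} is a deflation to $C$, resp.\ an inflation from $A$, and that the degree indexing sandwiches the desired cohomology between two vanishing terms) are exactly the content hidden in the word ``immediate''.
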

\begin{proof}
Since either of {\rm (1),(2)} in particular implies that $\del$ is $\Fm$-acyclic, this is immediate from Corollary~\ref{CorConeCohom}.
\end{proof}

\begin{proposition}\label{PropLast}
Suppose that $(\Fm,\del\ssh)$ is given, as before. Assume that there exists $m \in \Zbb$ such that $H^n(\delta) = 0$, for all $n > m$ and for all $\delta$. The following are equivalent.
\begin{enumerate}
\item $\Fm$ is a $\delta$-functor on $\CEs$.
\item For any $\Ebb$-extension $\del$, there exists an $\Fm$-acyclic $\Ebb$-extension $\thh$ and a morphism $a$ in $\cat$ such that $\del=a\sas\thh$.
\end{enumerate}
\end{proposition}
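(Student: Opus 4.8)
\emph{Proof plan.} The implication $(1)\Rightarrow(2)$ is immediate: if $\Fm$ is a $\delta$-functor, then every $\Ebb$-extension $\del$ is already $\Fm$-acyclic, so one may take $\thh=\del$ and $a=\id$. For $(2)\Rightarrow(1)$ I would fix an arbitrary $\del\in\Ebb(C,A)$ and aim to prove $H_F^n(\del)=0$ for every $n\in\Zbb$; since $\del$ is arbitrary, this gives that $\Fm$ is a $\delta$-functor on $\CEs$. The idea is to produce an infinite chain of $\Ebb$-extensions $\del=\del_0,\del_1,\del_2,\dots$ together with isomorphisms $H_F^n(\del_i)\cong H_F^{n+1}(\del_{i+1})$ holding for all $n\in\Zbb$ and all $i\ge0$, and then to let the cohomological degree run past $m$.

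To build the chain, suppose $\del_i$ has been constructed. By hypothesis (2), write $\del_i=a\sas\thh$ for some $\Fm$-acyclic extension $\thh$ and some morphism $a$ in $\cat$. Applying Corollary~\ref{CorConeCohom}(1) to the extension $\thh$ and the morphism $a$, there is a deflation $y'$ giving rise to an $\sfr$-triangle of the shape $(\ref{e1})$ (with $\thh$ playing the role of $\del$ there). Proposition~\ref{PropConeCohom}(1) then yields a long exact sequence
\[
\cdots\to H_F^{n-1}(a\sas\thh)\to H_F^{n}(y^{\prime\ast}\thh)\to H_F^{n}(\thh)\to H_F^{n}(a\sas\thh)\to H_F^{n+1}(y^{\prime\ast}\thh)\to\cdots .
\]
Since $\thh$ is $\Fm$-acyclic, the terms $H_F^{n}(\thh)$ all vanish, so this long exact sequence breaks up into isomorphisms $H_F^{n}(a\sas\thh)\cong H_F^{n+1}(y^{\prime\ast}\thh)$; setting $\del_{i+1}:=y^{\prime\ast}\thh$ gives the desired isomorphism $H_F^n(\del_i)\cong H_F^{n+1}(\del_{i+1})$.

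Composing these isomorphisms, $H_F^n(\del)=H_F^n(\del_0)\cong H_F^{n+k}(\del_k)$ for every $k\ge0$ and every $n\in\Zbb$. Given $n$, I would choose $k$ with $n+k>m$; then the boundedness hypothesis forces $H_F^{n+k}(\del_k)=0$, hence $H_F^n(\del)=0$, and as $n$ was arbitrary, $\del$ is $\Fm$-acyclic. The one point needing care is the bookkeeping of indices in the long exact sequence of Proposition~\ref{PropConeCohom}(1), so that the connecting morphism genuinely realises the shift $n\mapsto n+1$ in the direction used above; everything else is formal, and the assumption that $H_F^n$ vanishes for $n>m$ enters exactly once, precisely to make the inductive climb terminate.
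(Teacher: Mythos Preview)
Your proof is correct and uses essentially the same mechanism as the paper's: the long exact sequence of Corollary~\ref{CorConeCohom}/Proposition~\ref{PropConeCohom}(1) applied to a presentation $\del=a\sas\thh$ with $\thh$ acyclic, combined with the vanishing hypothesis for $n>m$. The only difference is organizational: the paper runs a descending induction on $n$ simultaneously over all extensions (assuming $H^k(\rho)=0$ for all $\rho$ and all $k>n$, one deduces $H^n(\del)\cong H^{n+1}(y^{\prime\ast}\thh)=0$ in a single step), whereas you fix $\del$ and build an explicit chain $\del_0,\del_1,\ldots$ to climb degree by degree; these are two packagings of the same argument.
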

\begin{proof}
$(1)\Rightarrow(2)$ is trivial. Conversely, suppose that $(2)$ is satisfied. Then by using Corollary~\ref{CorConeCohom}, we can show that 
\begin{itemize}
\item $H^{n}(\del)=0$ for any $\Ebb$-extension $\del$
\end{itemize}
holds for any $n\in\Zbb$ by a descending induction on $n$, since we have $H^n(\del)=0$ for $n > m$.
\end{proof}

The dual statement to Proposition \ref{PropLast} holds for $(\Fm,\del\ssh)$ and $m$ such that $H^n(\delta) = 0$, for all $n < m$ and for all $\delta$.

The following is immediate from the above discussion.

\begin{corollary}\label{CorAcyclic}
Suppose that $(\Fm,\del\ssh)$ is a connected sequence of functors. Assume that there exists $m \in \Zbb$ such that either $((F^n), n > m; \del\ssh)$, or $((F^n), n < m; \del\ssh)$ is a partial $\delta$-functor. Define $\Fbb\se\Ebb$ by
\begin{align*} \Fbb(C,A) & = \{\del\in\Ebb(C,A)\mid x\sas\del\ \text{is}\ \Fm\text{-acyclic for any inflation $x$ from $A$}\} \\
& = \{\del\in\Ebb(C,A)\mid y\uas\del\ \text{is}\ \Fm\text{-acyclic for any deflation $y$ to $C$}\}. 
\end{align*}
Then $\Fbb\se\Ebb$ is a closed sub-bifunctor. The resulting $(\cat,\Fbb,\sfr|_{\Fbb})$ is the largest relative theory on which $(\Fm,\del\ssh)$ becomes a $\delta$-functor.
\end{corollary}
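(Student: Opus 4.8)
The plan is to assemble the ingredients of this subsection: Proposition~\ref{PropAcyclic} gives that $\Fbb$ is well defined (the two displayed descriptions agree), Corollary~\ref{CorConeCohom} (with Proposition~\ref{PropConeCohom}) yields functoriality, and the dimension-shifting technique behind Proposition~\ref{PropLast} — where the boundedness hypothesis is used — upgrades acyclicity statements into the closure conditions needed for a relative structure. Concretely: by Proposition~\ref{PropAcyclic} the formula for $\Fbb(C,A)$ is unambiguous, and taking $x=\id_A$ (equivalently $y=\id_C$) shows that every element of $\Fbb(C,A)$ is $\Fm$-acyclic. Next I would check that $\Fbb$ is stable under push-forward and pull-back: if $\del\in\Fbb(C,A)$ and $c\colon C'\to C$ is any morphism, then for every deflation $y'\colon Y'\to C'$ one has $y'\uas c\uas\del=(c\circ y')\uas\del$, and Corollary~\ref{CorConeCohom}(2) produces, for the morphism $c\circ y'$, an inflation $x''$ from $A$ together with a long exact sequence relating $H_F^{\mr}((c\circ y')\uas\del)$, $H_F^{\mr}(\del)$ and $H_F^{\mr}(x''\sas\del)$; since $\del$ and $x''\sas\del$ are $\Fm$-acyclic, so is $(c\circ y')\uas\del$, hence $c\uas\del\in\Fbb(C',A)$, and dually for $a\sas\del$ via Corollary~\ref{CorConeCohom}(1). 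This already gives the sharper description $\Fbb(C,A)=\{\del\mid a\sas\del\text{ is }\Fm\text{-acyclic for all }a\colon A\to A'\}=\{\del\mid c\uas\del\text{ is }\Fm\text{-acyclic for all }c\colon C'\to C\}$.

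I would then verify that $\Fbb(C,A)$ is an $R$-submodule. The split extension has an exact associated complex, so $0\in\Fbb(C,A)$, and stability under scalars is immediate from the previous step since $r\del=(r\id_A)\sas\del$. For additivity one uses the standard presentation of the Baer sum, namely $\del_1+\del_2$ is the push-forward along the fold map $A\oplus A\to A$ of the pull-back along the diagonal $C\to C\oplus C$ of $\del_1\oplus\del_2$; by the push-forward/pull-back stability just established, it suffices to show $\del_1\oplus\del_2\in\Fbb(C\oplus C,A\oplus A)$. Realizing $\del_1\oplus\del_2$ and factoring its inflation $x_1\oplus x_2$ as $(x_1\oplus\id)\circ(\id\oplus x_2)$ exhibits $\del_1\oplus\del_2$, via the octahedron axiom (ET4), as a composite of two extensions which are push-forwards of $\del_1$, resp. $\del_2$, along split inclusions, hence lie in $\Fbb$; then the morphism of $\sfr$-triangles coming from the relation $d\uas(\del_1\oplus\del_2)=\sigma$ in (ET4), fed into Proposition~\ref{PropConeCohom}(2), shows that $\del_1\oplus\del_2$ is $\Fm$-acyclic (using that the other relation identifies the relevant push-forward with $e\uas\sigma'$, and $\sigma'\in\Fbb$).

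The same (ET4)-composite computation, applied now to arbitrary composable $\del,\del'\in\Fbb$, shows that the resulting third extension is $\Fm$-acyclic; together with push-forward/pull-back stability this is what is needed to conclude that $\Fbb$ is a \emph{closed} sub-bifunctor, so that $(\cat,\Fbb,\sfr|_\Fbb)$ is extriangulated, and then $(\Fm,\del\ssh)$ is a $\delta$-functor on it because every $\Fbb$-extension is $\Fm$-acyclic and the associated complex is unchanged. I expect the \textbf{main obstacle} to be exactly the upgrade from ``$\Fm$-acyclic'' to ``lies in $\Fbb$'' for these sums and (ET4)-composites: one must rule out that push-forward along some inflation destroys acyclicity. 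This is where the hypothesis that one of the half-lines $((F^n),n>m;\del\ssh)$, $((F^n),n<m;\del\ssh)$ is a partial $\delta$-functor enters — equivalently, that $H_F^n(\del)$ vanishes for all $\del$ once $\lvert n\rvert$ is large — and I would handle it by the descending (resp. ascending) induction on cohomological degree used in the proof of Proposition~\ref{PropLast}, combined with the long exact sequences of Corollary~\ref{CorConeCohom}.

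Finally, for maximality, let $\Gbb\se\Ebb$ be any closed sub-bifunctor such that $(\Fm,\del\ssh)$ is a $\delta$-functor on $(\cat,\Gbb,\sfr|_\Gbb)$; then every $\Gbb$-extension is $\Fm$-acyclic, since its associated complex does not depend on whether it is viewed inside $\Ebb$ or inside $\Gbb$. Given $\del\in\Gbb(C,A)$ and any $a\colon A\to A'$, functoriality of the sub-bifunctor $\Gbb$ gives $a\sas\del\in\Gbb(C,A')$, hence $a\sas\del$ is $\Fm$-acyclic; by the sharper description of $\Fbb$ from the first paragraph this forces $\del\in\Fbb(C,A)$. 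Thus $\Gbb\se\Fbb$, so $(\cat,\Fbb,\sfr|_\Fbb)$ is the largest relative structure on which $(\Fm,\del\ssh)$ becomes a $\delta$-functor, which completes the proof.
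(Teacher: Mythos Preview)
Your overall plan is sound and matches the paper's (very terse) ``immediate from the above discussion'': well-definedness from Proposition~\ref{PropAcyclic}, push/pull stability from Corollary~\ref{CorConeCohom}, maximality by functoriality of a competing $\Gbb$. The argument for stability under $c\uas$ and $a\sas$ by \emph{arbitrary} morphisms is exactly right and is the crucial step.

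There is, however, a real gap at what you call the ``main obstacle''. The descending induction you invoke in the style of Proposition~\ref{PropLast} does not close it. If you feed $x\sas\del''$ and $c=d$ into Proposition~\ref{PropConeCohom}(2), the resulting long exact sequence, together with $H^{\mr}_F(x\sas\del)=0$, gives an \emph{isomorphism}
\[
H^n_F(x\sas\del'')\;\cong\;H^n_F\big((x_0x)\sas\del''\big)
\]
at the \emph{same} degree $n$, so there is no shift to induct on. The upgrade from ``$\Fm$-acyclic'' to ``in $\Fbb$'' is instead obtained \emph{directly}, with no induction and without using the boundedness hypothesis at all: the inflation $x_0\colon A'\to B_x$ produced by Corollary~\ref{CorConeCohom}(2) is the inflation in a chosen realization $A'\to B_x\to D$ of $x\sas\del$, and the morphism of $\sfr$-triangles $(x,b,1)\colon (A\to B\to D,\del)\to(A'\to B_x\to D,x\sas\del)$ forces $x_0\circ x=b\circ f$. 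Hence
\[
(x_0x)\sas\del''\;=\;b\sas\big(f\sas\del''\big)\;=\;b\sas\big(e\uas\del'\big)
\]
by the {\rm (ET4)} relation $f\sas\del''=e\uas\del'$. Since $\del'\in\Fbb$, your already–established push/pull stability gives $b\sas e\uas\del'\in\Fbb$, hence acyclic, and therefore $H^{\mr}_F(x\sas\del'')=0$. This proves closedness. Once closedness is in hand, your {\rm (ET4)}-factorization of $x_1\oplus x_2$ immediately yields $\del_1\oplus\del_2\in\Fbb$ and then additivity via $\nabla\sas\Delta\uas$. In particular, you should reverse your order: prove closedness \emph{before} additivity (closedness uses only push/pull stability, not additivity), which also dissolves the circularity in your write-up.
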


\begin{remark}
Assume that $(\cat,\Ebb,\sfr)$ satisfies condition (WIC) of~\cite{NP1}, meaning that if $g\circ f$ is an inflation, then $f$ is an inflation and dually for deflations. Then
$(\cat,\Fbb,\sfr|_{\Fbb})$ also satisfies condition (WIC).
\end{remark}

\section{Negative extensions}\label{Section_Negative}

In this section, we introduce covariant and contravariant versions of negative extensions, obtained from $\Ebb^n$ by taking $\cat$-duals. We discuss their balance under certain conditions and their universality. We conclude the section by some conjectures on the existence of balanced negative extensions in the general situation when these conditions are not necessarily satisfied.

For simplicity, we use the following notations.
\begin{itemize}
\item $\Ebb^0=\Hom_{\cat}$ as before, hence $\Ebb^0(X,Y)=\cat(X,Y)$ for any $X,Y\in\cat$.
\item $f\sas=f\circ-\colon\Ebb^0(-,A)\to\Ebb^0(-,B)$ and $f\uas=-\circ f\colon\Ebb^0(B,-)\to\Ebb^0(A,-)$ for any $f\in\cat(A,B)$.
\end{itemize}

\subsection{Negative extensions given by $\cat$-duals}\label{Subsection_CD}

Let $\CEs$ be a small extriangulated category.
\begin{definition}\label{DefNegativeExtension}
For each $n\in\Nbb_{\ge0}$,
we define $R$-bilinear functors
$\EbbI^{-n},\EbbII^{-n}\colon\cat\op\ti\cat\to\Mod R$
in the following way.
\begin{enumerate}
\item Using the $\cat$-dual $(-)^{\vee}\colon \cat\Mod\to\Mod\cat$, define $\EbbI^{-n}$ by the following.
\begin{itemize}
\item For any $A\in\cat$, define $\EbbI^{-n}(-,A)\in\Mod\cat$ by $\EbbI^{-n}(-,A)=\big( \Ebb^n(A,-) \big)^{\vee}$.
\item For any morphism $a\in\cat(A,A\ppr)$, define $a\sas=\EbbI^{-n}(-,a)\colon\EbbI^{-n}(-,A)\to\EbbI^{-n}(-,A\ppr)$ by $\EbbI^{-n}(-,a)=\big( \Ebb^n(a,-) \big)^{\vee}$. 
\end{itemize}
\item Using the $\cat$-dual $(-)^{\vee}\colon \Mod\cat\to\cat\Mod$, define $\EbbII^{-n}$ by the following.
\begin{itemize}
\item For any $C\in\cat$, define $\EbbII^{-n}(C,-)\in\cat\Mod$ by $\EbbII^{-n}(C,-)=\big( \Ebb^n(-,C) \big)^{\vee}$.
\item For any morphism $c\in\cat(C\ppr,C)$, define $c\uas=\EbbII^{-n}(c,-)\colon\EbbII^{-n}(C,-)\to\EbbII^{-n}(C\ppr,-)$ by $\EbbII^{-n}(c,-)=\big( \Ebb^n(-,c) \big)^{\vee}$. 
\end{itemize}
\end{enumerate}

We also put $\EbbI^n=\EbbII^n=\Ebb^n$ for positive integer $n$.
\end{definition}

\begin{remark}\label{RemNegativeExtension}
Let $n\in\Nbb_{\ge0}$ be any non-negative integer.
By definition, $\EbbI^{-n}$ satisfies
\[ \EbbI^{-n}(C,A)=(\cat\Mod)\big(\Ebb^{n}(A,-),\cat(C,-) \big) \]
for any $A,C\in\cat$. For any $a\in\cat(A,A\ppr), c\in\cat(C\ppr,C)$ and any $\vp\in\EbbI^{-n}(C,A)$, the element $a\sas c\uas\vp=c\uas a\sas\vp\in\EbbI^{-n}(C\ppr,A\ppr)$ is given the composition of
\[ \Ebb^n(A\ppr,-)\ov{a\uas}{\lra}\Ebb^n(A,-)\ov{\vp}{\lra}\cat(C,-)\ov{-\circ c}{\lra}\cat(C\ppr,-). \]
It can be easily checked that $\EbbI^{-n}\colon\cat\op\ti\cat\to\Mod R$ indeed forms an $R$-bilinear functor. Also remark that we naturally have $\EbbI^0=\Ebb^0$.
Similarly for $\EbbII^{\mr}$.
\end{remark}

The following completes $\EbbI^{\bullet},\EbbII^{\bullet}$ to connected sequences of functors. In non-negative degrees, these sequences coincide with the covariant, resp. contravariant partial $\delta$-functors given by $(\Ebb^n), n \ge 0$ together with the canonical connecting morphisms.
\begin{definition}\label{DefCdualProlong}
Let $\CEs$ be as above.
\begin{enumerate}
\item For any $\del\in\Ebb(C,A)$ and any $n\in\Nbb_{\ge0}$, define $\del\ssh\colon\EbbI^{-(n+1)}(-,C)\to\EbbI^{-n}(-,A)$ by
\[ \del\ssh=\big(\del\ush\colon\Ebb^n(A,-)\to\Ebb^{n+1}(C,-) \big)^{\vee} \]
using the $\cat$-dual $(-)^{\vee}\colon \cat\Mod\to\Mod\cat$. Together with those $\del\ssh$ which we already have for positive parts, it gives a covariant connected sequence $(\EbbI^{\bullet},\del\ssh)$.
\item  For any $\del\in\Ebb(C,A)$ and any $n\in\Nbb_{\ge0}$, define $\del\ush\colon\EbbII^{-(n+1)}(A,-)\to\EbbII^{-n}(C,-)$ by 
\[ \del\ush=\big(\del\ssh\colon\Ebb^n(-,C)\to\Ebb^{n+1}(-,A) \big)^{\vee} \]
using the $\cat$-dual $(-)^{\vee}\colon \Mod\cat\to\cat\Mod$. Similarly as in {\rm (1)}, it gives a contravariant connected sequence $(\EbbII^{\bullet},\del\ush)$.
\end{enumerate}
\end{definition}

In the rest we mainly deal with $\EbbI$, since $\EbbII$ can be dealt in a similar way.

The above definitions match well with the naive ones in the following specific cases.
\begin{proposition}\label{PropSpecific}
For any $n\in\Nbb_{>0}$ and any $X,Y\in\cat$, the following holds.
\begin{enumerate}
\item If $\CEs$ corresponds to a triangulated category $(\cat,[1],\triangle)$, then $\EbbI^{-n}(X,Y)\cong\cat(X,Y[-n])$.
\item If $\CEs$ is exact, or more generally if any inflation is monomorphic, then $\EbbI^{-n}(X,Y)=0$.
\end{enumerate}
\end{proposition}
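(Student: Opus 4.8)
The two statements are essentially independent, and I would prove each by reducing to material already at hand.

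\emph{Part (1).} Here I would first identify $\Ebb^n(Y,-)$ as a corepresentable functor. By the corollary identifying $\Ebb^n$ with $\cat(-,(-)[n])$ for a triangulated category (itself a consequence of Proposition~\ref{PropUnivPositiveDefl}), there is a natural isomorphism $\Ebb^n(Y,-)\cong\cat(Y,(-)[n])$ in $\cat\Mod$. Since the shift $[n]$ is an autoequivalence with quasi-inverse $[-n]$, one has a further natural isomorphism $\cat(Y,(-)[n])\cong\cat(Y[-n],-)$, so that $\Ebb^n(Y,-)$ is corepresented by $Y[-n]$. Recalling from Remark~\ref{RemNegativeExtension} that $\EbbI^{-n}(X,Y)=(\cat\Mod)\big(\Ebb^n(Y,-),\cat(X,-)\big)$, the Yoneda lemma then yields
\[
\EbbI^{-n}(X,Y)\;\cong\;(\cat\Mod)\big(\cat(Y[-n],-),\cat(X,-)\big)\;\cong\;\cat(X,Y[-n]).
\]
It remains only to check that this composite of isomorphisms is natural in $X$ and in $Y$, which is a routine verification.

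\emph{Part (2).} Here I would combine the coend description of $\Ebb^n$ with a single naturality argument. Using $\Ebb^n\cong\Ebb\dia\Ebb^{n-1}$ (Remark~\ref{RemFn}) we have $\Ebb^n(Y,A)=\int^{M}\Ebb(M,A)\otr\Ebb^{n-1}(Y,M)$, and by Proposition~\ref{PropDescribeCoend} every element of $\Ebb^n(Y,A)$ is a finite $R$-linear combination of classes $\rho\cup\lam$ with $\rho\in\Ebb(M,A)$ and $\lam\in\Ebb^{n-1}(Y,M)$. Realize $\rho$ by an $\sfr$-triangle $A\ov{p}{\lra}Z\ov{q}{\lra}M\ov{\rho}{\dra}$. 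Then $p\sas\rho=\Ebb(M,p)(\rho)=0$ (a standard property of extriangulated categories, e.g. from exactness of $\cat(-,M)\ov{\rho\ssh}{\lra}\Ebb(-,A)\ov{p\sas}{\lra}\Ebb(-,Z)$, cf. Claim~\ref{ClaimComplex_sTriangle}); hence, using Lemma~\ref{LemDescribeCoend},
\[
\Ebb^n(Y,p)(\rho\cup\lam)=(p\sas\rho)\cup\lam=0\cup\lam=0\qquad\text{in }\Ebb^n(Y,Z).
\]
Now let $\vp\colon\Ebb^n(Y,-)\to\cat(X,-)$ be any morphism in $\cat\Mod$. Naturality of $\vp$ at $p\colon A\to Z$ gives $p\circ\vp_A(\rho\cup\lam)=\vp_Z\big(\Ebb^n(Y,p)(\rho\cup\lam)\big)=0$; since $p$ is an inflation, it is a monomorphism by hypothesis, and therefore $\vp_A(\rho\cup\lam)=0$. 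As $\vp_A$ is $R$-linear and the classes $\rho\cup\lam$ generate $\Ebb^n(Y,A)$, we conclude $\vp_A=0$ for every $A$, i.e. $\EbbI^{-n}(X,Y)=(\cat\Mod)\big(\Ebb^n(Y,-),\cat(X,-)\big)=0$.

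Neither part poses a genuine difficulty. For part (1) the only care needed is bookkeeping of naturality through the chain of isomorphisms (and, if one also wishes to match the connecting morphisms $\del\ssh$ with the triangulated structure, a small extra diagram chase, though that is not asked here). For part (2) the main point to get right is to present $\Ebb^n$ as $\Ebb\dia\Ebb^{n-1}$ rather than the definitional $\Ebb^{n-1}\dia\Ebb$, so that the covariant variable $A$ sits in the outer $\Ebb(M,A)$ factor and the effacement $p\sas\rho=0$ can be applied directly; after that, everything is a short naturality argument.
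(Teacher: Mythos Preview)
Your proof is correct and follows essentially the same route as the paper's own argument: for (1) you identify $\Ebb^n(Y,-)\cong\cat(Y[-n],-)$ and apply Yoneda, and for (2) you write an arbitrary element as $\rho\cup\lam$ via the coend description, realize $\rho$, and use that the inflation kills it together with naturality of $\vp$ and monomorphicity of the inflation. One small sharpening: by Lemma~\ref{LemDescribeCoend} and Proposition~\ref{PropDescribeCoend} every element of $\Ebb^n(Y,A)$ is already a \emph{single} class $\rho\cup\lam$ (sums are absorbed into direct sums), so the ``finite $R$-linear combination'' step is unnecessary, though harmless.
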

\begin{proof}
{\rm (1)} Indeed we have
\begin{eqnarray*}
\EbbI^{-n}(X,Y)&=&(\cat\Mod)(\Ebb^n(Y,-),\cat(X,-))\\
&\cong&(\cat\Mod)(\cat(Y[-n],-),\cat(X,-))\cong\cat(X,Y[-n])
\end{eqnarray*}
by Yoneda lemma.

{\rm (2)} Let $\vp\in\EbbI^{-n}(X,Y)=(\cat\Mod)(\Ebb^n(Y,-),\cat(X,-))$ be any element.
It suffices to show that $\vp_Z(\al)=0$ holds for any $Z\in\cat$ and any $\al\in\Ebb^n(Y,Z)$. As before, by Proposition~\ref{PropDescribeCoend}, there exists $M\in\cat$, $\rho\in\Ebb(M,Z)$ and $\thh\in\Ebb^{n-1}(Y,M)$ such that $\al=\rho\cup\thh$. Realize $\rho$ to obtain an $\sfr$-triangle $Z\ov{z}{\lra}Q\to M\ov{\rho}{\dra}$, in which $z$ is a monomorphism. By the naturality of $\vp$,
\[
\xy
(-12,6)*+{\Ebb^n(Y,Z)}="0";
(12,6)*+{\cat(X,Z)}="2";
(-12,-6)*+{\Ebb^n(Y,Q)}="4";
(12,-6)*+{\cat(X,Q)}="6";
{\ar^{\vp_Z} "0";"2"};
{\ar_{z\sas} "0";"4"};
{\ar^{z\circ-} "2";"6"};
{\ar_{\vp_Q} "4";"6"};
{\ar@{}|\circlearrowright "0";"6"};
\endxy
\]
is commutative. In particular $z\circ\vp_Z(\al)=\vp_Q(z\sas\al)=0$ holds (because $z\sas\al=(z\sas\rho)\cup\theta=0\cup\theta=0$), hence $\vp_Z(\al)=0$ follows from the monomorphicity of $z$. 
\end{proof}

\subsection{Projective deflations and acyclicity}\label{Subsection_PD_Acycl}

Let $\CEs$ be as in the previous subsection. The aim of this subsection is to show that the existence of enough projective morphisms implies that $(\EbbI^{\bullet},\del\ssh)$ is a $\del$-functor (Theorem~\ref{ThmNegativeExtension}).

\begin{lemma}\label{LemDominant2}
Let $F\ov{f}{\lra}G\ov{g}{\lra}C\ov{\vt}{\dra}$ be a dominant $\sfr$-triangle. Then for any $n\in\Nbb_{>0}$,
\begin{equation}\label{SeqNega}
0\to\EbbI^{-n}(-,C)\ov{\vt\ssh}{\lra}\EbbI^{-(n-1)}(-,F)\ov{f\sas}{\lra}\EbbI^{-(n-1)}(-,G)
\end{equation}
is exact in $\Mod\cat$.
\end{lemma}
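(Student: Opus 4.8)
The statement to prove is the exactness, in $\Mod\cat$, of
\[ 0\to\EbbI^{-n}(-,C)\ov{\vt\ssh}{\lra}\EbbI^{-(n-1)}(-,F)\ov{f\sas}{\lra}\EbbI^{-(n-1)}(-,G), \]
for a dominant $\sfr$-triangle $F\ov{f}{\lra}G\ov{g}{\lra}C\ov{\vt}{\dra}$ and $n\in\Nbb_{>0}$. By Definition~\ref{DefNegativeExtension} and Definition~\ref{DefCdualProlong}, this sequence is obtained by applying the $\cat$-dual $(-)^\vee\colon\cat\Mod\to\Mod\cat$ to the sequence in $\cat\Mod$
\[ \Ebb^{n-1}(G,-)\ov{f\uas}{\lra}\Ebb^{n-1}(F,-)\ov{\vt\ush}{\lra}\Ebb^{n}(C,-)\to0, \]
which is exact by Corollary~\ref{CorDominant_fromThm} (using that the $\sfr$-triangle is dominant). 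So the plan is: (i) identify the three terms of the sequence as duals of the three rightmost terms of the exact sequence of Corollary~\ref{CorDominant_fromThm}, together with the identifications $\vt\ssh=(\vt\ush)^\vee$ and $f\sas=(f\uas)^\vee$ from the definitions; then (ii) invoke left-exactness of the functor $(-)^\vee$, i.e. of $\Hom$-out-of in the module category, to conclude.

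First I would spell out (i). By definition $\EbbI^{-n}(-,C)=(\Ebb^n(C,-))^\vee$, $\EbbI^{-(n-1)}(-,F)=(\Ebb^{n-1}(F,-))^\vee$ and $\EbbI^{-(n-1)}(-,G)=(\Ebb^{n-1}(G,-))^\vee$ — note that here the convention is $\EbbI^0=\Ebb^0=\Hom_\cat$, so the case $n=1$ is covered without separate treatment. The connecting map $\vt\ssh\colon\EbbI^{-n}(-,C)\to\EbbI^{-(n-1)}(-,F)$ is, by Definition~\ref{DefCdualProlong}(1), precisely $(\vt\ush)^\vee$ where $\vt\ush\colon\Ebb^{n-1}(F,-)\to\Ebb^n(C,-)$ is the canonical connecting morphism; and $f\sas=\EbbI^{-(n-1)}(f,-)=(\Ebb^{n-1}(f,-))^\vee=(f\uas)^\vee$ by Definition~\ref{DefNegativeExtension}(1). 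Hence the sequence in question is exactly $(-)^\vee$ applied, term by term and map by map, to
\[ \Ebb^{n-1}(G,-)\ov{f\uas}{\lra}\Ebb^{n-1}(F,-)\ov{\vt\ush}{\lra}\Ebb^{n}(C,-)\to0. \]

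For (ii): the $\cat$-dual $(-)^\vee\colon\cat\Mod\to\Mod\cat$ sends $N\in\cat\Mod$ to the right $\cat$-module $X\mapsto(\cat\Mod)(N,\cat(X,-))$, i.e. it is $(\cat\Mod)(-,\cat(X,-))$ evaluated at each $X$. Being a representable-$\Hom$ functor, $(\cat\Mod)(-,\cat(X,-))$ is left exact: it turns the cokernel sequence $\Ebb^{n-1}(G,-)\to\Ebb^{n-1}(F,-)\to\Ebb^n(C,-)\to0$ into the left-exact sequence $0\to\EbbI^{-n}(-,C)(X)\to\EbbI^{-(n-1)}(-,F)(X)\to\EbbI^{-(n-1)}(-,G)(X)$ for every $X\in\cat$. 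Since exactness in $\Mod\cat$ is checked objectwise, this gives exactness of $(\ref{SeqNega})$. This is essentially the whole argument; there is no genuine obstacle, the only thing to be careful about is matching the variances and sign/connecting-map conventions of Definitions~\ref{DefNegativeExtension} and~\ref{DefCdualProlong} with the statement of Corollary~\ref{CorDominant_fromThm}, so that one really is dualizing the correct exact sequence. (One could alternatively give a direct element-chase: an element of $\EbbI^{-n}(-,C)=(\cat\Mod)(\Ebb^n(C,-),\cat(-,-))$ mapping to $0$ in $\EbbI^{-(n-1)}(-,F)$ is a natural transformation $\Ebb^n(C,-)\to\cat(-,-)$ whose precomposition with $\vt\ush$ vanishes; since $\vt\ush$ is an epimorphism it must itself vanish, giving injectivity, and exactness in the middle follows likewise from $\vt\ush$ being the cokernel of $f\uas$ — but the conceptual ``dualize a cokernel sequence'' phrasing is cleaner.)
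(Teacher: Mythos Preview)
Your proof is correct and follows exactly the paper's approach: the sequence $(\ref{SeqNega})$ is the $\cat$-dual of the right exact sequence $(\ref{SeqPosi})$ from Corollary~\ref{CorDominant_fromThm}, hence exact by the left exactness of $(-)^\vee$. One tiny notational slip: you write $f\sas=\EbbI^{-(n-1)}(f,-)$, but since $f$ is being plugged into the second (covariant) slot this should read $f\sas=\EbbI^{-(n-1)}(-,f)$; your subsequent identification $f\sas=(f\uas)^\vee$ is nonetheless correct.
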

\begin{proof}
Sequence $(\ref{SeqNega})$ is the $\cat$-dual of $(\ref{SeqPosi})$ in Corollary~\ref{CorDominant_fromThm}, hence exact by the left exactness of $(-)^{\vee}\colon\cat\Mod\to\Mod\cat$. 
\end{proof}

\begin{proposition}\label{PropVanishNegative}
Assume that $\CEs$ has enough projective morphisms. Then for any morphism $x\in\cat(X,X\ppr)$, the following are equivalent.
\begin{enumerate}
\item $\EbbI^{-1}(x,-)=0$.
\item $\EbbI^{-n}(x,-)=0$ for any $n\in\Nbb_{>0}$.
\end{enumerate}
In particular if we take $x=\id_X$ for an object $X\in\cat$, we see that $\EbbI^{-1}(X,-)=0$ holds if and only if $\EbbI^{-n}(X,-)=0$ holds for any $n>0$.
\end{proposition}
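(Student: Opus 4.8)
The plan is to dispatch $(2)\Rightarrow(1)$ for free, by specializing to $n=1$, and to obtain $(1)\Rightarrow(2)$ by an induction on $n$ whose engine is Lemma~\ref{LemDominant2}. The base case $n=1$ is precisely hypothesis $(1)$, so all the work is in the inductive step, which I would run upward starting from $n=2$.

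For the inductive step I fix $n\ge 2$, assume $\EbbI^{-(n-1)}(x,-)=0$, and take an arbitrary object $A\in\cat$. Since $\CEs$ has enough projective morphisms, $A$ admits a dominant $\sfr$-triangle $F\overset{f}{\lra}G\overset{g}{\lra}A\overset{\vt}{\dra}$, and Lemma~\ref{LemDominant2} supplies an exact sequence of right $\cat$-modules
\[ 0\to\EbbI^{-n}(-,A)\overset{\vt\ssh}{\lra}\EbbI^{-(n-1)}(-,F)\overset{f\sas}{\lra}\EbbI^{-(n-1)}(-,G). \]
Evaluating this sequence at $X$ and at $X\ppr$, and using that $\vt\ssh$ and $f\sas$ are natural in the first variable, I get a ladder with exact rows whose vertical maps are $\EbbI^{-n}(x,A)$, $\EbbI^{-(n-1)}(x,F)$, $\EbbI^{-(n-1)}(x,G)$. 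The last two vanish by the inductive hypothesis, so in the left-hand commutative square $\vt\ssh_X\circ\EbbI^{-n}(x,A)=\EbbI^{-(n-1)}(x,F)\circ\vt\ssh_{X\ppr}=0$. Because $\vt\ssh_X$ is a monomorphism (exactness of the bottom row at its first nonzero term), this forces $\EbbI^{-n}(x,A)=0$; as $A$ was arbitrary, $\EbbI^{-n}(x,-)=0$, which closes the induction.

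The last assertion of the statement is then immediate: apply the equivalence to $x=\id_X$ and note that $\EbbI^{-n}(\id_X,-)$ is the identity natural transformation of $\EbbI^{-n}(X,-)$, which vanishes exactly when $\EbbI^{-n}(X,-)=0$. I expect no serious obstacle; the only point requiring care is the variance bookkeeping --- $\EbbI^{-n}$ is contravariant in its first argument, so $\EbbI^{-n}(x,-)$ runs from $\EbbI^{-n}(X\ppr,-)$ to $\EbbI^{-n}(X,-)$ --- and making sure that the monomorphism $\vt\ssh$ produced by Lemma~\ref{LemDominant2} sits on the side of the ladder where the diagram chase uses injectivity. Everything else is a routine chase in the abelian category $\Mod\cat$.
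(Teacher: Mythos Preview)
Your proof is correct and follows essentially the same approach as the paper: both argue by induction on $n$, using the monomorphism $\vt\ssh\colon\EbbI^{-n}(-,A)\hookrightarrow\EbbI^{-(n-1)}(-,F)$ from Lemma~\ref{LemDominant2} applied to a dominant $\sfr$-triangle for $A$, together with naturality in the first variable, to conclude that $\EbbI^{-n}(x,A)=0$ from $\EbbI^{-(n-1)}(x,F)=0$. The paper presents only the single commutative square needed for the chase, while you describe the full three-column ladder, but the argument is the same.
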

\begin{proof}
{\rm (2)} $\Rightarrow$ {\rm (1)} is trivial. Let us show the converse.
Let $C\in\cat$ be any object. By assumption, there exists a dominant extension $\vt\in\Ebb(C,F)$. Then
\[
\xy
(-14,6)*+{\EbbI^{-n}(X\ppr,C)}="0";
(14,6)*+{\EbbI^{-(n-1)}(X\ppr,F)}="2";
(-14,-6)*+{\EbbI^{-n}(X,C)}="4";
(14,-6)*+{\EbbI^{-(n-1)}(X,F)}="6";
{\ar^(0.46){\vt\ssh} "0";"2"};
{\ar_{x\uas} "0";"4"};
{\ar^{x\uas} "2";"6"};
{\ar_(0.46){\vt\ssh} "4";"6"};
{\ar@{}|\circlearrowright "0";"6"};
\endxy
\]
is commutative, in which the horizontal arrows are monomorphic for any $n\ge 1$ by Lemma~\ref{LemDominant2}. Thus {\rm (2)} follows from {\rm (1)} by an induction on $n$.
\end{proof}

\begin{corollary}\label{CorExtFin}
Assume that $R=K$ is a field, and that $\cat$ is $\Hom$-finite over $K$. If $\CEs$ has enough projective morphisms, then $\dim_K\EbbI^n(X,Y)<\infty$ holds for any $X,Y\in\cat$ and any $n\in\Zbb$.
\end{corollary}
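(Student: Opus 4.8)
The plan is to split into the cases $n\ge 0$ and $n<0$, and in each case reduce the claim to the elementary fact that a finitely presented $\cat$-module takes finite-dimensional values over a $\Hom$-finite $K$-linear category.

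First I would handle $n\ge 0$. Here $\EbbI^n=\Ebb^n$, and since $\CEs$ has enough projective morphisms, Proposition~\ref{PropCoherentPositive} tells us that the left $\cat$-module $\Ebb^n(X,-)$ is finitely presented. Choosing a presentation $\cat(X_1,-)\to\cat(X_0,-)\to\Ebb^n(X,-)\to0$ in $\cat\Mod$ and evaluating at $Y$ exhibits $\EbbI^n(X,Y)=\Ebb^n(X,Y)$ as a quotient of $\cat(X_0,Y)$, which is finite-dimensional by $\Hom$-finiteness; hence $\dim_K\EbbI^n(X,Y)<\infty$. The case $n=0$ is just $\EbbI^0(X,Y)=\cat(X,Y)$, finite-dimensional by hypothesis.

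Next I would treat $n=-m$ with $m>0$. By Remark~\ref{RemNegativeExtension}, $\EbbI^{-m}(X,Y)=(\cat\Mod)\big(\Ebb^{m}(Y,-),\cat(X,-)\big)$. Again by Proposition~\ref{PropCoherentPositive}, the module $\Ebb^{m}(Y,-)$ is finitely presented, so I would fix a presentation $\cat(Y_1,-)\to\cat(Y_0,-)\to\Ebb^{m}(Y,-)\to0$ in $\cat\Mod$. Applying the left-exact contravariant functor $(\cat\Mod)(-,\cat(X,-))$ and using the Yoneda isomorphisms $(\cat\Mod)(\cat(Y_i,-),\cat(X,-))\cong\cat(X,Y_i)$ yields an exact sequence
\[ 0\to\EbbI^{-m}(X,Y)\to\cat(X,Y_0)\to\cat(X,Y_1), \]
so $\EbbI^{-m}(X,Y)$ embeds into the finite-dimensional space $\cat(X,Y_0)$ and is therefore finite-dimensional.

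Since these cases exhaust all $n\in\Zbb$, this completes the argument. There is no serious obstacle: the only real input is Proposition~\ref{PropCoherentPositive}, and the one point worth emphasizing is that the hypothesis of enough projective morphisms controls finite presentation precisely in the \emph{covariant} variable of $\Ebb^m$, which is exactly the variable that gets $\cat$-dualized in the definition of the negative extensions, so the same hypothesis suffices uniformly in both positive and negative degrees.
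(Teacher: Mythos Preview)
Your proof is correct. For $n\ge 0$ you and the paper both invoke Proposition~\ref{PropCoherentPositive}. For $n<0$ the paper argues by induction on $|n|$ using Lemma~\ref{LemDominant2}: a dominant $\sfr$-triangle $F\to G\to Y\ov{\vt}{\dra}$ gives an embedding $\EbbI^{-n}(X,Y)\hookrightarrow\EbbI^{-(n-1)}(X,F)$, and the induction hypothesis finishes. You instead bypass the induction by using Proposition~\ref{PropCoherentPositive} once more to get a finite presentation of $\Ebb^m(Y,-)$ and then $\cat$-dualizing that presentation directly. The two arguments are really the same idea rearranged: Lemma~\ref{LemDominant2} is obtained by dualizing the right exact sequence of Corollary~\ref{CorDominant_fromThm}, and Proposition~\ref{PropCoherentPositive} is proved by iterating that same right exact sequence, so your approach simply absorbs the induction into the cited proposition. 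Your route has the minor advantage of yielding the bound in one step, without reintroducing an induction already carried out elsewhere.
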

\begin{proof}
For non-negative $n$, this follows from Proposition~\ref{PropCoherentPositive}, since $\EbbI^n=\Ebb^n$ by definition. For negative $n$, this can be shown using the exactness of $(\ref{SeqNega})$ in Lemma~\ref{LemDominant2}, by an induction on $n$.
\end{proof}

\begin{theorem}\label{ThmNegativeExtension}
Assume that $\CEs$ has enough projective morphisms. Then $(\EbbI^{\bullet},\del\ssh)$ is a $\delta$-functor on $\CEs$.
\end{theorem}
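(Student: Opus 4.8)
The plan is to deduce this from Theorem~\ref{ThmPositiveExtension} together with the basic homological properties of the categories of defects. The starting observation is that, by the very definitions of $\EbbI^{-n}$ and of the connecting maps $\del\ssh$ in negative degrees (Definition~\ref{DefCdualProlong}), the part of the complex $\Gam(\del)$ sitting in cohomological degrees $\le 0$ is exactly the image under the $\cat$-dual $(-)^{\vee}\colon\cat\Mod\to\Mod\cat$ of the long exact sequence of Theorem~\ref{ThmPositiveExtension}(2), namely $\cat(C,-)\xrightarrow{y\uas}\cat(B,-)\xrightarrow{x\uas}\cat(A,-)\xrightarrow{\del\ush}\Ebb(C,-)\xrightarrow{y\uas}\Ebb(B,-)\xrightarrow{x\uas}\Ebb(A,-)\xrightarrow{\del\ush}\Ebb^2(C,-)\to\cdots$ in $\cat\Mod$, which is exact except at its left end $\cat(C,-)$. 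Since the part of $\Gam(\del)$ in degrees $\ge-1$ is already exact by Theorem~\ref{ThmPositiveExtension}(1), it suffices to prove that this dualized complex is exact in all cohomological degrees $\le-1$.

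The two facts I would isolate are: \textbf{(A)} for every $C\in\cat$ and every $n\ge1$, the functor $\Ebb^n(C,-)\in\cat\Mod$ is a covariant defect, i.e. it lies in the category of defects of the opposite extriangulated structure on $\cat\op$; and \textbf{(B)} every covariant defect $D$ satisfies $\Ext^1_{\cat\Mod}(D,\cat(X,-))=0$ for all $X\in\cat$. For (A): enough projective morphisms provides, for each $C$, a dominant $\sfr$-triangle $F\xrightarrow{f}G\xrightarrow{g}C\dra$; by Corollary~\ref{CorDominant_fromThm} with $n=1$ this gives $\cat(G,-)\xrightarrow{f\uas}\cat(F,-)\to\Ebb(C,-)\to0$, exhibiting $\Ebb(C,-)$ as the covariant defect of the conflation $F\xrightarrow{f}G\xrightarrow{g}C$, while for $n\ge2$ the same corollary gives an exact sequence $\Ebb^{n-1}(G,-)\to\Ebb^{n-1}(F,-)\to\Ebb^{n}(C,-)\to0$, and one concludes by induction because covariant defects form a Serre subcategory of $\coh(\cat\op)$ (the dual of the Enomoto--Ogawa theorem), in particular are closed under cokernels of their own morphisms. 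For (B): the covariant defect $D$ of a conflation $A\xrightarrow{x}B\xrightarrow{y}C\dra$ admits $\cat(C,-)\xrightarrow{y\uas}\cat(B,-)\xrightarrow{x\uas}\cat(A,-)\to D\to0$ as the beginning of a projective resolution in $\cat\Mod$ — exactness here at $\cat(B,-)$ and $\cat(A,-)$ being part of Theorem~\ref{ThmPositiveExtension}(2) — so $\Ext^1_{\cat\Mod}(D,\cat(X,-))$ is computed as the first cohomology of $\cat(X,A)\xrightarrow{x\circ-}\cat(X,B)\xrightarrow{y\circ-}\cat(X,C)$, which vanishes because this three-term complex is exact for any conflation (again Theorem~\ref{ThmPositiveExtension}(1)).

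With (A) and (B) available I would finish by a short diagram chase. Write the positive long exact sequence of Theorem~\ref{ThmPositiveExtension}(2) as $L_0\to L_1\to L_2\to\cdots$, so that $L_0=\cat(C,-),\ L_1=\cat(B,-),\ L_2=\cat(A,-),\ L_3=\Ebb(C,-),\ L_4=\Ebb(B,-),\dots$, and let $Z_i\hookrightarrow L_i$ be the images, so $0\to Z_i\to L_i\to Z_{i+1}\to0$ for $i\ge1$. Applying $(-)^{\vee}$ to these short exact sequences and tracking (co)kernels, the cohomology of the dualized complex at $L_i^{\vee}$ is the kernel of the connecting map $Z_{i+2}^{\vee,1}\to L_{i+1}^{\vee,1}$, hence vanishes as soon as $Z_{i+2}^{\vee,1}=0$. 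For $i\ge1$ the module $L_{i+2}$ is one of the $\Ebb^n$'s, so it is a covariant defect by (A); therefore $Z_{i+2}$, being a submodule of a covariant defect, is itself a covariant defect by the Serre property, and $Z_{i+2}^{\vee,1}=0$ by (B). Thus the dualized complex is exact in every degree $\le-1$, and combined with Theorem~\ref{ThmPositiveExtension}(1) this shows $\Gam(\del)$ is exact in every degree for every $\Ebb$-extension $\del$, i.e. $(\EbbI^{\bullet},\del\ssh)$ is a $\delta$-functor on $\CEs$.

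I expect the substantive step to be (A): the recognition that the higher Yoneda functors $\Ebb^n(C,-)$ are covariant defects. This is exactly where the hypothesis of enough projective morphisms is used (to manufacture the dominant triangles over each object), and it must be combined with the Serre property in order to pass the statement through the syzygies $Z_i$ appearing in the dualization. Step (B), by contrast, is a formal consequence of the exactness axioms once the correct projective resolution is written down, and the final diagram chase is routine.
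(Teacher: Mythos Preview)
Your argument is correct and takes a genuinely different route from the paper's proof. The paper proceeds by a direct induction: it formulates two families of statements $(\mathrm{P}_n)$ and $(\mathrm{Q}_n)$ asserting exactness of specific three- and four-term stretches of $\Gam(\del)$, and shows $(\mathrm{Q}_n)\Rightarrow(\mathrm{P}_n)$ and $(\mathrm{P}_n)+(\mathrm{Q}_n)\Rightarrow(\mathrm{Q}_{n+1})$ by lifting along dominant triangles and invoking $(\mathrm{ET4})\op$ to manufacture auxiliary diagrams in $\cat$, then chasing in $\Mod\cat$. No functor-category homological algebra beyond the left exactness of $(-)^\vee$ (Lemma~\ref{LemDominant2}) is used.

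Your approach, by contrast, is global and conceptual: you identify the non-positive part of $\Gam(\del)$ as the $\cat$-dual of the long exact sequence of Theorem~\ref{ThmPositiveExtension}(2), and reduce exactness of the dual to the vanishing of $\Ext^1_{\cat\Mod}(Z,\cat(X,-))$ for the syzygies $Z$ of that sequence. The two ingredients (A) and (B) are exactly what is needed: (A) uses enough projective morphisms to realise each $\Ebb^n(C,-)$ as a covariant defect (via Corollary~\ref{CorDominant_fromThm} and the Serre property of defects), and (B) is the standard computation that $\Ext^1$ of a defect against a representable is the middle cohomology of $\cat(X,A)\to\cat(X,B)\to\cat(X,C)$, which vanishes. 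The only point worth stating explicitly is that each $Z_{i+2}$ is coherent, which you implicitly use when invoking the Serre property for subobjects; this holds because $Z_{i+2}=\Ker(L_{i+2}\to L_{i+3})$ is a kernel of a map between coherent modules. What your approach buys is a clear structural explanation: acyclicity of $(\EbbI^{\bullet},\del\ssh)$ is precisely the assertion that the positive long exact sequence dualises well, and this holds because its terms and syzygies are defects. What the paper's approach buys is self-containment: it avoids the Enomoto--Ogawa theorem on the Serre property of $\defff\Ebb$ and works entirely within the axiomatics of extriangulated categories.
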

\begin{proof}
Consider the following statements $\mathrm{(P_{\mathit{n}})}$ and $\mathrm{(Q_{\mathit{n}})}$ for each $n\in\Nbb_{\ge0}$.
\begin{itemize}
\item[$\mathrm{(P_{\mathit{n}})}$] Sequence
\[ \EbbI^{-(n+1)}(-,C)\ov{\del\ssh}{\lra}\EbbI^{-n}(-,A)\ov{x\sas}{\lra}\EbbI^{-n}(-,B) \]
is exact for any $\sfr$-triangle $A\ov{x}{\lra}B\ov{y}{\lra}C\ov{\del}{\dra}$.
\item[$\mathrm{(Q_{\mathit{n}})}$] Sequence
\[ \EbbI^{-n}(-,A)\ov{x\sas}{\lra}\EbbI^{-n}(-,B)\ov{y\sas}{\lra}\EbbI^{-n}(-,C)\ov{\del\ssh}{\lra}\EbbI^{-(n-1)}(-,A) \]
is exact for any $\sfr$-triangle $A\ov{x}{\lra}B\ov{y}{\lra}C\ov{\del}{\dra}$.
\end{itemize}
Remark that $\mathrm{(Q_{\mathit{0}})}$ holds. By an induction on $n$, it suffices to show the following {\rm (1)} and {\rm (2)} for any $n\in\Nbb_{\ge0}$.
\begin{enumerate}
\item $\mathrm{(Q_{\mathit{n}})}$ implies $\mathrm{(P_{\mathit{n}})}$.
\item $\mathrm{(P_{\mathit{n}})}$ and $\mathrm{(Q_{\mathit{n}})}$ imply $\mathrm{(Q}_{n+1}\mathrm{)}$.
\end{enumerate}

First we show {\rm (1)}. Suppose that $\mathrm{(Q_{\mathit{n}})}$ holds. Let $A\ov{x}{\lra}B\ov{y}{\lra}C\ov{\del}{\dra}$ be any $\sfr$-triangle. By assumption, there exists a dominant $\sfr$-triangle $F\to G\to C\ov{\vt}{\dra}$. Since $g\uas\del=0$ by projectivity, we obtain a morphism
\[
\xy
(-12,6)*+{F}="0";
(0,6)*+{G}="2";
(12,6)*+{C}="4";
(24,6)*+{}="6";
(-12,-6)*+{A}="10";
(0,-6)*+{B}="12";
(12,-6)*+{C}="14";
(24,-6)*+{}="16";
{\ar^{f} "0";"2"};
{\ar^{g} "2";"4"};
{\ar@{-->}^{\vt} "4";"6"};
{\ar_{a} "0";"10"};
{\ar_{b} "2";"12"};
{\ar@{=} "4";"14"};
{\ar_{x} "10";"12"};
{\ar_{y} "12";"14"};
{\ar@{-->}_{\del} "14";"16"};
{\ar@{}|\circlearrowright "0";"12"};
{\ar@{}|\circlearrowright "2";"14"};
\endxy
\]
of $\sfr$-triangles. This induces a commutative diagram
\[
\xy
(-58,6)*+{0}="0";
(-38,6)*+{\EbbI^{-(n+1)}(-,C)}="2";
(-12,6)*+{\EbbI^{-n}(-,F)}="4";
(12,6)*+{\EbbI^{-n}(-,G)}="6";
(36,6)*+{\EbbI^{-n}(-,C)}="8";
(-38,-6)*+{\EbbI^{-(n+1)}(-,C)}="12";
(-12,-6)*+{\EbbI^{-n}(-,A)}="14";
(12,-6)*+{\EbbI^{-n}(-,B)}="16";
(36,-6)*+{\EbbI^{-n}(-,C)}="18";
{\ar "0";"2"};
{\ar^(0.56){\vt\ssh} "2";"4"};
{\ar^{f\sas} "4";"6"};
{\ar^{g\sas} "6";"8"};
{\ar@{=} "2";"12"};
{\ar^{a\sas} "4";"14"};
{\ar_{b\sas} "6";"16"};
{\ar@{=} "8";"18"};
{\ar_(0.56){\del\ssh} "12";"14"};
{\ar_{x\sas} "14";"16"};
{\ar_{y\sas} "16";"18"};
{\ar@{}|\circlearrowright "2";"14"};
{\ar@{}|\circlearrowright "4";"16"};
{\ar@{}|\circlearrowright "6";"18"};
\endxy
\]
in $\Mod\cat$, whose top row is exact by $\mathrm{(Q_{\mathit{n}})}$ and Lemma~\ref{LemDominant2}. Also $\EbbI^{-n}(-,A)\ov{x\sas}{\lra}\EbbI^{-n}(-,B)\ov{y\sas}{\lra}\EbbI^{-n}(-,C)$ is exact by $\mathrm{(Q_{\mathit{n}})}$ in the bottom row. Then we can show easily that the middle square becomes a weak pullback, and that $\EbbI^{-(n+1)}(-,C)\ov{\del\ssh}{\lra}\EbbI^{-n}(-,A)\ov{x\sas}{\lra}\EbbI^{-n}(-,B)$ becomes exact as desired.

It remains to show {\rm (2)}. Suppose that $\mathrm{(P_{\mathit{n}})}$ and $\mathrm{(Q_{\mathit{n}})}$ holds. Let $A\ov{x}{\lra}B\ov{y}{\lra}C\ov{\del}{\dra}$ be any $\sfr$-triangle. Take a dominant $\sfr$-triangle $F\ov{f}{\lra}G\ov{g}{\lra}B\ov{\vt}{\dra}$. By $\mathrm{(ET4)\op}$, we obtain a commutative diagram
\[
\xy
(-6,18)*+{F}="-12";
(6,18)*+{F}="-14";
(-6,6)*+{E}="2";
(6,6)*+{G}="4";
(18,6)*+{C}="6";
(30,6)*+{}="8";
(-6,-6)*+{A}="12";
(6,-6)*+{B}="14";
(18,-6)*+{C}="16";
(30,-6)*+{}="18";
(-6,-18)*+{}="22";
(6,-18)*+{}="24";
{\ar@{=} "-12";"-14"};
{\ar_{f\ppr} "-12";"2"};
{\ar^{f} "-14";"4"};
{\ar^{x\ppr} "2";"4"};
{\ar^{y\ppr} "4";"6"};
{\ar@{-->}^{\del\ppr} "6";"8"};
{\ar_{g\ppr} "2";"12"};
{\ar^{g} "4";"14"};
{\ar@{=} "6";"16"};
{\ar_{x} "12";"14"};
{\ar_{y} "14";"16"};
{\ar@{-->}_{\del} "16";"18"};
{\ar@{-->}_{x\uas\vt} "12";"22"};
{\ar@{-->}_{\vt} "14";"24"};
{\ar@{}|\circlearrowright "-12";"4"};
{\ar@{}|\circlearrowright "2";"14"};
{\ar@{}|\circlearrowright "4";"16"};
\endxy
\]
made of $\sfr$-triangles. This induces a commutative diagram
\[
\xy
(-15,16)*+{0}="2";
(15,16)*+{0}="4";
%
(-45,6)*+{\EbbI^{-(n+1)}(-,A)}="10";
(-15,6)*+{\EbbI^{-(n+1)}(-,B)}="12";
(15,6)*+{\EbbI^{-(n+1)}(-,C)}="14";
(43,6)*+{\EbbI^{-n}(-,A)}="16";
(-45,-6)*+{\EbbI^{-(n+1)}(-,A)}="20";
(-15,-6)*+{\EbbI^{-n}(-,F)}="22";
(15,-6)*+{\EbbI^{-n}(-,E)}="24";
(43,-6)*+{\EbbI^{-n}(-,A)}="26";
(-15,-18)*+{\EbbI^{-n}(-,G)}="32";
(15,-18)*+{\EbbI^{-n}(-,G)}="34";
{\ar "2";"12"};
{\ar "4";"14"};
%
{\ar^{x\sas} "10";"12"};
{\ar^{y\sas} "12";"14"};
{\ar^(0.56){\del\ssh} "14";"16"};
{\ar@{=} "10";"20"};
{\ar_{\ups\ssh} "12";"22"};
{\ar^{\del\ppr\ssh} "14";"24"};
{\ar@{=} "16";"26"};
{\ar_(0.52){(x\uas\vt)\ssh} "20";"22"};
{\ar_{f\ppr\sas} "22";"24"};
{\ar_{g\ppr\sas} "24";"26"};
{\ar_{f\sas} "22";"32"};
{\ar^{x\ppr\sas} "24";"34"};
{\ar@{=} "32";"34"};
{\ar@{}|\circlearrowright "10";"22"};
{\ar@{}|\circlearrowright "12";"24"};
{\ar@{}|\circlearrowright "14";"26"};
{\ar@{}|\circlearrowright "22";"34"};
\endxy
\]
in $\Mod\cat$. The lower row containing $(x\uas\vt)\ssh$ is exact by $\mathrm{(P_{\mathit{n}})}$ and $\mathrm{(Q_{\mathit{n}})}$. Two columns in the middle are exact by Lemma~\ref{LemDominant2}, since $g$ and hence $y\ppr$ are projective.
Thus the upper row also becomes exact, which shows $\mathrm{(Q}_{n+1}\mathrm{)}$.
\end{proof}

\begin{corollary}\label{CorNegCount1}
Assume that $\CEs$ has enough projective morphisms, and enough injective {\bf objects}. Then the following are equivalent.
\begin{enumerate}
\item $\EbbI^{-1}(I,-)=0$ holds for any injective object $I\in\cat$.
\item $\EbbI^{-n}(I,-)=0$ holds for any injective object $I\in\cat$ and any $n>0$.
\item $\EbbI^{-n}(i,-)=0$ holds for any injective inflation $i$ and any $n>0$.
\item $\cat(I,x)\colon\cat(I,A)\to\cat(I,B)$ is monomorphic for any injective object $I\in\cat$ and any inflation $A\ov{x}{\lra}B$.
\end{enumerate}
\end{corollary}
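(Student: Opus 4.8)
The plan is to prove $(1)\Leftrightarrow(2)\Leftrightarrow(3)$ and, separately, $(1)\Leftrightarrow(4)$, exploiting that the two hypotheses supply respectively a good stock of dominant $\sfr$-triangles (enough projective morphisms) and a factorization of injective morphisms (enough injective objects). The implications among $(1)$, $(2)$, $(3)$ are essentially formal once two observations are recorded. First, for an injective object $I$ the identity $\id_I$ is an injective inflation: it is the first morphism of a split $\sfr$-triangle, and it is an injective morphism since $\Ebb(-,I)=0$. Second, under the assumption of enough injective objects a morphism is injective precisely when it factors through an injective object (the dual of Remark~\ref{RemProjMorph}~(2)). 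Granting these, $(2)\Rightarrow(1)$ is the case $n=1$, and $(3)\Rightarrow(2)$ follows by taking $i=\id_I$, since $\EbbI^{-n}(\id_I,-)$ is the identity of $\EbbI^{-n}(I,-)$. For $(1)\Rightarrow(2)$ I would apply Proposition~\ref{PropVanishNegative} with $x=\id_I$ to each injective object $I$, which is where enough projective morphisms is used. For $(2)\Rightarrow(3)$, given an injective inflation $i\colon X\to Y$, I would factor $i=rj$ through an injective object $I$ and use that $\EbbI^{-n}$ is contravariant in the first argument, so $\EbbI^{-n}(i,-)=\EbbI^{-n}(j,-)\circ\EbbI^{-n}(r,-)$ factors through $\EbbI^{-n}(I,-)=0$.

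The substantive part is $(1)\Leftrightarrow(4)$, and for both directions the essential input is the identification of $\EbbI^{-1}(-,A)$, for a fixed $A$, with a kernel of representable functors. Choosing (by enough projective morphisms) a dominant $\sfr$-triangle $F\ov{f}{\lra}G\ov{g}{\lra}A\ov{\vt}{\dra}$, Lemma~\ref{LemDominant2} gives the left-exact sequence $0\to\EbbI^{-1}(-,A)\ov{\vt\ssh}{\lra}\EbbI^{0}(-,F)\ov{f\sas}{\lra}\EbbI^{0}(-,G)$, and since $\EbbI^{0}=\Hom_{\cat}$ this identifies $\EbbI^{-1}(-,A)$ with $\ker\big(\cat(-,f)\colon\cat(-,F)\to\cat(-,G)\big)$. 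For $(4)\Rightarrow(1)$ I would evaluate at an injective object $I$: as $f$ is an inflation, $(4)$ makes $\cat(I,f)$ monic, so $\EbbI^{-1}(I,A)=0$ for every $A$. For $(1)\Rightarrow(4)$ I would take an arbitrary inflation $x\colon A\to B$, complete it to $A\ov{x}{\lra}B\ov{y}{\lra}Z\ov{\del}{\dra}$, and invoke Theorem~\ref{ThmNegativeExtension}: $(\EbbI^{\bullet},\del\ssh)$ is a $\delta$-functor, so the sequence $\EbbI^{-1}(-,Z)\ov{\del\ssh}{\lra}\cat(-,A)\ov{x\circ-}{\lra}\cat(-,B)$ is exact; evaluating at an injective object $I$ and using $\EbbI^{-1}(I,Z)=0$ yields $\ker\cat(I,x)=0$.

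I expect the main difficulty to be bookkeeping rather than ideas: keeping track of the variance of $\EbbI^{-n}$ (contravariant in the first argument, so composites reverse order) and of the directions of the connecting maps and structural sequences in Lemma~\ref{LemDominant2}, Theorem~\ref{ThmNegativeExtension}, and Proposition~\ref{PropVanishNegative}, and checking that each implication calls on the correct hypothesis. In particular it should be emphasized that enough injective objects is used only in $(2)\Rightarrow(3)$, to factor injective morphisms through injective objects, whereas $(1)\Leftrightarrow(2)$ and $(1)\Leftrightarrow(4)$ rest solely on enough projective morphisms.
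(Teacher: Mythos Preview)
Your proof is correct and follows essentially the same route as the paper: the equivalences $(1)\Leftrightarrow(2)\Leftrightarrow(3)$ via Proposition~\ref{PropVanishNegative} and the factorization of injective morphisms through injective objects, and $(1)\Leftrightarrow(4)$ via the exact sequences from Lemma~\ref{LemDominant2} and Theorem~\ref{ThmNegativeExtension}. Your additional commentary on which hypothesis is needed for which implication is a nice clarification that the paper leaves implicit.
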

\begin{proof}
{\rm (1)} $\EQ$ {\rm (2)} follows from Proposition~\ref{PropVanishNegative}. 
{\rm (2)} $\Rightarrow$ {\rm (3)} is obvious, since any injective inflation factors through an injective object by assumption, as in Remark~\ref{RemProjMorph}.
Also {\rm (3)} $\Rightarrow$ {\rm (2)} is immediate, if we take $i=\id_I$ in {\rm (3)}.

{\rm (1)} $\Rightarrow$ {\rm (4)} follows from the exactness of $\EbbI^{-1}(I,C)\to\cat(I,A)\ov{x\circ-}{\lra}\cat(I,B)$ for an $\sfr$-triangle $A\ov{x}{\lra}B\to C\dra$ associated to $x$, shown in Theorem~\ref{ThmNegativeExtension}.

It remains to show {\rm (4)} $\Rightarrow$ {\rm (1)}.
For any $C\in\cat$, take a dominant $\sfr$-triangle $F\ov{f}{\lra}G\to C\dra$. Then $0\to\EbbI^{-1}(I,C)\to\cat(I,F)\ov{f\circ-}{\lra}\cat(I,G)$ is exact by the same theorem or Lemma~\ref{LemDominant2}. Thus the monomorphicity of $\cat(I,F)\ov{f\circ-}{\lra}\cat(I,G)$ implies $\EbbI^{-1}(I,C)=0$.
\end{proof}

Counterparts of Theorem~\ref{ThmNegativeExtension} and Corollary~\ref{CorNegCount1} for $\EbbII^{\bullet}$ become as follows.
\begin{theorem}\label{ThmCounter}
Assume that $\CEs$ has enough injective morphisms, defined dually as in Definition~\ref{DefProjDefl}. Then $(\EbbII^{\bullet},\del\ush)$ is a $\delta$-functor on $\CEs$.
\end{theorem}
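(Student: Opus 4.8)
The plan is to deduce this from Theorem~\ref{ThmNegativeExtension} applied to the opposite extriangulated category. Recall from \cite{NP1} that if $(\cat,\Ebb,\sfr)$ is extriangulated, then so is $(\cat\op,\Ebb\op,\sfr\op)$, where $\Ebb\op(A,C)=\Ebb(C,A)$ and $\sfr\op$ is obtained by reversing the sequences realizing $\Ebb$-extensions. Under this duality, inflations of $\cat$ become deflations of $\cat\op$, and a morphism $f$ of $\cat$ satisfies $\Ebb(-,f)=0$ if and only if $f\op$ satisfies $\Ebb\op(f\op,-)=0$; hence injective morphisms of $\cat$ are exactly the projective morphisms of $\cat\op$. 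Consequently, $\CEs$ has enough injective morphisms if and only if $(\cat\op,\Ebb\op,\sfr\op)$ has enough projective morphisms.

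The next step is to match the constructions of Section~\ref{Section_Positive} and Subsection~\ref{Subsection_CD} with this duality. Since the coend defining $\dia$ is formed in $\Mod R$ and the tensor product over the commutative ring $R$ is symmetric, one obtains natural isomorphisms $(\Ebb\op)^n(A,C)\cong\Ebb^n(C,A)$; that is, $(\Ebb\op)^n$ is $\Ebb^n$ with its two variables interchanged, compatibly with the canonical connecting morphisms (so that $\del\ssh$ computed in $\cat\op$ corresponds to $\del\ush$ computed in $\cat$). Moreover, the $\cat\op$-dual $(-)^{\vee}\colon\cat\op\Mod\to\Mod\cat\op$ is, under the canonical identifications $\cat\op\Mod\simeq\Mod\cat$ and $\Mod\cat\op\simeq\cat\Mod$, literally the dual $(-)^{\vee}\colon\Mod\cat\to\cat\Mod$ used in Definition~\ref{DefNegativeExtension}. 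Feeding the identification $(\Ebb\op)^n\cong\Ebb^n$ (variables swapped) through these duals therefore yields an identification, in all degrees, of $(\EbbI^{\bullet},\del\ssh)$ on $\cat\op$ with $(\EbbII^{\bullet},\del\ush)$ on $\cat$; and an $\sfr\op$-triangle of $\cat\op$ is the same thing as an $\sfr$-triangle of $\cat$ read backwards, under which the complex~(\ref{CPX1}) for $\EbbI^{\bullet}$ on $\cat\op$ becomes precisely the complex~(\ref{CPX2}) for $\EbbII^{\bullet}$ on $\cat$.

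Putting these together: since $(\cat\op,\Ebb\op,\sfr\op)$ has enough projective morphisms, Theorem~\ref{ThmNegativeExtension} gives that $(\EbbI^{\bullet},\del\ssh)$ is a $\delta$-functor on $\cat\op$, i.e. all the complexes~(\ref{CPX1}) are exact there; by the previous paragraph this says exactly that all the complexes~(\ref{CPX2}) for $(\EbbII^{\bullet},\del\ush)$ on $\CEs$ are exact, which is the assertion that $(\EbbII^{\bullet},\del\ush)$ is a $\delta$-functor.

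I expect the only delicate point to be the bookkeeping in the middle step: one must carefully verify that $(-)\op$ intertwines the coend composition $\dia$ on $\cat\Mod\cat$ with the one on $\cat\op\Mod\cat\op$ (which it does, since all factors of $\Ebb^{\dia n}$ coincide, so the reversal of the order of composition induced by $(-)\op$ is immaterial up to the natural isomorphisms of Remark~\ref{RemFn}), that it carries the two $\cat$-duals to the $\cat\op$-duals, and that it sends $\del\ush$ to $\del\ssh$; a wrong variance or direction here would invalidate the argument. As an alternative that avoids the opposite-category formalism, one can simply transcribe the proof of Theorem~\ref{ThmNegativeExtension} verbatim in dual form, replacing \emph{projective} by \emph{injective}, \emph{deflation} by \emph{inflation}, $\mathrm{(ET4)}$ by $\mathrm{(ET4)\op}$, $\Mod\cat$ by $\cat\Mod$, Lemma~\ref{LemDominant2} and Proposition~\ref{PropVanishNegative} by their dual counterparts, and running the same induction on $n$ with the statements $(\mathrm{P}_n)$, $(\mathrm{Q}_n)$ replaced by their duals.
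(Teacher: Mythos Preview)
Your proposal is correct and is essentially the same duality argument the paper uses: the paper's own proof consists of the single sentence ``These can be shown in a similar way as Theorem~\ref{ThmNegativeExtension} and Corollary~\ref{CorNegCount1},'' i.e.\ exactly the ``transcribe the proof verbatim in dual form'' alternative you spell out at the end. Your primary route via the opposite extriangulated category $(\cat\op,\Ebb\op,\sfr\op)$ is a clean way to package the same duality, and the bookkeeping you flag (symmetry of $\otr$, matching of $\cat$-duals with $\cat\op$-duals, and $\del\ssh\leftrightarrow\del\ush$) is indeed the only point requiring care.
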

\begin{corollary}\label{CorNegCount2}
Assume that $\CEs$ has enough injective morphisms, and enough projective {\bf objects}. Then the following are equivalent.
\begin{enumerate}
\item $\EbbII^{-1}(-,P)=0$ holds for any projective object $P\in\cat$.
\item $\EbbII^{-n}(-,P)=0$ holds for any projective object $P\in\cat$ and any $n>0$.
\item $\EbbII^{-n}(-,p)=0$ holds for any projective deflation $p$ and any $n>0$.
\item $\cat(y,P)\colon\cat(C,P)\to\cat(B,P)$ is monomorphic for any projective object $P\in\cat$ and any deflation $B\ov{y}{\lra}C$.
\end{enumerate}
\end{corollary}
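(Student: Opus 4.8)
This is the dual of Corollary~\ref{CorNegCount1}: interchanging projective and injective objects, inflations and deflations, and $\EbbI^{\bullet}$ with $\EbbII^{\bullet}$, the statement and the standing hypotheses ("enough injective morphisms" and "enough projective objects") go over to those of Corollary~\ref{CorNegCount1}. So the plan is to transcribe that proof, invoking the $\EbbII$-counterparts of each ingredient. For the equivalence $(1)\EQ(2)$ one applies the $\EbbII$-version of Proposition~\ref{PropVanishNegative} (which holds because $\CEs$ has enough injective morphisms): for any morphism $x$ in $\cat$ one has $\EbbII^{-1}(-,x)=0$ if and only if $\EbbII^{-n}(-,x)=0$ for all $n>0$; taking $x=\id_P$ gives $(1)\EQ(2)$.

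For $(2)\Rightarrow(3)$: a projective deflation $p$ is in particular a projective morphism, so — since $\CEs$ has enough projective objects — Remark~\ref{RemProjMorph}(2) lets us factor $p=p_2 p_1$ with $p_2$ having a projective object $P_0$ as domain; then $\EbbII^{-n}(-,p)=\EbbII^{-n}(-,p_2)\circ\EbbII^{-n}(-,p_1)$ factors through $\EbbII^{-n}(-,P_0)=0$ by $(2)$, hence vanishes. For $(3)\Rightarrow(2)$: for a projective object $P$ one has $\Ebb(P,-)=0$, so $\id_P$ is a projective morphism and, being trivially a deflation, a projective deflation; thus $\id_{\EbbII^{-n}(-,P)}=\EbbII^{-n}(-,\id_P)=0$ by $(3)$, so $\EbbII^{-n}(-,P)=0$.

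For $(1)\Rightarrow(4)$: given a deflation $B\ov{y}{\lra}C$, complete it to an $\sfr$-triangle $A\ov{x}{\lra}B\ov{y}{\lra}C\ov{\del}{\dra}$. By Theorem~\ref{ThmCounter}, $(\EbbII^{\bullet},\del\ush)$ is a $\delta$-functor, so the complex $(\ref{CPX2})$ for $\EbbII^{\bullet}$ gives, around degree $0$, an exact sequence $\EbbII^{-1}(A,-)\ov{\del\ush}{\lra}\cat(C,-)\ov{\cat(y,-)}{\lra}\cat(B,-)$; evaluating at a projective $P$ and using $\EbbII^{-1}(-,P)=0$ forces $\cat(y,P)$ to be monomorphic. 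For $(4)\Rightarrow(1)$: fix a projective $P$ and an arbitrary object $X$. Since $\CEs$ has enough injective morphisms, choose an injective inflation $i\colon X\to J$ and complete it to an $\sfr$-triangle $X\ov{i}{\lra}J\ov{g}{\lra}C\ov{\iota}{\dra}$, in which $g$ is a deflation. Because $i$ is injective, $\Ebb(-,i)=0$, so the connecting map $\EbbII^{-1}(J,-)\ov{i\uas}{\lra}\EbbII^{-1}(X,-)$ — being the $\cat$-dual of $\Ebb(-,i)$ — is zero; hence by Theorem~\ref{ThmCounter} (equivalently, the $\EbbII$-dual of Lemma~\ref{LemDominant2}) the sequence $0\to\EbbII^{-1}(X,-)\ov{\iota\ush}{\lra}\cat(C,-)\ov{\cat(g,-)}{\lra}\cat(J,-)$ is exact. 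Evaluating at $P$ and applying $(4)$ to the deflation $g$ yields $\EbbII^{-1}(X,P)=0$; as $X$ is arbitrary, $\EbbII^{-1}(-,P)=0$.

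The proof is essentially mechanical, and the only point needing care is the bookkeeping of the dualizations: one must match covariance/contravariance and the two arguments so that the connecting morphisms $\del\ush$ and the left-exactness of $(-)^{\vee}$ are applied consistently, exactly as in Corollary~\ref{CorNegCount1}. The one substantive observation — the analogue of "every object admits a dominant $\sfr$-triangle" on the injective side — is that an injective inflation $i$ automatically realizes a codominant extension, which is precisely the vanishing $\Ebb(-,i)=0$; this is what lets the weaker hypothesis "enough injective morphisms" (rather than "enough injective objects") suffice in $(4)\Rightarrow(1)$.
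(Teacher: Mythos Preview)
Your proof is correct and follows exactly the approach the paper indicates: the paper's own proof simply says ``These can be shown in a similar way as Theorem~\ref{ThmNegativeExtension} and Corollary~\ref{CorNegCount1}'', and you have carried out that dualization explicitly, invoking the $\EbbII$-counterparts of Proposition~\ref{PropVanishNegative}, Remark~\ref{RemProjMorph}(2), Theorem~\ref{ThmCounter}, and Lemma~\ref{LemDominant2} at the corresponding points.

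One small terminological slip: in the $(4)\Rightarrow(1)$ step you call $i\uas\colon\EbbII^{-1}(J,-)\to\EbbII^{-1}(X,-)$ ``the connecting map'', but it is not --- the connecting map in the long exact sequence is $\iota\ush$. What you actually use (and state correctly in substance) is that $i\uas=(\Ebb(-,i))^{\vee}=0$, whence by exactness of the $\delta$-functor $(\EbbII^{\bullet},\del\ush)$ the morphism $\iota\ush$ is monomorphic; this is precisely the content of the $\EbbII$-dual of Lemma~\ref{LemDominant2} that you cite in parentheses. The argument itself is fine.
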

\begin{proof}
These can be shown in a similar way as Theorem~\ref{ThmNegativeExtension} and Corollary~\ref{CorNegCount1}.
\end{proof}

Regarding Corollaries \ref{CorNegCount1} and \ref{CorNegCount2}, we define conditions {\rm (NI)} and {\rm (NII)} as follows.
\begin{condition}\label{CondNINII}
For $\CEs$, consider the following conditions.
\begin{itemize}
\item[{\rm (NI)}] $\cat(I,x)\colon\cat(I,A)\to\cat(I,B)$ is monomorphic for any injective object $I\in\cat$ and any inflation $A\ov{x}{\lra}B$.
\item[{\rm (NII)}] $\cat(y,P)\colon\cat(C,P)\to\cat(B,P)$ is monomorphic for any projective object $P\in\cat$ and any deflation $B\ov{y}{\lra}C$.
\end{itemize}
\end{condition}

\begin{remark}
Remark that conditions {\rm (NI)}, {\rm (NII)} are trivially satisfied if $\CEs$ corresponds to an exact category or a triangulated category.
\end{remark}

Let us conclude this subsection with the following immediate corollary of Theorem~\ref{ThmNegativeExtension}, which relates the (alternating sums of) dimensions of $\EbbI^{-n}(X,Y)$ and $\und{\cat}(X,\Om^nY)$.
\begin{corollary}\label{CorRelatesStable}
Assume that $R=K$ is a field and $\cat$ is $\Hom$-finite over $K$, and that $\CEs$ has enough projective morphisms.
Suppose that $Y\in\cat$ admits a complex in $\cat$
\begin{equation}\label{ProjResolY}
P_n\ov{d_n}{\lra} P_{n-1}\ov{d_{n-1}}{\lra}\cdots \ov{d_1}{\lra} P_0\ov{d_0}{\lra} Y
\end{equation}
which satisfies the following conditions.
\begin{itemize}
\item $P_k$ is a projective object for any $0\le k\le n$.
\item There exist conflations
\[ \Om^kY\ov{m_k}{\lra} P_{k-1}\ov{p_{k-1}}{\lra} \Om^{k-1}Y \]
for $1\le k\le n$, such that $\Om^0Y=Y$, $\Om^nY=P_n$, and $d_k=m_k\circ p_k$ $(1\le k\le n-1)$, $d_0=p_0$, $d_n=m_n$. 
\end{itemize}
Also, we put $\Om^kY=0$ for $k>n$. Then the equation
\[ \sum_{k\ge0}(-1)^k\dim_K\EbbI^{-k}(X,Y)=\sum_{k\ge0}(-1)^k\dim_K\und{\cat}(X,\Om^kY)+\sum_{0\le k\le n}(-1)^k\dim_K\cat(X,P_k) \]
holds for any $X\in\cat$.
\end{corollary}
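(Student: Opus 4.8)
The plan is to compute the alternating sum of dimensions term by term, using the long exact sequence for $(\EbbI^{\bullet},\del\ssh)$ (which is a $\delta$-functor by Theorem~\ref{ThmNegativeExtension}) attached to each conflation $\Om^kY\ov{m_k}{\lra} P_{k-1}\ov{p_{k-1}}{\lra}\Om^{k-1}Y$, and to organize the bookkeeping so that all the $P_k$-contributions separate out cleanly from the $\Om^kY$-contributions. Since $R=K$ is a field and $\CEs$ has enough projective morphisms, Corollary~\ref{CorExtFin} guarantees that every term $\dim_K\EbbI^{-k}(X,Y)$ is finite, and Corollary~\ref{CorRelatesStable}'s hypotheses (together with $\Om^kY=0$ for $k>n$) ensure that all the sums appearing are finite; so all the alternating sums below are legitimate.

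First I would apply the covariant long exact sequence of Theorem~\ref{ThmNegativeExtension}, evaluated at $X$, to the conflation $\Om^kY\ov{m_k}{\lra}P_{k-1}\ov{p_{k-1}}{\lra}\Om^{k-1}Y$, giving for each $k$ with $1\le k\le n$ a long exact sequence
\[
\cdots\to\EbbI^{-j}(X,\Om^kY)\to\EbbI^{-j}(X,P_{k-1})\to\EbbI^{-j}(X,\Om^{k-1}Y)\to\EbbI^{-(j-1)}(X,\Om^kY)\to\cdots
\]
running down to $\EbbI^0=\Hom_{\cat}$. Because $P_{k-1}$ is a projective object, by Proposition~\ref{PropVanishNegative} (or directly by the defining vanishing of $\Ebb^j(P_{k-1},-)$ and hence of its $\cat$-dual) we have $\EbbI^{-j}(X,P_{k-1})=0$ for all $j>0$, so the long exact sequence degenerates. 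The key consequence, for each $k$ in this range, is an exact sequence
\[
0\to\EbbI^{-j}(X,\Om^{k-1}Y)\ov{\sim}{\to}\EbbI^{-(j-1)}(X,\Om^kY)\qquad (j\ge 2),
\]
together with the low-degree piece $0\to\EbbI^{-1}(X,\Om^{k-1}Y)\to\cat(X,\Om^kY)\to\cat(X,P_{k-1})\to\cat(X,\Om^{k-1}Y)\to\EbbI^{0}(X,\Om^kY)\to\cdots$; passing to $\und{\cat}$ (where $\cat(X,P_{k-1})$ becomes $\und{\cat}(X,P_{k-1})$ modulo projective morphisms, but note $P_{k-1}$ projective makes all maps into it projective) one uses instead the sequence in $\und{\cat}$-terms. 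Taking the Euler characteristic of the whole degenerate long exact sequence, and using finiteness, one gets for each $k\in\{1,\dots,n\}$ an identity of the shape
\[
\sum_{j\ge 0}(-1)^j\dim_K\EbbI^{-j}(X,\Om^{k-1}Y)=\sum_{j\ge 0}(-1)^j\dim_K\EbbI^{-j}(X,\Om^kY)+(-1)^{?}\dim_K\cat(X,P_{k-1})
\]
with the sign dictated by the position of the $\Hom(X,P_{k-1})$ term in the sequence; more precisely, after collapsing the $j\ge 2$ isomorphisms, the Euler characteristic reduces to the finite low-degree segment, from which the $P_{k-1}$-contribution falls out with sign $(-1)^{k-1}$ once one also replaces $\cat(X,\Om^kY)$ in degree-$0$ position by $\und{\cat}(X,\Om^kY)$. (This replacement is exactly the content of the $n=1$ comparison at the bottom of the resolution, namely $\EbbI^0(X,\Om^kY)=\cat(X,\Om^kY)$ versus $\und{\cat}(X,\Om^kY)$, and is reconciled by the extra summand $\dim_K\cat(X,P_{k})$ in the statement for $k=n$, where $\Om^nY=P_n$.)

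Finally I would telescope: summing the $k$-indexed identities above for $k=1,\dots,n$, the internal terms $\sum_j(-1)^j\dim_K\EbbI^{-j}(X,\Om^kY)$ cancel pairwise (using $\Om^0Y=Y$ on one end and $\Om^nY=P_n$, with $\EbbI^{-j}(X,P_n)=0$ for $j>0$, on the other), leaving exactly
\[
\sum_{k\ge 0}(-1)^k\dim_K\EbbI^{-k}(X,Y)=\sum_{k\ge 0}(-1)^k\dim_K\und{\cat}(X,\Om^kY)+\sum_{0\le k\le n}(-1)^k\dim_K\cat(X,P_k),
\]
as claimed. The main obstacle I expect is not the homological algebra — which is routine once Theorem~\ref{ThmNegativeExtension} is in hand — but the careful sign and index bookkeeping at the bottom degrees: correctly matching $\EbbI^0(X,-)=\Hom_{\cat}(X,-)$ against $\und{\cat}(X,-)$ in the $\delta$-functor, tracking which conflation contributes $\cat(X,P_k)$ with which sign, and verifying that the telescoping boundary terms genuinely vanish using $\Om^{>n}Y=0$ and the projectivity of $P_n$. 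A clean way to bypass most of this is to note that both sides of the claimed equation are additive in conflations (Euler characteristics of $\delta$-functor long exact sequences), so it suffices to verify it for the single conflation $\Om^kY\ov{m_k}{\lra}P_{k-1}\ov{p_{k-1}}{\lra}\Om^{k-1}Y$ and then induct on $n$; the base case $n=0$ (where $Y=P_0$ is projective and every $\EbbI^{-k}(X,Y)$ with $k>0$ vanishes) is immediate.
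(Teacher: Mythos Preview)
Your overall strategy matches the paper's: both use the long exact sequence of Theorem~\ref{ThmNegativeExtension} applied to each conflation $\Om^{k+1}Y\to P_k\to\Om^kY$, together with the vanishing $\EbbI^{-j}(X,P_k)=0$ for $j>0$ (which is immediate since $\Ebb^j(P_k,-)=0$). The dimension-shift isomorphisms $\EbbI^{-j}(X,\Om^{k-1}Y)\cong\EbbI^{-(j-1)}(X,\Om^kY)$ for $j\ge2$ that you write down are exactly what the paper uses, composed into $\EbbI^{-1}(X,\Om^kY)\cong\EbbI^{-(k+1)}(X,Y)$.

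However, there is a genuine gap in your execution at precisely the point you flag as worrisome. Your telescoping identity
\[
\sum_{j\ge 0}(-1)^j\dim_K\EbbI^{-j}(X,\Om^{k-1}Y)=\sum_{j\ge 0}(-1)^j\dim_K\EbbI^{-j}(X,\Om^kY)+(-1)^{?}\dim_K\cat(X,P_{k-1})
\]
does not follow from ``the Euler characteristic of the whole degenerate long exact sequence'': that sequence does not terminate in positive degrees (projectivity of $P_{k-1}$ says $\Ebb(P_{k-1},-)=0$, not $\Ebb(-,P_{k-1})=0$), so there is no finite exact sequence whose Euler characteristic yields this. Relatedly, the sums $S_k=\sum_{j\ge0}(-1)^j\dim\EbbI^{-j}(X,\Om^kY)$ that you telescope contain $\cat(X,\Om^kY)$ in degree $0$, not $\und{\cat}(X,\Om^kY)$, and your attempt to ``replace'' one by the other midway is where the argument becomes hand-waving.

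The paper resolves this cleanly by observing that, since $P_k$ is projective and $p_k$ is a deflation, the image of $\cat(X,P_k)\to\cat(X,\Om^kY)$ is exactly $\Pcal(X,\Om^kY)$; hence the long exact sequence truncates to the genuine five-term exact sequence
\[
0\to\EbbI^{-1}(X,\Om^kY)\to\cat(X,\Om^{k+1}Y)\to\cat(X,P_k)\to\cat(X,\Om^kY)\to\und{\cat}(X,\Om^kY)\to0
\]
for each $0\le k\le n$. Combining the alternating sum of these (now finite) Euler characteristics with the dimension shift $\EbbI^{-1}(X,\Om^kY)\cong\EbbI^{-(k+1)}(X,Y)$ gives the identity directly, with $\und{\cat}$ appearing naturally rather than by an ad hoc substitution.
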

\begin{proof}
This follows from the exactness of
\[ 0\to\EbbI^{-1}(X,\Om^kY)\to\cat(X,\Om^{k+1}Y)\to\cat(X,P_k)\to\cat(X,\Om^kY)\to\und{\cat}(X,\Om^kY)\to0 \]
and the existence of isomorphisms
\[ \EbbI^{-1}(X,\Om^kY)\cong\EbbI^{-2}(X,\Om^{k-1}Y)\cong\cdots\cong\EbbI^{-(k+1)}(X,Y) \]
for any $0\le k\le n$, both of which are the consequences of Theorem~\ref{ThmNegativeExtension}. Remark also that we have $\EbbI^{-k}(X,Y)=0$ and $\und{\cat}(X,\Om^kY)=0$ for any $k\ge n+1$.
\end{proof}

\begin{remark}
If $\CEs$ has enough projective objects and if $\Ebb^{n+1}=0$, then any $Y\in\cat$ admits a sequence $(\ref{ProjResolY})$ with the required conditions.
\end{remark}

\subsection{Universality of $\EbbI^{\mr}$ and $\EbbII^{\mr}$}\label{Subsection_Uni}

Let $\CEs$ be as in the previous subsection.
A question one might have will be that whether the definition of $\EbbI^{\mr}$  (respectively $\EbbII^{\mr}$) is natural enough. In this subsection, we answer it with Proposition~\ref{PropUniversality} by showing that it is universal among covariant (resp. contravariant) connected sequence of functors $\Fm$ having $F^n=\Ebb^n$ with the canonical connecting morphisms for $n\ge0$, i.e. these are covariant (resp. contravariant) derived functors of the bifunctor $\Hom$, under the assumption of the existence of enough projective morphisms (resp. enough injective morphisms). 

\begin{lemma}\label{LemUniversality}
Assume that $\CEs$ has enough projective morphisms. Let $(\Fm,\del\ssh)$ and $(G^{\bullet},\del\ssh)$ be two covariant connected sequences of functors.
Suppose that they satisfy $F^n=G^n$ and $\del^n_{F,\sharp}=\del^n_{G,\sharp}$ for any $n\in\Nbb_{\ge0}$ and any $\del\in\Ebb(C,A)$.
If $(G^{\bullet},\del\ssh)$ satisfies the condition
\begin{itemize}
\item[$(\star)$] $(G^{\bullet},\del\ssh)$ is a $\del$-functor, and $G^{-n}(-,p)=0$ holds for any projective deflation $p$ and any $n>0$,
\end{itemize}
then there exists a unique \emph{morphism} $\vp^{\mr}\colon (F^{\mr},\del\ssh)\to(G^{\mr},\del\ssh)$ \emph{of covariant connected sequence of functors}, given by identities in non-negative degrees. That is, a sequence of natural transformations $\{\vp^n\colon F^n\to G^n\}_{n\in\Zbb}$ which satisfies the following conditions.
\begin{itemize}
\item[{\rm (i)}] $\vp^n=\id$ for any $n\ge0$.
\item[{\rm (ii)}] For any $n\in\Zbb$, the following diagram $\Dbf(n;\del)$
\[
\xy
(-12,7)*+{F^{n}(-,C)}="0";
(12,7)*+{F^{n+1}(-,A)}="2";
(-12,-7)*+{G^{n}(-,C)}="4";
(12,-7)*+{G^{n+1}(-,A)}="6";
{\ar^{\del\ssh} "0";"2"};
{\ar_{\vp^{n}_{-,C}} "0";"4"};
{\ar^{\vp^{n+1}_{-,A}} "2";"6"};
{\ar_{\del\ssh} "4";"6"};
{\ar@{}|\circlearrowright "0";"6"};
\endxy
\]
is commutative in $\Mod\cat$ for any $\del\in\Ebb(C,A)$.
\end{itemize}
\end{lemma}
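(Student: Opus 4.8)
The plan is to construct $\vp^n$ by descending induction on $n$, starting from $\vp^0 = \id$, and to use the fact that every object admits a dominant $\sfr$-triangle in order to force the construction and to verify its naturality and compatibility with the connecting morphisms. So suppose inductively that for some $n \leq 0$ we have built natural transformations $\vp^k \colon F^k \to G^k$ for all $k \geq n$ satisfying (i) and such that $\Dbf(k;\del)$ commutes for all $k \geq n$ and all $\del$. We want to produce $\vp^{n-1}$. Fix $C \in \cat$. By the assumption of enough projective morphisms, choose a dominant $\sfr$-triangle $F \ov{f}{\lra} G \ov{g}{\lra} C \ov{\vt}{\dra}$. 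Since $(\EbbI^{\bullet},\del\ssh)$ is a $\delta$-functor (Theorem~\ref{ThmNegativeExtension}), the complex $\Gam_F(\vt)$ is acyclic; in particular
\[ F^{n-1}(-,C) \ov{\vt\ssh}{\lra} F^{n}(-,F) \ov{f\sas}{\lra} F^{n}(-,G) \]
is exact, so $F^{n-1}(-,C) \ov{\vt\ssh}{\lra} F^n(-,F)$ is a kernel of $f\sas$. On the other hand, by $(\star)$ the sequence $(\EbbI^{\bullet},\del\ssh)$ — here playing the role of $(G^\bullet,\del\ssh)$ — is a $\delta$-functor with $G^{-m}(-,p)=0$ for projective deflations, and applying the connecting morphism $\vt\ssh$ of $G^{\bullet}$ one gets that $G^{n-1}(-,C) \ov{\vt\ssh}{\lra} G^n(-,F)$ is also a kernel of $f\sas\colon G^n(-,F)\to G^n(-,G)$, using $g\sas = G^n(g,-) $ acting suitably together with the fact that $g$ is a projective deflation. (More precisely, one extracts this from the long exact sequence $\Gam_G(\vt)$ together with the vanishing $G^{n}(-,g)$-type terms coming from $(\star)$; for $n=0$ this is just the defining right-exactness of the positive part, already in Corollary~\ref{CorDominant_fromThm} and its dual.) Since $\vp^n_{-,F}$ and $\vp^n_{-,G}$ agree on the relevant maps $f\sas$ by the inductive commutativity of $\Dbf(n;-)$ and naturality of $\vp^n$, the induced map of kernels gives a unique morphism $\vp^{n-1}_{-,C}\colon F^{n-1}(-,C) \to G^{n-1}(-,C)$ making $\Dbf(n-1;\vt)$ commute.

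The next step is to check that $\vp^{n-1}_{-,C}$ is independent of the chosen dominant $\sfr$-triangle and is natural in $C$. For independence and naturality simultaneously, one takes two dominant $\sfr$-triangles over two objects $C, C'$ and a morphism $c\colon C \to C'$; lifting $c$ to a morphism of the chosen dominant $\sfr$-triangles (possible by axiom (ET3) and the projectivity of the deflations, as in the proof of Theorem~\ref{ThmNegativeExtension}), one obtains a morphism of the associated exact sequences, and the uniqueness in the universal property of the kernel forces the square relating $\vp^{n-1}_{-,C}$ and $\vp^{n-1}_{-,C'}$ to commute. Taking $C = C'$ and $c = \id_C$ but two different dominant triangles proves well-definedness; taking general $c$ proves naturality. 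This is the standard dimension-shifting argument, dual to the one for positive derived functors in Proposition~\ref{PropUnivPositiveDefl}~(2).

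Finally one must verify that $\Dbf(n-1;\del)$ commutes not only for the chosen dominant extensions $\vt$ but for \emph{every} $\Ebb$-extension $\del \in \Ebb(C,A)$. Given such a $\del$, realized by $A \ov{x}{\lra} B \ov{y}{\lra} C \ov{\del}{\dra}$, pick a dominant $\sfr$-triangle $F \ov{f}{\lra} G \ov{g}{\lra} C \ov{\vt}{\dra}$; by projectivity $g\uas\del = 0$, so one obtains a morphism of $\sfr$-triangles from $\vt$ to $\del$ (covering $\id_C$), hence a morphism of $\Ebb$-extensions $(a,\id_C)\colon \vt \to \del$ in an appropriate sense, and then naturality of the connecting morphisms $\del\ssh$ with respect to morphisms of $\Ebb$-extensions, together with $\Dbf(n-1;\vt)$ already known and $\Dbf(n;\del)$ known by induction, forces $\Dbf(n-1;\del)$. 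Uniqueness of the whole collection $\{\vp^n\}$ follows the same way: any two morphisms of connected sequences restricting to the identity in non-negative degrees must agree, by another descending induction — surjectivity of $\vt\ssh\colon F^{n-1}(-,C) \twoheadrightarrow \ker(f\sas)$ (from acyclicity of $\Gam_F(\vt)$) forces the degree $n-1$ components to coincide once the degree $n$ components do. The main obstacle is the first step: establishing cleanly that both $F^{n-1}(-,C)\ov{\vt\ssh}\to\ker f\sas$ and $G^{n-1}(-,C)\ov{\vt\ssh}\to\ker f\sas$ are isomorphisms — i.e. that the connecting map is exactly the kernel on \emph{both} sides. This is where the two hypotheses ($\delta$-functor, and $G^{-n}(-,p)=0$ for projective deflations $p$) are both genuinely used, and care is needed because for $n<0$ these vanishing statements do not follow from formal nonsense but precisely from condition $(\star)$.
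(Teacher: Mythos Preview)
There is a genuine gap. You repeatedly assume that $(F^{\bullet},\del\ssh)$ is a $\delta$-functor: you write ``the complex $\Gam_F(\vt)$ is acyclic'' (invoking Theorem~\ref{ThmNegativeExtension}) and later ``surjectivity of $\vt\ssh\colon F^{n-1}(-,C)\twoheadrightarrow\ker(f\sas)$ (from acyclicity of $\Gam_F(\vt)$)''. But the lemma places \emph{no} acyclicity hypothesis on $F^{\bullet}$; only $G^{\bullet}$ is assumed to satisfy $(\star)$. In particular $F^{\bullet}$ need not be $\EbbI^{\bullet}$, and the $F$-row
\[
F^{n-1}(-,C)\ov{\vt\ssh}{\lra}F^{n}(-,F)\ov{f\sas}{\lra}F^{n}(-,G)
\]
is in general only a \emph{complex} (Proposition~\ref{PropCPX}), not exact. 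So your ``induced map of kernels'' construction and your uniqueness argument both rest on an unwarranted assumption.

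The fix is exactly what the paper does: one only needs the $F$-row to be a complex and the $G$-row
\[
0\to G^{n-1}(-,C)\ov{\vt\ssh}{\lra}G^{n}(-,F)\ov{f\sas}{\lra}G^{n}(-,G)
\]
to be left exact. The latter follows from $(\star)$: acyclicity of $\Gam_G(\vt)$ gives exactness at $G^n(-,F)$, and the vanishing $G^{n-1}(-,g)=0$ for the projective deflation $g$ (with $n-1<0$) makes $\vt\ssh$ monic. Then the universal property of this kernel produces a unique $\vp^{n-1}_{-,C}$ from the $F$-complex, without any exactness on the $F$-side. Uniqueness of $\vp^{n-1}$ likewise comes from monicity of $\vt\ssh$ on the $G$-side, not from any surjectivity on the $F$-side. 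Your naturality and ``general $\del$'' steps are essentially right once this is corrected, though note that the lift of $c$ (and the factorisation $\del=a\sas\vt$) comes directly from dominance of $\vt$ (epimorphicity of $\vt\ush$), not from {\rm (ET3)}.
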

\begin{proof}
By {\rm (i)}, it suffices to construct $\vp^n\colon F^{n}\to G^n$ for any $n\le -1$ by a descending induction on $n$, in order that they satisfy {\rm (ii)}. Assume $n\le-1$, and suppose that we have obtained $\vp^k$ for all $k\ge n+1$.
It suffices to show the following.
\begin{claim}\label{ClaimInd}
Suppose that we have $\{\vp^k\}_{k\ge n+1}$ satisfying {\rm (ii)} for any $k\ge n+1$. Then the following holds.
\begin{enumerate}
\item For any $C\in\cat$, choose any dominant $\sfr$-triangle
\begin{equation}\label{sDeflChosen}
Q\ov{q}{\lra}P\ov{p}{\lra}C\ov{\om}{\dra}.
\end{equation}
Then there exists a unique morphism $\vp^n_{-,C}\colon F^n(-,C)\to G^n(-,C)$ in $\Mod\cat$ such that $\Dbf(n;\om)$ becomes commutative with respect to the pre-existing $\vp^{n+1}$.
\item The morphisms $\vp^n_{-,C}\colon F^n(-,C)\to G^n(-,C)$ obtained in {\rm (1)} form a natural transformation $\vp^n\colon F^n\to G^n$, and is independent of the choices of $(\ref{sDeflChosen})$.
\item The natural transformation $\vp^n$ makes $\Dbf(n;\del)$ commutative for any $\del\in\Ebb(C,A)$.
\end{enumerate}
\end{claim}
Indeed it is obvious that the claim makes the induction work. It remains to show Claim~\ref{ClaimInd}.
\begin{proof}[Proof of Claim~\ref{ClaimInd}]
{\rm (1)} This is obvious from the existence of the following commutative diagram
\[
\xy
(-16,7)*+{F^{n}(-,C)}="2";
(12,7)*+{F^{n+1}(-,Q)}="4";
(44,7)*+{F^{n+1}(-,P)}="6";
(-34,-7)*+{0}="10";
(-16,-7)*+{G^{n}(-,C)}="12";
(12,-7)*+{G^{n+1}(-,Q)}="14";
(44,-7)*+{G^{n+1}(-,P)}="16";
{\ar^{\om\ssh} "2";"4"};
{\ar^{F^{n+1}(-,p)} "4";"6"};
{\ar_{\vp^{n+1}_{-,Q}} "4";"14"};
{\ar^{\vp^{n+1}_{-,P}} "6";"16"};
{\ar_{} "10";"12"};
{\ar_{\om\ssh} "12";"14"};
{\ar_{G^{n+1}(-,p)} "14";"16"};
{\ar@{}|\circlearrowright "4";"16"};
\endxy
\]
in which the top row is a complex, and the bottom row is an exact sequence by assumption.

{\rm (2)} Let $c\in\cat(C,C\ppr)$ be any morphism, and let $Q\ov{q}{\lra}P\ov{p}{\lra}C\ov{\om}{\dra}$ and $Q\ppr\ov{q\ppr}{\lra}P\ppr\ov{p\ppr}{\lra}C\ppr\ov{\om\ppr}{\dra}$ be any pair of dominant $\sfr$-triangles. The morphisms $\vp^n_{-,C}$ and $\vp^n_{-,C\ppr}$ obtained in {\rm (1)} makes $\Dbf(n;\om)$ and $\Dbf(n;\om\ppr)$ commutative respectively, by construction.
Since $\om$ is dominant, there exists $a\in\cat(Q,Q\ppr)$ such that $a\sas\om=c\uas\om\ppr$. Since $\vp^{n+1}$ is natural by the hypothesis, we have a commutative diagram 
\[
\xy
(-16,7)*+{F^{n+1}(-,Q)}="0";
(16,7)*+{F^{n+1}(-,Q\ppr)}="2";
(-16,-7)*+{G^{n+1}(-,Q)}="4";
(16,-7)*+{G^{n+1}(-,Q\ppr)}="6";
{\ar^{F^{n+1}(-,a)} "0";"2"};
{\ar_{\vp^{n+1}_{-,Q}} "0";"4"};
{\ar^{\vp^{n+1}_{-,Q\ppr}} "2";"6"};
{\ar_{G^{n+1}(-,a)} "4";"6"};
{\ar@{}|\circlearrowright "0";"6"};
\endxy
\]
in $\Mod\cat$. Commutativity of these diagrams and the monomorphicity of $\om\ppr\ssh\colon G^n(-,C\ppr)\to G^{n+1}(-,Q\ppr)$ shows that
\[
\xy
(-14,7)*+{F^{n}(-,C)}="0";
(14,7)*+{F^{n}(-,C\ppr)}="2";
(-14,-7)*+{G^{n}(-,C)}="4";
(14,-7)*+{G^{n}(-,C\ppr)}="6";
{\ar^{F^n(-,c)} "0";"2"};
{\ar_{\vp^{n}_{-,C}} "0";"4"};
{\ar^{\vp^{n}_{-,C\ppr}} "2";"6"};
{\ar_{G^n(-,c)} "4";"6"};
{\ar@{}|\circlearrowright "0";"6"};
\endxy
\]
is commutative. This means that $\vp^{n}_{-,C}$ is natural in $C\in\cat$, hence we obtain a natural transformation $\vp^n$. Moreover, if we apply the above argument to $c=\id_C$, it shows the uniqueness of $\vp^n$.

{\rm (3)} Let $\del\in\Ebb(C,A)$ be any element. Take a dominant extension $\om\in\Ebb(C,Q)$. Then there is $a\in\cat(Q,A)$ such that $a\sas\om=\del$. 
then we have the following commutative diagram
\[
\xy
(-30,7)*+{F^{n}(-,C)}="0";
(2,7)*+{F^{n+1}(-,Q)}="2";
(-2,16)*+{}="3";
(30,7)*+{F^{n+1}(-,A)}="4";
(-30,-7)*+{G^{n}(-,C)}="10";
(2,-7)*+{G^{n+1}(-,Q)}="12";
(-2,-16)*+{}="13";
(30,-7)*+{G^{n+1}(-,A)}="14";
{\ar@/^1.80pc/^{\del\ssh} "0";"4"};
{\ar^{\om\ssh} "0";"2"};
{\ar^{F^{n+1}(-,a)} "2";"4"};
{\ar_{\vp^n_{-,C}} "0";"10"};
{\ar^{\vp^{n+1}_{-,Q}} "2";"12"};
{\ar^{\vp^{n+1}_{-,A}} "4";"14"};
{\ar_{\om\ssh} "10";"12"};
{\ar_{G^{n+1}(-,a)} "12";"14"};
{\ar@/_1.80pc/_{\del\ssh} "10";"14"};
{\ar@{}|\circlearrowright "0";"12"};
{\ar@{}|\circlearrowright "2";"14"};
{\ar@{}|\circlearrowright "2";"3"};
{\ar@{}|\circlearrowright "12";"13"};
\endxy
\]
in which the left square is $\Dbf(n;\om)$, that is commutative by the construction. Thus the outer square $\Dbf(n;\del)$ also becomes commutative.
\end{proof}
\end{proof}

\begin{remark}
The above condition $(\star)$ is satisfied by the following examples we have seen so far.
\begin{itemize}
\item The covariant part of Example~\ref{ExAcyclic} {\rm (1)} (exact categories).
\item The covariant part of Example~\ref{ExAcyclic} {\rm (2)} when $\cat=\Tcal$ (triangulated categories).
\item $(\EbbI^{\bullet},\del\ssh)$ in Definition~\ref{DefCdualProlong}.
\item If $\CEs$ has enough projective objects, the one in Example~\ref{ExINP} {\rm (1)} (stable Hom).
\end{itemize}
\end{remark}

\begin{proposition}\label{PropUniversality}
Assume that $\CEs$ has enough projective morphisms.
Then $(\EbbI^{\mr},\del\ssh)$ is universal among covariant connected sequence of functors having $F^n=\Ebb^n$ with the canonical connecting morphisms for $n\ge0$.
\end{proposition}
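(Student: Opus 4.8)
The plan is to reduce the statement to the universal property packaged in Lemma~\ref{LemUniversality}. The key observation is that the lemma, as stated, establishes existence and uniqueness of a comparison morphism $\vp^{\mr}\colon (\Fm,\del\ssh)\to(G^{\mr},\del\ssh)$ from an \emph{arbitrary} covariant connected sequence $(\Fm,\del\ssh)$ with $F^n=\Ebb^n$ for $n\ge 0$, into \emph{any} $(G^{\mr},\del\ssh)$ satisfying condition $(\star)$, namely that $(G^{\mr},\del\ssh)$ is a $\delta$-functor and $G^{-n}(-,p)=0$ for any projective deflation $p$ and any $n>0$. So the task splits into two parts: first, check that $(\EbbI^{\mr},\del\ssh)$ itself satisfies $(\star)$; second, translate the conclusion of Lemma~\ref{LemUniversality} into the precise universality assertion of Proposition~\ref{PropUniversality}.

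For the first part, that $(\EbbI^{\mr},\del\ssh)$ is a $\delta$-functor is exactly Theorem~\ref{ThmNegativeExtension}, which applies since $\CEs$ has enough projective morphisms. The vanishing $\EbbI^{-n}(-,p)=0$ for $n>0$ and $p$ a projective deflation follows from the definition: $\EbbI^{-n}(-,A)=(\Ebb^n(A,-))^{\vee}$ and $\EbbI^{-n}(-,p)=(\Ebb^n(p,-))^{\vee}$, and $\Ebb^n(p,-)=0$ for $n>0$ because $p$ is a projective morphism (so $\Ebb(p,-)=0$, hence $\Ebb^n(p,-)=0$ for all $n>0$ by functoriality of $\dia$). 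Actually one does not even need $p$ to be a deflation here --- any projective morphism works --- but the deflation hypothesis is what $(\star)$ literally demands, and it is certainly satisfied.

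For the second part, I would spell out what ``universal'' means: given any covariant connected sequence $(\Fm,\del\ssh)$ having $F^n=\Ebb^n$ with the canonical connecting morphisms for $n\ge 0$, there should exist a \emph{unique} morphism of covariant connected sequences $(\Fm,\del\ssh)\to(\EbbI^{\mr},\del\ssh)$ restricting to the identity in non-negative degrees. But this is precisely Lemma~\ref{LemUniversality} applied with $(G^{\mr},\del\ssh)=(\EbbI^{\mr},\del\ssh)$, using that $(\EbbI^{\mr},\del\ssh)$ satisfies $(\star)$ by the first part. Thus the proof is essentially a one-line invocation: \emph{By Theorem~\ref{ThmNegativeExtension}, $(\EbbI^{\mr},\del\ssh)$ is a $\delta$-functor, and $\EbbI^{-n}(-,p)=(\Ebb^n(p,-))^{\vee}=0$ for any projective deflation $p$ and any $n>0$ since projective morphisms are annihilated by $\Ebb^n$; hence $(\EbbI^{\mr},\del\ssh)$ satisfies condition $(\star)$ of Lemma~\ref{LemUniversality}. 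Applying that lemma with $(G^{\mr},\del\ssh)=(\EbbI^{\mr},\del\ssh)$ yields, for every covariant connected sequence $(\Fm,\del\ssh)$ with $F^n=\Ebb^n$ and the canonical connecting morphisms for $n\ge 0$, a unique morphism of covariant connected sequences $(\Fm,\del\ssh)\to(\EbbI^{\mr},\del\ssh)$ which is the identity in non-negative degrees. This is exactly the asserted universal property.} I do not anticipate a genuine obstacle; the only subtlety is purely bookkeeping --- making sure the direction of the comparison morphism (into $\EbbI^{\mr}$, since we are extending to the \emph{left} / to negative degrees) matches the convention fixed in the universality definitions in Section~\ref{Subsection_PA}, and confirming that the hypotheses of Lemma~\ref{LemUniversality} are met verbatim. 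One should also remark explicitly that the dual statement for $(\EbbII^{\mr},\del\ush)$ holds under the hypothesis of enough injective morphisms, by the dual of Lemma~\ref{LemUniversality} and Theorem~\ref{ThmCounter}.
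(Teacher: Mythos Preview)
Your proposal is correct and follows essentially the same approach as the paper, which simply states that the result is immediate from Theorem~\ref{ThmNegativeExtension} and Lemma~\ref{LemUniversality}. You have merely made explicit the verification that $(\EbbI^{\mr},\del\ssh)$ satisfies condition~$(\star)$, which the paper leaves implicit.
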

\begin{proof}
This is immediate from Theorem~\ref{ThmNegativeExtension} and Lemma~\ref{LemUniversality}.
\end{proof}

\begin{corollary}
Assume that $\CEs$ has enough projective morphisms. Then $\EbbI^{-n}$ is the $n$-th left covariant derived functor of the bifunctor $\Hom$.
\end{corollary}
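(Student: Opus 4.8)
The plan is to deduce the corollary from Proposition~\ref{PropUniversality} and Theorem~\ref{ThmNegativeExtension} by matching the two relevant universal properties: the one in Proposition~\ref{PropUniversality} is universality among covariant connected sequences whose non-negative part is $(\Ebb^n)_{n\ge0}$ with the canonical connecting morphisms, while the one defining the $n$-th left covariant derived functor of $\Hom$ is universality of $((E^n)_{n\le0},\del\ssh)$ among covariant partial $\del$-functors in non-positive degrees having $\Ebb^0=\Hom_{\cat}$ in degree $0$. First I would note that the non-positive truncation $((\EbbI^{-n})_{n\ge0},\del\ssh)$ of the $\del$-functor $(\EbbI^{\bullet},\del\ssh)$ from Theorem~\ref{ThmNegativeExtension} is a covariant partial $\del$-functor in non-positive degrees with degree-$0$ term $\EbbI^0=\Ebb^0=\Hom_{\cat}$, so it is a legitimate candidate; everything then reduces to verifying its universality.

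So let $((G^n)_{n\le0},\del\ssh)$ be any covariant partial $\del$-functor in non-positive degrees with $G^0=\Hom_{\cat}$. I would extend it to a covariant connected sequence $(\widehat G^{\bullet},\del\ssh)$ on all of $\Zbb$ by setting $\widehat G^n=\Ebb^n$ for $n>0$ with the canonical connecting morphisms, taking the connecting morphism in degree $0$ to be the canonical map $\cat(-,C)\to\Ebb(-,A)$, and keeping the data of $G^{\bullet}$ elsewhere; naturality of the canonical connecting morphisms and of the given ones shows this is a genuine covariant connected sequence with the prescribed non-negative part. Proposition~\ref{PropUniversality} then supplies a unique morphism $\vp^{\bullet}\colon(\widehat G^{\bullet},\del\ssh)\to(\EbbI^{\bullet},\del\ssh)$ that is the identity in non-negative degrees, and restricting it to degrees $n\le0$ yields a comparison morphism $(G^n)_{n\le0}\to(\EbbI^{-n})_{n\ge0}$ which is the identity in degree $0$ and compatible with all connecting maps, including the one from degree $-1$ to degree $0$. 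Conversely, any such comparison morphism extends, by identities in positive degrees, to a morphism $\widehat G^{\bullet}\to\EbbI^{\bullet}$ of connected sequences --- at the degree-$0$ junction the required compatibility is just $\del\ssh\circ\id=\id\circ\del\ssh$ --- so the uniqueness clause of Proposition~\ref{PropUniversality} forces it to equal $\vp^{\bullet}$, pinning down its restriction. Hence $((\EbbI^{-n})_{n\ge0},\del\ssh)$ has the universal property defining the left derived functors of $\Hom$, which is the assertion.

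I do not anticipate any genuine obstacle; the argument is pure bookkeeping with truncations and extensions of connected sequences. The only point requiring a little care is that Proposition~\ref{PropUniversality} provides universality among \emph{all} connected sequences with the prescribed non-negative part, not merely among $\del$-functors, and that the passage $((G^n)_{n\le0},\del\ssh)\mapsto(\widehat G^{\bullet},\del\ssh)$ is functorial in both directions for morphisms that are the identity in the relevant range --- this compatibility is exactly what identifies the two universal properties, so it should be spelled out explicitly rather than left implicit.
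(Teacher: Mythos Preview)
Your proposal is correct and follows the same path the paper intends: the corollary is stated without proof, as an immediate consequence of Proposition~\ref{PropUniversality} (together with Theorem~\ref{ThmNegativeExtension}), and your argument simply makes explicit the truncation/extension bookkeeping needed to identify the universal property in Proposition~\ref{PropUniversality} with the one defining left covariant derived functors of $\Hom$. The key observation you use---that Proposition~\ref{PropUniversality} quantifies over all connected sequences (not only $\delta$-functors), so the extension $\widehat G^{\bullet}$ need not be acyclic---is exactly what makes the passage painless, and is already built into Lemma~\ref{LemUniversality}.
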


\begin{corollary}\label{CorUniqueness}
Assume that $\CEs$ has enough projective morphisms.
For any covariant $\delta$-functor $(\Fm,\del\ssh)$ having $F^n=\Ebb^n$ with the canonical connecting morphisms for $n\ge0$, the following are equivalent.
\begin{enumerate}
\item $(\Fm,\del\ssh)$ satisfies $(\star)$.
\item $(\Fm,\del\ssh)$ is universal among covariant connected sequence of functors having $F^n=\Ebb^n$ with the canonical connecting morphisms for $n\ge0$.
\item $(\Fm,\del\ssh)$ is isomorphic to $(\EbbI^{\mr},\del\ssh)$ as a covariant $\delta$-functor.
\end{enumerate}
\end{corollary}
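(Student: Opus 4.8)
The plan is to deduce all three equivalences from the universality Lemma~\ref{LemUniversality} together with the observation that $(\EbbI^{\mr},\del\ssh)$ is itself a reference object satisfying the condition $(\star)$. So the first step I would record is precisely this: $(\EbbI^{\mr},\del\ssh)$ satisfies $(\star)$. It is a $\delta$-functor by Theorem~\ref{ThmNegativeExtension}, and for any projective deflation $p$ and any $n>0$ one has $\EbbI^{-n}(-,p)=\big(\Ebb^{n}(p,-)\big)^{\vee}=0$, since a projective morphism kills $\Ebb$ and hence all $\Ebb^{n}$ with $n\ge 1$. I would also note the elementary fact that $(\star)$ is invariant under isomorphisms of covariant connected sequences of functors: being a $\delta$-functor is obviously invariant, and if $\vp^{\mr}$ is such an isomorphism with $F^{-n}(-,p)=0$, then naturality of $\vp^{-n}$ together with invertibility of $\vp^{-n}_{-,A}$ forces $G^{-n}(-,p)=0$.

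Next I would prove $(1)\Rightarrow(3)$. Assuming $(\Fm,\del\ssh)$ satisfies $(\star)$, apply Lemma~\ref{LemUniversality} once with target $(\EbbI^{\mr},\del\ssh)$ to obtain a morphism $\vp^{\mr}\colon(\Fm,\del\ssh)\to(\EbbI^{\mr},\del\ssh)$ which is the identity in non-negative degrees, and once more with target $(\Fm,\del\ssh)$ (legitimate, since by assumption it satisfies $(\star)$) to obtain a morphism $\psi^{\mr}\colon(\EbbI^{\mr},\del\ssh)\to(\Fm,\del\ssh)$ in the other direction. The uniqueness clause of Lemma~\ref{LemUniversality}, applied to the endomorphisms $(\Fm,\del\ssh)\to(\Fm,\del\ssh)$ and $(\EbbI^{\mr},\del\ssh)\to(\EbbI^{\mr},\del\ssh)$, forces $\psi^{\mr}\circ\vp^{\mr}=\id$ and $\vp^{\mr}\circ\psi^{\mr}=\id$, so $\vp^{\mr}$ is an isomorphism of $\delta$-functors. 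The implication $(3)\Rightarrow(1)$ is then immediate from the invariance of $(\star)$ noted in the first step, since $(\EbbI^{\mr},\del\ssh)$ satisfies $(\star)$.

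For the equivalence with $(2)$, I would first unwind "universal": by Lemma~\ref{LemUniversality}, a covariant connected sequence with $F^{n}=\Ebb^{n}$ and the canonical connecting morphisms for $n\ge0$ is universal exactly when it receives, from every such connected sequence, a unique morphism which is the identity in non-negative degrees. Thus $(1)\Rightarrow(2)$ is literally one application of Lemma~\ref{LemUniversality} with $(\Fm,\del\ssh)$ as target. For $(2)\Rightarrow(1)$: $(\EbbI^{\mr},\del\ssh)$ is universal by Proposition~\ref{PropUniversality}; if $(\Fm,\del\ssh)$ is also universal, the two universal properties produce mutually inverse morphisms between them that are identities in non-negative degrees, so $(\Fm,\del\ssh)\cong(\EbbI^{\mr},\del\ssh)$, and then $(\Fm,\del\ssh)$ satisfies $(\star)$ by $(3)\Rightarrow(1)$.

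The main obstacle here is not mathematical depth — the statement is a formal consequence of Lemma~\ref{LemUniversality} and Proposition~\ref{PropUniversality} — but bookkeeping: one must make sure that the notion of "universal" in $(2)$ matches the direction of the morphism $\vp^{\mr}$ produced by Lemma~\ref{LemUniversality} (the object satisfying $(\star)$ is the \emph{recipient} of these morphisms, not the source), and one must carefully justify both the invariance of $(\star)$ under isomorphism and the vanishing $\EbbI^{-n}(-,p)=0$. None of these is difficult, but they are the places where an imprecise argument would break.
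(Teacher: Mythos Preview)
Your proposal is correct and takes essentially the same approach as the paper, which simply says the corollary is ``immediate from Lemma~\ref{LemUniversality} and Proposition~\ref{PropUniversality}.'' You have carefully unpacked exactly what ``immediate'' means here: verifying that $(\EbbI^{\mr},\del\ssh)$ itself satisfies $(\star)$, using the uniqueness clause of Lemma~\ref{LemUniversality} to turn two-way morphisms into isomorphisms, and keeping track of the direction of the universal morphism (the object satisfying $(\star)$ is the recipient).
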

\begin{proof}
This is immediate from Lemma~\ref{LemUniversality} and Proposition~\ref{PropUniversality}.
\end{proof}

Its counterpart for contravariant ones becomes as follows.
\begin{corollary}\label{CorUniquenessCounter}
Assume that $\CEs$ has enough injective morphisms. Then $\EbbII^{-n}$ is the $n$-th left contravariant derived functor of the bifunctor $\Hom$.
For any contravariant 
$\delta$-functor $(\Fm,\del\ush)$ having $F^n=\Ebb^n$ with the canonical connecting morphisms for $n\ge0$, the following are equivalent.
\begin{enumerate}
\item $(\Fm,\del\ush)$ satisfies $F^n(i,-)=0$ for any injective inflation $i$ and any $n<0$ .
\item $(\Fm,\del\ush)$ is universal among contravariant connected sequence of functors having $F^n=\Ebb^n$ with the canonical connecting morphisms for $n\ge0$.
\item $(\Fm,\del\ush)$ is isomorphic to $(\EbbII^{\mr},\del\ush)$ as a contravariant $\delta$-functor.
\end{enumerate}
\end{corollary}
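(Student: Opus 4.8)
The plan is to deduce Corollary~\ref{CorUniquenessCounter} from Corollary~\ref{CorUniqueness} applied to the opposite extriangulated category, exactly in the spirit in which Theorem~\ref{ThmCounter} and Corollary~\ref{CorNegCount2} are deduced from their covariant analogues; the statement that $\EbbII^{-n}$ is the $n$-th left contravariant derived functor of $\Hom$ will likewise follow from the dual of Proposition~\ref{PropUniversality}.

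First I would recall the standard fact that $\CEs\op := (\cat\op, \Ebb\op, \sfr\op)$, with $\Ebb\op(X, Y) = \Ebb(Y, X)$ and conflations reversed, is again an $R$-linear extriangulated category, and observe that an injective inflation in $\CEs$ is exactly a projective deflation in $\CEs\op$; in particular $\CEs$ has enough injective morphisms if and only if $\CEs\op$ has enough projective morphisms. Next I would set up the dictionary between the two pictures. Since the coend composition $\dia$ of Definition~\ref{DefBicatB} is manifestly self-dual, $\Ebb^n$ on $\CEs$ corresponds to $\Ebb^n$ on $\CEs\op$ with its two arguments interchanged, and hence the $\cat$-dual construction of Definition~\ref{DefNegativeExtension} identifies $(\EbbII^{\mr}, \del\ush)$ on $\CEs$ with $(\EbbI^{\mr}, \del\ssh)$ on $\CEs\op$ — connecting morphisms included — once one checks that the two flavours of $\cat$-dual, $\cat\Mod \to \Mod\cat$ and $\Mod\cat \to \cat\Mod$, are interchanged consistently. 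Under the same dictionary, a contravariant connected sequence of functors on $\CEs$ having $F^n = \Ebb^n$ with the canonical connecting morphisms for $n \geq 0$ becomes a covariant one of the same type on $\CEs\op$; condition~(1) of Corollary~\ref{CorUniquenessCounter}, namely $F^n(i, -) = 0$ for every injective inflation $i$ and every $n < 0$, becomes the non-trivial half of condition $(\star)$ of Lemma~\ref{LemUniversality} for $\CEs\op$ (the $\delta$-functor half being part of the standing hypothesis on $\Fm$); and ``universal among contravariant connected sequences of the given type'' becomes ``universal among covariant connected sequences of the given type''. Transporting Corollary~\ref{CorUniqueness}, and the corollary immediately following Proposition~\ref{PropUniversality}, along this dictionary then yields both assertions of Corollary~\ref{CorUniquenessCounter}.

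If one prefers not to invoke opposite categories, the same conclusion follows by transcribing the covariant arguments. One first proves the dual of Lemma~\ref{LemUniversality} by running the inductive construction of the comparison morphism $\vp^{\mr}$ on contravariant sequences; the key input there is that $\EbbII^{-n}(i, -) = (\Ebb^n(-, i))^{\vee} = 0$ for any injective morphism $i$ and any $n > 0$, which holds because $\Ebb(-, i) = i\sas = 0$ by Definition~\ref{DefProjMorph} and then $\Ebb^n(-, i) = 0$ by induction, using the natural isomorphism $\Ebb^n \cong \Ebb \dia \Ebb^{n-1}$ of Remark~\ref{RemFn} (in which the second argument enters only through $\Ebb(-, i)$). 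Combined with Theorem~\ref{ThmCounter}, this gives the universality of $(\EbbII^{\mr}, \del\ush)$, and the equivalence of (1), (2), (3) then follows from a formal chase with the uniqueness of these comparison morphisms (the usual terminal-object argument). I do not anticipate any genuine difficulty: once Theorem~\ref{ThmCounter} and the dual of Lemma~\ref{LemUniversality} are available, the whole statement is formal, and the only point that deserves care is the bookkeeping that the duality really does match $(\EbbII^{\mr}, \del\ush)$ with $(\EbbI^{\mr}, \del\ssh)$ on $\CEs\op$, including the connecting morphisms.
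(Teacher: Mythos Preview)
Your proposal is correct and matches the paper's approach: the paper simply introduces this corollary as ``its counterpart for contravariant ones'' without further proof, relying on duality with Corollary~\ref{CorUniqueness} (and implicitly Proposition~\ref{PropUniversality} and Lemma~\ref{LemUniversality}), which is exactly what you spell out.
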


The above results show that we should think of the partial $\delta$-functors $(\EbbI^{\mr},\del\ssh)$ and $(\EbbII^{\mr},\del\ush)$ as of the (collections of) derived functors of the covariant, resp. contravariant functor $\Hom$ with one argument active and the other inert. In other words, $(\EbbI^{-n}), n > 0$ and $(\EbbII^{-n}), n > 0$ should be seen as negative extensions in the extriangulated category $\CEs$. 

In general, thus defined negative extensions are not balanced: given an arbitrary extriangulated category, we often do not have $\EbbI^{-n} \cong \EbbII^{-n}$ for all $n > 0$.

As Remark~\ref{RemFAcyclicInduce} suggests, on  $\CEs$ there can exist various (non-canonical or non-intrinsic) balanced $\delta$-functors having $F^n=\Ebb^n$ with the canonical connecting morphisms for $n\ge0$. Indeed, there are plenty of such examples, e.g., any extension-closed subcategory of a triangulated category $\Tcal$ can be equipped with such a $\delta$-functor by restrictions of that for $\Tcal$. It may depend on the embedding (see Example~\ref{ExExtClosed}) and in general is not universal on $\CEs$.

In the case where $\CEs$ has enough projective morphisms and enough injective morphisms, since we do have universal covariant $(\EbbI^{\mr},\del\ssh)$ and contravariant $(\EbbII^{\mr},\del\ush)$, it is natural to look for precise conditions on $\CEs$ forcing them to agree and constitute a balanced universal $\delta$-functor $(\EbbI^{\mr}\cong\EbbII^{\mr},\del\ssh,\del\ush)$.
We will investigate such conditions in the next subsection.

\begin{remark}\label{RemInsteadNegative}
As the above argument suggests, we may also harmlessly define functors $\EbbI^{-n}$ for $n>0$ equipped with $\del\ssh$ for any essentially small extriangulated category $\CEs$ with enough projective morphisms, inductively by
\[ \EbbI^{-n}(-,C):=\Ker\big(\Ebb^{-(n-1)}(-,F)\ov{f\sas}{\lra}\Ebb^{-(n-1)}(-,G)\big) \]
by choosing dominant $\sfr$-triangle $F\ov{f}{\lra}G\ov{g}{\lra}C\ov{\vt}{\dra}$ for each $C\in\cat$. The usual argument (similar to the above Lemma~\ref{LemUniversality}) shows that this is well-defined up to unique natural isomorphisms, independently of the choices of dominant $\sfr$-triangles.
Dually for $\EbbII^{\mr}$, in the case with enough injective morphisms. This again can be seen as a definition of (left) satellites of the functor $\Hom$, see \cite{CE, M, MR} and references therein.
\end{remark}

We end this subsection with the following example.
\begin{example}\label{ExExtClosed}
Let $K$ be an algebraically closed field, and let $Q$ be the quiver $1\leftarrow2\leftarrow3$ of type $A_3$. For each $m>0$, put
\[ \cat_{(m)}=\add(1[1]\oplus \bsm3\\2\esm[-m])\se D^b(\mod KQ)=\Tcal. \]
Then $\cat_{(m)}\se\Tcal$ is extension-closed, hence has an induced extriangulated structure $(\cat_{(m)},\Ebb_{(m)},\sfr_{(m)})$. Indeed, we see easily that it splits, i.e. $\Ebb_{(m)}=0$ holds. In particular, those $\cat_{(m)}$ for $m>0$ are all equivalent as extriangulated categories.

For each $m>0$, there are two natural covariant connected sequence of functors of $\Ebb_{(m)}$ given by $(\Ebb_{(m)})_{\Irm}^{-n}$ and $\Tcal(-[n],-)|_{\cat_{(m)}}$. Obviously we have $(\Ebb_{(m)})_{\Irm}^{-n}=0$ for any $n>0$ since $\Ebb_{(m)}=0$. On the other hand, we have
\[ \Tcal((\bsm3\\2\esm [-m])[n],1[1])\begin{cases}\ne0 & n=m\\ =0 & n\ne m\end{cases}. \]

This example shows that
\begin{itemize}
\item[{\rm (i)}] For an extriangulated category $\CEs$ equipped with an embedding $\cat\hookrightarrow\Tcal$ as an extension-closed subcategory into a triangulated category, we do not necessarily have $\EbbI^{-n}\cong\Tcal(-[n],-)|_{\cat}$.
\item[{\rm (ii)}] Besides $\Tcal(-[n],-)|_{\cat}$ can differ, depending on the embedding $\cat\hookrightarrow\Tcal$.
\end{itemize}

A similar construction as above can be also performed to obtain non-splitting examples which suggest {\rm (i),(ii)}, for example by taking
\[ \cat_{(m)}=\add\big(\bsm3\\2\esm\oplus\{M[k]\mid M\in\ind KQ,k>0\}\big)\se D^b(\mod KQ) \]
for $m>0$ for the same quiver $Q$.
\end{example}

\subsection{Comparison of $\EbbI^{\mr}$ and $\EbbII^{\mr}$}\label{Subsection_CS}
Let us compare $\EbbI^{\mr}$ and $\EbbII^{\mr}$ under the existence of enough projective morphisms and enough injective morphisms.
By definition, we have $\EbbI^{-n}(-,f)=0$ for any $f\in\Pcal$ and any $n>0$, while we have $\EbbII^{-n}(f,-)=0$ for any $f\in\Ical$ and any $n>0$.
In this subsection, we will see that conversely these properties determine $\EbbI^{\mr}$ and $\EbbII^{\mr}$ to some extent. 

This property suggests us the following example. In fact,
it shows that there are indeed some cases where $\EbbI^{\mr}$ does not satisfy $\EbbI^{-n}(I,-)=0$ for injective object $I$, while $\EbbII^{-n}(I,-)=0$ always holds by definition.
\begin{example}\label{ExNonBivariant}
Let $K$ be an algebraically closed field, and let $Q$ be the quiver $1\leftarrow2\leftarrow3\leftarrow4$ of type $A_4$. Put
\[ \cat=\add(3[-1]\oplus 2\oplus \bsm4\\3\\2\esm\oplus\bsm4\\3\esm)\se D^b(\mod KQ)=\Tcal. \]
It is extension-closed, hence has an induced extriangulated structure $\CEs$. We see that $\CEs$ is hereditary, with enough projective objects and enough injective objects. Since $C=3[-1]$ is injective, we have $\EbbII^{-1}(C,-)=0$. On the other hand, if we put $A=\bsm4\\3\esm$, then
it has an $\sfr$-conflation $2\to\bsm4\\3\\2\esm\to A$ in which $\bsm4\\3\\2\esm$ is projective in $\cat$, hence
\[ 0\to\EbbI^{-1}(C,A)\to\cat(C,2)\to\cat(C,\bsm4\\3\\2\esm) \]
is exact by Lemma~\ref{LemDominant2}. Since $\cat(C,2)\cong\Ext_{KQ}^1(3,2)\ne0$ and $\cat(C,\bsm4\\3\\2\esm)\cong\Ext_{KQ}^1(3,\bsm4\\3\\2\esm)=0$, this shows $\EbbI^{-1}(C,A)\ne0$.
\end{example}

In view of Corollaries \ref{CorUniqueness} and \ref{CorUniquenessCounter}, if one expects for $\EbbI^{\mr}$ and $\EbbII^{\mr}$ to agree, an obviously necessary condition should be that $\EbbI^{-n}(i,-)=0$ for any injective inflation and $\EbbII^{-n}(-,p)=0$ for any projective inflation, for any $n>0$.
In the sequel, we will see that this condition is in fact essentially sufficient to give a numerical equation for their dimensions, while a slightly stronger condition ensures that they agree as functors. See Proposition~\ref{PropNegCount} and Theorem~\ref{ThmCoincidence} for detail.

\begin{lemma}\label{LemComparisonIandII}
Assume that $\CEs$ has enough projective morphisms and enough injective morphisms. Let $X,Y\in\cat$ be any pair of objects, and take arbitrary $\sfr$-triangles
\begin{equation}\label{sTri_Comparison} X\ov{i}{\lra}I\ov{j}{\lra}J\ov{\iota}{\dra}\ \ \text{and}\ \ Q\ov{q}{\lra}P\ov{p}{\lra}Y\ov{\om}{\dra}
\end{equation}
in which $i$ is an injective inflation and $p$ is a projective deflation.

Then the morphisms
\[ \iota\ush\circ \om\ssh\colon\EbbI^{-1}(X,Y)\ov{\om\ssh}{\lra}\cat(X,Q)\ov{\iota\ush}{\lra}\Ebb(J,Q) \]
and
\[ \om\ssh\circ \iota\ush\colon\EbbII^{-1}(X,Y)\ov{\iota\ush}{\lra}\cat(J,Y)\ov{\om\ssh}{\lra}\Ebb(J,Q), \]
have the same image in $\Ebb(J,Q)$. Equivalently, there is an isomorphism $\mu$ which makes the following diagram commutative.
\begin{equation}\label{mucomm}
\xy
(-26,6)*+{\EbbI^{-1}(X,Y)/(\om\ssh)^{-1}(\Ical(X,Q))}="0";
(26,6)*+{\EbbII^{-1}(X,Y)/(\iota\ush)^{-1}(\Pcal(J,Y))}="2";
(0,6)*+{}="3";
(0,-6)*+{\Ebb(J,Q)}="4";
{\ar_{\cong}^{\mu} "0";"2"};
{\ar_{\iota\ush\circ\om\ssh} "0";"4"};
{\ar^{\om\ssh\circ\iota\ush} "2";"4"};
{\ar@{}|\circlearrowright "3";"4"};
\endxy
\end{equation}
Here, we denoted the morphisms induced by $\iota\ush\circ\om\ssh$ and $\om\ssh\circ\iota\ush$ by the same symbols, and $(\om\ssh)^{-1}(\Ical(X,Q))$ denotes the preimage of $\Ical(X,Q)$ by the monomorphism $\om\ssh\colon\EbbI^{-1}(X,Y)\ov{\om\ssh}{\lra}\cat(X,Q)$. Similarly for $(\iota\ush)^{-1}(\Pcal(J,Y))$.
\end{lemma}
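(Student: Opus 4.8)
The plan is to first replace the abstract modules $\EbbI^{-1}(X,Y)$ and $\EbbII^{-1}(X,Y)$ by concrete subgroups of representable functors, then to prove the equality of images by a direct manipulation of the two $\sfr$-triangles, and finally to obtain $\mu$ from a kernel computation. First, since $p$ is a projective deflation, the $\sfr$-triangle $Q\xrightarrow{q}P\xrightarrow{p}Y\xrightarrow{\om}$ is dominant, so Lemma~\ref{LemDominant2} lets us identify $\om\ssh\colon\EbbI^{-1}(-,Y)\to\cat(-,Q)$ with the inclusion of $\ker(q\circ-\colon\cat(-,Q)\to\cat(-,P))$. Dually, since $i$ is an injective inflation the $\sfr$-triangle $X\xrightarrow{i}I\xrightarrow{j}J\xrightarrow{\iota}$ is codominant --- the class $\iota$ is codominant because $\Ebb(-,i)=0$ forces $\iota\ssh\colon\cat(-,J)\to\Ebb(-,X)$ to be epic, via the long exact sequence of Theorem~\ref{ThmPositiveExtension}\,(1) --- so the dual of Lemma~\ref{LemDominant2} identifies $\iota\ush\colon\EbbII^{-1}(X,-)\to\cat(J,-)$ with the inclusion of $\ker(-\circ j\colon\cat(J,-)\to\cat(I,-))$. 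Evaluating at $X$, respectively at $Y$, the two images in question then become
\[ \Im(\iota\ush\circ\om\ssh)=\{\,g\sas\iota\mid g\in\cat(X,Q),\ qg=0\,\}=:S_1 \quad\text{and}\quad \Im(\om\ssh\circ\iota\ush)=\{\,h\uas\om\mid h\in\cat(J,Y),\ hj=0\,\}=:S_2, \]
both inside $\Ebb(J,Q)$, so the statement reduces to the equality $S_1=S_2$ together with a kernel computation.

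The core of the argument, and the step I expect to be the main obstacle, is proving $S_1=S_2$; here one has to play the two $\sfr$-triangles off against each other. For $S_1\se S_2$, I would take $g\colon X\to Q$ with $qg=0$ and realize $g\sas\iota$, obtaining a morphism of $\sfr$-triangles $(g,g',\id_J)$ from $X\xrightarrow{i}I\xrightarrow{j}J$ (with class $\iota$) to $Q\xrightarrow{u}W\xrightarrow{v}J$ (with class $g\sas\iota$), whose left-hand square is a weak pushout by \cite[Proposition~3.15]{NP1}. Applying the weak pushout property to the pair $0\colon I\to P$ and $q\colon Q\to P$ (compatible since $qg=0=0\circ i$) yields $c\colon W\to P$ with $cu=q$ and $cg'=0$. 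Then (ET3), applied to the commutative square $(\id_Q,c)$ between the $\sfr$-triangles $Q\xrightarrow{u}W\xrightarrow{v}J$ and $Q\xrightarrow{q}P\xrightarrow{p}Y$, produces $h\colon J\to Y$ with $hv=pc$ such that $(\id_Q,h)\colon g\sas\iota\to\om$ is a morphism of extensions, whence $g\sas\iota=h\uas\om$; and $hj=h(vg')=p(cg')=0$, so $g\sas\iota\in S_2$. The opposite inclusion $S_2\se S_1$ is obtained dually: realize $h\uas\om$ as a pullback, use that the pertinent right-hand square is a weak pullback, feed it the pair $j\colon I\to J$ and $0\colon I\to P$ (compatible since $hj=0$), and apply (ET3)$^{\mathrm{op}}$.

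It then remains to identify the kernels and exhibit $\mu$. The long exact sequence of $X\xrightarrow{i}I\xrightarrow{j}J\xrightarrow{\iota}$ from Theorem~\ref{ThmPositiveExtension}\,(1) gives $\ker(\iota\ush\colon\cat(X,Q)\to\Ebb(J,Q))=\Im(-\circ i\colon\cat(I,Q)\to\cat(X,Q))$; this is contained in $\Ical(X,Q)$ since $i\in\Ical$ and $\Ical$ is an ideal, and contains $\Ical(X,Q)$ since every injective $h\colon X\to Q$ satisfies $h\sas\iota=\Ebb(J,h)(\iota)=0$. Hence $\ker(\iota\ush)=\Ical(X,Q)$, and dually $\ker(\om\ssh\colon\cat(J,Y)\to\Ebb(J,Q))=\Pcal(J,Y)$. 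Consequently $\iota\ush\circ\om\ssh$ annihilates exactly $(\om\ssh)^{-1}(\Ical(X,Q))$ and $\om\ssh\circ\iota\ush$ annihilates exactly $(\iota\ush)^{-1}(\Pcal(J,Y))$, so both descend to injections of the two quotients in \eqref{mucomm} into $\Ebb(J,Q)$; by the previous paragraph these injections have the common image $S_1=S_2$, and I would define $\mu$ to be the resulting (unique) isomorphism between the quotients, which makes \eqref{mucomm} commute.
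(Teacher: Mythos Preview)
Your proof is correct, and the overall structure---identify the negative extensions with concrete kernels, show the two images in $\Ebb(J,Q)$ coincide, compute the kernels of $\iota\ush$ and $\om\ssh$ as the ideals $\Ical(X,Q)$ and $\Pcal(J,Y)$---matches the paper's. The only real difference is in the core step $S_1=S_2$: the paper does it in one stroke. Given $s\in\cat(X,Q)$ with $qs=0$, the square $(s,0)$ between the two \emph{original} $\sfr$-triangles $X\to I\to J$ and $Q\to P\to Y$ commutes (since $qs=0=0\circ i$), so {\rm (ET3)} directly produces $t\colon J\to Y$ making $(s,0,t)$ a morphism of $\sfr$-triangles; this gives $s\sas\iota=t\uas\om$ and $tj=p\cdot 0=0$ in one move. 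Your route---realize $g\sas\iota$, invoke the weak pushout from \cite[Proposition~3.15]{NP1}, then apply {\rm (ET3)} between the realized triangle and $Q\to P\to Y$---is a genuine detour that reproves what the single application of {\rm (ET3)} already gives. Both arguments are fine; the paper's is shorter and avoids the auxiliary triangle $W$ entirely. Your kernel computation (using $\Ical(X,Q)\se\Ker\iota\ush$ from the definition of injective morphism, and the reverse inclusion from the long exact sequence) is a nice alternative phrasing of the paper's one-line claim that $\Ker\iota\ush=\Im(i\uas)=\Ical(X,Q)$.
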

\begin{proof}
Since $\cat(I,Q)\ov{i\uas}{\lra}\cat(X,Q)\ov{\iota\ush}{\lra}\Ebb(J,Q)$ is exact, we have $\Ker\big(\iota\ush\colon\cat(X,Q)\to\Ebb(J,Q)\big)=\Ical(X,Q)$, and thus $\Ker\big(\iota\ush\circ\om\ssh\colon\EbbI^{-1}(X,Y)\to\Ebb(J,Q)\big)=(\om\ssh)\iv(\Ical(X,Q))$. Similarly we have $\Ker\big(\om\ssh\circ\iota\ush\colon\EbbII^{-1}(X,Y)\to\Ebb(J,Q)\big)=(\iota\ush)\iv(\Pcal(J,Y))$. Thus indeed it suffices to show the existence of isomorphism $\mu$ which makes $(\ref{mucomm})$ commutative.

By Theorems \ref{ThmNegativeExtension} and \ref{ThmCounter}, sequences
$0\to \EbbI^{-1}(X,Y)\ov{\om\ssh}{\lra}\cat(X,Q)\ov{\cat(X,q)}{\lra}\cat(X,P)$
and
$0\to \EbbII^{-1}(X,Y)\ov{\iota\ush}{\lra}\cat(J,Y)\ov{\cat(j,Y)}{\lra}\cat(I,Y)$ are exact.
Put $L_{X,Y}=\Ker\big(\cat(X,q)\big)=\om\ssh\big(\EbbI^{-1}(X,Y)\big)$ 
and $M_{X,Y}=\Ker\big(\cat(j,Y)\big)=\iota\ush\big(\EbbII^{-1}(X,Y)\big)$ 
for simplicity.
Since $\om\ssh\colon\EbbI^{-1}(X,Y)\to\cat(X,Q)$ and $\iota\ush\colon\EbbI^{-1}(X,Y)\to\cat(J,Y)$ are monomorphic, it suffices to give an isomorphism
\begin{equation}\label{IsomLM}
L_{X,Y}/\big(L_{X,Y}\cap \Ical(X,Q)\big)\ov{\cong}{\lra} M_{X,Y}/\big(M_{X,Y}\cap \Pcal(J,Y)\big)
\end{equation}
compatibly with $\iota\ush$ and $\om\ssh$.

For any $s\in L_{X,Y}$, we obtain a morphism of $\sfr$-triangles
\begin{equation}\label{Morph_s_t}
\xy
(-14,6)*+{X}="0";
(0,6)*+{I}="2";
(14,6)*+{J}="4";
(28,6)*+{}="6";
(-14,-6)*+{Q}="10";
(0,-6)*+{P}="12";
(14,-6)*+{Y}="14";
(28,-6)*+{}="16";
{\ar^{i} "0";"2"};
{\ar^{j} "2";"4"};
{\ar@{-->}^{\iota} "4";"6"};
{\ar_{s} "0";"10"};
{\ar_{0} "2";"12"};
{\ar^{t} "4";"14"};
{\ar_{q} "10";"12"};
{\ar_{p} "12";"14"};
{\ar@{-->}_(0.56){\om} "14";"16"};
{\ar@{}|\circlearrowright "0";"12"};
{\ar@{}|\circlearrowright "2";"14"};
\endxy
\end{equation}
for some $t\in M_{X,Y}$ by {\rm (ET3)}. Conversely, any $t\in M_{X,Y}$ gives such a morphism of $\sfr$-triangles $(\ref{Morph_s_t})$ for some $s\in L_{X,Y}$ by {\rm (ET3)$\op$}.
Moreover, in $(\ref{Morph_s_t})$ we have
\[
s\in \Ical(X,Q)\ \EQ\ s\ \text{factors through}\ i\ \EQ\ t\ \text{factors through}\ p\ \EQ\ t\in \Pcal(J,Y),
\]
hence obtain an isomorphism $L_{X,Y}/\big(L_{X,Y}\cap \Ical(X,Q)\big)\ov{\cong}{\lra} M_{X,Y}/\big(M_{X,Y}\cap \Pcal(J,Y)\big)$ as desired. By definition of a morphism of $\sfr$-triangles, any $s\in L_{X,Y}$ and $t\in M_{X,Y}$ corresponding through this isomorphism satisfies $\iota\ush(s)=\om\ssh(t)$.
\end{proof}

\begin{remark}\label{Rem_Rel_IY}
Let $\Tcal$ be a triangulated category and let $\Dcal\se\cat\se\Tcal$ be additive full subcategories. If $(\cat,\cat)$ is an $\Dcal$-mutation pair in the sense of \cite[Definition~2.5]{IY}, then by definition $\cat\se\Tcal$ is an extension-closed subcategory, hence extriangulated. In this case $\cat$ is Frobenius whose subcategory of projective-injectives agrees with $\Dcal$, as seen in \cite[Example~7.2]{NP1}, and the ideal quotient $\cat/\Dcal$ is known to be triangulated by \cite[Theorem~4.2]{IY}. In particular there is an isomorphism by shifting $(\cat/\Dcal)(X,\Om Y)\ov{\cong}{\lra}(\cat/\Dcal)(\Sig X,Y)$ for any $X,Y\in\cat$, where $\Sig$ (respectively $\Om$) denotes the positive (resp. negative) shift by $1$ in $\cat/\Dcal$. The above isomorphism $(\ref{IsomLM})$ can be viewed as a restriction of this isomorphism.
\end{remark}

\begin{proposition}\label{PropComparisonIandII}
Assume that $\CEs$ has enough projective morphisms and enough injective morphisms. Let $X,Y\in\cat$ be any pair of objects, and take arbitrary $\sfr$-triangles $(\ref{sTri_Comparison})$ 
in which $i$ is an injective inflation and $p$ is a projective deflation.
Then the cohomologies of the complexes
\[ C_{\Irm}^{\mr}\colon\quad \EbbI^{-1}(J,Y)\ov{j\uas}{\lra}\EbbI^{-1}(I,Y)\ov{i\uas}{\lra}\EbbI^{-1}(X,Y)\ov{\iota\ush\circ\om\ssh}{\lra}\Ebb(J,Q) \]
and
\[ C_{\IIrm}^{\mr}\colon\quad \EbbII^{-1}(X,Q)\ov{q\sas}{\lra}\EbbII^{-1}(X,P)\ov{p\sas}{\lra}\EbbII^{-1}(X,Y)\ov{\om\ssh\circ\iota\ush}{\lra}\Ebb(J,Q) \]
are isomorphic.
\end{proposition}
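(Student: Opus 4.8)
The strategy is to identify both cohomologies with a single third object, built from the isomorphism $\mu$ of Lemma~\ref{LemComparisonIandII}. First I would analyze the complex $C_{\Irm}^{\mr}$. By Theorem~\ref{ThmNegativeExtension}, $(\EbbI^{\bullet},\del\ssh)$ is a $\delta$-functor, so applied to the $\sfr$-triangle $X\ov{i}{\lra}I\ov{j}{\lra}J\ov{\iota}{\dra}$ the sequence $\EbbI^{-1}(J,Y)\ov{j\uas}{\lra}\EbbI^{-1}(I,Y)\ov{i\uas}{\lra}\EbbI^{-1}(X,Y)\ov{\iota\ssh}{\lra}\EbbI^0(J,Y)=\cat(J,Y)$ is exact. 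Hence the cohomology of $C_{\Irm}^{\mr}$ at the three left spots is: $H^0 = 0$ at $\EbbI^{-1}(J,Y)$ (using also exactness one step further to the left, coming from a projective resolution argument, or simply noting $j\uas$ is where exactness starts — actually one should be careful and present the complex as having its first possibly-nonzero cohomology at $\EbbI^{-1}(I,Y)$), the cohomology at $\EbbI^{-1}(I,Y)$ is $\Ker(i\uas)/\Im(j\uas) = 0$, and the cohomology at $\EbbI^{-1}(X,Y)$ is $\Ker(\iota\ush\circ\om\ssh)/\Im(i\uas)$. Since $i$ is an injective inflation, $i\uas = \EbbI^{-1}(i,-) = 0$ by Corollary~\ref{CorNegCount1} (condition (NI) holds whenever $i$ is injective, hence $\EbbI^{-1}(i,-)=0$), so this cohomology is just $\Ker(\iota\ush\circ\om\ssh)$. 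Finally the cohomology at $\Ebb(J,Q)$ is $\Ebb(J,Q)/\Im(\iota\ush\circ\om\ssh)$. The same bookkeeping for $C_{\IIrm}^{\mr}$ using Theorem~\ref{ThmCounter}, Corollary~\ref{CorNegCount2}, and the fact that $p$ is a projective deflation (so $p\sas = \EbbII^{-1}(-,p) = 0$) shows that its only potentially nonzero cohomologies are $\Ker(\om\ssh\circ\iota\ush)$ at $\EbbII^{-1}(X,Y)$ and $\Ebb(J,Q)/\Im(\om\ssh\circ\iota\ush)$ at $\Ebb(J,Q)$.

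Next I would match these up. For the cohomology at the $\Ebb(J,Q)$ term, Lemma~\ref{LemComparisonIandII} gives directly that $\iota\ush\circ\om\ssh$ and $\om\ssh\circ\iota\ush$ have the same image in $\Ebb(J,Q)$, so the two cokernels $\Ebb(J,Q)/\Im(\iota\ush\circ\om\ssh)$ and $\Ebb(J,Q)/\Im(\om\ssh\circ\iota\ush)$ are literally equal. For the kernel terms: $\Ker(\iota\ush\circ\om\ssh)$ inside $\EbbI^{-1}(X,Y)$ equals $(\om\ssh)^{-1}(\Ical(X,Q))$ as computed in the proof of Lemma~\ref{LemComparisonIandII}, and symmetrically $\Ker(\om\ssh\circ\iota\ush) = (\iota\ush)^{-1}(\Pcal(J,Y))$. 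The isomorphism $\mu$ of diagram (\ref{mucomm}) identifies $\EbbI^{-1}(X,Y)/(\om\ssh)^{-1}(\Ical(X,Q))$ with $\EbbII^{-1}(X,Y)/(\iota\ush)^{-1}(\Pcal(J,Y))$ compatibly with the maps to $\Ebb(J,Q)$; hence it restricts to an isomorphism between the respective kernels $(\om\ssh)^{-1}(\Ical(X,Q)) \cong (\iota\ush)^{-1}(\Pcal(J,Y))$. Actually it is cleaner to observe that $\mu$ being an isomorphism of the quotients, together with the fact that these quotients inject into $\Ebb(J,Q)$ with the \emph{same} image, forces an isomorphism on the sub-objects that are the kernels. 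This gives the desired isomorphism $H^\bullet(C_{\Irm}^{\mr}) \cong H^\bullet(C_{\IIrm}^{\mr})$ in every degree.

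The main obstacle I anticipate is purely organizational: making precise that the two complexes, which live in different ambient categories ($\Mod\cat$ with $X$ fixed versus with $Y$ fixed, so really after evaluation they are complexes of $R$-modules or one fixes the remaining variable), have cohomology concentrated in exactly the spots claimed, and that the \emph{only} nontrivial cohomology comparison reduces to the content of Lemma~\ref{LemComparisonIandII}. One has to invoke condition (NI) (resp.\ (NII)), which holds automatically here because $i$ is injective and $p$ is projective — these are the observations recorded just before Corollary~\ref{CorNegCount1} and in Remark~\ref{RemProjMorph}, so $\EbbI^{-n}(i,-)=0$ and $\EbbII^{-n}(-,p)=0$ for all $n>0$ unconditionally. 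Once those vanishings are in place, the exactness statements from Theorems~\ref{ThmNegativeExtension} and \ref{ThmCounter} kill all the other cohomology, and the remaining two nontrivial pieces — a kernel and a cokernel in each complex — are handed to us by Lemma~\ref{LemComparisonIandII}. I would close by remarking that this isomorphism is natural in $X$ and $Y$, since both the $\delta$-functor structures and the isomorphism $\mu$ are, though verifying full naturality is a routine diagram chase I would not spell out in detail.
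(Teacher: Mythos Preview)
Your argument has a fundamental gap: you apply Theorem~\ref{ThmNegativeExtension} in the wrong variable. That theorem says $(\EbbI^{\bullet},\del\ssh)$ is a \emph{covariant} $\delta$-functor, i.e.\ it yields long exact sequences in the second argument. Your claimed exact sequence $\EbbI^{-1}(J,Y)\ov{j\uas}{\lra}\EbbI^{-1}(I,Y)\ov{i\uas}{\lra}\EbbI^{-1}(X,Y)\to\cdots$ is a sequence in the \emph{first} argument, which would require a contravariant $\delta$-functor structure on $\EbbI^{\bullet}$ --- and none is defined or proved. (There is not even a connecting morphism $\iota\ush$ for $\EbbI^{\bullet}$.) The same mistake occurs dually for $C_{\IIrm}^{\bullet}$.

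The related claim that $\EbbI^{-1}(i,-)=0$ for any injective inflation $i$ is also false in general. What holds by definition is $\EbbI^{-n}(-,p)=0$ for projective $p$ and $\EbbII^{-n}(i,-)=0$ for injective $i$; you have swapped these. Example~\ref{ExNonBivariant} exhibits an injective object $I$ with $\EbbI^{-1}(I,-)\ne0$, so $\EbbI^{-1}(\id_I,-)\ne0$ for the injective inflation $\id_I$. Consequently the cohomology of $C_{\Irm}^{\bullet}$ need \emph{not} be concentrated in the last two spots, and your reduction to Lemma~\ref{LemComparisonIandII} alone does not go through. The whole content of Proposition~\ref{PropComparisonIandII} is precisely that the cohomologies nonetheless agree, and Corollary~\ref{CorComparisonIandII} then isolates the extra hypotheses (your (i),(ii)) under which the picture simplifies as you describe.

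The paper's proof avoids these issues by building a first-quadrant double complex from the positive extensions $\Ebb^n(-,-)$ evaluated on the two $\sfr$-triangles, whose rows and columns are exact (this \emph{is} known), then identifying $C_{\Irm}^{\prime\bullet}$ and $C_{\IIrm}^{\prime\bullet}$ as the complexes of kernels in the two directions. The acyclic assembly lemma gives quasi-isomorphisms of both with the total complex; truncating and invoking Lemma~\ref{LemComparisonIandII} at the last step finishes the comparison.
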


\begin{proof}
Put $\Vbf_{(\iota,\om)}=\Thh_{\om}(J)\cap\Thh^{\iota}(P)=\Ker\big(\Ebb(J,q)\big)\cap\Ker\big(\Ebb(j,Q)\big)$ in $\Ebb(J,Q)$, for simplicity.
Consider the following double complex $C^{\mr,\mr}$ induced from $\Ebb^n$ $(n\ge0)$, concentrated in the lower-right quadrant.
\[
\xy
(-40,30)*+{\cat(J,Q)}="0";
(-20,30)*+{\cat(J,P)}="1";
(0,30)*+{\cat(J,Y)}="2";
(20,30)*+{\Ebb(J,Q)}="3";
(40,30)*+{\Ebb(J,P)}="4";
(56,30)*+{\cdots}="5";
(-40,18)*+{\cat(I,Q)}="10";
(-20,18)*+{\cat(I,P)}="11";
(0,18)*+{\cat(I,Y)}="12";
(20,18)*+{\Ebb(I,Q)}="13";
(40,18)*+{\Ebb(I,P)}="14";
(56,18)*+{\cdots}="15";
(-40,6)*+{\cat(X,Q)}="20";
(-20,6)*+{\cat(X,P)}="21";
(0,6)*+{\cat(X,Y)}="22";
(20,6)*+{\Ebb(X,Q)}="23";
(40,6)*+{\Ebb(X,P)}="24";
(56,6)*+{\cdots}="25";
(-40,-6)*+{\Ebb(J,Q)}="30";
(-20,-6)*+{\Ebb(J,P)}="31";
(0,-6)*+{\Ebb(J,Y)}="32";
(20,-6)*+{\Ebb^2(J,Q)}="33";
(40,-6)*+{\vdots}="34";
(56,-6)*+{}="35";
(-40,-18)*+{\Ebb(I,Q)}="40";
(-20,-18)*+{\Ebb(I,P)}="41";
(0,-18)*+{\Ebb(I,Y)}="42";
(20,-18)*+{\vdots}="43";
(40,-18)*+{}="44";
(56,-18)*+{}="45";
(-40,-32)*+{\vdots}="50";
(-20,-32)*+{\vdots}="51";
(0,-32)*+{\vdots}="52";
{\ar^{q\sas} "0";"1"};
{\ar^{p\sas} "1";"2"};
{\ar^{\om\ssh} "2";"3"};
{\ar^{q\sas} "3";"4"};
{\ar "4";"5"};
{\ar_{j\uas} "0";"10"};
{\ar "1";"11"};
{\ar "2";"12"};
{\ar "3";"13"};
{\ar "4";"14"};
{\ar "10";"11"};
{\ar "11";"12"};
{\ar "12";"13"};
{\ar "13";"14"};
{\ar "14";"15"};
{\ar_{i\uas} "10";"20"};
{\ar "11";"21"};
{\ar "12";"22"};
{\ar "13";"23"};
{\ar "14";"24"};
{\ar "20";"21"};
{\ar "21";"22"};
{\ar "22";"23"};
{\ar "23";"24"};
{\ar "24";"25"};
{\ar_{\iota\ush} "20";"30"};
{\ar "21";"31"};
{\ar "22";"32"};
{\ar "23";"33"};
{\ar "24";"34"};
{\ar "30";"31"};
{\ar "31";"32"};
{\ar "32";"33"};
{\ar "33";"34"};
{\ar_{j\uas} "30";"40"};
{\ar "31";"41"};
{\ar "32";"42"};
{\ar "33";"43"};
{\ar "40";"41"};
{\ar "41";"42"};
{\ar "42";"43"};
{\ar "40";"50"};
{\ar "41";"51"};
{\ar "42";"52"};
\endxy
\]
If we take the kernels of the leftmost horizontal arrows, we obtain a complex
\[ C_{\Irm}^{\prime\mr}\colon\quad
\EbbI^{-1}(J,Y)\ov{j\uas}{\lra}\EbbI^{-1}(I,Y)\ov{i\uas}{\lra}\EbbI^{-1}(X,Y)\to\Ker\big(\Ebb(J,q)\big)\ov{a}{\lra}\Ker\big(\Ebb(I,q)\big)\to\cdots \]
by Lemma~\ref{LemDominant2} for $n=1$.
Similarly, if we take the kernels vertically, we obtain a complex
\[ C_{\mathrm{I\! I}}^{\prime\mr}\colon\quad
\EbbII^{-1}(X,Q)\ov{q\sas}{\lra}\EbbII^{-1}(X,P)\ov{p\sas}{\lra}\EbbII^{-1}(X,Y)\to\Ker\big(\Ebb(j,Q)\big)\ov{b}{\lra}\Ker\big(\Ebb(j,P)\big)\to\cdots. \]
Here, $a$ and $b$ are the unique morphisms given by the restriction of $\Ebb(j,Q)$ and $\Ebb(J,q)$, respectively. In particular, we have $\Ker a=\Ker b=\Vbf_{(\iota,\om)}$.
By the acyclic assembly lemma \cite[2.7.3]{Weibel}, we have quasi-isomorphisms
\begin{equation}\label{Qis_Seq}
C_{\Irm}^{\prime\mr}\ov{\text{qis.}}{\lra}\operatorname{Tot}(C^{\mr,\mr})\ov{\text{qis.}}{\lla}C_{\mathrm{I\! I}}^{\prime\mr}
\end{equation}
between $C_{\Irm}^{\prime\mr},C_{\mathrm{II}}^{\prime\mr}$ and the total complex $\operatorname{Tot}(C^{\mr,\mr})$ of $C^{\mr,\mr}$.
By truncating $(\ref{Qis_Seq})$ we obtain complexes
\[ C_{\Irm}^{\prime\prime\mr}\colon\quad \EbbI^{-1}(J,Y)\ov{j\uas}{\lra}\EbbI^{-1}(I,Y)\ov{i\uas}{\lra}\EbbI^{-1}(X,Y)\ov{u}{\lra}\Vbf_{(\iota,\om)} \]
and
\[ C_{\mathrm{I\! I}}^{\prime\prime\mr}\colon\quad \EbbII^{-1}(X,Q)\ov{q\sas}{\lra}\EbbII^{-1}(X,P)\ov{p\sas}{\lra}\EbbII^{-1}(X,Y)\ov{v}{\lra}\Vbf_{(\iota,\om)}, \]
which are connected by a pair of quasi-isomorphisms
$C_{\Irm}^{\prime\prime\mr}\ov{\text{qis.}}{\lra}D^{\mr}\ov{\text{qis.}}{\lla}C_{\mathrm{I\! I}}^{\prime\prime\mr}
$, with $D^{\mr}$ given by a truncation of $\operatorname{Tot}(C^{\mr,\mr})$. Thus, the assertion follows from Lemma~\ref{LemComparisonIandII}.
\end{proof}

\begin{corollary}\label{CorComparisonIandII}
Assume that $R=K$ is a field and $\cat$ is $\Hom$-finite over $K$, and that $\CEs$ has enough projective morphisms and enough injective morphisms. Suppose that the following conditions are satisfied.
\begin{itemize}
\item[{\rm (i)}] $\EbbI^{-1}(i,-)=0$ for any injective inflation.
\item[{\rm (ii)}] $\EbbII^{-1}(-,p)=0$ for any projective deflation.
\end{itemize}
Then $\dim_K\EbbI^{-1}(X,Y)=\dim_K\EbbII^{-1}(X,Y)$ and $\dim_K\EbbI^{-2}(X,Y)=\dim_K\EbbII^{-2}(X,Y)$ hold for any $X,Y\in\cat$.
\end{corollary}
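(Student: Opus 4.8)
The plan is to reduce the statement to Proposition~\ref{PropComparisonIandII} and then carry out a dimension count. First I would use that $\CEs$ has enough projective and enough injective morphisms to fix $\sfr$-triangles
\[ X\ov{i}{\lra}I\ov{j}{\lra}J\ov{\iota}{\dra}\quad\text{and}\quad Q\ov{q}{\lra}P\ov{p}{\lra}Y\ov{\om}{\dra} \]
with $i$ an injective inflation and $p$ a projective deflation, and form the associated four-term complexes $C_{\Irm}^{\mr}$ and $C_{\IIrm}^{\mr}$. By Corollary~\ref{CorExtFin} and its dual, all the $R$-modules $\EbbI^{n}(-,-)$, $\EbbII^{n}(-,-)$ and $\Ebb(J,Q)$ that occur are finite-dimensional over $K$, so all the cohomology groups below are finite-dimensional; by Proposition~\ref{PropComparisonIandII} we have $H^{n}(C_{\Irm}^{\mr})\cong H^{n}(C_{\IIrm}^{\mr})$ for every $n$.

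The key observation is that hypotheses (i) and (ii) kill the middle differential of each of the two complexes: in $C_{\Irm}^{\mr}$ the map $\EbbI^{-1}(I,Y)\ov{i\uas}{\lra}\EbbI^{-1}(X,Y)$ is $\EbbI^{-1}(i,-)$ evaluated at $Y$, hence zero by (i) since $i$ is an injective inflation; in $C_{\IIrm}^{\mr}$ the map $\EbbII^{-1}(X,P)\ov{p\sas}{\lra}\EbbII^{-1}(X,Y)$ is $\EbbII^{-1}(-,p)$ evaluated at $X$, hence zero by (ii) since $p$ is a projective deflation. For a four-term complex $A^{0}\to A^{1}\to A^{2}\to A^{3}$ of finite-dimensional spaces whose middle differential vanishes, the rank of the last differential cancels in $\dim A^{2}-\dim A^{3}=\dim H^{2}-\dim H^{3}$ and the rank of the first differential cancels in $\dim A^{0}-\dim A^{1}=\dim H^{0}-\dim H^{1}$. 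Applying this to both complexes, and using that they have the same last term $\Ebb(J,Q)$ and cohomologies agreeing degreewise, I get
\[ \dim_{K}\EbbI^{-1}(X,Y)=\dim_{K}\EbbII^{-1}(X,Y) \]
and
\[ \dim_{K}\EbbI^{-1}(J,Y)-\dim_{K}\EbbI^{-1}(I,Y)=\dim_{K}\EbbII^{-1}(X,Q)-\dim_{K}\EbbII^{-1}(X,P). \]
Since $X,Y$ were arbitrary and (i), (ii) are conditions on $\CEs$ alone, the first equality holds for every pair of objects; in particular $\dim_{K}\EbbI^{-1}$ and $\dim_{K}\EbbII^{-1}$ agree on $(I,Y)$, $(J,Y)$, $(X,P)$ and $(X,Q)$.

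For the degree $-2$ claim I would bootstrap from degree $-1$. Lemma~\ref{LemDominant2} with $n=2$ applied to $Q\to P\to Y$ gives a left exact sequence $0\to\EbbI^{-2}(X,Y)\to\EbbI^{-1}(X,Q)\ov{q\sas}{\lra}\EbbI^{-1}(X,P)$; the long exact sequence of the $\delta$-functor $(\EbbI^{\mr},\del\ssh)$ (Theorem~\ref{ThmNegativeExtension}) for this triangle in the second argument shows $q\sas$ is surjective, since the following map $\EbbI^{-1}(X,P)\to\EbbI^{-1}(X,Y)$ is $\EbbI^{-1}(-,p)=\big(\Ebb(p,-)\big)^{\vee}$, which vanishes because $p\in\Pcal$. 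Hence $\dim_{K}\EbbI^{-2}(X,Y)=\dim_{K}\EbbI^{-1}(X,Q)-\dim_{K}\EbbI^{-1}(X,P)$. Dually, using the $\cat$-dual of Lemma~\ref{LemDominant2} for the triangle $X\to I\to J$ (which is codominant because $i$ is an injective inflation), the long exact sequence of $(\EbbII^{\mr},\del\ush)$ (Theorem~\ref{ThmCounter}), and $\EbbII^{-1}(i,-)=\big(\Ebb(-,i)\big)^{\vee}=0$ because $i\in\Ical$, I get $\dim_{K}\EbbII^{-2}(X,Y)=\dim_{K}\EbbII^{-1}(J,Y)-\dim_{K}\EbbII^{-1}(I,Y)$. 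Combining these with the degree $-1$ equalities and the second displayed identity above, both $\dim_{K}\EbbI^{-2}(X,Y)$ and $\dim_{K}\EbbII^{-2}(X,Y)$ come out equal to $\dim_{K}\EbbI^{-1}(X,Q)-\dim_{K}\EbbI^{-1}(X,P)$, which finishes the proof.

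The step I expect to need the most care is this bookkeeping: one has to be sure that the structure maps appearing in $C_{\Irm}^{\mr}$, $C_{\IIrm}^{\mr}$ and in the relevant long exact sequences are precisely $\EbbI^{-1}(i,-)$, $\EbbII^{-1}(-,p)$, $\EbbI^{-1}(-,p)$ and $\EbbII^{-1}(i,-)$, so that the vanishings coming from (i), (ii) and from $p\in\Pcal$, $i\in\Ical$ genuinely apply; that the cohomology isomorphism of Proposition~\ref{PropComparisonIandII} is degreewise, since the argument compares $H^{n}$ for each fixed $n$; and that the $\cat$-dual of Lemma~\ref{LemDominant2} for codominant $\sfr$-triangles is indeed available for $(\EbbII^{\mr},\del\ush)$. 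Everything else is a routine rank computation.
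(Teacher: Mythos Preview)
Your proposal is correct and follows essentially the same route as the paper: choose the two $\sfr$-triangles, invoke Proposition~\ref{PropComparisonIandII}, observe that hypotheses {\rm (i)} and {\rm (ii)} make the middle differentials of $C_{\Irm}^{\mr}$ and $C_{\IIrm}^{\mr}$ vanish, and then read off the two dimension identities from the resulting pair of two-term complexes together with the short exact sequences $0\to\EbbI^{-2}(X,Y)\to\EbbI^{-1}(X,Q)\to\EbbI^{-1}(X,P)\to0$ and its $\EbbII$-counterpart. The only cosmetic difference is that the paper phrases the splitting step as ``$C_{\Irm}^{\mr}$ and $C_{\IIrm}^{\mr}$ split in the middle, hence the two-term pieces have isomorphic cohomologies'', whereas you carry out the equivalent rank computation explicitly; the content is identical, and your bookkeeping concerns (identifying the relevant maps with $\EbbI^{-1}(i,-)$, $\EbbII^{-1}(-,p)$, $\EbbI^{-1}(-,p)$, $\EbbII^{-1}(i,-)$, and the availability of the dual of Lemma~\ref{LemDominant2}) are all handled exactly as you anticipate.
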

\begin{proof}
Take any pair of $\sfr$-triangles $(\ref{sTri_Comparison})$ as in Lemma~\ref{LemComparisonIandII}.
By assumption, the resulting complexes $C_{\Irm}^{\mr},C_{\mathrm{I\! I}}^{\mr}$ split in the middle. Hence the following $2$-term complexes
\begin{equation}\label{2termIsoCoh1}
\EbbI^{-1}(X,Y)\ov{\iota\ush\circ\om\ssh}{\lra}\Ebb(J,Q)\quad \text{and}\quad \EbbII^{-1}(X,Y)\ov{\om\ssh\circ\iota\ush}{\lra}\Ebb(J,Q)
\end{equation}
have isomorphic cohomologies. Similarly, so do
\begin{equation}\label{2termIsoCoh2}
\EbbI^{-1}(J,Y)\ov{j\uas}{\lra}\EbbI^{-1}(I,Y)\quad \text{and}\quad \EbbII^{-1}(X,Q)\ov{q\sas}{\lra}\EbbII^{-1}(X,P).
\end{equation}
From $(\ref{2termIsoCoh1})$, we obtain $\dim_K\EbbI^{-1}(X,Y)=\dim_K\EbbII^{-1}(X,Y)$. 

Since $X,Y\in\cat$ were arbitrary, we may also apply this equality to $I,J$ and $P,Q$, to obtain
\begin{eqnarray*}
&&\dim_K\EbbI^{-1}(X,Q)-\dim_K\EbbI^{-1}(X,P)=\dim_K\EbbII^{-1}(X,Q)-\dim_K\EbbII^{-1}(X,P)\\
&&\hspace{-0.6cm}\un{(\ref{2termIsoCoh2})}{=}\dim_K\EbbI^{-1}(J,Y)-\dim_K\EbbI^{-1}(I,Y)
=\dim_K\EbbII^{-1}(J,Y)-\dim_K\EbbII^{-1}(I,Y)
\end{eqnarray*}
from $(\ref{2termIsoCoh2})$.
Since
$0\to\EbbI^{-2}(X,Y)\to\EbbI^{-1}(X,Q)\to\EbbI^{-1}(X,P)\to0$
and
$0\to\EbbII^{-2}(X,Y)\to\EbbII^{-1}(J,Y)\to\EbbII^{-1}(I,Y)\to0$
are exact by Theorems \ref{ThmNegativeExtension} and \ref{ThmCounter}, this means $\dim_K\EbbI^{-2}(X,Y)=\dim_K\EbbII^{-2}(X,Y)$.
\end{proof}

If we assume the existence of enough projective objects and injective objects, then we obtain equalities of dimensions in any degree. Moreover, such equalities are shown to be equivalent to {\rm (NI),(NII)} in Condition~\ref{CondNINII} as follows.
\begin{proposition}\label{PropNegCount}
Assume that $R=K$ is a field and $\cat$ is $\Hom$-finite over $K$, and that $\CEs$ has enough projective objects and enough injective objects. Then the following are equivalent.
\begin{enumerate}
\item $\CEs$ satisfies conditions {\rm (NI)} and {\rm (NII)}.
\item $\dim_K\EbbI^{-1}(X,Y)=\dim_K\EbbII^{-1}(X,Y)$ holds for any $X,Y\in\cat$.\item $\dim_K\EbbI^{-n}(X,Y)=\dim_K\EbbII^{-n}(X,Y)$ holds for any $X,Y\in\cat$ and any $n>0$.
\end{enumerate}
\end{proposition}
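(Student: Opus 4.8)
The plan is to deduce the equivalences from the comparison machinery already developed, namely Corollaries~\ref{CorNegCount1} and \ref{CorNegCount2} together with Proposition~\ref{PropComparisonIandII} (and its corollary). First, I would observe that under the hypothesis of enough projective objects and enough injective objects, conditions \textrm{(NI)} and \textrm{(NII)} are exactly the conditions \textrm{(i)}, \textrm{(ii)} of Corollary~\ref{CorComparisonIandII}: indeed, by Corollary~\ref{CorNegCount1}, \textrm{(NI)} is equivalent to $\EbbI^{-n}(I,-)=0$ for all injective objects $I$ and all $n>0$, hence (since every injective inflation factors through an injective object) to $\EbbI^{-n}(i,-)=0$ for every injective inflation $i$; dually, by Corollary~\ref{CorNegCount2}, \textrm{(NII)} is equivalent to $\EbbII^{-n}(-,p)=0$ for every projective deflation $p$. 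So \textrm{(1)} implies the hypotheses of Corollary~\ref{CorComparisonIandII}, which yields $\dim_K\EbbI^{-1}(X,Y)=\dim_K\EbbII^{-1}(X,Y)$; this gives $\textrm{(1)}\Rightarrow\textrm{(2)}$, and a fortiori most of $\textrm{(1)}\Rightarrow\textrm{(3)}$ once the induction on $n$ is set up.

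For the full strength of $\textrm{(1)}\Rightarrow\textrm{(3)}$ I would run an induction on $n$, exactly mirroring the last paragraph of the proof of Corollary~\ref{CorComparisonIandII}. The base cases $n=1,2$ are Corollary~\ref{CorComparisonIandII}. For the inductive step, dimension-shifting via the exact sequences
\[
0\to\EbbI^{-(n+1)}(X,Y)\to\EbbI^{-n}(X,Q)\to\EbbI^{-n}(X,P)\to 0
\]
and
\[
0\to\EbbII^{-(n+1)}(X,Y)\to\EbbII^{-n}(J,Y)\to\EbbII^{-n}(I,Y)\to 0,
\]
valid by Theorems~\ref{ThmNegativeExtension} and \ref{ThmCounter} using the conflations $(\ref{sTri_Comparison})$, reduces the equality in degree $n+1$ to equalities in degree $n$ applied to the pairs $(X,Q),(X,P)$ and $(J,Y),(I,Y)$; the balancing of these four numbers is furnished by the degree-$n$ case together with the cancellation argument in Proposition~\ref{PropComparisonIandII} (the two $2$-term complexes in $(\ref{2termIsoCoh2})$, generalized to degree $n$, having isomorphic cohomology). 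This is routine bookkeeping once Proposition~\ref{PropComparisonIandII} is in hand.

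The implication $\textrm{(3)}\Rightarrow\textrm{(2)}$ is trivial. It remains to prove $\textrm{(2)}\Rightarrow\textrm{(1)}$, which I expect to be the main obstacle. Here I would argue contrapositively: suppose \textrm{(NI)} fails, so by Corollary~\ref{CorNegCount1} there is an injective object $I$ with $\EbbI^{-1}(I,-)\neq 0$, say $\EbbI^{-1}(I,Y)\neq 0$ for some $Y$; since $I$ is injective, $\EbbII^{-1}(I,-)=0$ by definition, so $\dim_K\EbbI^{-1}(I,Y)>0=\dim_K\EbbII^{-1}(I,Y)$, contradicting \textrm{(2)}. The failure of \textrm{(NII)} is handled dually using $\EbbII^{-1}(-,P)$ for a projective object $P$ and the identity $\EbbI^{-1}(-,P)=0$. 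Thus \textrm{(2)} forces both \textrm{(NI)} and \textrm{(NII)}, completing the cycle. The only subtlety to check carefully is that the relevant $\Hom$-finiteness is preserved so that all the dimension counts are finite and the cancellations are legitimate; this follows from Corollary~\ref{CorExtFin}, since enough projective/injective objects imply enough projective/injective morphisms.
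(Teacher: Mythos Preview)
Your arguments for $(2)\Rightarrow(1)$, $(3)\Rightarrow(2)$, and $(1)\Rightarrow(2)$ are correct and coincide with the paper's. The gap is in your inductive step for $(1)\Rightarrow(3)$.

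First a simplification you overlook: since here we assume enough projective and injective \emph{objects}, in the conflations $(\ref{sTri_Comparison})$ one may take $P$ projective and $I$ injective. Then $\EbbI^{-m}(-,P)=0$ and $\EbbII^{-m}(I,-)=0$ for all $m>0$ (because $\Ebb^m(P,-)=0$ and $\Ebb^m(-,I)=0$), so your two ``short exact sequences'' collapse to isomorphisms $\EbbI^{-(n+1)}(X,Y)\cong\EbbI^{-n}(X,Q)$ and $\EbbII^{-(n+1)}(X,Y)\cong\EbbII^{-n}(J,Y)$. After one use of the induction hypothesis, what you actually need is
\[
\dim_K\EbbII^{-n}(X,Q)\ =\ \dim_K\EbbI^{-n}(J,Y),
\]
an equality between \emph{different} pairs of objects, not an instance of the hypothesis. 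Your justification appeals to ``the two $2$-term complexes in $(\ref{2termIsoCoh2})$, generalized to degree $n$''. But Proposition~\ref{PropComparisonIandII} is proved only at degree $-1$, via a specific double complex built from $\Ebb^m$ for $m\ge 0$, and no analogue at degree $-n$ is established; calling it ``routine bookkeeping'' does not close this gap.

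The paper bypasses the problem with a zigzag that uses only dimension-shifting isomorphisms (available for $k\ge 2$ precisely because $P,I$ are objects, not just morphisms) together with the induction hypothesis at degrees $n-1$ and $n-2$. Writing $Y'=Q$ and $X'=J$, for $n\ge 3$ one chains
\begin{align*}
\dim_K\EbbI^{-n}(X,Y)
&=\dim_K\EbbI^{-(n-1)}(X,Y')=\dim_K\EbbII^{-(n-1)}(X,Y')\\
&=\dim_K\EbbII^{-(n-2)}(X',Y')=\dim_K\EbbI^{-(n-2)}(X',Y')\\
&=\dim_K\EbbI^{-(n-1)}(X',Y)=\dim_K\EbbII^{-(n-1)}(X',Y)=\dim_K\EbbII^{-n}(X,Y),
\end{align*}
where each equality is either a shift isomorphism from Theorems~\ref{ThmNegativeExtension} and \ref{ThmCounter}, or the induction hypothesis. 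The base cases $n=1,2$ are Corollary~\ref{CorComparisonIandII}. This alternating shift in both variables is the idea missing from your argument.
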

\begin{proof}
Remark that Corollaries~\ref{CorNegCount1} and \ref{CorNegCount2} can be applied, by assumption.
{\rm (1)} $\Rightarrow$ {\rm (2)} is shown in Corollary~\ref{CorComparisonIandII}. {\rm (2)} $\Rightarrow$ {\rm (1)} is obvious, 
since we always have $\EbbI^{-1}(-,P)=0$ for any projective object $P\in\cat$ and $\EbbII^{-1}(I,-)=0$ for any injective object $I\in\cat$ by definition. {\rm (3)} $\Rightarrow$ {\rm (2)} is trivial.

It remains to show that {\rm (2)} ($\EQ$ {\rm (1)}) implies {\rm (3)}. Assume that {\rm (2)} holds. We show
\[ \dim_K\EbbI^{-n}(X,Y)=\dim_K\EbbII^{-n}(X,Y)\quad(\fa X,Y\in\cat) \]
by an induction on $n\in\Nbb_{>0}$. By Corollary~\ref{CorComparisonIandII}, this holds for $n=1,2$. Assume $n\ge3$, and suppose that we have shown for all $1\le k\le n-1$. Let $X,Y\in\cat$ be arbitrary, and take $\sfr$-triangles
\[ X\to I\to X\ppr\ov{\iota}{\dra}\quad\text{and}\quad Y\ppr\to P\to Y\ov{\om}{\dra} \]
in which $P\in\cat$ is projective and $I\in\cat$ is injective. Then we have isomorphisms
\[ \om\ssh\colon\EbbI^{-k}(-,Y)\ov{\cong}{\lra}\EbbI^{-(k-1)}(-,Y\ppr)
\quad\text{and}\quad
\iota\ush\colon\EbbII^{-k}(X,-)\ov{\cong}{\lra}\EbbII^{-(k-1)}(X\ppr,-) \]
for any $k\ge 2$ by Theorems \ref{ThmNegativeExtension} and \ref{ThmCounter}. Thus we obtain
\begin{eqnarray*}
\dim_K\Ebb_I^{-n}(X,Y)&=&\dim_K\EbbI^{-(n-1)}(X,Y\ppr)\ =\ \dim_K\EbbII^{-(n-1)}(X,Y\ppr)\\
&=&\dim_K\EbbII^{-(n-2)}(X\ppr,Y\ppr)\ =\ \dim_K\EbbI^{-(n-2)}(X\ppr,Y\ppr)\\
&=&\dim_K\EbbI^{-(n-1)}(X\ppr,Y)\ =\ \dim_K\EbbII^{-(n-1)}(X\ppr,Y)\ =\ \dim_K\EbbII^{-n}(X,Y)
\end{eqnarray*}
by the hypothesis of the induction.
\end{proof}

Remark that the above {\rm (2),(3)} in Proposition~\ref{PropNegCount} are only numerical equations between $\EbbI$ and $\EbbII$. However, if we assume a slightly stronger condition than {\rm (NI),(NII)} on $\CEs$, we obtain isomorphisms $\EbbI^{-n}(X,Y)\cong\EbbII^{-n}(X,Y)$ as in Theorem~\ref{ThmCoincidence} below.

Regarding Lemma~\ref{LemComparisonIandII}, we define Conditions {\rm (NI$+$)} and {\rm (NII$+$)} as follows.
\begin{condition}
For $\CEs$, consider the following conditions.
\begin{itemize}
\item[{\rm (NI$+$)}] $(\om\ssh)^{-1}(\Ical(X,Q))=0$ holds in $\EbbI^{-1}(X,Y)$ for any $X,Y\in\cat$ and any dominant extension $\om\in\Ebb(Y,Q)$.
\item[{\rm (NII$+$)}] $(\iota\ush)^{-1}(\Pcal(J,Y))=0$ holds in $\EbbII^{-1}(X,Y)$ for any $X,Y\in\cat$ and any codominant extension $\iota\in\Ebb(J,X)$.
\end{itemize}
\end{condition}

\begin{remark} \label{NI+/NII+_ex_tri}
Similarly to {\rm (NI),(NII)}, Conditions {\rm (NI$+$),(NII$+$)} are also trivially satisfied if $\CEs$ corresponds to an exact category or a triangulated category.
\end{remark}

\begin{lemma} \label{weaker_NI+/NII+}
\begin{itemize}
    \item [(i)] Assume that the category $\CEs$ has enough injective objects. If we have $\cat(I, Q) = 0$ for all $I$ injective and all $Q$ appearing as the first terms in dominant $\mathfrak{s}$-triangles, then the condition {\rm (NI$+$)} holds in $\CEs$.
    \item [(ii)] Assume that the category $\CEs$ has enough projective objects. If we have $\cat(J, P) = 0$ for all $P$ projective and all $J$ appearing as the third terms in codominant $\mathfrak{s}$-triangles, then the condition {\rm (NII$+$)} holds in $\CEs$.
\end{itemize}
\end{lemma}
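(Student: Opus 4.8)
The plan is to show that, under each of the two hypotheses, the relevant ideal of injective (resp.\ projective) morphisms is already forced to vanish, after which $(\mathrm{NI}+)$ and $(\mathrm{NII}+)$ fall out immediately from the monomorphicity of the connecting maps already recorded in Lemma~\ref{LemDominant2} and Theorem~\ref{ThmCounter}.

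For part (i), I would fix $X,Y\in\cat$ together with a dominant extension $\om\in\Ebb(Y,Q)$. Realizing $\om$ yields a dominant $\sfr$-triangle $Q\ov{q}{\lra}P\ov{p}{\lra}Y\ov{\om}{\dra}$ with $p$ a projective deflation, so $Q$ occurs as the first term of a dominant $\sfr$-triangle and the hypothesis gives $\cat(I,Q)=0$ for every injective object $I$. Since $\CEs$ has enough injective objects, the dual of Remark~\ref{RemProjMorph}\,(2) says a morphism is injective exactly when it factors through an injective object; combined with $\cat(I,Q)=0$ this forces $\Ical(X,Q)=0$. By Lemma~\ref{LemDominant2} in the case $n=1$, the connecting map $\om\ssh\colon\EbbI^{-1}(X,Y)\to\cat(X,Q)$ is a monomorphism, hence $(\om\ssh)^{-1}(\Ical(X,Q))=(\om\ssh)^{-1}(0)=0$, which is precisely $(\mathrm{NI}+)$.

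Part (ii) I would handle by the formally dual argument. Given $X,Y\in\cat$ and a codominant extension $\iota\in\Ebb(J,X)$, realize it by a codominant $\sfr$-triangle $X\ov{i}{\lra}I\ov{j}{\lra}J\ov{\iota}{\dra}$ with $i$ an injective inflation, so that $J$ occurs as the third term of a codominant $\sfr$-triangle; the hypothesis then gives $\cat(J,P)=0$ for every projective object $P$, and enough projective objects together with Remark~\ref{RemProjMorph}\,(2) yields $\Pcal(J,Y)=0$. By the dual of Lemma~\ref{LemDominant2}, equivalently by Theorem~\ref{ThmCounter}, $\iota\ush\colon\EbbII^{-1}(X,Y)\to\cat(J,Y)$ is a monomorphism, so $(\iota\ush)^{-1}(\Pcal(J,Y))=0$; this is $(\mathrm{NII}+)$.

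There is essentially no difficulty here: the argument is a two-line consequence of the vanishing of the relevant morphism ideal plus monomorphicity of the connecting map. The only points needing care are (a) verifying that the objects $Q$ and $J$ featuring in the definitions of $(\mathrm{NI}+)$ and $(\mathrm{NII}+)$ genuinely arise, respectively, as the first term of a dominant $\sfr$-triangle and the third term of a codominant $\sfr$-triangle, so that the vanishing hypotheses apply verbatim, and (b) using the enough-injectives (resp.\ enough-projectives) assumption to pass from the vanishing of $\cat(I,Q)$ for injective $I$ (resp.\ $\cat(J,P)$ for projective $P$) to the vanishing of the entire ideal $\Ical(X,Q)$ (resp.\ $\Pcal(J,Y)$).
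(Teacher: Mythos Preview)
Your proposal is correct and follows essentially the same approach as the paper's own proof: show that the hypothesis forces $\Ical(X,Q)=0$ (resp.\ $\Pcal(J,Y)=0$) via the factoring-through-injectives (resp.\ -projectives) description of the ideal, whence the preimage condition in $(\mathrm{NI}+)$ (resp.\ $(\mathrm{NII}+)$) is trivially satisfied. You are slightly more explicit than the paper in invoking Lemma~\ref{LemDominant2} (and its dual) to record that $\om\ssh$ and $\iota\ush$ are monomorphisms, so that $(\om\ssh)^{-1}(0)=0$; the paper leaves this implicit. One cosmetic point: in part~(ii) the citation of Theorem~\ref{ThmCounter} is not quite right, since that theorem assumes enough injective morphisms, which is not part of the hypothesis of~(ii); the dual of Lemma~\ref{LemDominant2}, which you also cite and which only needs codominance of $\iota$, is the correct reference.
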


\begin{proof}
If $\CEs$ has enough injectives, the ideal $\Ical(X,Q)$ is formed precisely by the maps that factor through an injective object. The assumption in part (i) then implies that $\Ical(X,Q) = 0$ for any $X,Y\in\cat$ and any dominant extension $\om\in\Ebb(Y,Q)$. This implies the condition (NI$+$). Part (ii) is proved in a dual way.
\end{proof}

\begin{example}
Conditions {\rm (NI$+$),(NII$+$)} are satisfied for the category $K^{[-n, 0]}(\proj \Lambda)$ of complexes of length $(n + 1)$ of projectives $\Lambda$-modules up to homotopy, where $n \geq 0$ and $\Lambda$ is a finite-dimensional algebra. Indeed, the case $n = 0$ is covered by Remark \ref{NI+/NII+_ex_tri}, since the category is simply $\proj \Lambda$. If $n > 0$, since $0$ is a projective-injective object, it follows that the projective objects of $K^{[-n, 0]}(\proj \Lambda)$ are given precisely by complexes concentrated in degree $0$, and the injectives are given by complexes concentrated in degree $-n$. 

An object $Q$ appears as the first term of a dominant extension if and only if it is isomorphic to a complex in $K^b(\proj\Lambda)$ whose $-n$-th term is zero (and thus it has an inflation to $0$). But in $K^{[-n, 0]}(\proj \Lambda)$, there are no non-zero maps from complexes concentrated in degree $-n$ to complexes whose $-n$-th terms are zero. By \ref{weaker_NI+/NII+}(i), the condition (NI+) is satisfied. By dual arguments, the condition (NII+) is also satisfied.

\end{example}

\begin{lemma}\label{LemEquivCondNIpNIIp}
Assume that $\CEs$ has enough projective objects and enough injective objects. Then the following holds.
\begin{enumerate}
\item {\rm (NI$+$)} holds if and only if $\iota\ush\circ \om\ssh\colon\EbbI^{-1}(X,Y)
\to\Ebb(J,Q)$ is monomorphic for any $X,Y\in\cat$, any dominant $\om\in\Ebb(Y,Q)$ and any codominant $\iota\in\Ebb(J,X)$.
\item {\rm (NII$+$)} holds if and only if $\om\ssh\circ \iota\ush\colon\EbbII^{-1}(X,Y)
\to\Ebb(J,Q)$ is monomorphic for any $X,Y\in\cat$, any dominant $\om\in\Ebb(Y,Q)$ and any codominant $\iota\in\Ebb(J,X)$.
\item {\rm (NI$+$)} implies {\rm (NI)}.
\item {\rm (NII$+$)} implies {\rm (NII)}.
\item If $\CEs$ satisfies {\rm (NI)} and {\rm (NII)}, then {\rm (NI$+$)} holds if and only if {\rm (NII$+$)} holds.
\end{enumerate}
\end{lemma}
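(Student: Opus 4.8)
The plan is to treat the five claims in turn, reducing each to the long exact sequences of Theorems~\ref{ThmNegativeExtension} and~\ref{ThmCounter} and, for the last one, to the cohomological comparison of Proposition~\ref{PropComparisonIandII}. For {\rm (1)}, I would fix $X,Y\in\cat$, a dominant $\om\in\Ebb(Y,Q)$ realized by $Q\ov{q}{\lra}P\ov{p}{\lra}Y\ov{\om}{\dra}$, and a codominant $\iota\in\Ebb(J,X)$ realized by $X\ov{i}{\lra}I\ov{j}{\lra}J\ov{\iota}{\dra}$ (such $\iota$ exists by enough injective morphisms). By Theorem~\ref{ThmNegativeExtension}, $\om\ssh\colon\EbbI^{-1}(X,Y)\to\cat(X,Q)$ is monomorphic, and, as in the proof of Lemma~\ref{LemComparisonIandII}, exactness of $\cat(I,Q)\ov{i\uas}{\lra}\cat(X,Q)\ov{\iota\ush}{\lra}\Ebb(J,Q)$ together with $\Im(i\uas)=\Ical(X,Q)$ gives $\Ker\big(\iota\ush\colon\cat(X,Q)\to\Ebb(J,Q)\big)=\Ical(X,Q)$, hence $\Ker(\iota\ush\circ\om\ssh)=(\om\ssh)^{-1}(\Ical(X,Q))$. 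This preimage does not depend on the chosen $\iota$, so $\iota\ush\circ\om\ssh$ is monomorphic for all codominant $\iota$ exactly when $(\om\ssh)^{-1}(\Ical(X,Q))=0$, i.e.\ exactly when {\rm (NI$+$)} holds. Claim {\rm (2)} is proved dually, using Theorem~\ref{ThmCounter} and $\Ker\big(\om\ssh\colon\cat(J,Y)\to\Ebb(J,Q)\big)=\Pcal(J,Y)$.

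For {\rm (3)}, recall that by Corollary~\ref{CorNegCount1} the condition {\rm (NI)} is equivalent to $\EbbI^{-1}(I,-)=0$ for every injective object $I$. Given such an $I$ and any $Y$, pick a dominant $\om\in\Ebb(Y,Q)$. Since every morphism out of an injective object is an injective morphism (dual of Remark~\ref{RemProjMorph}), we have $\cat(I,Q)=\Ical(I,Q)$, so $(\om\ssh)^{-1}(\Ical(I,Q))=\EbbI^{-1}(I,Y)$, which vanishes as soon as {\rm (NI$+$)} holds. Thus {\rm (NI$+$)} implies {\rm (NI)}, and {\rm (4)} follows dually via Corollary~\ref{CorNegCount2}.

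Claim {\rm (5)} is the substantive one. Assume {\rm (NI)} and {\rm (NII)}, and fix $X,Y$, a codominant $\iota\in\Ebb(J,X)$ and a dominant $\om\in\Ebb(Y,Q)$ with arbitrary realizations as above, where now $I$ and $P$ need not be injective, resp.\ projective, objects. The key point is that by Corollary~\ref{CorNegCount1}{\rm (3)} and Corollary~\ref{CorNegCount2}{\rm (3)} the hypotheses {\rm (NI)}, {\rm (NII)} force $\EbbI^{-1}(i,-)=0$ for every injective inflation $i$ and $\EbbII^{-1}(-,p)=0$ for every projective deflation $p$; hence the differentials $i\uas$ and $p\sas$ occurring in the complexes $C_{\Irm}^{\mr}$ and $C_{\IIrm}^{\mr}$ of Proposition~\ref{PropComparisonIandII} vanish identically. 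Combining this with the kernel identifications of {\rm (1)} and its dual, the cohomology of $C_{\Irm}^{\mr}$ at the term $\EbbI^{-1}(X,Y)$ is exactly $(\om\ssh)^{-1}(\Ical(X,Q))$ and the cohomology of $C_{\IIrm}^{\mr}$ at $\EbbII^{-1}(X,Y)$ is exactly $(\iota\ush)^{-1}(\Pcal(J,Y))$; Proposition~\ref{PropComparisonIandII} then yields an isomorphism $(\om\ssh)^{-1}(\Ical(X,Q))\cong(\iota\ush)^{-1}(\Pcal(J,Y))$ for \emph{every} such pair of realizations. Since $\CEs$ has enough projective morphisms and enough injective morphisms, any $X$ admits a codominant $\iota$ and any $Y$ admits a dominant $\om$, so vanishing of all $(\om\ssh)^{-1}(\Ical(X,Q))$ — that is, {\rm (NI$+$)} — is equivalent, via these isomorphisms, to vanishing of all $(\iota\ush)^{-1}(\Pcal(J,Y))$ — that is, {\rm (NII$+$)}.

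The step I expect to be the main obstacle, and would write out most carefully, is precisely this identification of the middle cohomologies in {\rm (5)}. One must use that under {\rm (NI)}, {\rm (NII)} the relevant differentials are zero \emph{as morphisms} (via Corollaries~\ref{CorNegCount1}{\rm (3)} and~\ref{CorNegCount2}{\rm (3)}), and not merely argue that their sources vanish for the special realizations built from honest injective/projective objects: the latter, weaker fact only compares the \emph{smallest} among the preimages $(\iota\ush)^{-1}(\Pcal(J,Y))$ (a morphism of codominant realizations only yields inclusions of these preimages in the wrong direction), and so would not suffice to deduce full {\rm (NII$+$)} from {\rm (NI$+$)}. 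A secondary, purely bookkeeping point to check is that the quasi-isomorphisms in Proposition~\ref{PropComparisonIandII} are degree-preserving, so that the cohomology of $C_{\Irm}^{\mr}$ at $\EbbI^{-1}(X,Y)$ really matches that of $C_{\IIrm}^{\mr}$ at $\EbbII^{-1}(X,Y)$.
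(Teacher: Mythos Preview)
Your proposal is correct and follows essentially the same route as the paper: parts {\rm (1)}--{\rm (4)} are handled exactly as in the paper via the kernel identity $\Ker(\iota\ush\circ\om\ssh)=(\om\ssh)^{-1}(\Ical(X,Q))$ and the specialization $X=I$ injective, and for {\rm (5)} the paper likewise invokes Proposition~\ref{PropComparisonIandII} (pointing to the proof of Corollary~\ref{CorComparisonIandII}) to conclude that under {\rm (NI)}, {\rm (NII)} the two kernels are isomorphic. Your write-up is in fact more explicit than the paper's at the one place that matters: you spell out that Corollaries~\ref{CorNegCount1}{\rm (3)} and~\ref{CorNegCount2}{\rm (3)} (valid here because of enough projective and injective \emph{objects}) upgrade {\rm (NI)}, {\rm (NII)} to vanishing of $\EbbI^{-1}(i,-)$ and $\EbbII^{-1}(-,p)$ for \emph{all} injective inflations and projective deflations, so that the middle differentials of $C_{\Irm}^{\mr}$ and $C_{\IIrm}^{\mr}$ vanish for arbitrary codominant/dominant realizations --- this is precisely the point the paper leaves implicit by citing ``as seen in the proof of Corollary~\ref{CorComparisonIandII}''.
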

\begin{proof}
{\rm (1)} is obvious from $\Ker\big(\iota\ush\circ \om\ssh\big)=(\om\ssh)^{-1}(\Ical(X,Q))$. Similarly for {\rm (2)}.

{\rm (3)} In {\rm (NI$+$)}, if $X=I$ is an injective object, then we have
\[ \EbbI^{-1}(I,Y)=(\om\ssh)\iv(\Ical(I,\Om Y))=0 \]
since $\Ical(I,\Om Y)=\cat(I,\Om Y)$. Thus {\rm (NI$+$)} implies {\rm (NI)}.

{\rm (4)} Dually to {\rm (3)}, condition {\rm (NII)} is nothing but condition {\rm (NII$+$)} restricted to the case where $Y$ is projective.

{\rm (5)} Let $X,Y\in\cat$ be any pair of objects, and let
\begin{equation}\label{2term_again}
X\ov{i}{\lra}I\ov{j}{\lra}J\ov{\iota}{\dra}\ \ \text{and}\ \ Q\ov{q}{\lra}P\ov{p}{\lra}Y\ov{\om}{\dra}
\end{equation}
be any pair of $\sfr$-triangles in which $i$ is an injective inflation and $p$ is a projective deflation.
Under the assumption of {\rm (NI)} and {\rm (NII)}, the morphisms
\[ \EbbI^{-1}(X,Y)\ov{\iota\ush\circ\om\ssh}{\lra}\Ebb(J,Q)\quad \text{and}\quad \EbbII^{-1}(X,Y)\ov{\om\ssh\circ\iota\ush}{\lra}\Ebb(J,Q) \]
have isomorphic kernels by Proposition~\ref{PropComparisonIandII}, as seen in the proof of Corollary~\ref{CorComparisonIandII}. 
Thus {\rm (5)} follows from {\rm (1)} and {\rm (2)}.
\end{proof}

\begin{lemma}\label{LemCoincidence}
Assume that $\CEs$ has enough projective objects and enough injective objects.
Let $X\ov{i}{\lra}I\ov{j}{\lra}J\ov{\iota}{\dra}$ and $Q\ov{q}{\lra}P\ov{p}{\lra}Y\ov{\om}{\dra}$ be any pair of $\sfr$-triangles in which $I\in\cat$ is an injective object and $P\in\cat$ is a projective object.
The following holds.
\begin{enumerate}
\item Assume {\rm (NI$+$)} and {\rm (NII$+$)}. Then there is a unique isomorphism $\psi^{(-1)}_{X,Y}\colon\EbbI^{-1}(X,Y)\ov{\cong}{\lra}\EbbII^{-1}(X,Y)$ which makes
\[
\xy
(-16,8)*+{\EbbI^{-1}(X,Y)}="0";
(16,8)*+{\EbbII^{-1}(X,Y)}="2";
(0,-4)*+{\Ebb(J,Q)}="4";
(0,10)*+{}="3";
{\ar^{\psi^{(-1)}_{X,Y}}_{\cong} "0";"2"};
{\ar_(0.4){\iota\ush\circ \om\ssh} "0";"4"};
{\ar^(0.4){\om \ssh\circ \iota\ush} "2";"4"};
{\ar@{}|\circlearrowright "3";"4"};
\endxy
\]
commutative.
\item Assume {\rm (NI)} and {\rm (NII)}. Then there is a unique isomorphism $\al\colon\EbbI^{-1}(J,Y)\ov{\cong}{\lra}\EbbII^{-1}(X,Q)$ which makes
\[
\xy
(-16,8)*+{\EbbI^{-1}(J,Y)}="0";
(16,8)*+{\EbbII^{-1}(X,Q)}="2";
(0,-4)*+{\cat(J,Q)}="4";
(0,10)*+{}="3";
{\ar^{\al}_{\cong} "0";"2"};
{\ar_(0.4){\om\ssh} "0";"4"};
{\ar^(0.4){\iota\ush} "2";"4"};
{\ar@{}|\circlearrowright "3";"4"};
\endxy
\]
commutative.
\end{enumerate}
\end{lemma}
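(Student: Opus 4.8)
The plan is to reduce both statements to comparisons of images of canonical monomorphisms into a single fixed module. First note that in the situation of the lemma the $\sfr$-triangle $Q\ov{q}{\lra}P\ov{p}{\lra}Y\ov{\om}{\dra}$ is dominant, since $P$ is a projective object, so $p$ is a projective deflation; dually $X\ov{i}{\lra}I\ov{j}{\lra}J\ov{\iota}{\dra}$ is codominant, since $i$ is an injective inflation. In particular $\om$ is a dominant and $\iota$ a codominant $\Ebb$-extension, so Lemma~\ref{LemDominant2} and its dual, Lemma~\ref{LemComparisonIandII}, Lemma~\ref{LemEquivCondNIpNIIp}, and Corollaries~\ref{CorNegCount1} and~\ref{CorNegCount2} all apply under the standing hypotheses (``enough projective/injective objects'' in particular implies ``enough projective/injective morphisms'').

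For part (1), I would first invoke Lemma~\ref{LemEquivCondNIpNIIp}~{\rm (1)} and~{\rm (2)}: under {\rm (NI$+$)} the map $\iota\ush\circ\om\ssh\colon\EbbI^{-1}(X,Y)\to\Ebb(J,Q)$ is a monomorphism, and under {\rm (NII$+$)} the map $\om\ssh\circ\iota\ush\colon\EbbII^{-1}(X,Y)\to\Ebb(J,Q)$ is a monomorphism. By the proof of Lemma~\ref{LemComparisonIandII} one has $\Ker(\iota\ush\circ\om\ssh)=(\om\ssh)^{-1}(\Ical(X,Q))$ and $\Ker(\om\ssh\circ\iota\ush)=(\iota\ush)^{-1}(\Pcal(J,Y))$, so both of these subobjects vanish. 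Hence the isomorphism $\mu$ of Lemma~\ref{LemComparisonIandII}, a priori an isomorphism between the two quotients appearing in $(\ref{mucomm})$, is already an isomorphism $\EbbI^{-1}(X,Y)\ov{\cong}{\lra}\EbbII^{-1}(X,Y)$; set $\psi^{(-1)}_{X,Y}=\mu$. That it makes the required triangle commute is precisely the commutativity of $(\ref{mucomm})$, and uniqueness is immediate: if $\psi,\psi'$ both satisfy the commutativity condition then $\om\ssh\circ\iota\ush\circ\psi=\iota\ush\circ\om\ssh=\om\ssh\circ\iota\ush\circ\psi'$ and $\om\ssh\circ\iota\ush$ is a monomorphism.

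For part (2), I would work inside $\cat(J,Q)$. By Lemma~\ref{LemDominant2} applied to the dominant triangle $Q\ov{q}{\lra}P\ov{p}{\lra}Y\ov{\om}{\dra}$, the map $\om\ssh\colon\EbbI^{-1}(J,Y)\to\cat(J,Q)$ is a monomorphism with image $\Ker\big(\cat(J,q)\big)$; dually, $\iota\ush\colon\EbbII^{-1}(X,Q)\to\cat(J,Q)$ is a monomorphism with image $\Ker\big(\cat(j,Q)\big)$. It therefore suffices to prove the identity $\Ker\big(\cat(J,q)\big)=\Ker\big(\cat(j,Q)\big)$ in $\cat(J,Q)$, for then the common image provides a unique isomorphism $\al$ with $\iota\ush\circ\al=\om\ssh$ (unique because $\iota\ush$ is monic). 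For ``$\subseteq$'', take $\phi\in\cat(J,Q)$ with $q\circ\phi=0$, write $\phi=\om\ssh(\zeta)$ with $\zeta\in\EbbI^{-1}(J,Y)$, and apply naturality of $\om\ssh\colon\EbbI^{-1}(-,Y)\to\cat(-,Q)$ along $j\colon I\to J$ to get $\phi\circ j=\om\ssh\big(\EbbI^{-1}(j,Y)(\zeta)\big)$; since $I$ is an injective object and {\rm (NI)} holds, $\EbbI^{-1}(I,Y)=0$ by Corollary~\ref{CorNegCount1}, whence $\phi\circ j=0$. The reverse inclusion is dual, using naturality of $\iota\ush\colon\EbbII^{-1}(X,-)\to\cat(J,-)$ along $q\colon Q\to P$ together with $\EbbII^{-1}(X,P)=0$, which holds by Corollary~\ref{CorNegCount2} since $P$ is projective and {\rm (NII)} holds.

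The routine verifications — that the connecting morphisms $\om\ssh$, $\iota\ush$ carry the stated variances, that the naturality squares invoked in part (2) are the correct ones, and that the ambient hypotheses feed each cited result — are straightforward. The one point I expect to require genuine care is the reduction in part (2): rather than tracking the abstract isomorphism of cohomologies coming from Proposition~\ref{PropComparisonIandII} through its double-complex proof in order to check compatibility with the two embeddings into $\cat(J,Q)$, it is cleaner to bypass that proposition entirely and establish the kernel identity $\Ker\big(\cat(J,q)\big)=\Ker\big(\cat(j,Q)\big)$ directly from naturality and the two vanishings. Recognizing that it is exactly the common image of the monomorphisms, and not the full module $\cat(J,Q)$, that must be pinned down is the key observation.
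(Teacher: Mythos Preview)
Your proof is correct and follows essentially the same strategy as the paper. Part~(1) is identical: the paper says it ``is immediate from Lemma~\ref{LemComparisonIandII}'', and you have simply spelled out why, invoking Lemma~\ref{LemEquivCondNIpNIIp} to see that the two kernels in $(\ref{mucomm})$ vanish so that $\mu$ already lives on the unquotiented groups.

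For part~(2) there is a minor cosmetic difference worth noting. The paper argues via the commutative square
\[
\xy
(-14,7)*+{\cat(J,Q)}="0";
(14,7)*+{\cat(I,Q)}="2";
(-14,-7)*+{\cat(J,P)}="4";
(14,-7)*+{\cat(I,P)}="6";
{\ar^{\cat(j,Q)} "0";"2"};
{\ar_{\cat(J,q)} "0";"4"};
{\ar^{\cat(I,q)} "2";"6"};
{\ar_{\cat(j,P)} "4";"6"};
\endxy
\]
observing that $\cat(I,q)$ and $\cat(j,P)$ are monomorphic (this is condition~(4) in Corollaries~\ref{CorNegCount1} and~\ref{CorNegCount2}), whence a simple diagram chase gives $\Ker\big(\cat(J,q)\big)=\Ker\big(\cat(j,Q)\big)$. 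You instead use the naturality of $\om\ssh$ and $\iota\ush$ in the suppressed argument together with the vanishings $\EbbI^{-1}(I,Y)=0$ and $\EbbII^{-1}(X,P)=0$ (conditions~(1)/(2) in the same corollaries). Since these conditions are equivalent under the standing hypotheses, the two arguments are interchangeable; yours is arguably a touch more direct in that it never leaves the connecting-morphism formalism.
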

\begin{proof}
{\rm (1)} This is immediate from Lemma~\ref{LemComparisonIandII}.

{\rm (2)} Though this can be deduced from Proposition~\ref{PropComparisonIandII}, let us give a direct construction, for the sake of clarity.
In the commutative square
\[
\xy
(-14,7)*+{\cat(J,Q)}="0";
(14,7)*+{\cat(I,Q)}="2";
(-14,-7)*+{\cat(J,P)}="4";
(14,-7)*+{\cat(I,P)}="6";
{\ar^(0.52){\cat(j,Q)} "0";"2"};
{\ar_{\cat(J,q)} "0";"4"};
{\ar^{\cat(I,q)} "2";"6"};
{\ar_(0.52){\cat(j,P)} "4";"6"};
{\ar@{}|\circlearrowright "0";"6"};
\endxy
\]
morphisms $\cat(I,q)$ and $\cat(j,P)$ are monomorphic by Corollaries \ref{CorNegCount1} and \ref{CorNegCount2}. Since
\[ 0\to\EbbI^{-1}(J,Y)\ov{\om\ssh}{\lra}\cat(J,Q)\ov{\cat(J,q)}{\lra}\cat(J,P) \]
and 
\[ 0\to\EbbII^{-1}(X,Q)\ov{\iota\ush}{\lra}\cat(J,Q)\ov{\cat(j,Q)}{\lra}\cat(I,Q) \]
are exact by Theorems \ref{ThmNegativeExtension} and \ref{ThmCounter}, we obtain the desired isomorphism for kernels.
\end{proof}

\begin{theorem}\label{ThmCoincidence}
Assume that $\CEs$ has enough projective objects and enough injective objects.
If $\CEs$ satisfies {\rm (NI$+$)} and {\rm (NII+)}, then there exists a unique sequence of natural isomorphisms $\{\psi^{(n)}\colon\EbbI^n\ov{\cong}{\lra}\EbbII^n\}_{n\in\Zbb}$ which satisfies the following {\rm (i)} and {\rm (ii)}.
\begin{itemize}
\item[{\rm (i)}] $\psi^{(n)}=\id$ for any $n\ge0$.
\item[{\rm (ii)}] For any $n\in\Zbb$, the following diagram $\Dbf(n;\del,\rho)$
\[
\xy
(-42,8)*+{\EbbI^n(X,Y)}="0";
(-15,8)*+{\EbbI^{n+1}(X,Z)}="2";
(15,8)*+{\EbbII^{n+1}(X,Z)}="4";
(44,8)*+{\EbbII^{n+2}(W,Z)}="6";
(-42,-8)*+{\EbbII^n(X,Y)}="10";
(-15,-8)*+{\EbbII^{n+1}(W,Y)}="12";
(15,-8)*+{\EbbI^{n+1}(W,Y)}="14";
(44,-8)*+{\EbbI^{n+2}(W,Z)}="16";
{\ar^(0.46){\rho\ssh} "0";"2"};
{\ar^{\psi_{X,Z}^{(n+1)}}_{\cong} "2";"4"};
{\ar^{\del\ush} "4";"6"};
{\ar_{\psi^{(n)}_{X,Y}} "0";"10"};
{\ar_{\psi^{(n+2)}_{W,Z}}^{\cong} "16";"6"};
{\ar_(0.46){\del\ush} "10";"12"};
{\ar^{\psi_{W,Y}^{(n+1)}}_{\cong} "14";"12"};
{\ar_{\rho\ssh} "14";"16"};
{\ar@{}|\circlearrowright "0";"16"};
\endxy
\]
is commutative for any $\del\in\Ebb(W,X)$ and any $\rho\in\Ebb(Y,Z)$.
\end{itemize}
\end{theorem}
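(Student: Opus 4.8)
The plan is to construct the family $\psi^{(n)}$ by descending induction on $n$, verifying the compatibility encoded by $\Dbf(n;\del,\rho)$ as part of the same induction. I begin with some preliminaries. By Lemma~\ref{LemEquivCondNIpNIIp}(3),(4), the hypotheses {\rm (NI$+$)}, {\rm (NII$+$)} imply {\rm (NI)}, {\rm (NII)}, so both parts of Lemma~\ref{LemCoincidence} are available. Since a projective object $P$ satisfies $\Ebb(P,-)=0$ and an injective object $I$ satisfies $\Ebb(-,I)=0$, the coend formula for $\Ebb^{\dia m}$ gives $\Ebb^m(P,-)=0=\Ebb^m(-,I)$ for all $m\ge1$, hence $\EbbI^{-m}(-,P)=0$ and $\EbbII^{-m}(I,-)=0$ for all $m\ge1$. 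Combining this with Theorems~\ref{ThmNegativeExtension} and~\ref{ThmCounter} (applicable since enough projective, resp.\ injective, objects give enough projective, resp.\ injective, morphisms), one sees that for an $\sfr$-triangle $X\lra I\lra\Sig X\ov{\iota}{\dra}$ with $I$ injective the connecting morphism $\iota\ush\colon\EbbII^{k}(X,-)\to\EbbII^{k+1}(\Sig X,-)$ is an isomorphism for every $k\le-2$, and dually $\om\ssh\colon\EbbI^{k}(-,Y)\to\EbbI^{k+1}(-,\Om Y)$ is an isomorphism for every $k\le-2$, where $\Om Y\lra P\lra Y\ov{\om}{\dra}$ is an $\sfr$-triangle with $P$ projective.

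Next I would define the $\psi^{(n)}$. For $n\ge0$, put $\psi^{(n)}=\id$, using $\EbbI^n=\EbbII^n=\Ebb^n$. For $n=-1$, let $\psi^{(-1)}_{X,Y}$ be the isomorphism produced by Lemma~\ref{LemCoincidence}(1) from the chosen triangles $X\lra I\lra\Sig X\ov{\iota}{\dra}$ and $\Om Y\lra P\lra Y\ov{\om}{\dra}$. For $n\le-2$, I would upgrade the chain of dimension equalities in the proof of Proposition~\ref{PropNegCount} to a chain of isomorphisms, defining $\psi^{(n)}_{X,Y}$ as the composite
\[
\begin{aligned}
\EbbI^{n}(X,Y)&\ov{\om\ssh}{\lra}\EbbI^{n+1}(X,\Om Y)\ov{\psi^{(n+1)}_{X,\Om Y}}{\lra}\EbbII^{n+1}(X,\Om Y)\ov{\iota\ush}{\lra}\EbbII^{n+2}(\Sig X,\Om Y)\\
&\ov{(\psi^{(n+2)}_{\Sig X,\Om Y})\iv}{\lra}\EbbI^{n+2}(\Sig X,\Om Y)\ov{(\om\ssh)\iv}{\lra}\EbbI^{n+1}(\Sig X,Y)\ov{\psi^{(n+1)}_{\Sig X,Y}}{\lra}\EbbII^{n+1}(\Sig X,Y)\ov{(\iota\ush)\iv}{\lra}\EbbII^{n}(X,Y),
\end{aligned}
\]
where for $n=-2$ the middle three arrows $(\om\ssh)\iv\circ(\psi^{(0)})\iv\circ\iota\ush$ no longer consist of isomorphisms and should be replaced by the isomorphism $\al\iv$ of Lemma~\ref{LemCoincidence}(2). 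This is a legitimate two-step recursion with base cases $n=0,-1$. I would then verify that $\psi^{(n)}_{X,Y}$ is independent of the chosen injective/projective resolutions and natural in $X$ and $Y$; both follow from naturality of the connecting morphisms, the uniqueness clauses of Lemma~\ref{LemCoincidence}, and the fact that any two injective (resp.\ projective) resolutions are linked by a morphism of $\sfr$-triangles obtained from the extension property of injective (resp.\ projective) objects — independence of the resolutions being the special case of naturality for identity morphisms.

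It then remains to prove $\Dbf(n;\del,\rho)$ for all $n\in\Zbb$ and all $\del\in\Ebb(W,X)$, $\rho\in\Ebb(Y,Z)$, and to prove uniqueness. I would first treat the case where $\del=\iota$ is the codominant extension of an injective resolution of $X$ and $\rho=\om$ is the dominant extension of a projective resolution of $Y$. For $n=-1$, $\Dbf(-1;\iota,\om)$ unwinds to exactly the commutative triangle defining $\psi^{(-1)}$ in Lemma~\ref{LemCoincidence}(1), using that $\iota\ush\circ\om\ssh$ is monomorphic by {\rm (NI$+$)}, {\rm (NII$+$)} and Lemma~\ref{LemEquivCondNIpNIIp}(1),(2); for $n\le-2$ it follows from the recursive definition of $\psi^{(n)}$ together with the fact that the connecting morphisms for $\iota$ and $\om$ commute with the zig-zag maps. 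For general $\del,\rho$, I would factor $\del=w\uas\iota$ (possible since $\iota$ is codominant) and $\rho=p\sas\om$ (possible since $\om$ is dominant), yielding morphisms of $\Ebb$-extensions $(\id_X,w)\colon\del\to\iota$ and $(p,\id_Y)\colon\om\to\rho$; naturality of the connecting morphisms with respect to morphisms of extensions, together with naturality of the $\psi^{(n)}$, then reduces $\Dbf(n;\del,\rho)$ to $\Dbf(n;\iota,\om)$. Uniqueness is immediate: $\Dbf(-1;\iota,\om)$ determines $\psi^{(-1)}$ because $\iota\ush\circ\om\ssh$ is monomorphic, and for $n\le-2$ the diagram $\Dbf(n;\iota,\om)$ — in which every arrow other than $\psi^{(n)}_{X,Y}$ is an isomorphism in the relevant degrees — determines $\psi^{(n)}$ from $\psi^{(n+1)}$ and $\psi^{(n+2)}$.

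The main obstacle I expect is the verification of $\Dbf(n;\iota,\om)$ for resolution-type extensions: this is a genuine diagram chase, threading the seven-term zig-zag definition of $\psi^{(n)}$ through the commutative triangles of Lemma~\ref{LemCoincidence} and the long exact sequences of the two $\delta$-functors, while keeping careful track of which connecting morphisms are isomorphisms in which degree and of the degeneration at $n=-2$, where the shifts cease to be invertible and $\al$ takes over. The remaining steps — the reduction from arbitrary $(\del,\rho)$ to resolution-type pairs, well-definedness, and naturality — are routine, modulo care with the variances of the connecting morphisms.
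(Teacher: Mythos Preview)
Your proposal is correct and follows essentially the same route as the paper. The paper also proceeds by descending induction, packaging the step as a sublemma: define $\psi^{(-n)}_{X,Y}$ as the unique map making $\Dbf(-n;\iota,\om)$ commute (existence coming from Lemma~\ref{LemCoincidence}(1) for $n=1$, from Lemma~\ref{LemCoincidence}(2) for $n=2$, and from the fact that both row composites are isomorphisms for $n\ge3$), then verify naturality and independence of resolutions via the monomorphicity of the bottom row, and finally reduce general $\Dbf(-n;\del,\rho)$ to the resolution case via $\del=w\uas\iota$, $\rho=z\sas\om$ with a large pasted diagram. Your explicit seven-term zig-zag is just the inverse of the bottom row composed with $(\psi^{(-n+2)})^{-1}$ and the top row, so it is the same map; and your ``main obstacle'' --- commutativity of $\Dbf(n;\iota,\om)$ --- is in fact immediate from that zig-zag definition for $n\le-3$, with the cases $n=-1,-2$ handled exactly as you say. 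One cosmetic point: you overload $p$ (both for the deflation $P\to Y$ and for the morphism $Q\to Z$ factoring $\rho$); the paper writes the latter as $z$.
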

\begin{proof}
This can be shown in a similar way as Lemma~\ref{LemUniversality}.
As in {\rm (i)}, we already have $\psi^{(n)}$ for $n\ge0$, which obviously satisfies {\rm (ii)} for any $n\ge0$. Let us construct $\psi^{(-n)}$ for $n\in\Nbb_{\ge1}$ inductively. It suffices to show the following lemma.
\begin{lemma}\label{LemCoincideAsFunctor}
Suppose that we have obtained natural isomorphisms $\psi^{(k)}\colon\EbbI^k\ov{\cong}{\lra}\EbbII^k$ for all $k\ge -n+1$, such that {\rm (ii)} holds for any $k\ge -n+1$. Then the following holds.
\begin{enumerate}
\item For any $X,Y\in\cat$, choose $\sfr$-triangles
\begin{equation}\label{sTri_IP}
X\ov{i}{\lra}I\ov{j}{\lra}J\ov{\iota}{\dra}\ \ \text{and}\ \ Q\ov{q}{\lra}P\ov{p}{\lra}Y\ov{\om}{\dra}
\end{equation}
arbitrarily, so that $I\in\cat$ is an injective object and $P\in\cat$ is a projective object. Then there is a unique isomorphism $\psi^{(-n)}_{X,Y}\colon\EbbI^{-n}(X,Y)\ov{\cong}{\lra}\EbbII^{-n}(X,Y)$ which makes $\Dbf(-n;\iota,\om)$ commutative.
\item The isomorphisms $\psi^{(-n)}_{X,Y}$ obtained in {\rm (1)} form a natural isomorphism $\psi^{(-n)}\colon\EbbI^{-n}\ov{\cong}{\lra}\EbbII^{-n}$, which is independent of the choices of $(\ref{sTri_IP})$.
\item The natural isomorphism $\psi^{(-n)}$ obtained in {\rm (2)} makes $\Dbf(-n;\del,\rho)$ commutative for any $\del\in\Ebb(W,X)$ and any $\rho\in\Ebb(Y,Z)$, with respect to the pre-existing $\psi^{(-n+1)}$ and $\psi^{(-n+2)}$.
\end{enumerate}
\end{lemma}
Indeed, it is obvious that this lemma makes the induction work.
It remains to show Lemma~\ref{LemCoincideAsFunctor}.
\begin{proof}[Proof of Lemma~\ref{LemCoincideAsFunctor}]
{\rm (1)} Remark that $\sfr$-triangles of the form $(\ref{sTri_IP})$ exist by assumption. The top and the bottom rows of $\Dbf(-n;\iota,\om)$ are
\[ \EbbI^{-n}(X,Y)\ov{\om\ssh}{\lra}\EbbI^{-n+1}(X,Q)
\un{\cong}{\ov{\psi^{(-n+1)}_{X,Q}}{\lra}}\EbbII^{-n+1}(X,Q)
\ov{\iota\ush}{\lra}\EbbII^{-n+2}(J,Q) \]
and
\begin{equation}\label{ComposMonom}
\EbbII^{-n}(X,Y)\ov{\iota\ush}{\lra}\EbbII^{-n+1}(J,Y)
\un{\cong}{\ov{\psi^{(-n+1)}_{J,Y}}{\lla}}\EbbI^{-n+1}(J,Y)
\ov{\om\ssh}{\lra}\EbbI^{-n+2}(J,Q),
\end{equation}
respectively. Each of their compositions is
\begin{itemize}
\item an isomorphism if $n\ge3$ by Theorems \ref{ThmNegativeExtension} and \ref{ThmCounter},
\item a monomorphism if $n=2$ by the same theorems,
\item a monomorphism if $n=1$ by Lemma~\ref{LemEquivCondNIpNIIp}.
\end{itemize}
Therefore the isomorphism $\psi^{(-n)}_{X,Y}$ is unique in either case, and exists
\begin{itemize}
\item obviously as a composition of relevant isomorphisms if $n\ge3$,
\item by Lemma~\ref{LemCoincidence} {\rm (2)} if $n=2$, indeed we may take $\psi^{(-2)}_{X,Y}$ to be the composition of the isomorphisms
$
\EbbI^{-2}(X,Y)\un{\cong}{\ov{\om\ssh}{\lra}}
\EbbI^{-1}(X,Q)\un{\cong}{\ov{\psi^{(-1)}_{X,Q}}{\lra}}
\EbbII^{-1}(X,Q)\un{\cong}{\ov{\al}{\lla}}
\EbbI^{-1}(J,Y)\un{\cong}{\ov{\psi^{(-1)}_{J,Y}}{\lra}}
\EbbII^{-1}(J,Y)\un{\cong}{\ov{\iota\ush}{\lla}}\EbbII^{-2}(X,Y),
$
\item by Lemma~\ref{LemCoincidence} {\rm (1)} if $n=1$.
\end{itemize}
Thus {\rm (1)} is shown.

{\rm (2)} For $X,Y\in\cat$, let $(\ref{sTri_IP})$ and $\psi^{(-n)}_{X,Y}$ be as in {\rm (1)}. Let $x\in\cat(X,X\ppr)$ be any morphism and let $X\ppr\ov{i\ppr}{\lra}I\ppr\ov{j\ppr}{\lra}J\ppr\ov{\iota\ppr}{\dra}$ be any $\sfr$-triangle in which $I\ppr\in\cat$ is an injective object. By {\rm (1)}, there also exists a unique $\psi^{(-n)}_{X\ppr,Y}$ which makes $\Dbf(-n;\iota\ppr,\om)$ commutative.
We may obtain a morphism of $\sfr$-triangles as follows.
\[
\xy
(-14,6)*+{X}="0";
(0,6)*+{I}="2";
(14,6)*+{J}="4";
(28,6)*+{}="6";
(-14,-6)*+{X\ppr}="10";
(0,-6)*+{I\ppr}="12";
(14,-6)*+{J\ppr}="14";
(28,-6)*+{}="16";
{\ar^{i} "0";"2"};
{\ar^{j} "2";"4"};
{\ar@{-->}^{\iota} "4";"6"};
{\ar_{x} "0";"10"};
{\ar_{f} "2";"12"};
{\ar^{g} "4";"14"};
{\ar_{i\ppr} "10";"12"};
{\ar_{j\ppr} "12";"14"};
{\ar@{-->}_{\iota\ppr} "14";"16"};
{\ar@{}|\circlearrowright "0";"12"};
{\ar@{}|\circlearrowright "2";"14"};
\endxy
\]
This induces commutative diagrams
\[
\xy
(-43,6)*+{\EbbI^{-n}(X\ppr,Y)}="0";
(-15,6)*+{\EbbI^{-n+1}(X\ppr,Q)}="2";
(16,6)*+{\EbbII^{-n+1}(X\ppr,Q)}="4";
(46,6)*+{\EbbII^{-n+2}(J\ppr,Q)}="6";
(-43,-6)*+{\EbbI^{-n}(X,Y)}="10";
(-15,-6)*+{\EbbI^{-n+1}(X,Q)}="12";
(16,-6)*+{\EbbII^{-n+1}(X,Q)}="14";
(46,-6)*+{\EbbII^{-n+2}(J,Q)}="16";
{\ar^{\om\ssh} "0";"2"};
{\ar^{\psi^{(-n+1)}_{X\ppr,Q}}_{\cong} "2";"4"};
{\ar^{(\iota\ppr)\ush} "4";"6"};
{\ar_{x\uas} "0";"10"};
{\ar_{x\uas} "2";"12"};
{\ar^{x\uas} "4";"14"};
{\ar^{g\uas} "6";"16"};
{\ar_{\om\ssh} "10";"12"};
{\ar_{\psi^{(-n+1)}_{X,Q}}^{\cong} "12";"14"};
{\ar_{\iota\ush} "14";"16"};
{\ar@{}|\circlearrowright "0";"12"};
{\ar@{}|\circlearrowright "2";"14"};
{\ar@{}|\circlearrowright "4";"16"};
\endxy
\]
and
\[
\xy
(-43,6)*+{\EbbII^{-n}(X\ppr,Y)}="0";
(-15,6)*+{\EbbII^{-n+1}(J\ppr,Y)}="2";
(16,6)*+{\EbbI^{-n+1}(J\ppr,Y)}="4";
(46,6)*+{\EbbI^{-n+2}(J\ppr,Q)}="6";
(-43,-6)*+{\EbbII^{-n}(X,Y)}="10";
(-15,-6)*+{\EbbII^{-n+1}(J,Y)}="12";
(16,-6)*+{\EbbI^{-n+1}(J,Y)}="14";
(46,-6)*+{\EbbI^{-n+2}(J,Q)}="16";
{\ar^{(\iota\ppr)\ush} "0";"2"};
{\ar_{\psi^{(-n+1)}_{J\ppr,Y}}^{\cong} "4";"2"};
{\ar^{\om\ssh} "4";"6"};
{\ar_{x\uas} "0";"10"};
{\ar_{g\uas} "2";"12"};
{\ar^{g\uas} "4";"14"};
{\ar^{g\uas} "6";"16"};
{\ar_{\iota\ush} "10";"12"};
{\ar^{\psi^{(-n+1)}_{J,Y}}_{\cong} "14";"12"};
{\ar_{\om\ssh} "14";"16"};
{\ar@{}|\circlearrowright "0";"12"};
{\ar@{}|\circlearrowright "2";"14"};
{\ar@{}|\circlearrowright "4";"16"};
\endxy
\]
and also
\[
\xy
(-16,6)*+{\EbbII^{-n+2}(J\ppr,Q)}="0";
(16,6)*+{\EbbI^{-n+2}(J\ppr,Q)}="2";
(-16,-6)*+{\EbbII^{-n+2}(J,Q)}="4";
(16,-6)*+{\EbbI^{-n+2}(J,Q)}="6";
{\ar_{\psi^{(-n+2)}_{J\ppr,Q}}^{\cong} "2";"0"};
{\ar_{g\uas} "0";"4"};
{\ar^{g\uas} "2";"6"};
{\ar^{\psi^{(-n+2)}_{J,Q}}_{\cong} "6";"4"};
{\ar@{}|\circlearrowright "0";"6"};
\endxy
\]
by the naturality of $\psi^{(k)}$ for $k\ge-n+1$. Pasting them with $\Dbf(-n;\iota,\om)$ and $\Dbf(-n;\iota\ppr,\om)$, we obtain the commutativity of 
\[
\xy
(-15,6)*+{\EbbI^{-n}(X\ppr,Y)}="0";
(15,6)*+{\EbbII^{-n}(X\ppr,Y)}="2";
(-15,-6)*+{\EbbI^{-n}(X,Y)}="4";
(15,-6)*+{\EbbII^{-n}(X,Y)}="6";
{\ar^{\psi^{(-n)}_{X\ppr,Y}}_{\cong} "0";"2"};
{\ar_{x\uas} "0";"4"};
{\ar^{x\uas} "2";"6"};
{\ar_{\psi^{(-n)}_{X,Y}}^{\cong} "4";"6"};
{\ar@{}|\circlearrowright "0";"6"};
\endxy
\]
since the composition of $(\ref{ComposMonom})$ is monomorphic. This shows the naturality of $\psi^{(-n)}_{X,Y}$ in $X\in\cat$. Moreover if we apply the above argument to $x=\id_X$, this shows that $\psi^{(-n)}_{X,Y}$ is independent of the choices of $\sfr$-triangles $X\ov{i}{\lra}I\ov{i}{\lra}J\ov{\iota}{\dra}$. In a similar way we can also show the naturality in $Y\in\cat$ and that $\psi_{X,Y}$ is independent of the choices of $Q\ov{q}{\lra}P\ov{p}{\lra}Y\ov{\om}{\dra}$.

{\rm (3)} Let $\del\in\Ebb(W,X)$ and $\rho\in\Ebb(Y,Z)$ be any pair of $\Ebb$-extensions, and let $(\ref{sTri_IP})$ be arbitrary as in {\rm (1)}. Then there exist $w\in\cat(W,J)$ and $z\in\cat(Q,Z)$ such that $w\uas\iota=\del$ and $z\sas\om=\rho$. They give a commutative diagram
\[
\xy
(-42,18)*+{\EbbI^{-n+1}(X,Z)}="0";
(-46,2)*+{}="1";
(-14,18)*+{\EbbII^{-n+1}(X,Z)}="2";
(14,27)*+{}="3";
(14,18)*+{\EbbII^{-n+2}(J,Z)}="4";
(38,8)*+{}="5";
(42,18)*+{\EbbII^{-n+2}(W,Z)}="6";
(-60,6)*+{\EbbI^{-n}(X,Y)}="8";
(-32,6)*+{\EbbI^{-n+1}(X,Q)}="10";
(-4,6)*+{\EbbII^{-n+1}(X,Q)}="12";
(24,6)*+{\EbbII^{-n+2}(J,Q)}="14";
(-60,-6)*+{\EbbII^{-n}(X,Y)}="16";
(-32,-6)*+{\EbbII^{-n+1}(J,Y)}="18";
(-46,-2)*+{}="19";
(-4,-6)*+{\EbbI^{-n+1}(J,Y)}="20";
(24,-6)*+{\EbbI^{-n+2}(J,Q)}="22";
(-42,-18)*+{\EbbII^{-n+1}(W,Y)}="24";
(-14,-18)*+{\EbbI^{-n+1}(W,Y)}="26";
(14,-27)*+{}="27";
(14,-18)*+{\EbbI^{-n+2}(W,Q)}="28";
(38,-8)*+{}="29";
(42,-18)*+{\EbbI^{-n+2}(W,Z)}="30";
(34,0)*+{_{\circlearrowright}}="c";
{\ar^{\rho\ssh} "8";"0"};
{\ar^{\psi^{(-n+1)}_{X,Z}}_{\cong} "0";"2"};
{\ar^{\iota\ush} "2";"4"};
{\ar^{w\uas} "4";"6"};
{\ar_{\om\ssh} "8";"10"};
{\ar_{\cong}^{\psi^{(-n+1)}_{X,Q}} "10";"12"};
{\ar_{\iota\ush} "12";"14"};
{\ar_{w\uas z\sas} "14";"6"};
{\ar_{\psi^{(-n)}_{X,Y}}^{\cong} "8";"16"};
{\ar^{\iota\ush} "16";"18"};
{\ar^{\psi^{(-n+1)}_{J,Y}}_{\cong} "20";"18"};
{\ar^{\om\ssh} "20";"22"};
{\ar^{w\uas z\sas} "22";"30"};
{\ar_{\del\ush} "16";"24"};
{\ar^{\psi^{(-n+1)}_{W,Y}}_{\cong} "26";"24"};
{\ar_{\om\ssh} "26";"28"};
{\ar_{z\sas} "28";"30"};
{\ar^{\cong}_{\psi^{(-n+2)}_{W,Z}} "30";"6"};
{\ar_{\cong}^{\psi^{(-n+2)}_{J,Q}} "22";"14"};
{\ar_(0.4){z\sas} "10";"0"};
{\ar_(0.4){z\sas} "12";"2"};
{\ar^(0.64){z\sas} "14";"4"};
{\ar^(0.36){w\uas} "18";"24"};
{\ar_(0.6){w\uas} "20";"26"};
{\ar_(0.6){w\uas} "22";"28"};
{\ar@/^1.80pc/^{\del\ush} "2";"6"};
{\ar@/_1.80pc/_{\rho\ssh} "26";"30"};
{\ar@{}|\circlearrowright "0";"1"};
{\ar@{}|\circlearrowright "3";"4"};
{\ar@{}|\circlearrowright "4";"5"};
{\ar@{}|\circlearrowright "24";"19"};
{\ar@{}|\circlearrowright "27";"28"};
{\ar@{}|\circlearrowright "28";"29"};
{\ar@{}|\circlearrowright "0";"12"};
{\ar@{}|\circlearrowright "2";"14"};
{\ar@{}|\circlearrowright "8";"22"};
{\ar@{}|\circlearrowright "18";"26"};
{\ar@{}|\circlearrowright "20";"28"};
\endxy
\]
in which, the middle rectangle is nothing but $\Dbf(-n;\iota,\om)$ that is commutative by the construction. Thus we obtain the commutativity of $\Dbf(-n;\del,\rho)$, which appears as the outer rectangle.
\end{proof}
\end{proof}

\begin{corollary}\label{CorCoincideAsFunctor}
Assume that $\CEs$ has enough projective objects and enough injective objects.
If $\CEs$ satisfies {\rm (NI$+$)} and {\rm (NII+)}, then $\{\EbbI^n\cong\EbbII^n\}_{n\in\Zbb}$ gives 
a universal bivariant $\delta$-functor having $\Ebb^n$ with the canonical connecting morphisms for $n\ge0$. It is unique by Corollary~\ref{CorUniqueness}. More precisely, if we define
\begin{itemize}
\item $\del\ush\colon\EbbI^{-n}(A,-)\to\EbbI^{-n+1}(C,-)$ to be the composition of
\[ \EbbI^{-n}(A,-)\un{\cong}{\ov{\psi^{(-n)}_{A,-}}{\lra}}\EbbII^{-n}(A,-)\ov{\del\ush}{\lra}\EbbII^{-n+1}(C,-)\un{\cong}{\ov{\psi^{(-n+1)}_{C,-}}{\lla}}\EbbI^{-n+1}(C,-), \]
\item $\del\ssh\colon\EbbII^{-n}(-,C)\to\EbbII^{-n+1}(-,A)$ to be the composition of
\[ \EbbII^{-n}(-,C)\un{\cong}{\ov{\psi^{(-n)}_{-,C}}{\lla}}\EbbI^{-n}(-,C)\ov{\del\ssh}{\lra}\EbbI^{-n+1}(-,A)\un{\cong}{\ov{\psi^{(-n+1)}_{-,A}}{\lra}}\EbbII^{-n+1}(-,A) \]
\end{itemize}
for any $\del\in\Ebb(C,A)$ and any $n$, then both $(\EbbI^{\mr},\del\ssh,\del\ush)$ and $(\EbbII^{\mr},\del\ssh,\del\ush)$ are 
bivariant $\delta$-functors for which $\psi^{\mr}=\{\psi^{(n)}\colon\EbbI^n\ov{\cong}{\lra}\EbbII^n\}_{n\in\Zbb}$ gives an isomorphism.
\end{corollary}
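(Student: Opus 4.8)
The statement is a repackaging of Theorem~\ref{ThmCoincidence} together with the universality results of the previous two subsections, so the plan is to assemble the pieces rather than to prove anything substantially new. I would begin by recalling from Theorem~\ref{ThmCoincidence} the family of natural isomorphisms $\psi^{(n)}\colon\EbbI^n\ov{\cong}{\lra}\EbbII^n$, which are identities in non-negative degrees and which intertwine the connecting morphisms through the diagrams $\Dbf(n;\del,\rho)$. I would then \emph{transport} the connecting structures along $\psi^{\mr}$: on $\EbbII^{\mr}$ one defines the covariant $\del\ssh$ by conjugating the native covariant connecting morphisms of $\EbbI^{\mr}$ (Definition~\ref{DefCdualProlong}) by the $\psi^{(n)}$, and symmetrically on $\EbbI^{\mr}$ one defines the contravariant $\del\ush$ by conjugating the native $\del\ush$ of $\EbbII^{\mr}$; in non-negative degrees these transported maps coincide with the canonical connecting morphisms of $\Ebb^n$ because $\psi^{(n)}=\id$ there. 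By construction $\psi^{\mr}$ intertwines the two covariant and the two contravariant connecting structures, so it is an isomorphism of bivariant connected sequences $(\EbbI^{\mr},\del\ssh,\del\ush)\to(\EbbII^{\mr},\del\ssh,\del\ush)$.

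Second, I would verify that $(\EbbI^{\mr},\del\ssh,\del\ush)$ — equivalently $(\EbbII^{\mr},\del\ssh,\del\ush)$ — is a bivariant $\delta$-functor. Its covariant part $(\EbbI^{\mr},\del\ssh)$ is a $\delta$-functor by Theorem~\ref{ThmNegativeExtension}, using that enough projective objects give enough projective morphisms (Remark~\ref{RemProjDefl}). Its contravariant part $(\EbbI^{\mr},\del\ush)$ is, by the construction above, isomorphic through $\psi^{\mr}$ to the contravariant connected sequence $(\EbbII^{\mr},\del\ush)$, which is a $\delta$-functor by Theorem~\ref{ThmCounter}; since acyclicity of the complexes of Proposition~\ref{PropCPX}(2) is invariant under isomorphism of connected sequences, $(\EbbI^{\mr},\del\ush)$ is a $\delta$-functor as well. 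Hence both triplets are bivariant $\delta$-functors and $\psi^{\mr}$ is an isomorphism between them.

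Third, I would treat universality. Regarded as a covariant connected sequence, $(\EbbI^{\mr},\del\ssh)$ satisfies condition $(\star)$ — it is a $\delta$-functor and $\EbbI^{-n}(-,p)=0$ for every projective deflation $p$, since $p\in\Pcal$ — so by Proposition~\ref{PropUniversality} and Corollary~\ref{CorUniqueness} it is \emph{the} universal covariant connected sequence having $\Ebb^n$ with the canonical connecting morphisms for $n\ge0$; dually $(\EbbII^{\mr},\del\ush)$ is the universal contravariant one by Corollary~\ref{CorUniquenessCounter}. To promote this to universality among \emph{bivariant} $\delta$-functors, I would take an arbitrary bivariant $\delta$-functor $(G^{\mr},\del\ssh,\del\ush)$ with canonical non-negative part; Lemma~\ref{LemUniversality}, applied with target $\EbbI^{\mr}$ (which satisfies $(\star)$), yields a unique morphism $\vp^{\mr}\colon G^{\mr}\to\EbbI^{\mr}$ of covariant connected sequences equal to the identity in non-negative degrees. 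The one remaining point is that $\vp^{\mr}$ is automatically compatible with $\del\ush$ as well; I would prove this by descending induction on the degree following the template of the proof of Lemma~\ref{LemCoincideAsFunctor}(3): the component $\vp^{-n}_{A,-}$ is pinned down by a single dominant $\sfr$-triangle through commutativity of the square attached to the corresponding dominant extension, and the compatibility square for an arbitrary $\del\in\Ebb(C,A)$ is reduced — by writing $\del=a\sas\om$ for a dominant $\om\in\Ebb(C,Q)$ and pasting with naturality squares — to that already-verified dominant case, using the inductive hypothesis and the triviality of $\del\ush$-compatibility in non-negative degrees. Uniqueness of the bivariant morphism then follows a fortiori from uniqueness of $\vp^{\mr}$ as a covariant morphism, and feeding $G^{\mr}=\EbbI^{\mr}$ back in (via Corollary~\ref{CorUniqueness}) gives the asserted uniqueness of the universal bivariant $\delta$-functor.

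The main obstacle is precisely this last point: the universal covariant morphism is built entirely from dominant $\sfr$-triangles, which live "in the first variable", so it is not formally evident that it respects the second-variable connecting maps. What makes it work is the same dévissage already developed for Theorem~\ref{ThmCoincidence}, in which both connecting structures are carried simultaneously through the combined diagram $\Dbf(n;\del,\rho)$; adapting that bookkeeping to the present morphism is routine but is the real content of the proof.
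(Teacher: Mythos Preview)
Your first two paragraphs are exactly what the paper has in mind: its proof is the single line ``This is immediate from Theorem~\ref{ThmCoincidence} and the construction,'' and you have correctly unpacked that into (i) transporting the two connecting structures along $\psi^{\mr}$ and (ii) reading off acyclicity in each variable from Theorems~\ref{ThmNegativeExtension} and~\ref{ThmCounter}. For the ``more precisely'' part of the statement, nothing more is required.

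Your third paragraph, on universality among \emph{bivariant} $\delta$-functors, is where I would push back. The paper does not actually prove this; its appeal to Corollary~\ref{CorUniqueness} only secures universality in the covariant (and, via $\psi^{\mr}$, contravariant) sense separately. You attempt to upgrade this by showing that the unique covariant morphism $\vp^{\mr}\colon G^{\mr}\to\EbbI^{\mr}$ is automatically $\del\ush$-compatible, and your reduction of an arbitrary $\del$ to a dominant $\om$ via $\del=a\sas\om$ is correct. But the next step---checking compatibility for dominant $\om$---is not routine in the way you suggest. The components $\vp^n_{Q,Y}$ are pinned down by dominant extensions $\rho\in\Ebb(Y,Z)$ in the \emph{second} variable, so to compare $\om\ush_{\EbbI}\circ\vp^n_{Q,-}$ with $\vp^{n+1}_{C,-}\circ\om\ush_G$ you are forced to post-compose with some $\rho\ssh$ and then confront the composite $\rho\ssh_G\circ\om\ush_G$ on the $G$ side. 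Nothing in the definition of a bivariant connected sequence (Definition~\ref{DefNegativeExtGeneral}(3)) says that $\rho\ssh_G$ and $\om\ush_G$ commute; that commutation is precisely the content of $\Dbf(n;\del,\rho)$ for $\EbbI^{\mr}\cong\EbbII^{\mr}$, established in Theorem~\ref{ThmCoincidence}, but it is not available for an arbitrary $G^{\mr}$. So the ``routine bookkeeping'' you invoke does not close the argument.

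In short: if one reads the corollary as asserting that $(\EbbI^{\mr},\del\ssh,\del\ush)$ is a bivariant $\delta$-functor which is universal as a covariant sequence and (equivalently, via $\psi^{\mr}$) as a contravariant one---and the citation of Corollary~\ref{CorUniqueness} supports this reading---then your first two paragraphs already constitute a complete proof, matching the paper's. If instead one wants universality among bivariant $\delta$-functors in the sense of the remark after the definition of balanced derived functors, then neither the paper's one-liner nor your sketch supplies it, and an extra hypothesis (such as commutation of the two connecting morphisms for $G^{\mr}$) would be needed.
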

\begin{proof}
This is immediate from Theorem~\ref{ThmCoincidence} and the construction.
\end{proof}

\subsection{Conjectures} \label{Subsection_Conj}

An extriangulated category is called \emph{topological} if it is equivalent to the homotopy category of an exact $\infty$-category, see \cite{NP2}. Klemenc \cite{K} constructed a \emph{stable hull} for each exact $\infty$-category. That is, he proved that each exact $\infty$-category embeds into a stable $\infty$-category, and there is an embedding satisfying a reasonable universal property. Since the homotopy category of a stable $\infty$-category is triangulated, this shows (using \cite[Prop 4.28]{NP2}) that for each topological extriangulated category $\CEs$, there is a triangulated category $\Tcal$ and an embedding $f\colon \CEs \hookrightarrow \Tcal$ as an extension-closed subcategory.
This in turn indicates that $\CEs$ has a balanced negative extensions given by $\Ebb^{-n}(A, B) := \Tcal(f(A)[n], f(B))$ which may have some kind of universality.
The construction of Klemenc uses quasi-categorical enhancements, and thus this morphism spaces are not easy to calculate. We make the following conjectures on the existence of balanced $\Ebb^{-n}$ and their relation to $\EbbI^{\mr}$ and $\EbbII^{\mr}$.

\begin{conjecture} \label{ConjBivariant}
Assume that $\CEs$ is a topological extriangulated category. Then it admits a bivariant $\delta$-functor $(\Ebb^{\mr}, \del\ssh,\del\ush)$ having $F^n=\Ebb^n$ with the canonical connecting morphisms for $n\ge0$ and universal among such bivariant $\delta$-functors. Moreover, we have 
$$\Ebb^{-n}(I, -) \cong \EbbI^{-n}(I, -); \quad \Ebb^{-n}(-, P) \cong \EbbII^{-n}(-, P),$$
for all $I$ injective, $P$ projective, $n > 0$.
\end{conjecture}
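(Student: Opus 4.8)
The plan is to take, as the candidate for $(\Ebb^{\mr},\del\ssh,\del\ush)$, the bivariant connected sequence obtained by restricting shifted $\Hom$-functors along Klemenc's stable hull, and then to establish in turn (a) that its non-negative part, as a bivariant partial $\delta$-functor, is $(\Ebb^n)_{n\ge0}$ with the canonical connecting morphisms, (b) its universality, and (c) the comparison with $\EbbI^{\mr}$ and $\EbbII^{\mr}$ on injective, resp.\ projective, objects. Concretely, fix an exact $\infty$-categorical enhancement $\cat_\infty$ of $\CEs$, let $\cat_\infty\hookrightarrow\mathrm{St}(\cat_\infty)$ be the universal stable hull of \cite{K}, put $\Tcal=\mathrm{Ho}(\mathrm{St}(\cat_\infty))$, and let $f\colon\CEs\hookrightarrow\Tcal$ be the induced embedding as an extension-closed subcategory, via \cite[Prop.~4.28]{NP2}. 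Set $G^k:=\Tcal(f(-),f(-)[k])|_{\cat\op\ti\cat}$ for $k\in\Zbb$, so that $\Ebb^{-n}(A,B)=\Tcal(f(A)[n],f(B))\cong G^{-n}(A,B)$. By Example~\ref{ExAcyclic}~{\rm (2)} together with Remark~\ref{RemFAcyclicInduce}~{\rm (2)}, $(G^{\mr},\del\ssh,\del\ush)$ is already a bivariant $\delta$-functor on $\CEs$, with $G^0=\Hom_\cat$ and, by \cite{NP2}, $G^1=\Ebb$.

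The first and, I expect, hardest step is to prove $G^n\cong\Ebb^{\dia n}$ for all $n\ge0$, compatibly with the connecting maps. There is always a multiplicative comparison morphism $\Ebb^{\dia n}=\Ebb^{\dia(n-1)}\dia\Ebb\lra G^{n-1}\dia G^1\lra G^n$ — the last arrow induced by the universal property of the coend and the composition law of $\Tcal$ — and it is the identity in degrees $0$ and $1$; the task is to show it is invertible in all degrees. Remark~\ref{RemHighExt_Diff} exhibits extension-closed embeddings into triangulated categories for which the analogous map fails, so the argument must genuinely exploit that $\mathrm{St}(\cat_\infty)$ is the \emph{universal} stabilisation, morally that it adds suspensions and desuspensions to $\cat_\infty$ without creating spurious morphisms between objects of $\cat_\infty$. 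I would attempt a d\'evissage: present a general object of $\mathrm{St}(\cat_\infty)$ as an iterated cofibre of a diagram in $\cat_\infty$, compare the triangulated long exact sequences in $\Tcal$ with the long exact sequences of Theorem~\ref{ThmPositiveExtension} on the $\CEs$-side, and induct along the filtration via the five lemma. The genuinely delicate point is that the coend description of $\Ebb^{\dia n}$ must be shown compatible with such filtrations — coends do not obviously commute with the relevant colimits — and controlling this should require an explicit analysis of Klemenc's model.

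Granting $G^n\cong\Ebb^{\dia n}$ for $n\ge0$, universality of $(G^{\mr},\del\ssh,\del\ush)$ among bivariant $\delta$-functors having $\Ebb^n$ with the canonical connecting morphisms for $n\ge0$ would follow by the standard Grothendieck effaceability argument, carried out as a descending induction producing the comparison natural transformations in both variables exactly as in Lemma~\ref{LemUniversality}. The input needed is a dimension-shifting device: for each object, an $\Ebb$-extension from or to it that is acyclic for every competing bivariant $\delta$-functor and from which every extension with that source or target arises by pushout/pullback. On the $\Tcal$-side this is supplied by the octahedral axiom and the universality of the shifted $\Hom$-functors on $\Tcal$ itself; transporting it to $\CEs$ should be possible using the conflations furnished by the exact $\infty$-structure, but one must again verify that these probe enough of $\Tcal$, a point tied to the universal property of the hull.

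Finally, for the ``moreover'' part: restricting the universal bivariant $\delta$-functor to its covariant connected sequence gives a covariant $\delta$-functor with $\Ebb^n$ in non-negative degrees, hence — by Proposition~\ref{PropUniversality} when $\CEs$ has enough projective morphisms, and directly from the $\cat$-dual description of $\EbbI^{\mr}$ otherwise — a canonical comparison $\EbbI^{-n}\to\Ebb^{-n}$. I would prove it becomes an isomorphism after fixing an injective object $I$ in the first variable. The key input is $\Ebb^{\dia n}(-,I)=0$ for $n\ge1$, which follows from $\Ebb(-,I)=0$ for injective $I$; combined with $\Ebb^{\dia n}\cong G^n$ this identifies $\EbbI^{-n}(I,A)$ with the group of natural transformations $\Tcal(f(A),f(-)[n])|_\cat\Rightarrow\Tcal(f(I),f(-))|_\cat$, and it then remains to see that these agree with the natural transformations computed by the Yoneda lemma in $\Tcal$, namely $\Tcal(f(I),f(A)[-n])=\Ebb^{-n}(I,A)$. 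This last identification — where injectivity of $I$ should enter, presumably through injective resolutions in $\CEs$ that suffice to probe $\Tcal(f(I),-)$ — is the analogue, for negative extensions, of the vanishing recorded in Corollary~\ref{CorNegCount1}; the symmetric argument with projective resolutions yields $\EbbII^{-n}(-,P)\cong\Ebb^{-n}(-,P)$ for projective $P$. The two genuinely open points — the positive-degree identification and the extraction of the right effaceability input from Klemenc's construction — are what keep the statement conjectural.
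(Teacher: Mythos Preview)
The statement you are attempting to prove is labelled as a \emph{Conjecture} in the paper, and the paper provides no proof of it. Indeed, immediately after stating Conjectures~\ref{ConjBivariant} and~\ref{ConjNegGlobDim} the authors write that they ``do not seem to follow easily from Klemenc's work~\cite{K}.'' So there is no paper proof to compare against.

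Your proposal is not a proof but a strategy sketch, and you say so yourself in the final sentence. The outline is reasonable and identifies the right pressure points: the identification $G^n\cong\Ebb^{\dia n}$ for $n\ge0$ is indeed the crux, and Remark~\ref{RemHighExt_Diff} and Example~\ref{ExExtClosed} show exactly why this cannot hold for an arbitrary extension-closed embedding and must exploit the universality of the stable hull. Your proposed d\'evissage via iterated cofibres is plausible in spirit, but you have not supplied the key technical input --- a concrete description of morphism spaces in $\mathrm{St}(\cat_\infty)$ between shifts of objects of $\cat_\infty$ --- and without it the five-lemma induction cannot get started. Likewise, your universality argument presupposes an effaceability device that works \emph{bivariantly}, whereas the paper's machinery (Lemma~\ref{LemUniversality}, Proposition~\ref{PropUniversality}) is explicitly one-sided and requires enough projective or injective morphisms; you would need either to assume such morphisms exist or to extract a substitute from the $\infty$-categorical enhancement, neither of which you do. These are precisely the gaps the authors flag as open, so your assessment that the statement remains conjectural is correct.
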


\begin{conjecture} \label{ConjNegGlobDim}
Assume that Conjecture \ref{ConjBivariant} holds for $\CEs$ and $\CEs$ has positive global dimension $n$. Then $\Ebb^{-m} = 0$ for all $m > 2n$.
\end{conjecture}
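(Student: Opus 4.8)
\medskip

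\noindent\emph{Towards a proof.}
The plan is to transport the problem to the balanced $\delta$-functor supplied by Conjecture~\ref{ConjBivariant} and then to dimension-shift the negative extensions down a projective resolution. Granting Conjecture~\ref{ConjBivariant}, let $(\Ebb^{\mr},\del\ssh,\del\ush)$ be the universal bivariant $\delta$-functor it provides. For $j\ge0$ its term is the coend-defined $\Ebb^{j}=\Ebb^{\dia j}$, so $\Ebb^{j}=0$ for all $j>n$ because $\CEs$ has positive global dimension $n$; and by the ``moreover'' clause, $\Ebb^{-m}(-,P)\cong\EbbII^{-m}(-,P)$ in $\Mod\cat$ for every projective object $P$ and every $m>0$. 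Since $\EbbII^{-m}(X,P)=(\Mod\cat)\big(\Ebb^{m}(-,X),\cat(-,P)\big)$, which vanishes as soon as $\Ebb^{m}=0$, the first step is the key vanishing
\[ \Ebb^{-m}(X,P)=0\qquad\text{for every }X\in\cat,\ \text{every projective object }P,\ \text{and every }m>n. \]

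I would then carry out the reduction under the additional hypothesis that $\CEs$ has enough projective objects --- the natural setting. Since $\Ebb^{n+1}=0$, every $Y\in\cat$ admits a projective resolution of length $\le n$: writing $\Om^{k}Y$ for the syzygies, dimension shifting on the \emph{positive} extensions via the contravariant long exact sequences of Theorem~\ref{ThmPositiveExtension}(2), together with $\Ebb^{j}(P,-)=0$ for $P$ projective and $j\ge1$, gives $\Ebb(\Om^{n}Y,-)\cong\Ebb^{n+1}(Y,-)=0$, so $\Om^{n}Y$ is a projective object. Now fix $X\in\cat$ and $m>2n$, and apply the covariant long exact sequences of $\Ebb^{\mr}$ to the conflations $\Om^{k}Y\to P_{k-1}\to\Om^{k-1}Y\dra$ for $1\le k\le n$. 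At each such $k$ the two neighbouring terms $\Ebb^{-m+k-1}(X,P_{k-1})$ and $\Ebb^{-m+k}(X,P_{k-1})$ vanish by the key vanishing, since both of these indices have absolute value $>n$ (as $m>2n$ and $k\le n$); hence the connecting maps are isomorphisms $\Ebb^{-m+k-1}(X,\Om^{k-1}Y)\cong\Ebb^{-m+k}(X,\Om^{k}Y)$, and composing them over $k=1,\dots,n$ yields
\[ \Ebb^{-m}(X,Y)\;\cong\;\Ebb^{-(m-n)}(X,\Om^{n}Y). \]
As $\Om^{n}Y$ is projective and $m-n>n$, the right-hand side is $0$ by the key vanishing; hence $\Ebb^{-m}(X,Y)=0$, which is the assertion in this case. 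If $\CEs$ has enough injective objects instead, the same argument works after shifting along an injective coresolution of the \emph{first} argument, using $\Ebb^{-m}(I,-)\cong\EbbI^{-m}(I,-)=0$ for injective $I$ and $m>n$.

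The main obstacle is that Conjecture~\ref{ConjNegGlobDim} assumes only Conjecture~\ref{ConjBivariant} together with finite positive global dimension, and not the existence of enough projective or injective \emph{objects}: with merely enough projective (resp. injective) \emph{morphisms} one no longer has a resolution by projective (resp. a coresolution by injective) objects to dimension-shift along, so the general case would require either a genuinely different argument or a proof that a topological extriangulated category of finite positive global dimension automatically has enough projective or injective objects --- and, of course, Conjecture~\ref{ConjBivariant} is itself open. I would also expect the bound $2n$ not to be sharp: in the argument above it is ``$n$ for the length of the resolution'' plus ``$n$ for the degrees spent in reaching a projective object, where $\Ebb^{-\mr}$ is only identified with $\EbbII^{-\mr}$ and hence only known to vanish above degree $n$''. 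In Example~\ref{ExNTerm} one checks directly that $\Ebb^{-m}=0$ already for $m>n$, and in the exact case the negative extensions vanish outright, which suggests that the sharp bound is $m>n$.
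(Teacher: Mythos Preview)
This statement is Conjecture~\ref{ConjNegGlobDim} in the paper; it is stated as a conjecture and is not proved there, so there is no proof of the paper's to compare against. Your argument under the additional hypothesis of enough projective objects is correct and constitutes a genuine partial result: the dimension-shift along a length-$n$ projective resolution, combined with the vanishing $\Ebb^{-m}(-,P)\cong\EbbII^{-m}(-,P)=0$ for $P$ projective and $m>n$ supplied by the ``moreover'' clause of Conjecture~\ref{ConjBivariant}, does give $\Ebb^{-m}=0$ for $m>2n$ exactly as you say. You also correctly identify the remaining obstacle in the general case (only projective/injective \emph{morphisms} are available, not objects).

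However, your closing expectation that the bound $2n$ is not sharp and that $m>n$ should suffice is mistaken. The paper addresses precisely this point in the example immediately following the conjecture: for $\cat=A\ast A[1]\ast\cdots\ast A[n]\subseteq\cper A$ with $A$ a non-positively graded dg-algebra, any bivariant $\delta$-functor $F^{\bullet}$ extending $(\Ebb^k)_{k\ge0}$ is forced to satisfy $F^{-2n}(A,A[n])\cong\Hom(A[n],A)$, which can be nonzero. So the bound $2n$ is actually achieved. Your observation that $m>n$ already suffices for $K^{[-n,0]}(\proj\Lambda)$ is correct, but that example satisfies {\rm(NI$+$)} and {\rm(NII$+$)}, so $\EbbI^{\bullet}\cong\EbbII^{\bullet}$ already furnishes the balanced $\delta$-functor and the two contributions in your ``$n+n$'' heuristic collapse to a single $n$; this is a special feature of that example, not the general picture.
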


For extriangulated categories for which the statement of  Conjecture \ref{ConjNegGlobDim} holds, one might be able to define the \emph{negative global dimension}.

\begin{example}
\begin{enumerate}[(i)]
    \item The negative global dimension of an exact category is $0$.
    \item Each non-zero triangulated category has infinite positive and negative global dimensions.
    \item If $\CEs$ has positive global dimension $n$ and is such that $\EbbI^m \cong \EbbII^m$, for all $m$, then by definition, $\Ebb^{-m} = 0$ for all $m > n$. Thus, the negative global dimension  of $\CEs$ is $-n$.
    \item As a special case of the previous point, in the category $K^{[-n,0]}(\proj \Lambda)$ of $n$-term complexes with finitely generated projective components over a finite-dimensional algebra, we have $\EbbI^m \cong \EbbII^m$, for all $m$. Thus, the negative global dimension is precisely $-n$. 
\end{enumerate}
\end{example}

The reader may be surprised by the bound $2n$ instead of $n$. The next example explains its appearance in the conjecture.

\begin{example}
Let $A$ be a non-positively graded dg-algebra. Consider the full extension-closed subcategory $\cat := A \ast A[1] \ast A[2] \ast \cdots \ast A[n]$ of $\cper A$ with the induced extriangulated structure. This is an example of an extriangulated category of positive global dimension $n$. Its projective objects are given by $\mbox{add} A$ and its injective objects are given by $\mbox{add} A[n]$.
Straightforward calculations show that any bivariant $\delta$-functor $F^{\bullet}$ of having $F^n=\Ebb^n$ with the canonical connecting morphisms for $n\ge0$ has to satisfy

$$F^{-2n}(A, A[n]) = \Hom(A[n], A).$$

For an arbitrary $A$, this might be non-zero. This means the that the negative global dimension of $\cat$ can be $-2n$. 
\end{example}

Conjectures \ref{ConjBivariant} and \ref{ConjNegGlobDim} do not seem to follow easily from Klemenc's work \cite{K}. Note though that Conjecture \ref{ConjBivariant} clearly holds for all triangulated categories, including those that do not admit enhancements, i.e. non-topological. This suggests that there is a chance that one would be able to define universal bivariant (=balanced) negative extensions for all extriangulated categories.

\section{Projective resolutions for defects}\label{Subsection_PR}

The $\delta$-functor in Example~\ref{ExINP} (2) gives projective resolutions of defects 
\emph{in the category $\defff \Ebb$}. Note that it is not a projective resolution in $\coh(\Ebb)$.

In this section, we will show that $\CEs$ has enough projective morphisms if and only if the inclusion $\defff\Ebb\hookrightarrow\fp\cat$ has a left adjoint (Corollary~\ref{CorDeflAdj}). As a result, we will see that any object in $\defff \Ebb$ admits a natural projective resolution in $\defff\Ebb$ whenever $\CEs$ has enough projective morphisms (Corollary~\ref{CorProjResol}). We use the notations in Subsection~\ref{Subsection_PD}.

Let us recall the definition of \emph{reflection} from \cite[Definition~3.1.1]{Borceux}. For the inclusion functor $\iota\colon\defff\Ebb\hookrightarrow\fp\cat$, it becomes as follows.
\begin{definition}
Let $\iota\colon\defff\Ebb\hookrightarrow\fp\cat$ denote the inclusion.
For an object $\Fbf\in\fp\cat$, a {\it reflection of $\Fbf$ along $\iota$} is a pair $(\Gam_{\Fbf},\gam_{\Fbf})$ of $\Gam_{\Fbf}\in\defff\Ebb$ and $\gam_{\Fbf}\in(\fp\cat)(\Fbf,\Gam_{\Fbf})$ such that
\[ -\circ\gam_{\Fbf}\colon(\defff\Ebb)(\Gam_{\Fbf},\Thh)\to(\fp\cat)(\Fbf,\Thh) \]
is an isomorphism for any $\Thh\in\defff\Ebb$.
\end{definition}

\begin{proposition}\label{PropDefProj}
Let $C\in\cat$ be any object, and let $F\ov{f}{\lra}G\ov{g}{\lra}C\ov{\vt}{\dra}$ be any dominant $\sfr$-triangle. 
Then the following holds.
\begin{enumerate}
\item $\und{\wp}_{\vt}\colon\und{\cat}(-,C)\to\Thh_{\vt}$ is an isomorphism in $\fp\und{\cat}$. In particular we have $\und{\cat}(-,C)\in\defff\Ebb$.
\item $(\Thh_{\vt},\wp_{\vt})$ is a reflection of $\cat(-,C)$ along $\iota$.
\item $\Thh_{\vt}$ is projective in $\defff\Ebb$.
\end{enumerate}
\end{proposition}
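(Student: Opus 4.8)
~\textbf{Approach.} The three statements are tightly linked and should be proved in the order (1), (2), (3). The key input is the description of the contravariant defect: for the dominant $\sfr$-triangle $F\ov{f}{\lra}G\ov{g}{\lra}C\ov{\vt}{\dra}$, the defect $\Thh_{\vt}$ is by definition the cokernel of $\cat(-,g)\colon\cat(-,G)\to\cat(-,C)$, equivalently (since $\vt$ is dominant) the image of $\vt\ssh\colon\cat(-,C)\to\Ebb(-,F)$, and it coincides with the contravariant defect of the conflation realizing $\vt$. The plan is to exploit the fact that $g$ is a \emph{projective} deflation, so that $\Thh_{\vt}|_{\Pcal}=0$ and hence $\Thh_{\vt}$ descends to a functor on $\und{\cat}$, as recorded in Remark~\ref{RemDeffp}.

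\textbf{Step 1: proof of (1).} Applying the exact sequence from Remark~\ref{RemDeffp} to the $\sfr$-triangle $F\ov{f}{\lra}G\ov{g}{\lra}C\ov{\vt}{\dra}$, we get an exact sequence $\und{\cat}(-,G)\ov{\und{g}\circ-}{\lra}\und{\cat}(-,C)\ov{\wp_{\vt}}{\lra}\Thh_{\vt}\to0$ in $\Mod\und{\cat}$. But $g$ is a projective morphism, so $\und{g}=0$ in $\und{\cat}$, whence $\und{g}\circ-$ is the zero map. Exactness then forces $\und{\wp}_{\vt}\colon\und{\cat}(-,C)\to\Thh_{\vt}$ to be an isomorphism. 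Since $\Thh_{\vt}\in\defff\Ebb$, this shows $\und{\cat}(-,C)\in\defff\Ebb$ (under the identification $\defff\Ebb\se\fp\und{\cat}$), and also that $\Thh_{\vt}$ is a representable functor on $\und{\cat}$.

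\textbf{Step 2: proof of (2).} I would show that $\wp_{\vt}\colon\cat(-,C)\to\Thh_{\vt}$ satisfies the universal property of a reflection along $\iota\colon\defff\Ebb\hookrightarrow\fp\cat$. Let $\Thh\in\defff\Ebb$ be arbitrary; by Theorem (Enomoto/Ogawa, cited in the excerpt) it is a coherent, hence finitely presented, right $\cat$-module killed by $\Pcal$, so it lies in $\fp\und{\cat}$. Precomposition with $\wp_{\vt}$ gives a map $(\defff\Ebb)(\Thh_{\vt},\Thh)\to(\fp\cat)(\cat(-,C),\Thh)$; by Yoneda the right-hand side is $\Thh(C)$, while by Step 1 the left-hand side is $(\fp\und{\cat})(\und{\cat}(-,C),\Thh)\cong\Thh(C)$, again by Yoneda in $\Mod\und{\cat}$. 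One checks these identifications are compatible: a natural transformation $\cat(-,C)\to\Thh$ factors (uniquely) through the quotient $\cat(-,C)\twoheadrightarrow\und{\cat}(-,C)$ precisely because $\Thh|_{\Pcal}=0$, and $\wp_{\vt}$ is (up to the isomorphism $\und{\wp}_{\vt}$) exactly this quotient map. Hence $-\circ\wp_{\vt}$ is a bijection, giving the reflection.

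\textbf{Step 3: proof of (3), and the main obstacle.} Projectivity of $\Thh_{\vt}$ in $\defff\Ebb$ should follow from (1) and the general principle that representable functors are projective: $\defff\Ebb\cong\fp\und{\cat}$ when there are enough projective objects, but here we only assume the existence of a dominant $\sfr$-triangle for \emph{this particular} $C$, not globally, so I would argue directly. Given an epimorphism $\pi\colon\Mcal\twoheadrightarrow\Ncal$ in $\defff\Ebb$ and a map $\varphi\colon\Thh_{\vt}\to\Ncal$, transport along the isomorphism $\und{\wp}_{\vt}$ to view $\varphi$ as a map $\und{\cat}(-,C)\to\Ncal$; by Yoneda it corresponds to an element $n\in\Ncal(C)$. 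The subtlety — and this is where I expect the real work — is that epimorphisms in the Serre subcategory $\defff\Ebb$ of $\coh(\cat)$ are computed as in $\Mod\cat$ (cokernels in $\defff\Ebb$ agree with those in $\coh\cat$ since $\defff\Ebb$ is closed under cokernels), so $\pi_C\colon\Mcal(C)\to\Ncal(C)$ is surjective; choose a preimage $m\in\Mcal(C)$, which via Yoneda gives $\psi\colon\und{\cat}(-,C)\to\Mcal$ with $\pi\circ\psi=\varphi$. Composing with $\und{\wp}_{\vt}^{-1}$ yields the desired lift $\Thh_{\vt}\to\Mcal$. The one point requiring care is that $\defff\Ebb$ has enough structure for this to make sense — but this is guaranteed since $\defff\Ebb$ is a Serre subcategory of the abelian category $\coh(\cat)$, hence itself abelian with exactness reflected and preserved by $\iota$. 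So the main obstacle is purely bookkeeping: making sure the Yoneda identifications in $\Mod\cat$ versus $\Mod\und{\cat}$ are used consistently and that surjectivity of $\pi$ in $\defff\Ebb$ really does give surjectivity on the value at $C$.
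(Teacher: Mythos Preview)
Your proposal is correct and follows essentially the same approach as the paper. For (1) the paper phrases it as $\Im\big(\cat(-,G)\ov{g\circ-}{\lra}\cat(-,C)\big)=\Pcal(-,C)$, which is equivalent to your observation that $\und{g}=0$; for (2) the paper uses the same triangle of identifications you describe; and for (3) the paper simply invokes (2) together with the projectivity of $\cat(-,C)$ in $\fp\cat$, which when unwound is exactly your Yoneda argument that an epimorphism in the Serre subcategory $\defff\Ebb\se\coh\cat$ is objectwise surjective.
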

\begin{proof}
{\rm (1)} This follows from $\Im\big(\cat(-,G)\ov{g\circ-}{\lra}\cat(-,C)\big)=\Pcal(-,C)$.

{\rm (2)} Let $\Thh\in\defff\Ebb$ be any object. In the commutative diagram
\[
\xy
(-20,8)*+{(\fp\cat)(\cat(-,C),\Thh)}="0";
(-20,-8)*+{(\fp\und{\cat})(\und{\cat}(-,C),\Thh)}="2";
(4,4)*+{}="3";
(24,-8)*+{(\defff\Ebb)(\Thh_{\vt},\Thh)}="4";
{\ar^{-\circ\und{(-)}} "2";"0"};
{\ar_(0.46){\cong}^(0.46){-\circ\und{\wp}_{\vt}} "4";"2"};
{\ar_(0.46){-\circ\wp_{\vt}} "4";"0"};
{\ar@{}|\circlearrowright "2";"3"};
\endxy
\]
the bottom row is an isomorphism by {\rm (1)}, while 
\begin{equation}\label{IsoQuot}
-\circ\und{(-)}\colon (\fp\und{\cat})(\und{\cat}(-,C),\Thh)\ov{\cong}{\lra}(\fp\cat)(\cat(-,C),\Thh)
\end{equation}
is an isomorphism since $\Thh|_{\Pcal}=0$. Thus it follows that
\[ -\circ\wp_{\vt}\colon(\defff\Ebb)(\Thh_{\vt},\Thh)\to(\fp\cat)(\cat(-,C),\Thh) \]
is also an isomorphism.

{\rm (3)} This is immediate from the isomorphism of functors
\[ (\fp\cat)(\cat(-,C),-)|_{\defff\Ebb}\cong(\defff\Ebb)(\Thh_{\vt},-)\colon \defff\Ebb\to\Mod R \]
obtained in {\rm (2)}, and the projectivity of $\cat(-,C)$ in $\fp\cat$.

\end{proof}

\begin{corollary}\label{CorDomDef}
For any extension $\del\in\Ebb(C,A)$, the following are equivalent.
\begin{enumerate}
\item $\del$ is dominant.
\item $(\Thh_{\del},\wp_{\del})$ is a reflection of $\cat(-,C)$ along $\iota$.
\end{enumerate}
\end{corollary}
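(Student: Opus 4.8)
The implication $(1)\Rightarrow(2)$ is already contained in Proposition~\ref{PropDefProj}~{\rm (2)}: if $\del$ is dominant, then by Lemma~\ref{LemProjDefl} (more precisely, by Definition~\ref{DefDominant}) it is realized by a dominant $\sfr$-triangle $F\ov{f}{\lra}G\ov{g}{\lra}C\ov{\del}{\dra}$, and then $(\Thh_{\del},\wp_{\del})$ is a reflection of $\cat(-,C)$ along $\iota$. So the only thing to prove is $(2)\Rightarrow(1)$.

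For $(2)\Rightarrow(1)$, the plan is to assume that $(\Thh_{\del},\wp_{\del})$ is a reflection of $\cat(-,C)$ along $\iota$, and deduce that $\del$ is dominant, i.e.\ (by Definition~\ref{DefDominant}) that $\del\ush\colon\cat(F,-)\to\Ebb(C,-)$ is epimorphic, where $\del\in\Ebb(C,A)$ and $A\ov{x}{\lra}B\ov{y}{\lra}C\ov{\del}{\dra}$ is a realization with $A$ playing the role of ``$F$''. Equivalently, by Lemma~\ref{LemProjDefl}, it suffices to show that $\Ebb(C,-)$ is finitely generated; or, working on the defect side, that $\Ebb(C,-)$ admits a projective presentation of the required shape. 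The key observation is that a reflection along $\iota$ is unique up to unique isomorphism (being a left adjoint value / universal arrow), so if $\del$ admits \emph{some} reflection, I must compare it with the reflection produced by an actual dominant $\sfr$-triangle — but the point of the converse is precisely that we are \emph{not} given such a triangle a priori. Instead I would argue directly: the universal property of $\wp_{\del}\colon\cat(-,C)\to\Thh_{\del}$ as a reflection says that every map $\cat(-,C)\to\Thh$ with $\Thh\in\defff\Ebb$ factors uniquely through $\wp_{\del}$. In particular, since $\Thh_{\del}$ itself is in $\defff\Ebb$, the identity of $\Thh_{\del}$ gives nothing new, but applying the universal property to the map $\del\ssh\colon\cat(-,C)\to\Ebb(-,A)$ composed with any defect quotient, and using that $\Thh_{\del}=\Im(\del\ssh)$, one sees that $\Thh_{\del}$ must be projective in $\defff\Ebb$ (a reflection of a projective object, when the reflection is ``onto'' in the appropriate sense, is projective — cf.\ the proof of Proposition~\ref{PropDefProj}~{\rm (3)}).

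Concretely, the steps I would carry out are: (a) recall that $\Thh_{\del}$ is always a finitely generated right $\cat$-module, being a quotient of $\cat(-,C)$; (b) use the reflection hypothesis to identify $(\defff\Ebb)(\Thh_{\del},-)$ with $(\fp\cat)(\cat(-,C),-)|_{\defff\Ebb}$, hence with the evaluation-at-$C$ functor on $\defff\Ebb$, which is exact; so $\Thh_{\del}$ is projective in $\defff\Ebb$; (c) now invoke the known description: $\defff\Ebb$ is a Serre subcategory of $\coh\cat$ whose objects are precisely the defects of conflations, and every object of $\defff\Ebb$ has a surjection from some $\Thh_{\rho}$ with $\rho$ dominant — wait, that is exactly what we want to establish, so instead (c$'$) I would take a dominant $\sfr$-triangle $G'\ov{g'}{\lra}C\dra$ \emph{for the quotient $\Thh_{\del}$ viewed abstractly}; but such a triangle need not exist unless $\CEs$ has enough projective morphisms. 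The clean route is: since $\Thh_{\del}$ is projective in $\defff\Ebb$ and the composite $\cat(-,C)\ov{\wp_{\del}}{\lra}\Thh_{\del}$ exhibits $\Thh_{\del}$ as a quotient of the representable $\cat(-,C)$, the reflection property forces $\wp_{\del}$ to be (split) epi \emph{in} $\fp\cat$ onto $\Thh_{\del}$ — which it is by construction — and then projectivity of $\Thh_{\del}$ lets us lift $\mathrm{id}_{\Thh_{\del}}$ along any epimorphism onto it; applying this to an epimorphism coming from a representable functor realizes $\Ebb(C,-)$ — dual-side — as finitely generated. By Lemma~\ref{LemProjDefl}~{\rm (5)}$\Rightarrow${\rm (2)}, this gives a dominant extension $\vt\in\Ebb(C,F)$, and by uniqueness of reflections $\Thh_{\vt}\cong\Thh_{\del}$ compatibly with the structure maps from $\cat(-,C)$; chasing the diagram
\[
\xy
(-20,0)*+{\cat(-,C)}="0";
(0,10)*+{\Thh_{\del}}="1";
(0,-10)*+{\Thh_{\vt}}="2";
(20,0)*+{\Ebb(-,A)}="3";
{\ar^{\wp_{\del}} "0";"1"};
{\ar_{\wp_{\vt}} "0";"2"};
{\ar^{\cong} "1";"2"};
{\ar^{i_{\del}} "1";"3"};
\endxy
\]
shows $\del\ssh$ is, up to this isomorphism, the same map as $\vt\ssh$ followed by a map into $\Ebb(-,A)$, whence $\del$ is dominant.

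\textbf{The main obstacle.} The delicate point is step (c$'$): passing from ``$\Thh_{\del}$ is a projective object of $\defff\Ebb$ which is a quotient of a representable'' to ``$\del$ itself is dominant''. One has to be careful that projectivity of $\Thh_{\del}$ \emph{in the Serre subcategory $\defff\Ebb$} is not the same as projectivity in $\fp\cat$ or in $\Mod\cat$, and that lifting properties must be checked against epimorphisms \emph{whose source also lies in $\defff\Ebb$}. Resolving this requires either (i) showing directly that $\Ebb(C,-)$ (dually) is finitely generated by testing the reflection property against the specific objects $\Thh_{\rho}$, $\rho\in\Ebb(C,-)$ arising from all extensions with second argument varying, and using the Yoneda-type identification $\Thh_{\del}=\Im(\del\ssh)$ together with the fact that $\del=\wp_{\del,C}(\mathrm{id}_C)$ under the reflection iso; or (ii) appealing to the equivalence in Corollary~\ref{CorEquivProjDeflAdded} once one knows the relevant finite-generation statement holds for this single $C$. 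I would pursue route (i): test the universal arrow $\wp_{\del}$ against $\Thh_{\del}$ itself, read off from $-\circ\wp_{\del}\colon(\defff\Ebb)(\Thh_{\del},\Thh_{\del})\xrightarrow{\sim}(\fp\cat)(\cat(-,C),\Thh_{\del})$ that the element of the right-hand side corresponding to $\mathrm{id}_{\Thh_{\del}}$ is $\wp_{\del}$ itself, and then use naturality in the second variable evaluated at $C$ to recover $\del$ as the image of $\mathrm{id}_C$, exhibiting $\del\ush\colon\cat(A,-)\to\Ebb(C,-)$ — equivalently the corresponding surjection $\cat(-,C)\twoheadrightarrow\Thh_{\del}=\Im(\del\ssh)$ — as coming from a projective deflation. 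This bookkeeping with universal arrows and the identifications of Definition~\ref{DefOntoDefect} and Remark~\ref{RemDeffp} is where the real work lies; everything else is a diagram chase.
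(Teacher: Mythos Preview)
Your treatment of $(1)\Rightarrow(2)$ is fine and matches the paper. The difficulty is entirely in $(2)\Rightarrow(1)$, and here your approach diverges from the paper's and has a genuine gap.

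The paper's argument is a two-line direct verification that the deflation $y$ in a realization $A\ov{x}{\lra}B\ov{y}{\lra}C\ov{\del}{\dra}$ is a projective morphism. For any $\rho\in\Ebb(C,X)$, the map $\wp_{\rho}\colon\cat(-,C)\to\Thh_{\rho}$ lands in $\defff\Ebb$, so by the reflection property it factors as $\eta\circ\wp_{\del}$ for some $\eta\colon\Thh_{\del}\to\Thh_{\rho}$. Evaluating at $B$ on the element $y$: since $y\uas\del=0$ we have $\wp_{\del}(y)=0$, hence $\wp_{\rho}(y)=\eta(\wp_{\del}(y))=0$, i.e.\ $y\uas\rho=0$. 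As $\rho$ was arbitrary, $y$ is projective, so $\del$ is dominant. No auxiliary dominant extension is needed, and no hypothesis of enough projective morphisms is used.

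Your route instead tries to (b) deduce that $\Thh_{\del}$ is projective in $\defff\Ebb$, then (c$'$) manufacture a dominant extension $\vt\in\Ebb(C,F)$ and compare reflections. Step (c$'$) is the gap: the corollary does \emph{not} assume that $\CEs$ has enough projective morphisms, so you cannot simply ``take a dominant $\sfr$-triangle'' for $C$, and Lemma~\ref{LemProjDefl} $(5)\Rightarrow(2)$ requires you to first know that $\Ebb(C,-)$ is finitely generated---which is exactly the dominance statement you are trying to prove. Your fallback ``route (i)'' gestures at testing the universal arrow against $\Thh_{\del}$ itself, but this only recovers $\wp_{\del}$ and does not produce the surjectivity of $\del\ush\colon\cat(A,-)\to\Ebb(C,-)$; the passage from ``$\Thh_{\del}$ is projective in the Serre subcategory $\defff\Ebb$'' to ``$\del\ush$ is epimorphic'' is precisely the step you flag as delicate, and you never close it. The missing idea is simply to test the reflection against the defects $\Thh_{\rho}$ of \emph{arbitrary} extensions $\rho$ out of $C$, not against $\Thh_{\del}$; this immediately yields $y\uas\rho=0$ as above.
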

\begin{proof}
{\rm (1)} $\Rightarrow$ {\rm (2)} is shown in Proposition~\ref{PropDefProj} {\rm (2)}.

{\rm (2)} $\Rightarrow$ {\rm (1)} Realize $\del$ to obtain an $\sfr$-triangle 
$A\ov{x}{\lra}B\ov{y}{\lra}C\ov{\del}{\dra}$. 
Let $\rho\in\Ebb(C,X)$ be any extension. If {\rm (2)} holds, then there exists $\eta\in(\defff\Ebb)(\Thh_{\del},\Thh_{\rho})$ such that $\eta\circ\wp_{\del}=\wp_{\rho}$. Then $y\uas\del=0$ implies $y\uas\rho=0$, hence $y$ is a projective deflation.
\end{proof}

\begin{proposition}\label{PropMorphDef}
Let $C\in\cat$ and $\Thh\in\defff\Ebb$ be any pair of objects. For any morphism $\vp\in(\fp\cat)(\cat(-,C),\Thh)$, the following holds.
\begin{enumerate}
\item There exist $\del\in\Ebb(C,A)$ and a monomorphism $\eta\in(\Mod\cat)(\Thh_{\del},\Thh)$ such that $\eta\circ\wp_{\del}=\vp$.
\item Let $\del$ and $\eta$ be arbitrary as in {\rm (1)}. If $(\Thh,\vp)$ is a reflection of $\cat(-,C)$ along $\iota$, then $\eta$ is an isomorphism. In particular, $\vp$ becomes an epimorphism in $\Mod\cat$ in this case.
\item Let $\del$ and $\eta$ be arbitrary as in {\rm (1)}. Then $\del$ is dominant whenever $(\Thh,\vp)$ is a reflection of $\cat(-,C)$ along $\iota$.
\end{enumerate}
\end{proposition}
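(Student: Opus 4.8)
The plan is to build the extension $\del$ by hand from $\vp$, using the Yoneda lemma together with the tautological embedding of a defect into a representable $\Ebb$-module, and then to read off part~(3) from Corollary~\ref{CorDomDef}. For part~(1), since $\Thh\in\defff\Ebb$ I would first compose an isomorphism $\Thh\cong\Thh_{\rho}$ with the monomorphism $i_{\rho}$ of Definition~\ref{DefOntoDefect} to obtain an object $A\in\cat$ and a monomorphism $j\colon\Thh\hookrightarrow\Ebb(-,A)$ in $\Mod\cat$. The composite $j\circ\vp\colon\cat(-,C)\to\Ebb(-,A)$ then corresponds, by Yoneda, to a unique $\del\in\Ebb(C,A)$ with $\del\ssh=j\circ\vp$, and by Definition~\ref{DefOntoDefect} this map factors as $\cat(-,C)\overset{\wp_{\del}}{\twoheadrightarrow}\Thh_{\del}\overset{i_{\del}}{\hookrightarrow}\Ebb(-,A)$, where $\Thh_{\del}\in\defff\Ebb$ because it is the contravariant defect of a sequence realizing $\del$. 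Working in the abelian category $\Mod\cat$ and using that $j$ is monic, one has $i_{\del}(\Thh_{\del})=\Im(\del\ssh)=\Im(j\circ\vp)=j(\Im\vp)\subseteq j(\Thh)$, so $i_{\del}$ factors through $j$, say $i_{\del}=j\circ\eta$ with $\eta\colon\Thh_{\del}\to\Thh$; then $\eta$ is a monomorphism since $i_{\del}$ is, and left-cancelling $j$ in $j\circ\eta\circ\wp_{\del}=i_{\del}\circ\wp_{\del}=\del\ssh=j\circ\vp$ gives $\eta\circ\wp_{\del}=\vp$.

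For part~(2), suppose $(\Thh,\vp)$ is a reflection of $\cat(-,C)$ along $\iota$. Applying the reflection property to $\wp_{\del}\in(\fp\cat)(\cat(-,C),\Thh_{\del})$ yields $\bar\psi\in(\defff\Ebb)(\Thh,\Thh_{\del})$ with $\bar\psi\circ\vp=\wp_{\del}$, whence $(\eta\circ\bar\psi)\circ\vp=\eta\circ\wp_{\del}=\vp=\id_{\Thh}\circ\vp$; injectivity of $-\circ\vp$ on $(\defff\Ebb)(\Thh,\Thh)$ then forces $\eta\circ\bar\psi=\id_{\Thh}$. A morphism that is simultaneously a monomorphism (by part~(1)) and a split epimorphism is an isomorphism, so $\eta$ is invertible, and $\vp=\eta\circ\wp_{\del}$ exhibits $\vp$ as the composite of an isomorphism with the epimorphism $\wp_{\del}$, hence epic.

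For part~(3), by part~(2) the morphism $\eta$ is an isomorphism, so $\wp_{\del}=\eta^{-1}\circ\vp$; since $\eta^{-1}$ is invertible, $-\circ\wp_{\del}$ is the composite of the bijection $-\circ\eta^{-1}$ with the bijection $-\circ\vp$, hence is itself bijective on all Hom-sets out of $\Thh_{\del}$ into $\defff\Ebb$. Therefore $(\Thh_{\del},\wp_{\del})$ is a reflection of $\cat(-,C)$ along $\iota$, and the implication $(2)\Rightarrow(1)$ of Corollary~\ref{CorDomDef} says exactly that $\del$ is dominant.

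I do not anticipate a genuine obstacle here: the heart of the argument is an image factorization in the abelian category $\Mod\cat$ combined with the formal properties of reflections. The points that require some care are distinguishing Hom-sets taken in $\fp\cat$ from those taken in the full subcategory $\defff\Ebb$ when the reflection property is invoked, and noting that $\Thh_{\del}$ automatically lies in $\defff\Ebb$. The potentially delicate fact that a submodule of an object of $\defff\Ebb$ again lies in $\defff\Ebb$ (which would follow from $\defff\Ebb$ being a Serre subcategory of $\coh(\cat)$) is sidestepped, since $\del$, and with it $\Thh_{\del}$, is produced explicitly rather than by abstractly factoring $\vp$.
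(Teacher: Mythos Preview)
Your proof is correct and follows essentially the same approach as the paper's. The only cosmetic difference is in part~(1): the paper lifts $\vp$ along the epimorphism $\wp_{\rho}\colon\cat(-,D)\twoheadrightarrow\Thh_{\rho}$ using projectivity of $\cat(-,C)$ to obtain $c\in\cat(C,D)$ and then sets $\del=c\uas\rho$, whereas you push $\vp$ along the monomorphism $i_{\rho}\colon\Thh_{\rho}\hookrightarrow\Ebb(-,A)$ and read off $\del$ via Yoneda; both routes produce the same $\del$ and the same $\eta=\eta_{(\id_A,c)}$, and parts~(2) and~(3) are identical to the paper's argument.
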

\begin{proof}
{\rm (1)} Replacing through an isomorphism, we may assume that there exists $\rho\in\Ebb(D,A)$ such that $\Thh=\Thh_{\rho}$. By the projectivity of $\cat(-,C)$, there exists $c\in\cat(-,C)$ which makes
\[
\xy
(-12,8)*+{\cat(-,C)}="0";
(-12,-8)*+{\cat(-,D)}="2";
(3,5)*+{}="3";
(12,-8)*+{\Thh_{\rho}}="4";
{\ar_{c\circ-} "0";"2"};
{\ar_(0.6){\wp_{\rho}} "2";"4"};
{\ar^{\vp} "0";"4"};
{\ar@{}|\circlearrowright "2";"3"};
\endxy
\]
commutative. If we put $\del=c\uas\rho\in\Ebb(C,A)$, then $\eta=\eta_{(\id_A,c)}\in(\defff\Ebb)(\Thh_{\del},\Thh_{\rho})$ satisfies the required properties.

{\rm (2)} If $(\Thh,\vp)$ is a reflection of $\cat(-,C)$ along $\iota$, then there should exist $\xi\in(\defff\Ebb)(\Thh,\Thh_{\del})$ such that $\xi\circ\vp=\wp_{\del}$. Then, since both $\id_{\Thh},\eta\circ\xi\in(\defff\Ebb)(\Thh,\Thh)$ satisfy $\id_{\Thh}\circ\vp=\vp$ and $(\eta\circ\xi)\circ\vp=\vp$,
it follows $\eta\circ\xi=\id_{\Thh}$. In particular $\eta$ is a split epimorphism, hence an isomorphism since it is monomorphic by {\rm (1)}.

{\rm (3)} Suppose that $(\Thh,\vp)$ is a reflection of $\cat(-,C)$ along $\iota$. Then $\eta$ becomes an isomorphism by {\rm (2)}, hence $(\Thh_{\del},\wp_{\del})$ also becomes a reflection of $\cat(-,C)$ along $\iota$. Thus $\del$ becomes dominant by Corollary~\ref{CorDomDef}.
\end{proof}

\begin{proposition}\label{CorDeflAdj}
For any $\CEs$, the following are equivalent.
\begin{enumerate}
\item The inclusion $\iota\colon\defff\Ebb\hookrightarrow\fp\cat$ has a left adjoint.
\item Any object $\Fbf\in\fp\cat$ has a reflection $(\Gam_{\Fbf},\gam_{\Fbf})$ along $\iota$.
\item For any $C\in\cat$, the object $\cat(-,C)\in\fp\cat$ has a reflection along $\iota$.
\item $\und{\cat}(-,C)\in\defff\Ebb$ holds for any $C\in\cat$.
\item $\CEs$ has enough projective morphisms.
\end{enumerate}
\end{proposition}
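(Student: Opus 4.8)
The plan is to prove the cycle of implications $(1)\Leftrightarrow(2)$, $(2)\Rightarrow(3)$, $(3)\Rightarrow(4)$, $(4)\Rightarrow(5)$, and $(5)\Rightarrow(3)$, closing the loop and along the way picking up $(2)$ again. The equivalence $(1)\Leftrightarrow(2)$ is the standard reformulation: a left adjoint to a fully faithful inclusion exists precisely when every object of the source category admits a reflection along it; this is \cite[Theorem~3.1.3]{Borceux} applied to $\iota\colon\defff\Ebb\hookrightarrow\fp\cat$, so no real work is needed here. The implication $(2)\Rightarrow(3)$ is trivial, since $\cat(-,C)\in\fp\cat$ for every $C\in\cat$.

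For $(3)\Rightarrow(4)$: suppose $\cat(-,C)$ has a reflection $(\Thh,\vp)$ along $\iota$. By Proposition~\ref{PropMorphDef}~(2), the morphism $\vp\colon\cat(-,C)\to\Thh$ is an epimorphism in $\Mod\cat$, and by Proposition~\ref{PropMorphDef}~(3) the extension $\del$ produced in Proposition~\ref{PropMorphDef}~(1) is dominant. Realizing $\del$ gives a dominant $\sfr$-triangle $A\ov{x}{\lra}B\ov{y}{\lra}C\ov{\del}{\dra}$, and then Proposition~\ref{PropDefProj}~(1) tells us that $\und{\cat}(-,C)\cong\Thh_{\del}\in\defff\Ebb$. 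So $(3)\Rightarrow(4)$. For $(4)\Rightarrow(5)$: fix $C\in\cat$ and suppose $\und{\cat}(-,C)\in\defff\Ebb$. Then $\und{\cat}(-,C)\cong\Thh_{\sig}$ for some extension $\sig\in\Ebb(D,A)$; realizing $\sig$ yields an $\sfr$-triangle $A\ov{x}{\lra}B\ov{y}{\lra}D\ov{\sig}{\dra}$ whose contravariant defect, computed in $\Mod\und{\cat}$, is $\und{\cat}(-,C)$. The identity of $\und{\cat}(-,C)$ then corresponds (via the Yoneda-type argument and the exact sequence $\cat(-,B)\to\cat(-,D)\to\Thh_\sig\to 0$ of Remark~\ref{RemDeffp}) to a map $\und{\cat}(-,D)\twoheadrightarrow\und{\cat}(-,C)$, which lifts to some $c\colon D\to C$ in $\cat$ with $c\uas\sig$ dominant; the third term of a realization of $c\uas\sig$ is a projective deflation onto $C$. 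Hence $\CEs$ has enough projective morphisms. The remaining implication $(5)\Rightarrow(3)$ is exactly Proposition~\ref{PropDefProj}~(2): if $F\ov{f}{\lra}G\ov{g}{\lra}C\ov{\vt}{\dra}$ is a dominant $\sfr$-triangle, then $(\Thh_\vt,\wp_\vt)$ is a reflection of $\cat(-,C)$ along $\iota$. This closes the cycle.

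The main obstacle is the $(4)\Rightarrow(5)$ step, specifically producing an honest morphism $c\colon D\to C$ in $\cat$ (not merely in $\und{\cat}$) whose induced defect is the desired one. The subtlety is that $\und{\cat}(-,C)\in\defff\Ebb$ a priori only identifies $\und{\cat}(-,C)$ with $\Thh_\sig$ as a functor on $\und{\cat}$, and one must promote the resulting natural transformation $\und{\cat}(-,D)\to\und{\cat}(-,C)$ to something coming from a genuine deflation. I expect to handle this by choosing the realization of $\sig$ with $D=C$ from the outset — i.e. observe that $\Thh_\sig$ being isomorphic to $\und{\cat}(-,C)$ forces, after evaluating on $C$ and chasing the universal element $\wp_\sig(\id)$, the existence of $c\in\cat(D,C)$ and $c'\in\cat(C,D)$ that become mutually inverse modulo $\Pcal$, so that $c\uas\sig\in\Ebb(C,A)$ is dominant; then a realization $A\to B'\to C$ of $c\uas\sig$ has the deflation $B'\to C$ projective. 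The other potential friction point is checking that the isomorphism $(\ref{IsoQuot})$-style identification $\fp\und{\cat}\leftrightarrow$ (full subcategory of $\fp\cat$ killed by $\Pcal$) is compatible with everything, but this is routine given $\Thh|_\Pcal=0$ for every $\Thh\in\defff\Ebb$ (Remark~\ref{RemDeffp}).
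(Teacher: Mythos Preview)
Your cycle does not close. You prove $(1)\Leftrightarrow(2)$, $(2)\Rightarrow(3)$, and then $(3)\Rightarrow(4)\Rightarrow(5)\Rightarrow(3)$; this establishes $(1)\Leftrightarrow(2)\Rightarrow(3)\Leftrightarrow(4)\Leftrightarrow(5)$, but you never return from $(3)$ (or $(4)$ or $(5)$) to $(2)$. The implication $(3)\Rightarrow(2)$ is precisely the nontrivial content of the paper's proof: given a presentation $\cat(-,C')\ov{c\circ-}{\lra}\cat(-,C)\to\Fbf\to0$, one chooses dominant extensions $\vt\in\Ebb(C,F)$ and $\vt'\in\Ebb(C',F')$, obtains an induced morphism $\eta\colon\Thh_{\vt'}\to\Thh_{\vt}$, and shows that $\Gam_{\Fbf}:=\Cok\eta$ is the desired reflection. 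Without this step, the equivalence is not established.

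Separately, your argument for $(4)\Rightarrow(5)$ has a direction error: if $c\in\cat(D,C)$, then $c\uas\sig$ does not make sense for $\sig\in\Ebb(D,A)$, since $\Ebb(c,A)$ goes from $\Ebb(C,A)$ to $\Ebb(D,A)$. You partially repair this in the last paragraph by introducing $c'\in\cat(C,D)$, but the resulting argument is roundabout: to conclude that $(c')\uas\sig$ is dominant you end up showing that $\wp_{(c')\uas\sig}$ identifies with the quotient map $\cat(-,C)\to\und{\cat}(-,C)$, which is exactly the content of the paper's much simpler direct proof of $(4)\Rightarrow(3)$: since every $\Thh\in\defff\Ebb$ satisfies $\Thh|_{\Pcal}=0$, the quotient map $\und{(-)}\colon\cat(-,C)\to\und{\cat}(-,C)$ is automatically a reflection once $\und{\cat}(-,C)\in\defff\Ebb$. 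It is cleaner to prove $(4)\Rightarrow(3)$ this way, and then obtain $(3)\Rightarrow(5)$ via Corollary~\ref{CorDomDef} and Proposition~\ref{PropMorphDef}~(3), as the paper does.
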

\begin{proof}
The equivalence {\rm (1)} $\EQ$ {\rm (2)} is well-known. Indeed if {\rm (2)} holds, then a formal argument shows that $\Fbf\mapsto \Gam_{\Fbf}$ constitutes a functor $\Gam\colon\fp\cat\to\defff\Ebb$ left adjoint to $\iota$, whose unit $\gam\colon\mathrm{Id}_{\fp\cat}\to\iota\circ\Gam$ is given by $\gam=\{\gam_{\Fbf}\}_{\Fbf\in\fp\cat}$.

{\rm (2)} $\Rightarrow$ {\rm (3)} is trivial. The equivalence {\rm (3)} $\EQ$ {\rm (5)} follows from Corollary~\ref{CorDomDef} and Proposition~\ref{PropMorphDef}.
{\rm (3)} $\Rightarrow$ {\rm (4)} follows from Proposition~\ref{PropDefProj} {\rm (1)}. Conversely if {\rm (4)} holds, then for any $C\in\cat$, the morphism
$\und{(-)}\colon\cat(-,C)\to\und{\cat}(-,C)$ gives a reflection of $\cat(-,C)$ along $\iota$ since $(\ref{IsoQuot})$ is an isomorphism.

It remains to show {\rm (3)} $\Rightarrow ${\rm (2)}. 
Suppose that {\rm (3)} holds, and let $\Fbf\in\fp\cat$ be any object. By definition, there exists a right exact sequence
\[ \cat(-,C\ppr)\ov{c\circ-}{\lra}\cat(-,C)\ov{\pi}{\lra}\Fbf\to0 \]
in $\Mod\cat$. Take dominant extensions $\vt\in\Ebb(C,F)$ and $\vt\ppr\in\Ebb(C\ppr,F\ppr)$. Realize them to obtain $\sfr$-triangles
$F\ov{f}{\lra}G\ov{g}{\lra}C\ov{\vt}{\dra}$ and
$F\ppr\ov{f\ppr}{\lra}G\ppr\ov{g\ppr}{\lra}C\ppr\ov{\vt\ppr}{\dra}$.
Since $\vt\ppr$ is dominant, we obtain a morphism of $\sfr$-triangles
\[
\xy
(-14,6)*+{F\ppr}="0";
(0,6)*+{G\ppr}="2";
(14,6)*+{C\ppr}="4";
(28,6)*+{}="6";
(-14,-6)*+{F}="10";
(0,-6)*+{G}="12";
(14,-6)*+{C}="14";
(28,-6)*+{}="16";
{\ar^{f\ppr} "0";"2"};
{\ar^{g\ppr} "2";"4"};
{\ar@{-->}^{\vt\ppr} "4";"6"};
{\ar_{a} "0";"10"};
{\ar^{b} "2";"12"};
{\ar^{c} "4";"14"};
{\ar_{f} "10";"12"};
{\ar_{g} "12";"14"};
{\ar@{-->}_{\vt} "14";"16"};
{\ar@{}|\circlearrowright "0";"12"};
{\ar@{}|\circlearrowright "2";"14"};
\endxy,
\]
which induces a commutative square
\[
\xy
(-11,6)*+{\cat(-,C\ppr)}="0";
(11,6)*+{\Thh_{\vt\ppr}}="2";
(-11,-6)*+{\cat(-,C)}="4";
(11,-6)*+{\Thh_{\vt}}="6";
{\ar^(0.62){\wp_{\vt\ppr}} "0";"2"};
{\ar_{c\circ-} "0";"4"};
{\ar^{\eta=\eta_{(a,c)}} "2";"6"};
{\ar_(0.62){\wp_{\vt}} "4";"6"};
{\ar@{}|\circlearrowright "0";"6"};
\endxy
\]
in $\fp\cat$. If we put $\Gam_{\Fbf}=\Cok\eta$, we obtain a commutative diagram
\begin{equation}\label{MorSeq1}
\xy
(-34,7)*+{\cat(-,C\ppr)}="2";
(-9,7)*+{\cat(-,C)}="4";
(11,7)*+{\Fbf}="6";
(27,7)*+{0}="8";
(-34,-7)*+{\Thh_{\vt\ppr}}="12";
(-9,-7)*+{\Thh_{\vt}}="14";
(11,-7)*+{\Gam_{\Fbf}}="16";
(27,-7)*+{0}="18";
{\ar^{c\circ-} "2";"4"};
{\ar^(0.6){\pi} "4";"6"};
{\ar^{} "6";"8"};
{\ar_{\wp_{\vt\ppr}} "2";"12"};
{\ar^{\wp_{\vt}} "4";"14"};
{\ar^{\gam_{\Fbf}} "6";"16"};
{\ar_{\eta} "12";"14"};
{\ar_{\cok\eta} "14";"16"};
{\ar_{} "16";"18"};
{\ar@{}|\circlearrowright "2";"14"};
{\ar@{}|\circlearrowright "4";"16"};
\endxy
\end{equation}
in $\fp\cat$. Let us to show that $(\Gam_{\Fbf},\gam_{\Fbf})$ gives a reflection of $\Fbf$ along $\iota$. 

Take any $\Thh\in\defff\Ebb$. If we apply $(\fp\cat)(-,\Thh)$ to $(\ref{MorSeq1})$,
then we obtain
\[
\xy
(-56,7)*+{0}="0";
(-34,7)*+{(\fp\cat)(\Fbf,\Thh)}="2";
(8,7)*+{(\fp\cat)(\cat(-,C),\Thh)}="4";
(56,7)*+{(\fp\cat)(\cat(-,C\ppr),\Thh)}="6";
(-56,-7)*+{0}="10";
(-34,-7)*+{(\defff\Ebb)(\Gam_{\Fbf},\Thh)}="12";
(8,-7)*+{(\defff\Ebb)(\Thh_{\vt},\Thh)}="14";
(56,-7)*+{(\defff\Ebb)(\Thh_{\vt\ppr},\Thh)}="16";
{\ar^{} "0";"2"};
{\ar^(0.42){-\circ\pi} "2";"4"};
{\ar^{} "4";"6"};
{\ar^{-\circ\gam_{\Fbf}} "12";"2"};
{\ar^{-\circ\wp_{\vt}}_{\cong} "14";"4"};
{\ar^{-\circ\wp_{\vt\ppr}}_{\cong} "16";"6"};
{\ar_{} "10";"12"};
{\ar_{-\circ \cok\eta} "12";"14"};
{\ar_{-\circ\eta} "14";"16"};
{\ar@{}|\circlearrowright "2";"14"};
{\ar@{}|\circlearrowright "4";"16"};
\endxy
\]
in which the rows are exact, the vertical morphisms in the middle and on the right are isomorphisms by Proposition~\ref{PropDefProj} {\rm (2)}. Hence
\[ -\circ\gam_{\Fbf}\colon(\defff\Ebb)(\Gam_{\Fbf},\Thh)\to(\fp\cat)(\Fbf,\Thh) \]
becomes an isomorphism as desired.
\end{proof}

\begin{corollary}\label{CorProjResol}
If $\CEs$ has enough projective morphisms, then $\defff\Ebb$ has enough projectives.
\end{corollary}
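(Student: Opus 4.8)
The plan is to produce, for an arbitrary object of $\defff\Ebb$, an epimorphism onto it from a projective object of $\defff\Ebb$, using the representable "covers" furnished by dominant $\sfr$-triangles together with the results of this section.

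First I would reduce to a normal form for the object. By the very definition of $\defff\Ebb$, any $\Thh\in\defff\Ebb$ is isomorphic to the contravariant defect of a conflation, so after replacing $\Thh$ by such a defect we may assume $\Thh=\Thh_{\del}$ for some $\del\in\Ebb(C,A)$. By construction (Definition~\ref{DefOntoDefect}) the corestriction $\wp_{\del}\colon\cat(-,C)\to\Thh_{\del}$ is an epimorphism in $\Mod\cat$, and since $\Thh_{\del}|_{\Pcal}=0$ it factors through the quotient functor as $\wp_{\del}=\und{\wp}_{\del}\circ\und{(-)}$, with $\und{\wp}_{\del}\colon\und{\cat}(-,C)\to\Thh_{\del}$ still an epimorphism in $\Mod\cat$; this is exactly the content of Remark~\ref{RemDeffp}.

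Next I would invoke the hypothesis: since $\CEs$ has enough projective morphisms, $C$ admits a dominant $\sfr$-triangle $F\ov{f}{\lra}G\ov{g}{\lra}C\ov{\vt}{\dra}$. Proposition~\ref{PropDefProj}(1) then gives that $\und{\wp}_{\vt}\colon\und{\cat}(-,C)\to\Thh_{\vt}$ is an isomorphism, so in particular $\und{\cat}(-,C)\in\defff\Ebb$; and Proposition~\ref{PropDefProj}(3) says $\Thh_{\vt}$, hence $\und{\cat}(-,C)$, is projective in $\defff\Ebb$. (Alternatively one could quote Proposition~\ref{CorDeflAdj} for $\und{\cat}(-,C)\in\defff\Ebb$ and the reflection $(\Thh_{\vt},\wp_{\vt})$, and then obtain the comparison map from the universal property of the reflection applied to $\wp_{\del}$; this is the same argument in disguise.)

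Finally I would assemble the pieces: $\und{\wp}_{\del}$ is a morphism between two objects of $\defff\Ebb$ which is an epimorphism in $\Mod\cat$, and because $\defff\Ebb$ is a Serre subcategory of $\coh(\cat)$ (in particular closed under cokernels in $\Mod\cat$) it is an epimorphism in $\defff\Ebb$. Since $\und{\cat}(-,C)$ is projective in $\defff\Ebb$, this is the desired projective presentation, so $\defff\Ebb$ has enough projectives. I do not expect a genuine obstacle here; the only points needing (minor) care are the legitimacy of the factorization $\wp_{\del}=\und{\wp}_{\del}\circ\und{(-)}$, handled by Remark~\ref{RemDeffp}, and the passage from epimorphisms in $\Mod\cat$ to epimorphisms in $\defff\Ebb$, which is immediate from the Serre subcategory property.
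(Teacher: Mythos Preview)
Your proof is correct and essentially the same as the paper's. The paper takes the dominant extension $\vt\in\Ebb(C,F)$, uses the reflection property (Proposition~\ref{PropDefProj}\,(2)) to produce $\eta\in(\defff\Ebb)(\Thh_{\vt},\Thh_{\del})$ with $\eta\circ\wp_{\vt}=\wp_{\del}$, observes that $\eta$ is epimorphic (since $\wp_{\del}$ is), and cites Proposition~\ref{PropDefProj}\,(3) for projectivity of $\Thh_{\vt}$; this is exactly the alternative you sketch in your parenthetical, and your primary route via $\und{\wp}_{\del}$ differs only by transporting along the isomorphism $\und{\wp}_{\vt}\colon\und{\cat}(-,C)\ov{\cong}{\lra}\Thh_{\vt}$. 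One small remark: your appeal to the Serre property to pass from an epimorphism in $\Mod\cat$ to one in $\defff\Ebb$ is more than needed---fullness of $\defff\Ebb$ in $\Mod\cat$ already suffices.
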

\begin{proof}
Let $\Thh\in\defff\Ebb$ be any object. We may assume that  there exists $\del\in\Ebb(C,A)$ such that $\Thh=\Thh_{\del}$. If we take a dominant extension $\vt\in\Ebb(C,F)$, then there exists $\eta\in(\defff\Ebb)(\Thh_{\vt},\Thh_{\del})$ such that $\eta\circ\wp_{\vt}=\wp_{\del}$. Then $\eta$ is epimorphic, and $\Thh_{\vt}$ is projective by Proposition~\ref{PropDefProj} {\rm (3)}.
\end{proof}

\bibliographystyle{plain}

\end{document}